\def\ThesisYear{2012}
\theoremstyle{theorem}
\newtheorem{theorem}{Theorem}
\newtheorem{cor}[theorem]{Corollary}
\newtheorem{lemma}[theorem]{Lemma}
\newtheorem{prop}[theorem]{Proposition}
\theoremstyle{definition}
\newtheorem{definition}[theorem]{Definition}
\newtheorem{example}[theorem]{Example}
\theoremstyle{remark}
\newtheorem{remark}[theorem]{Remark}
\def\MR#1{\href{http://www.ams.org/mathscinet-getitem?mr=#1}{MR#1}}
\def\I{\mathtt{i}}
\def\EXP#1.{e^{2\pi\I #1}}
\newcommand{\N}{\mathbb{N}}
\newcommand{\R}{\mathbb{R}}
\newcommand{\T}{\mathbb{T}}
\begin{document}

\fancyhf{}

\fancyhead[LO]{\slshape \rightmark}
\fancyhead[RE]{\slshape\leftmark} \fancyfoot[C]{\thepage}

\TitlePage
  \HEADER{\BAR\FIG{\includegraphics[height=60mm]{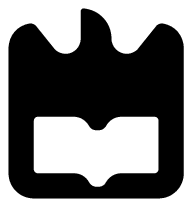}}}
         {\ThesisYear}
    \TITLE{Nuno Rafael de \newline Oliveira Bastos}
          {C\'{a}lculo Fraccional em Escalas Temporais\newline\newline
Fractional Calculus on Time Scales}
          \EndTitlePage
\titlepage\ \endtitlepage

\TitlePage
  \HEADER{\BAR\FIG{\begin{minipage}{50mm}
          ``Mathematical discoveries, small or great, are never born of
spontaneous generation. They always presuppose a soil seeded with
preliminary knowledge and well prepared by labour, both conscious
and subconscious.''
             \begin{flushright}
            --- Henri Poincar\'{e}
             \end{flushright}
            \end{minipage}}}
         {\ThesisYear}
    \TITLE{Nuno Rafael de \newline Oliveira Bastos}
          {C\'{a}lculo Fraccional em Escalas Temporais\newline\newline
Fractional Calculus on Time Scales}
\EndTitlePage
\titlepage\ \endtitlepage

\TitlePage
  \HEADER{}{\ThesisYear}
  \TITLE{Nuno Rafael de \newline Oliveira Bastos}
        {C\'{a}lculo Fraccional em Escalas Temporais\\
Fractional Calculus on Time Scales}
  \vspace*{15mm}
  \TEXT{}
       {Tese apresentada \`a Universidade de Aveiro para cumprimento dos requisitos
        necess\'arios \`a obten\c c\~ao do grau de Doutor em Matem\'{a}tica, Programa Doutoral 
        em Matem\'{a}tica e Aplica\c{c}\~{o}es -- PDMA 2077-2011 -- da Universidade de Aveiro 
        e Universidade do Minho, realizada sob a orienta\c c\~ao
        cient\'\i fica de Delfim Fernando Marado Torres, Professor Associado com Agrega\c{c}\~{a}o 
        do Departamento de Matem\'{a}tica da Universidade de Aveiro.\\\\\\
Thesis submitted to the University of Aveiro in fulfilment of the requirements for the degree 
of Doctor of Philosophy in Mathematics, Doctoral Programme in Mathematics 
and Applications -- PDMA 2007-2011 -- of the University of Aveiro and University of Minho, 
under the supervision of Professor Delfim Fernando Marado Torres, Associate Professor 
with tenure and Habilitation of the University of Aveiro.}
\EndTitlePage
\titlepage\ \endtitlepage

\TitlePage
  \vspace*{55mm}
  \TEXT{\textbf{o j\'uri~/~the jury\newline}}
       {}
  \TEXT{presidente~/~president}
       {\textbf{Prof. Doutor Jos\'e Carlos da Silva Neves}\newline {\small
        Professor Catedr\'atico da Universidade de Aveiro (por delega\c c\~ao do Reitor da
        Universidade de Aveiro)}}
  \vspace*{5mm}
  \TEXT{vogais~/~examiners committee}
       {\textbf{Prof. Doutor Manuel Duarte Ortigueira
}\newline {\small
        Professor Associado com Agrega\c c\~ao da Faculdade de Ci\^encias e Tecnologia da Universidade Nova de Lisboa}}
  \vspace*{5mm}
  \TEXT{}
       {\textbf{Prof. Doutor Delfim Fernando Marado Torres}\newline {\small
        Professor Associado com Agrega\c c\~ao da Universidade de Aveiro (orientador)}}
  \vspace*{5mm}
  \TEXT{}
       {\textbf{Prof. Doutor Filipe Artur Pacheco Neves Carteado Mena
}\newline {\small
        Professor Associado da Escola de Ci\^encias da Universidade do Minho}}
\vspace*{5mm}
  \TEXT{}
       {\textbf{Prof. Doutora Agnieszka Barbara Malinowska
}\newline {\small
        Professora Auxiliar da Bia\l ystok University of Technology, Pol\'onia}}
\vspace*{5mm}
  \TEXT{}
       {\textbf{Prof. Doutor Ricardo Miguel Moreira de Almeida
}\newline {\small
        Professor Auxiliar da Universidade de Aveiro}}
\EndTitlePage
\titlepage\ \endtitlepage

\TitlePage
  \vspace*{55mm}
  \TEXT{\textbf{agradecimentos~/\newline acknowledgements}}
       {I would like to acknowledge my supervisor Delfim F. M. Torres for accepting me as his student. 
       I could not imagine having a better advisor because without his knowledge, 
       invaluable guidance, advice, trust, encouragement and endless patience, 
       I would never have finished this thesis.\\
\\
I wish to express a special gratitude for the financial support 
of the \emph{Polytechnic Institute of Viseu} and
the \emph{Portuguese Foundation for Science and Technology} (FCT), 
through the ``Programa de apoio \`{a} forma\c{c}\~{a}o avan\c{c}ada de docentes 
do Ensino Superior Polit\'{e}cnico'', PhD fellowship SFRH/PROTEC/49730/2009, 
without which this work wouldn't be possible.\\
\\
I'm thankful to the people with whom I interacted at the Department 
of Mathematics of University of Aveiro and at the Department of Mathematics 
of Faculty of Computer Science, Bia\l ystok University of Technology, 
for providing a kind atmosphere and to contribute, in some sense, to this work.
\\\\
I also thank to my friends for their untiring support and to my department colleagues 
for their encouraging comments during these years, 
especially when this thesis seemed stopped.
\\\\
I wish to thank to Vanda for her patience, never-ending support and for never 
allowing me feeling alone in this hard and long way, providing, on that sense, 
the perfect conditions to work on this thesis.
\\\\
I also wish to express my deepest gratitude to my family for their invaluable 
support and for understanding my missing in several moments of this work.\\\\
Finally, I wish to dedicate this thesis to my parents, to my brother and to Vanda.
}
\EndTitlePage
\titlepage\ \endtitlepage

\TitlePage
  \vspace*{55mm}
\TEXT{\textbf{Palavras-chave}}
       {Problemas variacionais fraccionais em tempo discreto, operadores fraccionais discretos 
       do tipo de Riemann--Liouville, soma por partes fraccional, equa\c c\~oes de Euler--Lagrange, 
       condi\c c\~ao necess\'aria de optimalidade do tipo de Legendre, escalas temporais, 
       sistema de computa\c c\~ao alg\'ebrico \emph{Maxima}. }
\TEXT{}\\
\TEXT{\textbf{Resumo}}
       {Introduzimos um c\'alculo das varia\c{c}\~oes fraccionais nas escalas temporais 
       $\mathbb{Z}$ e $\left(h\mathbb{Z}\right)_a$. Estabelecemos a primeira e a segunda 
       condi\c{c}\~ao necess\'aria  de optimalidade. S\~ao dados alguns exemplos num\'ericos 
       que ilustram o uso quer da nova condi\c{c}\~ao de Euler--Lagrange quer da nova condi\c{c}\~ao 
       do tipo de Legendre. Introduzimos tamb\'em novas defini\c{c}\~oes de derivada fraccional 
       e de integral fraccional numa escala temporal com recurso \`a transformada 
       inversa generalizada de Laplace. }
  \TEXT{}
       {}
\EndTitlePage
\titlepage\ \endtitlepage

\TitlePage
  \vspace*{55mm}
\TEXT{\textbf{Keywords}}
       {Fractional discrete-time variational problems, discrete analogues of Riemann--Liouville 
       fractional-order operators, fractional formula for summation by parts, Euler--Lagrange equations, 
       Legendre type necessary optimality condition, time scales, \emph{Maxima} computer algebra system.}
\TEXT{}\\
 \TEXT{\textbf{Abstract}}
       {We introduce a discrete-time fractional calculus of variations on
the time scales $\mathbb{Z}$ and $(h\mathbb{Z})_a$. First and second
order necessary optimality conditions are established. Some
numerical examples illustrating the use of the new Euler--Lagrange
and Legendre type conditions are given. We also give new definitions of fractional
derivatives and integrals on time scales via the inverse generalized
Laplace transform.
\newline\newline
\textbf{2010 Mathematics Subject Classification:} 26A33, 26E70, 39A12, 49K05.
}  \EndTitlePage
\titlepage\ \endtitlepage

\pagenumbering{roman}

\tableofcontents

\cleardoublepage

\pagenumbering{arabic}


\chapter*{Introduction}\markboth{INTRODUCTION}{INTRODUCTION}

The main goal of this thesis is to develop a more general fractional calculus on time scales.\\
In the first year of my PhD Doctoral Programme I followed several one-semester courses in distinct 
fields of mathematics. One of the one-semester courses was called Research Lab where five different 
subjects were covered. One of those subjects was calculus of variations on time scales. 
Time scales theory looked to me a new and wonderful subject. As the doctoral programme 
PDMA Aveiro--Minho is backed up by two research units, not only Professors of the first semester 
but any researcher that belongs to any of these R\&D units is invited to present, 
at the end of the first semester, research topics for PhD thesis by giving seminars 
to interested students. The author's supervisor was one of the researchers that 
was available to be advisor if any student show interest in the  theme 
\emph{Fractional Calculus on Time Scales} -- the title of this thesis.\\
The main idea was to connect in one theory two subjects that were, 
and still are, subject to strong research and development.
Our contribute on this new area, until now, appears in part II of this thesis. 
On one hand we develop a discrete fractional calculus of variations 
for the time scale $\mathbb{Z}$ (Chapter~\ref{chap3}) and for the time scale 
$(h\mathbb{Z})_a$ (Chapter~\ref{chap4}). One the other hand we give new definitions 
for differintegral fractional calculus on a time scale using 
a Laplace transform approach (Chapter~\ref{chap5}).

Nowadays fractional differentiation (differentiation of an arbitrary order)
plays an important role in various fields: physics (classic and quantum mechanics,
thermodynamics, etc.), chemistry, biology, economics, engineering,
signal and image processing, and control theory
\cite{Agrawal:2004b,CD:Hilfer:2000,book:Kilbas,CD:Klimek:2002,Miller1,oldman,Ortigueira,Podlubny,Samko,TenreiroMachado}. 
It is a subject as old as Calculus itself but much in progress \cite{livro:ortigueira}. 
The origin of fractional calculus goes back three centuries, when in 1695 L'Hopital 
asked Leibniz what should be the meaning of a derivative of order $1/2$. Leibniz's response: 
``An apparent paradox, from which one day useful consequences will be drawn''. 
After that episode, which most authors consider the born of fractional calculus, 
several famous mathematicians contributed to the development of Fractional 
Calculus~\cite{CD:Hilfer:2000,Miller1,Samko}: Abel, Fourier, Liouville,~Gr\"{u}nwald, 
Letnikov, Caputo, Riemann, Riesz, just to mention a few names.
In the last decades, considerable research has been done in fractional calculus.
This is particularly true in the area of the calculus of variations, which is being 
subject to intense investigations during the last few years 
\cite{B:08,Ozlem,R:N:H:M:B:07,R:T:M:B:07,Tat:Mal:Torres:2012,Almeida:Pooseh:Torres}. 
F.~Riewe \cite{Riewe,CD:Riewe:1997} obtained a version of the Euler-Lagrange equations 
for problems of the calculus of variations with fractional derivatives, 
that combines the conservative and non-conservative cases.
In fractional calculus of variations our main interest is the study 
of necessary optimality conditions for fractional problems of the calculus of variations.
The study of fractional problems of the calculus of variations and respective 
Euler--Lagrange equations is a fairly recent issue -- see 
\cite{agr0,agr1,agr3,agr2,RicDel,MyID:182,Ozlem,El-Nabulsi1,El-Nabulsi2,gastao:delfim,gasta1,M:Baleanu}
and references therein -- and include only the continuous case.
It is well known that discrete analogues of differential equations can be very useful in
applications \cite{B:J:06,J:B:07,book:DCV} and that fractional Euler-Lagrange differential
equations are extremely difficult to solve, being necessary to discretize them \cite{agr2,Ozlem}.
Therefore, we consider pertinent to develop a fractional discrete-time theory of the
calculus of variations in a different time scale than $\mathbb{R}$.
We dedicate two chapters to that: one for the time scale $\mathbb{Z}$ and another for the time scale $(h\mathbb{Z})_a$.
Applications of fractional calculus of variations include fractional variational principles in mechanics and physics, 
quantization, control theory, and description of conservative,
nonconservative, and constrained systems \cite{B:08,B:M:R:08,B:M:08,R:T:M:B:07}.
Roughly speaking, the classical calculus of variations and optimal control are extended
by substituting the usual derivatives of integer order
by different kinds of fractional (non-integer) derivatives.
It is important to note that the passage from the integer/classical differential calculus
to the fractional one is not unique because we have at our disposal
different notions of fractional derivatives.
This is, as argued in \cite{B:08,R:N:H:M:B:07},
an interesting and advantage feature of the area.
Most part of investigations in the fractional variational calculus are
based on the replacement of the classical derivatives by fractional derivatives
in the sense of Riemann--Liouville, Caputo and Riesz
\cite{agr0,MyID:182,B:08,MyID:149,Almeida:Torres:2011,Tatiana:Torres:2011}.
Independently of the chosen fractional derivatives, one obtains, when the fractional
order of differentiation tends to an integer order, the usual problems and results
of the calculus of variations.

A time scale is any nonempty closed subset of the real line.
The theory of time scales is a fairly new area of research. It was introduced in Stefan Hilger's 
1988 Ph.D. thesis~\cite{Hilger2} and subsequent landmark papers \cite{Hilger'97,Hilger},
as a way to unify the seemingly disparate fields of discrete dynamical systems (\textrm{i.e.}, 
difference equations) and continuous dynamical systems (\textrm{i.e.}, differential equations).
His dissertation referred to such unification as ``Calculus on Measure Chains'' 
\cite{Aul:Hil,book:Lak}. Today it is better known as the time scale calculus.
Since the nineties of XX century, the study of dynamic equations on time scales received 
a lot of attention (see, \textrm{e.g.}, \cite{Agarwal&Bohner&O'Regan&Peterson'02,livro:2001,livro:2003}).
In 1997, the German mathematician Martin Bohner came across time scale calculus by
chance, when he took up a position at the National University of
Singapore. On the way from Singapore airport, a colleague, Ravi
Agarwal, mentioned that time scale calculus might be the key to the problems 
that Bohner was investigating at that time. After that episode, 
time scale calculus became one of its main areas of research.
To the reader who wants a gentle overview of time scales we advise 
to begin with \cite{Spedding}, that was written in a didactic way 
and at the same time points some possible applications (e.g. in biology). 
Here we are interested in the calculus of variations on time scales
\cite{Natalia:Delfim:2011,Delfim:alone:2010}. Our goal is to connect 
the theories of calculus of variations on time scales and fractional calculus.

This thesis is divided in two major parts. The first part has two chapters 
in which we provide some preliminaries  on fractional calculus and time scales calculus, 
respectively. The second part is splitted in three chapters where we present our original work. 
In Chapter~\ref{chap3} we introduce the fractional calculus of variations 
on the time scale $\mathbb{Z}$. The main results of this chapter are a first-order 
necessary optimality condition (Euler-Lagrange equation) and a second-order necessary 
optimality condition (Legendre inequality). In Chapter~\ref{chap4} we introduce 
a fractional factorial function that allow us to define left and right fractional 
derivatives and to develop further the two necessary optimality conditions 
of Chapter~\ref{chap3}. In Chapter~\ref{chap4} the time scale is $h\mathbb{Z}_a$.
We believe that our results open some possible doors to research, also in the continuous case,
when $h\rightarrow 0$. Chapter~\ref{chap5} is devoted to a new approach 
to define fractional derivatives and fractional integrals in an arbitrary time scale,
by using the Laplace transform of time scales as support.
Finally, we write our conclusions in Chapter~\ref{conclusao} 
as well some future research directions.


\clearpage{\thispagestyle{empty}\cleardoublepage}

\part{Synthesis}

\clearpage{\thispagestyle{empty}\cleardoublepage}


\chapter{Fractional Calculus}
\label{chap1}

\begin{flushright}
``The fractional calculus is the calculus of the XXI-st century''\\
K. Nishimoto (1989)
\end{flushright}

The theory of discrete fractional calculus
is in its infancy~\cite{Atici0,Atici,Miller}.
In contrast, the theory of continuous fractional
calculus is much more developed \cite{Samko}.
The fractional discrete theory
has its foundations in the pioneering work of
Kuttner in 1957, where it appears the first definition of
fractional order differences.
In Sections \ref{discretefract} and \ref{continuosfract} some
definitions of fractional discrete and continuous operators are
given, respectively.

\section{Discrete fractional calculus}\label{discretefract}

In 1957 \cite{Kuttner} Kuttner defined, for any sequence of complex
numbers, $\{a_n\}$, the $s$-th order difference as
\begin{equation}\label{Kuttner:Def}
\Delta^s a_n=\sum_{m=0}^{\infty}\binom{-s-1+m}{m}a_{n+m},
\end{equation}
where
\begin{equation}\label{binom1}
\binom{t}{m}=\frac{t(t-1)\ldots(t-m+1)}{m!}~.
\end{equation}

In \cite{Kuttner} Kuttner also remarks that
$\displaystyle{\binom{t}{m}}$ means $0$ when $m$ is negative and
also when $t-m$ is a negative integer but $t$ is not a negative
integer. Clearly, \eqref{Kuttner:Def} only makes sense when the
series converges.
\\\\
In 1974, Diaz and Osler \cite{Diaz} gave the following definition
for a fractional difference of order $\nu$:
$$
\Delta^\nu f(x)=\sum_{k=0}^{\infty}(-1)^k\binom{\nu}{k}f(x+\nu-k),
$$
\begin{equation}\label{binom2}
\binom{\nu}{k}=\frac{\Gamma(\nu+1)}{\Gamma(\nu-k+1)k!}
\end{equation} where $\nu$ is any real or complex number.
\\\\
The above definition uses the usual well-known gamma function which
is defined by

\begin{equation}\label{Gamma}
\Gamma(\nu):=\int_0^{\infty}t^{\nu-1}e^{-t}dt,~\nu\in
\mathbb{C}\backslash(-\mathbb{N}_0)~.
\end{equation}

The gamma function was first introduced by the Swiss mathematician
Leonard Euler in his goal to generalize the factorial to non integer
values.

Throughout this thesis we use some of its most important
properties:
\begin{eqnarray}
\Gamma(1)&=& 1, \\
\Gamma(\nu+1)&=&\nu\Gamma(\nu)~~\text{for}~\nu\in
\mathbb{C}\backslash(-\mathbb{N}_0)\label{Z::naosei9},\\
\Gamma(x+1)&=&x!,~~\text{for}~x\in \mathbb{N}_0~.
\end{eqnarray}

\begin{remark}
Using the properties of gamma function it is easily proved
that formulas (\ref{binom1}) and (\ref{binom2}) coincide for $\nu$ an integer.
\end{remark}

We begin by introducing some notation used throughout. Let $a$ be an
arbitrary real number and $b = a + k$ for a certain $k
\in\mathbb{N}$ with $k \ge 2$. Let $\mathbb{T}= \{a, a + 1, \ldots,
b\}$. According with \cite{livro:2001}, we define the factorial function
$$
t^{(n)} = t(t-1)(t-2)\ldots(t-n+1), \quad n\in\mathbb{N}
$$
and $t^{(0)}=0$.\\
Extending the above definition from
an integer $n$ to an arbitrary real number $\alpha$, we have
\begin{equation}\label{factorial:fracti}
t^{(\alpha)}=\frac{\Gamma(t+1)}{\Gamma(t+1-\alpha)},
\end{equation}
where $\Gamma$ is the Euler gamma function.
In \cite{Miller} Miller and Ross define a fractional sum of order
$\nu>0$ \emph{via} the solution of a linear difference equation.
Namely, they present it as follows:
\begin{definition}\label{Miller:fract}
\begin{equation}\label{Z::naosei8}
\Delta^{-\nu}f(t)=\frac{1}{\Gamma(\nu)}\sum_{s=a}^{t-\nu}(t-\sigma(s))^{(\nu-1)}f(s).
\end{equation}
Here $f$ is defined for $s=a \mod (1)$ and
$\Delta^{-\nu}f$ is defined for $t=(a+\nu) \mod (1)$.
\end{definition}
This was done in analogy with the Riemann--Liouville fractional
integral of order $\nu>0$ (\textrm{cf.} formula~\eqref{left::RLFI}) 
which can be obtained \emph{via} the
solution of a linear differential equation \cite{Miller,Miller1}.
Some basic properties of the sum in \eqref{Z::naosei8} were obtained
in \cite{Miller}. Although there are other definitions of fractional
difference operators, throughout this thesis, we follow mostly,
the spirit of Miller and Ross, Atici and Eloe~\cite{Atici,Miller}.

\section{Continuous fractional calculus}
\label{continuosfract}

In the literature there are several definitions of fractional
derivatives and fractional integrals (simply called differintegrals)
like Riemann--Liouville, Caputo, Riesz and Hadamard.
Throughout this thesis only Riemann--Liouville and Caputo
definitions are used.\\
By historical precedent over Caputo definition, we will start with
the Riemann-Liouville definition.
Let $[a,b]$ be a finite interval and $\alpha\in \mathbb{R}^+$. The
\emph{left}\index{Riemann--Liouville!left fractional integral} 
and the \emph{right}\index{Riemann--Liouville!right fractional integral} 
Riemann--Liouville fractional integrals (RLFI) of order $\alpha$ of a function $f$
are defined, respectively, by:
\begin{equation}\label{left::RLFI}
{}_a I_x^\alpha
f(x):=\frac{1}{\Gamma(\alpha)}\int_a^x(x-t)^{\alpha-1}f(t)dt,~~~x>a,
\end{equation}
and
\begin{equation}\label{right::RLFI}
{}_x I_b^\alpha
f(x):=\frac{1}{\Gamma(\alpha)}\int_x^b(t-x)^{\alpha-1}f(t)dt,~~~x<b,
\end{equation}
where $\Gamma(\alpha)$ is the Gamma function~\eqref{Gamma}.

\begin{remark}
If $f$ is a continuous function ${}_a I_x^0 f = {}_x I_b^0 =f$.
\end{remark}

\begin{remark}
The semigroup property of Riemann--Liouville fractional operators
are given by

\begin{equation}\label{right::RLSP}
({}_a I_x^\alpha {}_a I_x^\beta f)(x)={}_a
I_x^{\alpha+\beta}f(x),~~x>a,~\alpha>0,~\beta>0.
\end{equation}

If $f$ is a continuous function ${}_a I_x^0 f = {}_x I_b^0 =f$.
\end{remark}

Let $f$ be a function, $\alpha\in \mathbb{R}_0^{+}$ and $n=[\alpha]+1$,
where $[\alpha]$ means the integer part of $\alpha$. 
The \emph{left}\index{Riemann--Liouville!left fractional derivative} 
and the \emph{right}\index{Riemann--Liouville!right fractional derivative} 
Riemann--Liouville fractional derivatives (RLFD) of order
$\alpha$ of $f$ are defined, respectively, by
\begin{equation}\label{left::RLFD}
    {}_a D_x^\alpha
    f(x):=\frac{1}{\Gamma(n-\alpha)}\left(\frac{d}{dx}\right)^n
    \int_a^x (x-t)^{-\alpha+n-1}f(t)dt=\left(\frac{d}{dx}\right)^n
    {}_a I_x^{n-\alpha} f(x),~~x>a
\end{equation}
and
\begin{equation}\label{right::RLFD}
    {}_x D_b^\alpha
    f(x):=\frac{1}{\Gamma(n-\alpha)}\left(-\frac{d}{dx}\right)^n
    \int_x^b (t-x)^{-\alpha+n-1}f(t)dt=\left(-\frac{d}{dx}\right)^n
    {}_x I_b^{n-\alpha} f(x),~~x<b)~.
\end{equation}
Let $AC([a,b])$ represent the space of absolutely continuous
functions on $[a,b]$. For $n\in \mathbb{N}$ we denote by $AC^n[a,b]$
the space of functions $f$ which have continuous derivatives up to
order $n-1$ on $[a,b]$ such that $f^{(n-1)}\in AC[a,b]$. In
particular, $AC^1[a,b]=AC[a,b]$.

The \emph{left}\index{Caputo!left fractional derivative} and the 
\emph{right}\index{Caputo!right fractional derivative} Caputo 
fractional derivatives (CFD) of order $\alpha\in \mathbb{R}_0^{+}$ 
of $f\in AC^n([a,b])$ are defined, respectively, by:

\begin{equation}\label{left::CFD}
    {}_a^C D_x^\alpha
    f(x):={}_a I_x^{n-\alpha} \frac{d^n}{dx^n}f(x),~~x>a
\end{equation}
and
\begin{equation}\label{right::CFD}
    {}_x^C D_b^\alpha
    f(x):=(-1)^n {}_x I_b^{n-\alpha} \frac{d^n}{dx^n}f(x),~~x<b,
\end{equation}
where $n=\left[\alpha\right]+1$.

\begin{remark}
The Riemann-Liouville approach requires the initial conditions
for differential equations in terms of non-integer derivatives,
which are very difficult to be interpreted from the physical point
of view~\cite{Dorota:Torres:2011}, whereas the Caputo approach uses 
integer-order initial conditions that are more easy to get in real 
world problems~\cite{Dorota:Torres:2010}. Caputo approach it's also 
preferable when we need fractional derivatives of constants to be zero.
The Caputo fractional derivative is zero for a constant while the
Riemann--Liouville derivative is not.
\end{remark}

\begin{remark}
If $f(a)=f'(a)=\cdots=f^{(n-1)}(a)=0$, then both Riemann--Liouville
and Caputo derivatives coincide. In particular, for $\alpha\in
(0,1)$ and $f(a)=0$ one has ${}_a^C D^{\alpha}_{x}f(t)={}_a
D^{\alpha}_{x}f(t)$.
\end{remark}

\clearpage{\thispagestyle{empty}\cleardoublepage}


\chapter{Time Scales}
\label{chap2}

In this chapter we recall some basic results
on time scales  (see Section~\ref{TS:defini}) that we use in the sequel. 
In Section~\ref{sec::cv} we review some results 
of the calculus of variations on time scales.

\section{Basic definitions}
\label{TS:defini}

\begin{definition}
A \emph{time scale}\index{Time Scale} is an arbitrary nonempty
closed subset of $\mathbb{R}$ and is denoted by  $\mathbb{T}$.
\end{definition}

\begin{example}
Here we just give some examples of sets that are time scales and others that
are not.

\begin{itemize}
  \item The set $\mathbb{R}$ is a time scale;
  \item The set $\mathbb{Z}$ is a time scale;
  \item The Cantor set is a time scale;
  \item The set $\mathbb{C}$ is not a time scale;
  \item The set $\mathbb{Q}$ is not a time scale.
\end{itemize}
\end{example}

In applications, the quantum calculus that has, as base, the set of powers 
of a given number $q$ is, in addition to classical continuous and purely 
discrete calculus, one of the most important time scales. Applications 
of this calculus appear, for example, in physics~\cite{book:Kac} 
and economics~\cite{Malin:Torres:Hahn}. The quantum derivative 
was introduced by Leonhard Euler and a fractional formulation 
of that derivative can be found in \cite{Ortigueira1,Ortigueira2}.

In this thesis attention will be given to \emph{time
scales} $\mathbb{Z}$, $\mathbb{R}$ and $(h\mathbb{Z})_a=\{a, a+h,
a+2h, \ldots\},~a\in \mathbb{R},~h>0$.
\\
A time scale of the form of a union of disjoint closed real intervals 
constitutes a good background for the study of population 
(of plants, insects, etc.) models. Such models appear, for example, 
when a plant population exhibits exponential growth during the months 
of Spring and Summer, and at the beginning of Autumn all plants die while
the seeds remain in the ground. Similar examples
concerning insect populations, where all the adults die
before the babies are born can be found in~\cite{livro:2001,Spedding}.
\\
The following operators of time scales theory are used, in literature and
throughout this thesis, several times:

\begin{definition}
The mapping $\sigma:\mathbb{T}\rightarrow\mathbb{T}$, defined by
$\sigma(t)=\inf{\{s\in\mathbb{T}:s>t\}}$ with
$\inf\emptyset=\sup\mathbb{T}$ (\textrm{i.e.}, $\sigma(M)=M$ if
$\mathbb{T}$ has a maximum $M$) is called the \emph{forward jump
operator}\index{Forward jump operator}. Accordingly, we define the
\emph{backward jump operator}\index{Backward jump operator}
$\rho:\mathbb{T}\rightarrow\mathbb{T}$ by
$\rho(t)=\sup{\{s\in\mathbb{T}:s<t\}}$ with
$\sup\emptyset=\inf\mathbb{T}$ (\textrm{i.e.}, $\rho(m)=m$ if
$\mathbb{T}$ has a minimum $m$). The symbol $\emptyset$ denotes the
empty set.
\end{definition}

The following classification of points is used within the theory:
a point $t\in\mathbb{T}$ is called
\emph{right-dense}\index{Points!right-dense},
\emph{right-scattered}\index{Points!right-scattered},
\emph{left-dense}\index{Points!left-dense} or
\emph{left-scattered}\index{Points!left-scattered} if $\sigma(t)=t$,
$\sigma(t)>t$, $\rho(t)=t$, $\rho(t)<t$, respectively.
A point $t$ is called \emph{isolated} if
$\rho(t)<t<\sigma(t)$ and \emph{dense} if $\rho(t)=t=\sigma(t)$.

\begin{definition}
A function $f:\mathbb{T}\rightarrow \mathbb{R}$ is called
\emph{regulated}\index{Regulated function} provided its right-sided limits exist (finite) at
all right-dense points in $\mathbb{T}$ and its left-sided limits
exist (finite) at all left-dense points in $\mathbb{T}$.
\end{definition}

\begin{example}
Let
$\displaystyle{\mathbb{T}=\left\{0\right\}\cup\left\{\frac{1}{n}:~n\in
\mathbb{N}\right\}\cup\{2\}\cup\left\{2-\frac{1}{n}:~n\in
\mathbb{N}\right\}}$ and define $f:\mathbb{T}\rightarrow \mathbb{R}$
by
\[f(t):=\left\{
\begin{array}{ll}
0 & \mbox{if $t\neq 2$};\\
t & \mbox{if $t=2$}.\end{array} \right.
~~.\]
Function $f$ is regulated. 
\end{example}

Now, let us define the sets $\mathbb{T}^{\kappa^n}$, inductively:
$$
\mathbb{T}^{\kappa^1}=\mathbb{T}^\kappa=\{t\in\mathbb{T}:\mbox{$t$
non-maximal or left-dense}\}
$$
and $\mathbb{T}^{\kappa^n}$=$(\mathbb{T}^{\kappa^{n-1}})^\kappa$, $n\geq 2$.

\begin{definition}
The \emph{forward graininess function}
$\mu:\mathbb{T}\rightarrow[0,\infty)$ is defined by
$\mu(t)=\sigma(t)-t$.
\end{definition}

\begin{remark}
Throughout the thesis, we refer to the forward graininess
function simply as the graininess function.
\end{remark}

\begin{definition}
The \emph{backward graininess function}
$\nu:\mathbb{T}\rightarrow[0,\infty)$ is defined by
$\nu(t)=t-\rho(t)$.
\end{definition}

\begin{example}
If $\mathbb{T}=\mathbb{R}$, then $\sigma(t)=\rho(t)=t$ and
$\mu(t)=\nu(t)=0$.
 If $\mathbb{T}=[-4,1]\cup \mathbb{N}$, then
\[ \sigma(t) = \left\{ \begin{array}{ll}
t & \mbox{if $t\in[-4,1)$};\\
t+1 & \mbox{otherwise},\end{array} \right. \] while
\[ \rho(t) = \left\{ \begin{array}{ll}
t & \mbox{if $t\in[-4,1]$};\\
t-1 & \mbox{otherwise}.\end{array} \right. \] Moreover,
\[ \mu(t) = \left\{ \begin{array}{ll}
0 & \mbox{if $t\in[-4,1)$};\\
1 & \mbox{otherwise},\end{array} \right. \] and
\[ \nu(t) = \left\{ \begin{array}{ll}
0 & \mbox{if $t\in[-4,1]$};\\
1 & \mbox{otherwise}.\end{array} \right. \]
\end{example}
\begin{example}\label{Z::sigma:mu}
If $\mathbb{T}=\mathbb{Z}$, then $\sigma(t)=t+1$, $\rho(t)=t-1$ and
$\mu(t)=\nu(t)=1$. If $\mathbb{T}=(h\mathbb{Z})_a$ then
$\sigma(t)=t+h,~\rho(t)=t-h$ and $\mu(t)=\nu(t)=h$.
\end{example}
For two points $a,b\in\mathbb{T}$, the time scale interval is
defined by
$$[a,b]_{\mathbb{T}}=\{t\in\mathbb{T}:a\leq t\leq b\}.$$

Throughout the thesis we will frequently write
$f^\sigma(t)=f(\sigma(t))$ and $f^\rho(t)=f(\rho(t))$.

Next results are related with differentiation on time scales.

\begin{definition}\label{def2}
We say that a function $f:\mathbb{T}\rightarrow\mathbb{R}$ is
\emph{$\Delta$-differentiable}\index{Function!$\Delta$-differentiable}
at $t\in\mathbb{T}^\kappa$ if there is a number $f^{\Delta}(t)$ such
that for all $\varepsilon>0$ there exists a neighborhood $U$ of $t$
 such that
$$|f^\sigma(t)-f(s)-f^{\Delta}(t)(\sigma(t)-s)|
\leq\varepsilon|\sigma(t)-s| \quad\mbox{ for all $s\in U$}.$$ We
call $f^{\Delta}(t)$ the \emph{$\Delta$-derivative} of $f$ at $t$.
\end{definition}

The $\Delta$-derivative of order $n\in\mathbb{N}$ of a function $f$
is defined by
$f^{\Delta^n}(t)=\left(f^{\Delta^{n-1}}(t)\right)^\Delta$,
$t\in\mathbb{T}^{\kappa^n}$, provided the right-hand side of the
equality exists, where $f^{\Delta^0}=f$.

Some basic properties will now be given for the $\Delta$-derivative.

\begin{theorem}\cite[Theorem~1.16]{livro:2001}
\label{teorema0} Assume $f:\mathbb{T}\rightarrow\mathbb{R}$ is a
function and let $t\in\mathbb{T}^\kappa$. Then we have the
following:
\begin{enumerate}
    \item If $f$ is $\Delta$-differentiable at $t$, then $f$ is
    continuous at $t$.
    \item If $f$ is continuous at $t$ and $t$ is right-scattered,
    then $f$ is $\Delta$-differentiable at $t$ with
    \begin{equation}\label{derdiscr}f^\Delta(t)=\frac{f^\sigma(t)-f(t)}{\mu(t)}.\end{equation}
    \item If $t$ is right-dense, then $f$ is $\Delta$-differentiable at
    $t$ if and only if the limit
    $$\lim_{s\rightarrow t}\frac{f(s)-f(t)}{s-t}$$
    exists as a finite number. In this case,
    $$f^\Delta(t)=\lim_{s\rightarrow t}\frac{f(s)-f(t)}{s-t}.$$
    \item If $f$ is $\Delta$-differentiable at $t$, then
    \begin{equation}\label{transfor}
    f^\sigma(t)=f(t)+\mu(t)f^\Delta(t).
    \end{equation}
\end{enumerate}
\end{theorem}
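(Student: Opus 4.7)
My plan is to handle the four assertions separately, using the $\varepsilon$-neighborhood definition directly and exploiting the dichotomy between right-dense points ($\sigma(t)=t$, so $\mu(t)=0$) and right-scattered points ($\sigma(t)>t$, so $\mu(t)>0$). Only part (1) requires real work; parts (2)--(4) will then follow with short calculations.

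For part (1) I would fix $\varepsilon^*>0$, obtain a neighborhood $U$ from Definition~\ref{def2}, and apply the defining inequality twice: once at a generic $s\in U$ and once at the distinguished point $s=t$, which yields $|f^\sigma(t)-f(t)-f^\Delta(t)\mu(t)|\leq \varepsilon^*\mu(t)$. A triangle-inequality combination then eliminates $f^\sigma(t)$ and, using the identity $(\sigma(t)-s)-\mu(t)=t-s$, produces
$$|f(s)-f(t)|\leq \varepsilon^*|\sigma(t)-s|+\varepsilon^*\mu(t)+|f^\Delta(t)|\,|s-t|.$$
Shrinking $U$ so that $|s-t|$ is small (which, through $|\sigma(t)-s|\leq \mu(t)+|s-t|$, also controls the first term) and choosing $\varepsilon^*$ appropriately in terms of $\varepsilon$, $\mu(t)$ and $|f^\Delta(t)|$ delivers $|f(s)-f(t)|<\varepsilon$. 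The main subtlety here is the book-keeping of constants: $\varepsilon^*$ must be chosen depending on the fixed value $\mu(t)$, and one must check that the argument degenerates correctly in the right-dense case $\mu(t)=0$.

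For part (2), right-scatteredness lets me choose $U$ small enough that $\sigma(t)\notin U$, so every $s\in U\cap\mathbb{T}$ with $s\neq t$ lies strictly to the left of $t$ and therefore satisfies $|\sigma(t)-s|\geq \mu(t)>0$. Setting $c=(f^\sigma(t)-f(t))/\mu(t)$, a short algebraic simplification gives the identity $f^\sigma(t)-f(s)-c(\sigma(t)-s)=f(t)-f(s)+c(s-t)$; continuity of $f$ at $t$ together with smallness of $|s-t|$ make the right-hand side at most $\varepsilon\mu(t)\leq \varepsilon|\sigma(t)-s|$, and the case $s=t$ is trivial.

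Part (3) is essentially a tautology at right-dense points: with $\sigma(t)=t$, the defining inequality rewrites as $\bigl|\tfrac{f(s)-f(t)}{s-t}-f^\Delta(t)\bigr|\leq \varepsilon$ for $s\in U\setminus\{t\}$, which is precisely the classical limit characterization; conversely, given such a limit, the same inequality reconstructs the time-scale definition, with the degenerate equality at $s=t$ automatic. Finally, part (4) is immediate: at right-scattered points just rearrange the formula of (2); at right-dense points both sides collapse to $f(t)$, since $\mu(t)=0$ and $f^\sigma(t)=f(t)$.
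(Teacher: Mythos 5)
Your proposal is correct; note that the thesis itself offers no proof of this statement, quoting it directly from Bohner and Peterson's book, and your argument is essentially the standard proof given there (the same $\varepsilon^*$ bookkeeping with $\varepsilon^*$ chosen in terms of $1+|f^\Delta(t)|+2\mu(t)$ for part (1), the identity $f^\sigma(t)-f(s)-c(\sigma(t)-s)=f(t)-f(s)+c(s-t)$ for part (2), the tautological rewriting at right-dense points for part (3), and the dichotomy right-dense/right-scattered for part (4)). The only step left implicit is that part (4) at a right-scattered point uses part (1) to supply the continuity hypothesis needed before the formula of part (2) can be invoked and rearranged, but that chain is immediate.
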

It is an immediate consequence of Theorem \ref{teorema0} that if
$\mathbb{T}=\mathbb{R}$, then the $\Delta$-derivative becomes the
classical one, i.e., $f^\Delta=f'$ while if $\mathbb{T}=\mathbb{Z}$,
the $\Delta$-derivative reduces to the \emph{forward difference}
$f^\Delta(t)=\Delta f(t)=f(t+1)-f(t)$.

\begin{theorem} \cite[Theorem~1.20]{livro:2001}
Assume $f,g:\mathbb{T}\rightarrow\mathbb{R}$ are
$\Delta$-differentiable at $t\in\mathbb{T}^\kappa$. Then:
\begin{enumerate}
    \item The sum $f+g:\mathbb{T}\rightarrow\mathbb{R}$ is $\Delta$-differentiable at
    $t$ and $(f+g)^\Delta(t)=f^\Delta(t)+g^\Delta(t)$.
    \item For any number $\xi\in\mathbb{R}$, $\xi f:\mathbb{T}\rightarrow\mathbb{R}$ is $\Delta$-differentiable at
    $t$ and $(\xi f)^\Delta(t)=\xi f^\Delta(t)$.
    \item The product $fg:\mathbb{T}\rightarrow\mathbb{R}$ is $\Delta$-differentiable at
    $t$ and
    \begin{equation}\label{produto}
    (fg)^\Delta(t)=f^\Delta(t)g(t)+f^\sigma(t)g^\Delta(t)=f(t)g^\Delta(t)+f^\Delta(t)g^\sigma(t).
    \end{equation}
\end{enumerate}
\end{theorem}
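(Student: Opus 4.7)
The plan is to derive each of the three items directly from Definition~\ref{def2}, treating parts 1 and 2 as short warm-up exercises and part 3 as the main work.

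For the sum rule, given $\varepsilon>0$, I would use the $\Delta$-differentiability of $f$ and $g$ to produce neighborhoods $U_1$ and $U_2$ of $t$ on which
$$|f^\sigma(t)-f(s)-f^\Delta(t)(\sigma(t)-s)|\leq \tfrac{\varepsilon}{2}|\sigma(t)-s|,\qquad |g^\sigma(t)-g(s)-g^\Delta(t)(\sigma(t)-s)|\leq \tfrac{\varepsilon}{2}|\sigma(t)-s|.$$
On $U=U_1\cap U_2$, the triangle inequality applied to
$(f+g)^\sigma(t)-(f+g)(s)-[f^\Delta(t)+g^\Delta(t)](\sigma(t)-s)$
gives the desired bound, which by Definition~\ref{def2} identifies $f^\Delta(t)+g^\Delta(t)$ as $(f+g)^\Delta(t)$. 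The scalar multiple rule follows the same pattern: for $\xi\neq 0$, apply the definition to $f$ with $\varepsilon/|\xi|$ in place of $\varepsilon$; for $\xi=0$, the claim is immediate.

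For the product rule, the key algebraic trick is to add and subtract terms that let me isolate the differentiability residues of $f$ and $g$ separately. Specifically, I would rewrite
\begin{align*}
(fg)^\sigma(t)-(fg)(s)&-[f^\Delta(t)g(t)+f^\sigma(t)g^\Delta(t)](\sigma(t)-s)\\
&= f^\sigma(t)\bigl[g^\sigma(t)-g(s)-g^\Delta(t)(\sigma(t)-s)\bigr]\\
&\quad + g(s)\bigl[f^\sigma(t)-f(s)-f^\Delta(t)(\sigma(t)-s)\bigr]\\
&\quad + f^\Delta(t)\bigl[g(s)-g(t)\bigr](\sigma(t)-s).
\end{align*}
Each of the first two brackets is controlled by the differentiability of $g$ and $f$ respectively, while the third piece is handled by continuity of $g$ at $t$ (part~1 of Theorem~\ref{teorema0}), which forces $|g(s)-g(t)|$ to be small on a small enough neighborhood. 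Choosing neighborhoods and $\varepsilon$-weights so that each of the three contributions is at most $\varepsilon|\sigma(t)-s|/3$ yields the first expression for $(fg)^\Delta(t)$.

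The main obstacle is really just the bookkeeping in this product decomposition: picking the auxiliary term $f^\sigma(t)g(s)$ to add and subtract, and then controlling the $g(s)-g(t)$ factor via continuity. To obtain the second form $(fg)^\Delta(t)=f(t)g^\Delta(t)+f^\Delta(t)g^\sigma(t)$, I would run the symmetric decomposition — interchanging the roles of $f$ and $g$ (add and subtract $f(s)g^\sigma(t)$ instead). Alternatively, once the first identity is established, the second follows by substituting $f^\sigma(t)=f(t)+\mu(t)f^\Delta(t)$ and $g^\sigma(t)=g(t)+\mu(t)g^\Delta(t)$ from \eqref{transfor} into each expression and verifying the two sides agree.
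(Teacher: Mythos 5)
Your proposal is correct: the algebraic decomposition for the product rule checks out (expanding the three bracketed terms does recover $(fg)^\sigma(t)-(fg)(s)-[f^\Delta(t)g(t)+f^\sigma(t)g^\Delta(t)](\sigma(t)-s)$, with the extra $f^\Delta(t)[g(s)-g(t)](\sigma(t)-s)$ piece absorbed by continuity of $g$ at $t$, and $|g(s)|$ bounded near $t$ for the same reason), and the passage to the second form via $f^\sigma(t)=f(t)+\mu(t)f^\Delta(t)$ is valid. Note, however, that the paper does not prove this statement at all: it is quoted verbatim from Theorem~1.20 of the Bohner--Peterson monograph, and your argument is essentially the canonical proof given there, so there is no divergence of method to report.
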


\begin{definition}
A function $f:\mathbb{T}\rightarrow\mathbb{R}$ is called
\emph{rd-continuous}\index{Function!rd-continuous} if it is
continuous at right-dense points and if the left-sided limit exists
at left-dense points.
\end{definition}

\begin{remark}
We denote the set of all rd-continuous functions by
C$_{\textrm{rd}}$ or C$_{\textrm{rd}}(\mathbb{T})$, and the set of
all $\Delta$-differentiable functions with rd-continuous derivative
by C$_{\textrm{rd}}^1$ or C$_{\textrm{rd}}^1(\mathbb{T})$.
\end{remark}

\begin{example}
Let $\mathbb{T}=\mathbb{N}_0 \cup \left\{1-\dfrac{1}{n}:~n\in
\mathbb{N}\right\}$ and define $f:\mathbb{T}\rightarrow \mathbb{R}$
by
\[f(t):=\left\{
\begin{array}{ll}
0 & \mbox{if $t\in\mathbb{N}$};\\
t & \mbox{otherwise}.\end{array} \right.
\]
It is easy to verify that $f$ is continuous at the isolated points.
So, the points that need our careful attention are the right-scattered
point $0$ and the left-dense point $1$.
The right-sided limit of $f$ at $0$ exist and is finite (is
equal to $0$). The left-sided limit of $f$ at $1$ exist and is
finite (is equal to $1$). We conclude that $f$ is
rd-continuous in $\mathbb{T}$.
\end{example}

We consider now some results about integration on time scales.

\begin{definition}
A function $F:\mathbb{T}\rightarrow\mathbb{R}$ is called an
\emph{antiderivative}\index{Antiderivative} of
$f:\mathbb{T}^\kappa\rightarrow\mathbb{R}$ provided
$F^\Delta(t)=f(t)$ for all $t\in\mathbb{T}^\kappa$.
\end{definition}

\begin{theorem} \cite[Theorem~1.74]{livro:2001}
Every rd-continuous function has an antiderivative.
\end{theorem}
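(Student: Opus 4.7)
The plan is to construct an antiderivative $F$ from $f$ by exploiting the structure of an arbitrary time scale, handling right-scattered and right-dense points separately, and then verifying that rd-continuity is exactly the hypothesis needed to guarantee $F^\Delta(t)=f(t)$ at every $t\in\mathbb{T}^\kappa$.

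First I would introduce the notion of a \emph{pre-antiderivative}: a continuous $F:\mathbb{T}\to\mathbb{R}$ such that $F^\Delta(t)=f(t)$ holds for every $t\in\mathbb{T}^\kappa$ except possibly on a countable set of right-dense points. The key intermediate claim is that every regulated function (in particular, every rd-continuous function, since existence of both one-sided limits at dense points follows trivially from the definition of rd-continuity) admits such a pre-antiderivative. To build it, fix $t_0\in\mathbb{T}$, set $F(t_0)=0$, and extend $F$ to the right by a transfinite-style prescription: at a right-scattered $t$ force the jump condition $F(\sigma(t))=F(t)+\mu(t)f(t)$, which comes from the discrete formula \eqref{derdiscr}; on a maximal real subinterval $[a,b]\subset\mathbb{T}$, use the classical Riemann integral of $f|_{[a,b]}$ (well-defined, as $f$ restricted to such an interval is regulated in the classical sense and bounded); and at a right-dense point that is the supremum of such pieces, take the appropriate one-sided limit, which exists because $f$ is regulated. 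The symmetric procedure defines $F$ to the left of $t_0$. Continuity of $F$ is built in by construction.

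Next I would verify $F^\Delta(t)=f(t)$ pointwise on $\mathbb{T}^\kappa$ by splitting into the two cases of Theorem~\ref{teorema0}. If $t$ is right-scattered, then by the jump formula $F(\sigma(t))-F(t)=\mu(t)f(t)$, so part~(2) of Theorem~\ref{teorema0} yields $F^\Delta(t)=f(t)$. If $t$ is right-dense, then by part~(3) I must show that $\lim_{s\to t}\bigl(F(s)-F(t)\bigr)/(s-t)=f(t)$. This is where rd-continuity (and not merely the regulated property) is crucial: continuity of $f$ at the right-dense point $t$ (from both sides, since right-dense combined with rd-continuity supplies the needed one-sided behavior) lets me estimate $F(s)-F(t)$ by $\int_t^s f$ (on the continuous pieces) plus telescoping jumps $\mu(\tau_i)f(\tau_i)$ across intervening scattered points, and both contributions are within $\varepsilon|s-t|$ of $f(t)(s-t)$ when $s$ is sufficiently close to $t$.

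The main obstacle is the construction step in the first paragraph: a general time scale can interleave dense intervals, isolated points, and accumulation points of scattered points in intricate ways, so the inductive/transfinite patching has to be justified carefully so that $F$ is well-defined and continuous across all such limits. Once that bookkeeping is done, the verification in the second paragraph is an essentially routine application of Theorem~\ref{teorema0} together with the defining property of rd-continuity.
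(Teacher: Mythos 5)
The paper does not prove this statement at all: it is quoted verbatim from \cite[Theorem~1.74]{livro:2001}, so there is no in-paper argument to compare against. Your outline reconstructs the standard proof from that reference: reduce to the existence of a \emph{pre-antiderivative} for regulated functions, then use rd-continuity to upgrade the pre-antiderivative to a genuine antiderivative at the exceptional right-dense points, with the right-scattered case following immediately from \eqref{derdiscr}. The verification step in your second paragraph is sound, including the observation that rd-continuity (rather than the regulated property alone) is exactly what makes the difference quotient at a right-dense point converge to $f(t)$. The one place where your proposal is genuinely incomplete is the one you flag yourself: the construction of the pre-antiderivative. A ``transfinite-style prescription'' is not enough as stated, because the right-scattered points in a bounded interval can be countably infinite and can accumulate at dense points from either side, so the patching cannot proceed point by point; in the literature this is handled by the induction principle for time scales (Theorem~1.7 of \cite{livro:2001}), which is the tool that makes the bookkeeping legitimate, and the proof that regulated functions admit pre-antiderivatives (Theorem~1.70 there) is the longest part of the whole argument. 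Relatedly, your phrase ``telescoping jumps $\mu(\tau_i)f(\tau_i)$ across intervening scattered points'' should be read as a possibly infinite sum; the estimate still closes because $\sum_i \mu(\tau_i)\le |s-t|$, but that needs to be said. With the induction principle supplied, your proof is correct and is essentially the textbook one.
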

Let $f:\mathbb{T}^\kappa:\rightarrow\mathbb{R}$ be a rd-continuous
function and let $F:\mathbb{T}\rightarrow\mathbb{R}$ be an
antiderivative of $f$. Then, the
\emph{$\Delta$-integral}\index{$\Delta$-integral} is defined by
$\int_a^b f(t)\Delta t=F(b)-F(a)$ for all $a,b\in\mathbb{T}$.

One can easily prove \cite[Theorem~1.79]{livro:2001} that, when
$\mathbb{T}=\mathbb{R}$ then $\int_a^bf(t)\Delta t=\int_a^bf(t)dt$,
being the right-hand side of the equality the usual Riemann
integral, and when $[a,b]\cap\mathbb{T}$ contains only isolated points, then
\begin{equation}\label{integration:isolated}
\int_a^b f(t)\Delta t=\left\{
                        \begin{array}{ll}
                          \displaystyle{\sum_{t=a}^{\rho(b)}\mu(t)f(t)} & \hbox{if $a<b$;} \\
                          0 & \hbox{if $a=b$;}\\
                          \displaystyle{-\sum_{t=a}^{\rho(b)}\mu(t)f(t)}, & \hbox{if $a>b$.}
                        \end{array}
                      \right.
\end{equation}

\begin{remark}
When $\mathbb{T}=\mathbb{Z}$ or $\mathbb{T}=h\mathbb{Z}$,
equation \eqref{integration:isolated} holds.
\end{remark}

The $\Delta-integral$ also satisfies
\begin{equation}
\label{sigma} \int_t^{\sigma(t)}f(\tau)\Delta\tau=\mu(t)f(t),\quad
t\in\mathbb{T}^\kappa.
\end{equation}

\begin{theorem}\cite[Theorem~1.77]{livro:2001}\label{teorema1}
Let $a,b,c\in\mathbb{T}$, $\xi\in\mathbb{R}$ and
$f,g\in$C$_{\textrm{rd}}(\mathbb{T}^\kappa)$. Then,
\begin{enumerate}
    \item $\int_a^b[f(t)+g(t)]\Delta t=\int_a^bf(t)\Delta t+\int_a^bg(t)\Delta
    t$.
    \item $\int_a^b(\xi f)(t)\Delta t=\xi\int_a^b f(t)\Delta t$.
    \item $\int_a^b f(t)\Delta t=-\int_b^a f(t)\Delta t$.
    \item $\int_a^bf(t)\Delta t=\int_a^cf(t)\Delta t+\int_c^bf(t)\Delta
    t$.
    \item $\int_a^af(t)\Delta t=0$.
    \item If $|f(t)|\leq g(t)$ on $[a,b]_{\mathbb{T}}^\kappa$,
    then
    $$\left|\int_a^bf(t)\Delta t\right|\leq \int_a^bg(t)\Delta t.$$
\end{enumerate}
\end{theorem}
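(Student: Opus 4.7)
The approach is to reduce everything to the definition of the $\Delta$-integral via antiderivatives, $\int_a^b f(t)\,\Delta t = F(b) - F(a)$, where $F$ is an antiderivative of $f$ (which exists by the earlier theorem asserting that every rd-continuous function admits one). With this in hand, parts 1--5 are essentially algebraic and follow from the linearity of the $\Delta$-derivative proved just above. Specifically, for parts 1 and 2 I would take antiderivatives $F,G$ of $f,g$; then $F+G$ and $\xi F$ are antiderivatives of $f+g$ and $\xi f$ by the sum and scalar-multiple rules, and evaluation at the endpoints yields the identities. Part 3 is $F(a)-F(b)=-(F(b)-F(a))$, part 4 is the telescoping identity $(F(c)-F(a))+(F(b)-F(c))=F(b)-F(a)$, and part 5 is trivial since $F(a)-F(a)=0$. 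None of these arguments use the relative order of $a,b,c$.

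For part 6, the plan is to establish a positivity lemma --- if $h$ is rd-continuous and $h\geq 0$ on $[a,b]_{\mathbb{T}}^\kappa$ with $a\leq b$, then $\int_a^b h(t)\,\Delta t \geq 0$ --- and then apply it to the rd-continuous functions $g-f$ and $g+f$, which are both non-negative by the hypothesis $|f|\leq g$. Combining with the linearity established in parts 1 and 2 gives $-\int_a^b g\,\Delta t \leq \int_a^b f\,\Delta t \leq \int_a^b g\,\Delta t$, which is the stated inequality. The case $a>b$ is reduced to $a\leq b$ by an appeal to part 3.

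The hard part will be the positivity lemma, since it is the only step that genuinely exploits the order structure of the time scale rather than mere algebra. To prove it I would show that any antiderivative $H$ of a non-negative rd-continuous function $h$ is non-decreasing on $[a,b]_{\mathbb{T}}$. At a right-scattered point $t$, Theorem~\ref{teorema0}(2) yields $H^\sigma(t)-H(t)=\mu(t)\,H^\Delta(t)=\mu(t)\,h(t)\geq 0$; on maximal subintervals of right-dense points, Theorem~\ref{teorema0}(3) identifies $H^\Delta$ with the classical derivative, so $H$ is non-decreasing there by the ordinary mean value theorem. The continuity of $H$, guaranteed by Theorem~\ref{teorema0}(1), allows these monotone pieces to be patched together to yield $H(b)\geq H(a)$, which is exactly $\int_a^b h(t)\,\Delta t \geq 0$, completing the sketch.
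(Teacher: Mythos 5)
First, note that the paper does not prove this theorem at all: it is quoted as Theorem~1.77 of the Bohner--Peterson book and used as a known preliminary, so there is no internal proof to compare against. Judged on its own terms, your reduction of parts 1--5 to antiderivatives is correct and complete: existence of antiderivatives is supplied by the quoted theorem, linearity of the $\Delta$-derivative gives parts 1 and 2, and parts 3--5 are pure endpoint algebra, independent of the order of $a,b,c$. Your overall plan for part 6 (a positivity lemma applied to $g\pm f$, combined with linearity) is also the right one.

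The genuine gap is in the proof of the positivity lemma itself. You propose to show that an antiderivative $H$ of $h\geq 0$ is non-decreasing by handling right-scattered points via $H^{\sigma}(t)-H(t)=\mu(t)h(t)\geq 0$ and handling the right-dense points via the classical mean value theorem on ``maximal subintervals of right-dense points,'' then patching by continuity. On a general time scale the set of right-dense points need not contain any nondegenerate real interval: the paper's own example of the Cantor set is a time scale in which every maximal interval of right-dense points is a singleton, so the MVT step is vacuous there, and the scattered-point step only compares $H(t)$ with $H(\sigma(t))$ at isolated points. Even in mild examples such as $\{0\}\cup\{1/n:n\in\mathbb{N}\}\cup[1,2]$, the point $0$ is right-dense but belongs to no interval of right-dense points, so your two cases do not cover it and ``patching'' silently hides a limiting argument. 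The standard repair is a connectedness/supremum argument: fix $\varepsilon>0$, let $S=\{t\in[a,b]_{\mathbb{T}}:H(t)\geq H(a)-\varepsilon(t-a)\}$, observe that $S$ is nonempty and closed, put $r=\sup S\in S$, and rule out $r<b$ by considering separately the cases $r$ right-scattered (use $H^{\sigma}(r)-H(r)=\mu(r)h(r)\geq 0$) and $r$ right-dense (use the $\varepsilon$-neighborhood in Definition~\ref{def2} to creep slightly to the right of $r$ inside $S$); letting $\varepsilon\to 0$ gives $H(b)\geq H(a)$. With that lemma in place your derivation of part 6 goes through, except that the final remark about reducing the case $a>b$ to $a\leq b$ via part 3 should be dropped: when $a>b$ the interval $[a,b]_{\mathbb{T}}^{\kappa}$ is empty, the hypothesis is vacuous, and the asserted inequality need not hold, so the statement is implicitly restricted to $a\leq b$.
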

We now present the integration by parts formulas for the
$\Delta$-integral:
\begin{lemma}(\textrm{cf.} \cite[Theorem~1.77]{livro:2001})
\label{integracao:partes} If $a,b\in\mathbb{T}$ and
$f,g\in C_{\textrm{rd}}^{1}$, then
\begin{equation}
 \int_{a}^{b}f^\sigma(t)g^{\Delta}(t)\Delta t
 =\left[(fg)(t)\right]_{t=a}^{t=b}-\int_{a}^{b}f^{\Delta}(t)g(t)\Delta
 t\label{partes1};
\end{equation}
\begin{equation}
\int_{a}^{b}f(t)g^{\Delta}(t)\Delta t
=\left[(fg)(t)\right]_{t=a}^{t=b}-\int_{a}^{b}f^{\Delta}(t)g^\sigma(t)\Delta
t.\label{partes2}
\end{equation}
\end{lemma}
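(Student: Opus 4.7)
The plan is to deduce both integration by parts formulas directly from the product rule \eqref{produto} combined with the fundamental theorem of calculus for the $\Delta$-integral (i.e., that every rd-continuous function has an antiderivative and $\int_a^b F^\Delta(t)\Delta t = F(b)-F(a)$, which follows from the definition of the $\Delta$-integral given just before Theorem~\ref{teorema1}).

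First I would observe that since $f,g\in C_{\textrm{rd}}^1$, both products $fg$, $f^\sigma g^\Delta$, $f^\Delta g$, $f g^\Delta$, and $f^\Delta g^\sigma$ are rd-continuous on $\mathbb{T}^\kappa$, so all the $\Delta$-integrals appearing in the statement exist. In particular, $fg$ is $\Delta$-differentiable on $\mathbb{T}^\kappa$ and therefore serves as an antiderivative of $(fg)^\Delta$.

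Next, I would apply the two forms of the product rule \eqref{produto}. From the first identity
\[
(fg)^\Delta(t)=f^\Delta(t)g(t)+f^\sigma(t)g^\Delta(t),
\]
integrate both sides from $a$ to $b$, use linearity (Theorem~\ref{teorema1}, item~1), and invoke the fundamental theorem to get
\[
\left[(fg)(t)\right]_{t=a}^{t=b}
= \int_a^b f^\Delta(t)g(t)\Delta t + \int_a^b f^\sigma(t)g^\Delta(t)\Delta t,
\]
which rearranges to \eqref{partes1}. Then, applying the symmetric form $(fg)^\Delta(t)=f(t)g^\Delta(t)+f^\Delta(t)g^\sigma(t)$ and integrating in the same fashion yields \eqref{partes2}.

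There is no real obstacle here; the proof is essentially a one-line consequence of \eqref{produto} and the definition of the $\Delta$-integral as the difference of values of an antiderivative. The only point deserving mild care is verifying the rd-continuity of the integrands so that the fundamental theorem applies, but this is immediate from the hypothesis $f,g\in C_{\textrm{rd}}^1$ together with the fact that $f^\sigma$ and $g^\sigma$ remain rd-continuous (a standard consequence of Theorem~\ref{teorema0} and \eqref{transfor}).
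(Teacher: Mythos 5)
Your proof is correct and is the standard argument: the paper itself gives no proof of Lemma~\ref{integracao:partes}, simply citing Bohner and Peterson, and the proof in that source is exactly what you wrote — integrate the two forms of the product rule \eqref{produto} over $[a,b]_{\mathbb{T}}$ and apply the definition of the $\Delta$-integral via an antiderivative. Your remark on rd-continuity of the integrands (so that the antiderivative exists and linearity of the integral applies) is the only point needing care, and you handle it correctly.
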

\begin{remark}
For analogous results on $\nabla$-integrals the reader can consult,
e.g., \cite{livro:2003}.
\end{remark}

Some more definitions and results must be presented since we will
need them. We start defining functions that generalize
polynomials functions in $\mathbb{R}$ to the times scales calculus.
There are, at least, two ways of doing that:
\begin{definition}\label{polinomios}
We define the
\emph{polynomials on time
scales}\index{Polynomials on time scales}, $g_k,h_k:\mathbb{T}^2\rightarrow\mathbb{R}$ for $k\in
\mathbb{N}_0$ as follows:

$$g_0(t,s)=h_0(t,s)\equiv 1\quad \forall s,t\in\mathbb{T}$$

$$g_{k+1}(t,s)=\int_s^tg_k(\sigma(\tau),s)\Delta\tau\quad \forall s,t\in\mathbb{T},$$

$$h_{k+1}(t,s)=\int_s^th_k(\tau,s)\Delta\tau\quad \forall s,t\in\mathbb{T}.$$
\end{definition}

\begin{remark}
If we let $h^\Delta_k(t,s)$ denote, for each fixed $s\in \mathbb{T}$,
the derivative of $h_k(t,s)$ with respect to $t$, then we have the
following equalities:
\begin{enumerate}
  \item $h^\Delta_k(t,s)=h_{k-1}(t,s)\quad\text{for}\quad k\in \mathbb{N},~t\in \mathbb{T}^\kappa$;
  \item $g^\Delta_k(t,s)=g_{k-1}(\sigma(t),s)\quad\text{for}\quad k\in \mathbb{N},~t\in \mathbb{T}^\kappa$;
  \item $g_1(t,s)=h_1(t,s)=t-s\quad\text{for all}\quad s,t\in \mathbb{T}$.
\end{enumerate}
\end{remark}

\begin{remark}
It's not easy to find the explicit form of  $g_k$ and $h_k$ for a
generic time scale.
\end{remark}

To give the reader an idea what is the formula of such polynomials
for particular time scales, we give the following two examples:
 \begin{example}
For $\mathbb{T}=\mathbb{R}$ and $k\in \mathbb{N}_0$ we have:

\begin{equation}\label{poly:R}
    g_k(t,s)=h_k(t,s)=\frac{(t-s)^k}{k!}\quad\text{for all}\quad
s,t\in \mathbb{T}.
\end{equation}

\end{example}

\begin{example}
Consider $\mathbb{T}=\overline{q^\mathbb{Z}}=\left\{q^k~:~k\in
\mathbb{Z}\right\}\cup \left\{0\right\}$. For $k\in \mathbb{N}_0$
and $q>1$ we have:

\begin{equation}\label{poly:q}
    \displaystyle{h_k(t,s)=\prod_{i=0}^{k-1}\frac{t-q^i s}{\sum_{j=0}^{i}q^j}\quad\text{for all}\quad
s,t\in \mathbb{T}}.
\end{equation}

\end{example}

\section{Calculus of Variations}\label{sec::cv}

It is human nature to optimize. Typically, we try to maximize
profit, minimize cost, travel to a destination following the
smallest route or in the quickest time. The human propensity to
optimize and the necessity to develop systematic tools for that has
created a area of mathematics called optimization. Our main interest
in this chapter is to introduce some concepts necessary to solve
problems which involve finding extrema of an integral (functionals).
The area of mathematics that deals with these problems is a branch of
optimization called calculus of variations. Because this kind of
calculus involve functionals it was called, earlier, functional
calculus. Originally the name ``calculus of variations'' came from
representing a perturbed curve using a Taylor polynomial plus some
other term which was called the variation. Let us consider, for now,
the following basic (but fundamental) problem: seek a function $y\in C^1[a,b]$ such
that
\begin{equation}\label{Prob0}
\mathcal{L}[y(\cdot)]=\int_{a}^{b}L(t,y(t),y'(t))d t \longrightarrow
\min ,\quad y(a)=y_a,\quad y(b)=y_b,
\end{equation}
with $a,b,y_a,y_b\in\mathbb{R}$ and $L(t,u,v)$ satisfying some
smoothness properties.

\begin{remark}
Although we write \eqref{Prob0} as a minimization problem, we can
formulate it as a maximization problem using the fact
to minimize $\mathcal{L}[y(\cdot)]$ is the same as to maximize $-\mathcal{L}[y(\cdot)]$.
\end{remark}

We start by giving a simple application of the calculus of variations
on the real setting and then we refer to some results of the calculus of
variations on time scales, which includes as special cases the
classical calculus of variations ($\mathbb{T}=\mathbb{R}$) and the
discrete calculus of variations ($\mathbb{T}=\mathbb{Z}$).

\begin{example}\label{shortestdistance}
Given two distinct points $A=(a,y_1)$ and $B=(b,y_2)$ in the plane
$\mathbb{R}^2$ our task is to find the curve of shortest length
connecting them. In childhood, all of us learn that the shortest path
between two points is a straight line. That can be proved using,
for example, the theory of calculus of variations. The formulation
of this problem, in the sense of the calculus of variations, is to find the
function $y(\cdot)$ that solves the following problem:
\begin{equation}\label{Prob00}
\mathcal{L}[y(\cdot)]=\int_{a}^{b}\sqrt{1+\left(y'(t)\right)^2}d t
\longrightarrow \min ,\quad y(a)=y_1,\quad y(b)=y_2.
\end{equation}
\end{example}

In \cite{CD:Bohner:2004} Martin Bohner initiated the theory of
calculus of variations on time scales. The rest of the section will
be dedicated to present some results that will be necessary during
the thesis. For more on the subject we refer to~\cite{Tese:Rui} and references therein.

\begin{definition}
A function $f$ defined on $[a,b]_\mathbb{T}\times\mathbb{R}$ is
called \emph{continuous in the second variable, uniformly in the
first variable}, if for each $\varepsilon>0$ there exists $\delta>0$
such that $|x_1-x_2|<\delta$ implies
$|f(t,x_1)-f(t,x_2)|<\varepsilon$ for all $t\in[a,b]_\mathbb{T}$.
\end{definition}
\begin{lemma}[\textrm{cf.} Lemma 2.2 in \cite{CD:Bohner:2004}]
Suppose that $F(x)=\int_a^bf(t,x)\Delta t$ is well defined. If $f_x$
is continuous in $x$, uniformly in $t$, then
$F'(x)=\int_a^bf_x(t,x)\Delta t$.
\end{lemma}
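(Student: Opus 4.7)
The plan is to adapt the classical proof of differentiation under the integral sign, where the novelty here is only that the integral is a $\Delta$-integral over $[a,b]_\mathbb{T}$ while the parameter $x$ (and its derivative) lives in the real line. All the properties of the $\Delta$-integral that are needed are already listed in Theorem~\ref{teorema1} (linearity and the bounded-modulus inequality).

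First I would fix $x$ and consider the difference quotient
\[
\frac{F(x+h)-F(x)}{h}
=\int_a^b\frac{f(t,x+h)-f(t,x)}{h}\,\Delta t,
\]
where the interchange of the quotient with the $\Delta$-integral is just parts~1 and~2 of Theorem~\ref{teorema1}. For each fixed $t\in[a,b]_\mathbb{T}$ the function $x\mapsto f(t,x)$ is differentiable in the real-variable sense (since $f_x$ exists), so the classical mean value theorem yields some $\xi_{t,h}$ strictly between $x$ and $x+h$ with
\[
\frac{f(t,x+h)-f(t,x)}{h}=f_x(t,\xi_{t,h}).
\]

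Next I would subtract the candidate derivative $\int_a^b f_x(t,x)\,\Delta t$ and invoke uniform continuity of $f_x$. Given $\varepsilon>0$, pick $\delta>0$ so that $|s_1-s_2|<\delta$ implies $|f_x(t,s_1)-f_x(t,s_2)|<\varepsilon$ for every $t\in[a,b]_\mathbb{T}$. For $|h|<\delta$ we have $|\xi_{t,h}-x|<\delta$, hence $|f_x(t,\xi_{t,h})-f_x(t,x)|<\varepsilon$ uniformly in $t$. Applying part~6 of Theorem~\ref{teorema1} (with the constant bound $g(t)\equiv\varepsilon$, which is trivially rd-continuous and $\Delta$-integrable) gives
\[
\left|\frac{F(x+h)-F(x)}{h}-\int_a^b f_x(t,x)\,\Delta t\right|
\leq \int_a^b\bigl|f_x(t,\xi_{t,h})-f_x(t,x)\bigr|\,\Delta t
\leq \varepsilon(b-a).
\]
Letting $h\to 0$ concludes the proof.

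The only step that requires some care is ensuring that the integrand $t\mapsto[f(t,x+h)-f(t,x)]/h$ is itself $\Delta$-integrable, which is implicit in the hypothesis that $F$ is well defined (it is a linear combination of the rd-continuous sections $f(\cdot,x+h)$ and $f(\cdot,x)$, hence rd-continuous). Everything else is a direct transcription of the classical argument, replacing the Riemann integral by the $\Delta$-integral and its standard inequalities.
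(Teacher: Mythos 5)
Your argument is correct and is the standard one: it is essentially the proof of Lemma~2.2 in \cite{CD:Bohner:2004}, which this thesis cites without reproducing (the lemma appears here only as a quoted preliminary, so there is no in-paper proof to compare against). The combination of the mean value theorem in the real parameter $x$ with the ``continuous in $x$, uniformly in $t$'' hypothesis, followed by item~6 of Theorem~\ref{teorema1} with the constant majorant $\varepsilon$, is exactly the intended route; your closing remark correctly identifies the only technical point (rd-continuity of the integrands, needed to invoke Theorem~\ref{teorema1}), which is implicit in the standing smoothness assumptions of \cite{CD:Bohner:2004}.
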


Let us now extend \eqref{Prob0} to a generic time-scale.\\Let
$a,b\in\mathbb{T}$ and
$L(t,u,v):[a,b]^\kappa_\mathbb{T}\times\mathbb{R}^2\rightarrow\mathbb{R}$. 
Find a function $y\in \textrm{C}_{\textrm{rd}}^1$ such that
\begin{equation}\label{Prob2}
\mathcal{L}[y(\cdot)]=\int_{a}^{b}L(t,y^\sigma(t),y^\Delta(t))\Delta
t \longrightarrow \min ,\quad y(a)=y_a,\quad y(b)=y_b,
\end{equation}
with $y_a,y_b\in\mathbb{R}$.

\begin{remark}
If we fix $\mathbb{T}=\mathbb{R}$ problem~\eqref{Prob2} reduces to \eqref{Prob0}.
\end{remark}

\begin{remark}
If we fix $\mathbb{T}=\mathbb{Z}$ we obtain the discrete version of
\eqref{Prob2}. The goal is to find a function $y\in
\textrm{C}_{\textrm{rd}}^1$ such that
\begin{equation}\label{Prob3}
\mathcal{L}[y(\cdot)]=\sum_{t=a}^{b-1}L(t,y(t+1),y^\Delta(t))\Delta t
\longrightarrow \min ,\quad y(a)=y_a,\quad y(b)=y_b,
\end{equation}
where $a,~b\in \mathbb{Z}$ with $a<b$; $y_a,y_b\in \mathbb{R}$ and
$L:\mathbb{Z}\times\mathbb{R}^2\rightarrow\mathbb{R}$.
\end{remark}

\begin{definition}
For $f\in\textrm{C}_{\textrm{rd}}^1$ we define the norm
$$\|f\|=\sup_{t\in[a,b]^\kappa_\mathbb{T}}|f^\sigma(t)|
+\sup_{t\in[a,b]^\kappa_\mathbb{T}}|f^\Delta(t)|.$$
A function $\hat{y}\in \textrm{C}_{\textrm{rd}}^1$ with
$\hat{y}(a)=y_a$ and $\hat{y}(b)=y_b$ is called a (weak) local
minimizer for problem \eqref{Prob2} provided there exists $\delta>0$
such that $\mathcal{L}(\hat{y})\leq\mathcal{L}(\hat{y})$ for all
$y\in \textrm{C}_{\textrm{rd}}^1$ with $y(a)=y_a$ and $y(b)=y_b$ and
$\|y-\hat{y}\|<\delta$.
\end{definition}
\begin{definition}
A function $\eta\in\textrm{C}_{\textrm{rd}}^1$ is called an
\emph{admissible variation} provided $\eta\neq 0$ and
$\eta(a)=\eta(b)=0$.
\end{definition}
\begin{lemma}[\textrm{cf.} Lemma 3.4 in \cite{CD:Bohner:2004}]\label{lema1}
Let $y,\eta\in\textrm{C}_{\textrm{rd}}^1$ be arbitrary fixed
functions. Put
$f(t,\varepsilon)=L(t,y^\sigma(t)+\varepsilon\eta^\sigma(t),
y^\Delta(t)+\varepsilon\eta^\Delta(t))$
and $\Phi(\varepsilon)=\mathcal{L}(y+\varepsilon\eta)$,
$\varepsilon\in\mathbb{R}$. If $f_\varepsilon$ and
$f_{\varepsilon\varepsilon}$ are continuous in $\varepsilon$,
uniformly in $t$, then
\begin{align}
\Phi'(\varepsilon)&=\int_a^b[L_u(t,y^\sigma(t),y^\Delta(t))\eta^\sigma(t)
+L_v(t,y^\sigma(t),y^\Delta(t))\eta^\Delta(t)]\Delta t,\nonumber\\
\Phi''(\varepsilon)&=\int_a^b\{L_{uu}[y](t)(\eta^\sigma(t))^2
+2\eta^\sigma(t)L_{uv}[y](t)\eta^\Delta(t)
+L_{vv}[y](t)(\eta^\Delta(t))^2\}\Delta t,\nonumber
\end{align}
where $[y](t)=(t,y^\sigma(t),y^\Delta(t))$.
\end{lemma}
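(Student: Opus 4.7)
The plan is to express $\Phi(\varepsilon)$ as a single $\Delta$-integral in $t$ that depends on the real parameter $\varepsilon$, and then differentiate under the integral sign using the preceding lemma (the time-scale analog of Leibniz's rule). Concretely, since
\[
\Phi(\varepsilon) = \mathcal{L}(y+\varepsilon\eta) = \int_a^b L\bigl(t, y^\sigma(t)+\varepsilon\eta^\sigma(t), y^\Delta(t)+\varepsilon\eta^\Delta(t)\bigr)\,\Delta t = \int_a^b f(t,\varepsilon)\,\Delta t,
\]
and since the hypothesis guarantees that $f_\varepsilon$ is continuous in $\varepsilon$ uniformly in $t$, the previous lemma yields $\Phi'(\varepsilon) = \int_a^b f_\varepsilon(t,\varepsilon)\,\Delta t$. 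The remaining task is purely a chain-rule computation.

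First I would compute $f_\varepsilon$ using the chain rule applied to the map $\varepsilon \mapsto L(t, u(t,\varepsilon), v(t,\varepsilon))$ with $u(t,\varepsilon) = y^\sigma(t)+\varepsilon\eta^\sigma(t)$ and $v(t,\varepsilon) = y^\Delta(t)+\varepsilon\eta^\Delta(t)$. Since $\partial u/\partial \varepsilon = \eta^\sigma(t)$ and $\partial v/\partial \varepsilon = \eta^\Delta(t)$, I obtain
\[
f_\varepsilon(t,\varepsilon) = L_u\bigl(t, y^\sigma+\varepsilon\eta^\sigma, y^\Delta+\varepsilon\eta^\Delta\bigr)\eta^\sigma(t) + L_v\bigl(t, y^\sigma+\varepsilon\eta^\sigma, y^\Delta+\varepsilon\eta^\Delta\bigr)\eta^\Delta(t),
\]
which after integration gives the stated formula for $\Phi'(\varepsilon)$ (evaluated at the generic $\varepsilon$, and in particular at $\varepsilon = 0$ as written in the lemma, where the arguments of $L_u, L_v$ collapse to $(t, y^\sigma(t), y^\Delta(t))$).

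Second, I would repeat the argument on $\Phi'(\varepsilon)$. Applying the Leibniz-type lemma again, justified now by the uniform continuity of $f_{\varepsilon\varepsilon}$ in $\varepsilon$ (the other hypothesis), I obtain $\Phi''(\varepsilon) = \int_a^b f_{\varepsilon\varepsilon}(t,\varepsilon)\,\Delta t$. Differentiating $f_\varepsilon$ a second time via the chain rule, each of the two terms contributes an $L_{uu}$, an $L_{uv}$, an $L_{vu}$, and an $L_{vv}$ piece, weighted respectively by $(\eta^\sigma)^2$, $\eta^\sigma\eta^\Delta$, $\eta^\Delta\eta^\sigma$, and $(\eta^\Delta)^2$. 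Using $L_{uv} = L_{vu}$ (smoothness of $L$) collapses the two mixed contributions into the single term $2\eta^\sigma L_{uv}\eta^\Delta$, yielding exactly the expression for $\Phi''(\varepsilon)$ in the statement.

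The main obstacle is essentially bookkeeping: one must verify that the hypotheses on $f_\varepsilon$ and $f_{\varepsilon\varepsilon}$ are exactly what the preceding Leibniz-type lemma requires, so that differentiation under the $\Delta$-integral is legitimate. Once this is in place, everything else is an application of the ordinary multivariable chain rule to a real-valued function of the scalar $\varepsilon$; the time-scale structure enters only through the $\sigma$ and $\Delta$ decorations on $\eta$, which are simply constants with respect to $\varepsilon$.
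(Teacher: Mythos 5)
Your proof is correct and is the standard argument: the paper states this lemma without proof (it is quoted from Bohner's Lemma 3.4), and the intended derivation is exactly what you give --- differentiation under the $\Delta$-integral justified by the preceding Leibniz-type lemma (whose hypotheses are precisely the assumed uniform continuity of $f_\varepsilon$ and $f_{\varepsilon\varepsilon}$), followed by the ordinary chain rule in $\varepsilon$ and the symmetry $L_{uv}=L_{vu}$. You also rightly note that the displayed right-hand sides correspond to evaluation at $\varepsilon=0$ (for general $\varepsilon$ the arguments of $L_u$, $L_v$ should be $y+\varepsilon\eta$), which is a harmless abuse in the statement as transcribed.
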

The next lemma is the time scales version of the classical
Dubois--Reymond lemma\index{Dubois--Reymond lemma on time scales}.
\begin{lemma}[\textrm{cf.} Lemma 4.1 in \cite{CD:Bohner:2004}]
\label{lem:DR} Let $g\in C_{\textrm{rd}}([a,b]_\mathbb{T}^\kappa)$.
Then,
$$\int_{a}^{b}g(t)\eta^\Delta(t)\Delta t=0,\quad
\mbox{for all $\eta\in C_{\textrm{rd}}^1([a,b]_\mathbb{T})$ with
$\eta(a)=\eta(b)=0$},$$ holds if and only if
$$g(t)=c,\quad\mbox{on $[a,b]^\kappa_\mathbb{T}$ for some
$c\in\mathbb{R}$}.$$
\end{lemma}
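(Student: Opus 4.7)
The plan is to prove both directions separately, with the nontrivial direction relying on a clever choice of admissible variation.

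The \emph{if} direction is immediate: if $g \equiv c$ on $[a,b]^\kappa_\mathbb{T}$, then by linearity of the $\Delta$-integral and the fundamental theorem of time scale calculus,
$$\int_{a}^{b} g(t)\eta^\Delta(t)\Delta t = c\int_{a}^{b}\eta^\Delta(t)\Delta t = c[\eta(b)-\eta(a)] = 0.$$

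For the \emph{only if} direction, I would construct a specific admissible variation that forces $g$ to be constant. Set
$$c := \frac{1}{b-a}\int_{a}^{b}g(t)\Delta t \in \mathbb{R},$$
and define $\eta(t) := \int_{a}^{t}\bigl[g(s)-c\bigr]\Delta s$ for $t \in [a,b]_\mathbb{T}$. Since $g - c$ is rd-continuous, it admits an antiderivative (by the theorem quoted earlier in the chapter), so $\eta \in C^1_{\textrm{rd}}$ with $\eta^\Delta(t) = g(t)-c$. Clearly $\eta(a)=0$, and by the choice of $c$ we also get $\eta(b) = \int_a^b g(s)\Delta s - c(b-a) = 0$, so $\eta$ is an admissible test function. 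Substituting into the hypothesis yields $\int_a^b g(t)[g(t)-c]\Delta t = 0$. On the other hand, the defining property of $c$ gives $\int_a^b c[g(t)-c]\Delta t = 0$. Subtracting these two identities,
$$\int_{a}^{b}\bigl[g(t)-c\bigr]^2 \Delta t = 0.$$

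The final step is to deduce that $g(t) = c$ pointwise on $[a,b]^\kappa_\mathbb{T}$. This uses the standard fact that a non-negative rd-continuous function whose $\Delta$-integral over $[a,b]_\mathbb{T}$ vanishes must itself be identically zero: at a right-scattered $t$, the relation $\int_t^{\sigma(t)}h(\tau)\Delta\tau = \mu(t)h(t)$ from \eqref{sigma} together with $\mu(t)>0$ forces $h(t)=0$; at a right-dense $t$, rd-continuity implies continuity at $t$, so if $h(t)>0$ one could extract a subinterval on which the integral is strictly positive, a contradiction. Applying this to $h = (g-c)^2 \geq 0$ completes the proof. The main subtlety is just this last positivity argument, which is routine once one splits into right-scattered and right-dense cases as above.
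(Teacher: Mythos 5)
Your proof is correct and is the standard Dubois--Reymond argument; the thesis states this lemma without proof, simply citing Lemma~4.1 of \cite{CD:Bohner:2004}, whose proof proceeds exactly as yours (take $c$ to be the mean value of $g$, test with $\eta(t)=\int_a^t\bigl[g(s)-c\bigr]\Delta s$, and conclude from $\int_a^b\bigl[g(t)-c\bigr]^2\Delta t=0$). The only point requiring care is the final positivity step, and your case split into right-scattered points (using \eqref{sigma} and $\mu(t)>0$) and right-dense points (using rd-continuity) handles it correctly.
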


The following theorem give first and second order necessary 
optimality conditions for problem \eqref{Prob2}.

\begin{theorem}\cite{CD:Bohner:2004}\label{necessary:conditions:TS}
Suppose that $L$ satisfies the assumption of Lemma \ref{lema1}. If
$\hat{y}\in\textrm{C}_{\textrm{rd}}^1$ is a (weak) local minimizer for
problem given by \eqref{Prob2}, then necessarily
\begin{enumerate}
    \item
    $ L_v^\Delta[\hat{y}](t)=L_u[\hat{y}](t),\quad
    t\in[a,b]^{\kappa}_\mathbb{T}\quad\mbox{\emph{(time scales 
    Euler--Lagrange equation\index{Euler--Lagrange equation!time scales case})}}$.
    \item $L_{vv}[\hat{y}](t)+\mu(t)\{2L_{uv}[\hat{y}](t)+\mu(t)L_{uu}[\hat{y}](t)
    +(\mu^\sigma(t))^\ast L_{vv}[\hat{y}](\sigma(t))\}\geq 0,
    \quad t\in[a,b]^{\kappa^2}_\mathbb{T}\quad\\\mbox{\emph{(time scales Legendre's
    condition\index{Legendre's necessary condition!time scales case})}}$,
\end{enumerate}
where $[y](t)=(t,y^\sigma(t),y^\Delta(t))$ and
$\alpha^\ast=\frac{1}{\alpha}$ if
$\alpha\in\mathbb{R}\backslash\{0\}$ and $0^\ast=0$.
\end{theorem}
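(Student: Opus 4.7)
The plan is to follow the classical variational template, adapted to the time scales calculus: reduce the functional minimization to a scalar minimization in a one-parameter family, then extract first and second order necessary conditions.

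Fix an admissible variation $\eta\in C_{\mathrm{rd}}^1$ with $\eta(a)=\eta(b)=0$, and set $\Phi(\varepsilon)=\mathcal{L}(\hat{y}+\varepsilon\eta)$. Since $\hat{y}$ is a weak local minimizer and $\hat{y}+\varepsilon\eta$ satisfies the boundary conditions, $\varepsilon=0$ is a local minimizer of $\Phi$, hence $\Phi'(0)=0$ and $\Phi''(0)\geq 0$. By Lemma~\ref{lema1} these two facts translate into
\[
\int_{a}^{b}\bigl[L_u[\hat{y}](t)\,\eta^{\sigma}(t)+L_v[\hat{y}](t)\,\eta^{\Delta}(t)\bigr]\,\Delta t = 0,
\]
and the analogous nonnegativity of the second variation, valid for every admissible $\eta$.

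For the Euler--Lagrange equation, I would introduce the antiderivative $A(t):=\int_a^{t}L_u[\hat{y}](\tau)\,\Delta\tau$, so that $A^{\Delta}=L_u[\hat{y}]$. Applying the integration by parts formula \eqref{partes2} with $f=A$ and $g=\eta$, and using $\eta(a)=\eta(b)=0$, yields $\int_a^b L_u[\hat{y}]\,\eta^{\sigma}\Delta t=-\int_a^b A\,\eta^{\Delta}\Delta t$. Substituting this back into the first-order condition gives
\[
\int_{a}^{b}\bigl(L_v[\hat{y}](t)-A(t)\bigr)\eta^{\Delta}(t)\,\Delta t=0
\]
for every admissible $\eta$. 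The Dubois--Reymond lemma (Lemma~\ref{lem:DR}) then forces $L_v[\hat{y}]-A\equiv c$ on $[a,b]^{\kappa}_{\T}$; taking the $\Delta$-derivative gives exactly $L_v^{\Delta}[\hat{y}](t)=L_u[\hat{y}](t)$.

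For the Legendre-type condition, I would use the second variation
\[
\Phi''(0)=\int_a^b\!\!\bigl\{L_{uu}[\hat{y}](t)(\eta^{\sigma}(t))^{2}+2L_{uv}[\hat{y}](t)\eta^{\sigma}(t)\eta^{\Delta}(t)+L_{vv}[\hat{y}](t)(\eta^{\Delta}(t))^{2}\bigr\}\Delta t\geq 0
\]
and construct, for each fixed $t_0\in[a,b]^{\kappa^{2}}_{\T}$, a family of admissible variations $\eta$ supported essentially in $\{t_0,\sigma(t_0)\}$ so that only the point $t_0$ contributes to the integral. Concretely, I would choose $\eta$ to vanish off $\{t_0,\sigma(t_0)\}$ (this is possible at an isolated point and must be approximated by a continuous spike at a dense point), so that $\eta^{\sigma}(t_0)$, $\eta^{\Delta}(t_0)$ and $\eta^{\Delta}(\sigma(t_0))$ are the only nonzero values; the formula \eqref{integration:isolated}, together with the relations $\mu(t)\eta^{\Delta}(t)=\eta^{\sigma}(t)-\eta(t)$ from \eqref{transfor} applied at $t_0$ and at $\sigma(t_0)$, reduces the integral to an explicit quadratic form in $\eta^{\sigma}(t_0)$. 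Demanding its nonnegativity for all choices of $\eta^{\sigma}(t_0)\in\R$ yields the pointwise inequality, and the symbol $(\mu^{\sigma})^{\ast}$ appears naturally: when $\sigma(t_0)$ is right-scattered we divide by $\mu(\sigma(t_0))$, while the case $\mu(\sigma(t_0))=0$ simply kills that term, which matches the convention $0^{\ast}=0$.

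The main obstacle I foresee is the Legendre step: handling the dense case requires a careful smooth-spike construction to recover a pointwise condition from the integral inequality, and bookkeeping of the $\sigma$-shifts so that $L_{vv}$ is evaluated both at $t_0$ and at $\sigma(t_0)$ with the correct weights $\mu(t_0)$ and $(\mu^{\sigma}(t_0))^{\ast}$. The first-order part is essentially routine once the right integration-by-parts formula is chosen.
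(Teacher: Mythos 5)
A point of reference first: the thesis states this theorem with a citation to Bohner's 2004 paper and gives no proof of its own, so your attempt can only be measured against the standard argument in that source --- which is, in outline, exactly what you propose. The first-order half is complete and correct: introducing the antiderivative $A(t)=\int_a^t L_u[\hat{y}](\tau)\,\Delta\tau$, integrating by parts via \eqref{partes2}, and invoking the Dubois--Reymond lemma (Lemma~\ref{lem:DR}) to conclude $L_v[\hat{y}]-A\equiv c$ and hence $L_v^\Delta[\hat{y}]=L_u[\hat{y}]$ is the canonical route and needs no repair.

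The Legendre half has the right strategy but is not yet a proof, for two concrete reasons. First, the support of the spike: to make $\eta^\sigma(t_0)$, $\eta^\Delta(t_0)$ and $\eta^\Delta(\sigma(t_0))$ the only nonvanishing quantities you must take $\eta$ supported at the single point $\sigma(t_0)$, not on $\{t_0,\sigma(t_0)\}$; if $\eta(t_0)\neq 0$ then $\eta^\sigma(\rho(t_0))$ and $\eta^\Delta(\rho(t_0))$ are also nonzero and the second variation picks up uncontrolled terms at $\rho(t_0)$. With the one-point spike the computation does yield, after multiplying by $\mu(t_0)>0$, exactly the displayed inequality --- but only when $t_0$ is right-scattered and $\sigma(t_0)$ is isolated. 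Second, and more seriously, the remaining cases are the actual content of the theorem and you leave them as a promissory note: if $t_0$ is right-dense the claim degenerates to $L_{vv}[\hat{y}](t_0)\geq 0$ and requires a genuine limiting family of continuous spikes with controlled $\eta^\Delta$; and if $t_0$ is right-scattered but $\sigma(t_0)$ is right-dense, the one-point spike at $\sigma(t_0)$ fails to be rd-continuous (it is discontinuous at the right-dense point $\sigma(t_0)$), hence is not an admissible variation, and again an approximation argument is needed --- this is precisely the case in which the convention $0^\ast=0$ must be \emph{derived} rather than merely observed to be consistent. Until those two limiting arguments are written out, the second assertion is established only at isolated points.
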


Using the theory of calculus of variations on time scales and considering
$y_1=0$ and $y_2=1$ in Example \ref{shortestdistance}, we can generalize
that example to the following one.
\begin{example}\cite[Example~4.3]{CD:Bohner:2004} Find the solution of the problem
\begin{equation}\label{shortestdistance:TS}
\int_a^b\sqrt{1+\left(y^\Delta\right)^2}\Delta t
\rightarrow~min,\quad\quad y(a)=0,~~y(b)=1.
\end{equation}
Writing \eqref{shortestdistance:TS} in form of \eqref{Prob2} we have
$$L(t,u,v)=\sqrt{1+v^2},\quad\quad L_u(t,u,v)=0\quad\text{and}\quad L_v(t,u,v)=\frac{v}{1+v^2}.$$
Suppose $\hat{y}$ is a local minimizer of \eqref{shortestdistance:TS}.
Then, by item 1 of Theorem~\ref{necessary:conditions:TS}, equation
\begin{equation}\label{EL:TS:example}
L_v^\Delta(t,\hat{y}^\sigma(t),\hat{y}^\Delta(t))=0,\quad
t\in[a,b]^\kappa,
\end{equation}
must hold. The last equation implies that there exist a constant
$c\in \mathbb{R}$ such that
$$L_v(t,\hat{y}^\sigma(t),\hat{y}^\Delta(t))\equiv c,\quad t\in[a,b],$$
i.e.,
\begin{equation}
\label{eqfinal}
\hat{y}^\Delta(t)=c\sqrt{1+\left(\hat{y}^\Delta\right)^2},\quad t\in[a,b]
\end{equation}
holds. Solving equation \eqref{eqfinal} we obtain
$$\hat{y}(t)=\frac{t-a}{b-a}\quad\text{for all}\quad t\in[a,b],$$
which is the equation of the straight line connecting the given
points.
\end{example}

We refer the reader to the PhD thesis~\cite{Tese:Rui} for more recent results 
on calculus of variations on time scales. Here our main interest on the subject 
is just as the starting point to go further and start developing the \emph{fractional} 
case on the time scale $\mathbb{Z}$ (Chapter~\ref{chap3}), 
$\mathbb{T}=\left(h\mathbb{Z}\right)_a$ (Chapter~\ref{chap4}) 
and possible on an arbitrary time scale (cf. Chapter \ref{chap5}). 
Before the original work of this thesis, already published in the international 
journals~\cite{comNuno:Rui:Z,7}, results on the fractional calculus of variations 
were restricted to the continuous case $\mathbb{T}=\mathbb{R}$ 
(see, e.g., \cite{RicDel,gastao:delfim,MyID:169}).

\clearpage{\thispagestyle{empty}\cleardoublepage}

\part{Original Work}

\clearpage{\thispagestyle{empty}\cleardoublepage}


\chapter{Fractional Variational Problems in $\mathbb{T}=\mathbb{Z}$}
\label{chap3}

In this chapter we introduce a discrete-time fractional 
calculus of variations on the time scale $\mathbb{Z}$. 
First and second order necessary optimality conditions 
are established. We finish the chapter 
with some examples illustrating the use
of the new Euler--Lagrange and Legendre type conditions.

\section{Introduction}
\label{Z::int}

In the last decades, considerable research has been done in
fractional calculus. This is particularly true in the area of the
calculus of variations, which is being subject to intense
investigations during the last few years
\cite{B:08,Ozlem,comNuno:Rui:Z,R:N:H:M:B:07,R:T:M:B:07}.
\\\\
As mentioned in Section~\ref{discretefract}, Miller and Ross define the
fractional sum of order $\nu>0$ by
\begin{equation}\label{Z::naosei88}
\Delta^{-\nu}f(t)=\frac{1}{\Gamma(\nu)}\sum_{s=a}^{t-\nu}(t-\sigma(s))^{(\nu-1)}f(s).
\end{equation}
\begin{remark}
This was done in analogy with the left Riemann--Liouville fractional
integral of order $\nu>0$~(\textrm{cf.}~(\ref{left::RLFI})),
\begin{equation*}
_a I_x^{-\nu}f(x)=\frac{1}{\Gamma(\nu)}\int_{a}^{x}(x-s)^{\nu-1}f(s)ds,
\end{equation*}
which can be obtained \emph{via} the solution of a linear
differential equation \cite{Miller,Miller1}. Some basic properties
of the sum in \eqref{Z::naosei88} were obtained in \cite{Miller}.
\end{remark}

More recently, F.~Atici and P.~Eloe \cite{Atici0,Atici} defined the
fractional difference of order $\alpha>0$, \textrm{i.e.},
$\Delta^\alpha f(t)=\Delta^m(\Delta^{-(m-\alpha)}f(t))$ with $m$ the
least integer satisfying $m \ge \alpha$, and developed some of its
properties that allow to obtain solutions of certain fractional
difference equations.

To the best of the author's knowledge, the first paper with a theory 
for the discrete calculus of variations (using forward difference operator) 
was written by Tomlinson Fort \cite{Fort} in 1937. Some important results 
of discrete calculus of variations are summarized in~\cite[Chap.~8]{book:DCV}.
It is well known that discrete analogues of differential equations can
be very useful in applications \cite{Zeidan1,book:DCV}. Therefore,
we consider pertinent  to start here a fractional discrete-time
theory of the calculus of variations.

Throughout the chapter we proceed to develop the
theory of \emph{fractional difference calculus}, namely, we
introduce the concept of left and right fractional sum/difference
(\textrm{cf.} Definitions~\ref{Z::def0} and \ref{Z::def1} below) and
prove some new results related to them.

We begin, in Section~\ref{Z::sec0}, to give the definitions and
results needed throughout. In Section~\ref{Z::sec1} we present and
prove the new results; in Section~\ref{Z::sec2} we give some
examples. Finally, in Section~\ref{Z::sec:conc} we mention the
main conclusions of the chapter, and some possible extensions and open questions.
Computer code done in the Computer Algebra System \textsf{Maxima} is given in Appendix A.


\section{Preliminaries}
\label{Z::sec0}

We begin by introducing some notation used throughout. Let $a$ be an
arbitrary real number and $b = k + a$ for a certain $k
\in\mathbb{N}$ with $k \ge 2$. In this chapter we restrict ourselves
to the time scale $\mathbb{T}= \{a, a + 1, \ldots, b\}$.

Denote by $\mathcal{F}$ the set of all real valued functions defined
on $\mathbb{T}$.
According with Example~\ref{Z::sigma:mu}, we have $\sigma(t)=t+1$,
$\rho(t)=t-1$.

The usual conventions $\sum_{t=c}^{c-1}f(t)=0$, $c\in\mathbb{T}$,
and $\prod_{i=0}^{-1}f(i)=1$ remain valid here. As usual, the
forward difference is defined by $\Delta f(t)=f^\sigma(t)-f(t)$. If
we have a function $f$ of two variables, $f(t,s)$, its partial
(difference) derivatives are denoted by $\Delta_t$ and $\Delta_s$,
respectively.
Recalling~\eqref{factorial:fracti} we can, for arbitrary $x,y\in\mathbb{R}$, 
define (when it makes sense)
\begin{equation}\label{powerZ}
x^{(y)}=\frac{\Gamma(x+1)}{\Gamma(x+1-y)}
\end{equation} where
$\Gamma$ is the gamma function.

While reaching the proof of
Theorem~\ref{Z::teor1} we actually ``find'' the definition of left
and right fractional sum:

\begin{definition}\label{Z::def0}
Let $f\in\mathcal{F}$. The \emph{left fractional sum} and the
\emph{right fractional sum} of order $\nu>0$ are defined,
respectively, as
\begin{equation}\label{Z::sum1}
_a\Delta_t^{-\nu}f(t)=\frac{1}{\Gamma(\nu)}\sum_{s=a}^{t-\nu}(t-\sigma(s))^{(\nu-1)}f(s),
\end{equation}
and
\begin{equation}\label{Z::sum2}
_t\Delta_b^{-\nu}f(t)=\frac{1}{\Gamma(\nu)}\sum_{s=t+\nu}^{b}(s-\sigma(t))^{(\nu-1)}f(s).
\end{equation}
\end{definition}

\begin{remark}
The above sums \eqref{Z::sum1} and \eqref{Z::sum2} are defined for
$t \in \{a+\nu, a + \nu + 1, \ldots, b + \nu\}$ and $t \in \{a-\nu,
a - \nu + 1, \ldots, b- \nu\}$, respectively, while $f(t)$ is
defined for $t \in \{a, a + 1, \ldots, b\}$. Throughout we will
write \eqref{Z::sum1} and \eqref{Z::sum2}, respectively, in the
following way:
\begin{equation*}
_a\Delta_t^{-\nu}f(t)=\frac{1}{\Gamma(\nu)}\sum_{s=a}^{t}(t+\nu-\sigma(s))^{(\nu-1)}f(s),\quad
t\in\mathbb{T},
\end{equation*}
\begin{equation*}
_t\Delta_b^{-\nu}f(t)=\frac{1}{\Gamma(\nu)}\sum_{s=t}^{b}(s+\nu-\sigma(t))^{(\nu-1)}f(s),\quad
t\in\mathbb{T}.
\end{equation*}
\end{remark}
\begin{remark}
The left fractional sum defined in \eqref{Z::sum1} coincides with
the fractional sum defined in \cite{Miller} (see also
\eqref{Z::naosei8}). The analogy of \eqref{Z::sum1} and
\eqref{Z::sum2} with the Riemann--Liouville left and right
fractional integrals of order $\nu>0$ is clear:
$$_a I_x^{-\nu}f(x)=\frac{1}{\Gamma(\nu)}\int_{a}^{x}(x-s)^{\nu-1}f(s)ds,$$
$$_x I_b^{-\nu}f(x)=\frac{1}{\Gamma(\nu)}\int_{x}^{b}(s-x)^{\nu-1}f(s)ds.$$
\end{remark}
It was proved in \cite{Miller} that $\lim_{\nu\rightarrow
0}{_a}\Delta_t^{-\nu}f(t)=f(t)$. We do the same for the right
fractional sum using a different method. Let $\nu>0$ be arbitrary.
Then,
\begin{align*}
{_t}\Delta_b^{-\nu}f(t)&=\frac{1}{\Gamma(\nu)}\sum_{s=t}^{b}(s+\nu-\sigma(t))^{(\nu-1)}f(s)\\
&=f(t)+\frac{1}{\Gamma(\nu)}\sum_{s=\sigma(t)}^{b}(s+\nu-\sigma(t))^{(\nu-1)}f(s)\\
&=f(t)+\sum_{s=\sigma(t)}^{b}\frac{\Gamma(s+\nu-t)}{\Gamma(\nu)\Gamma(s-t+1)}f(s)\\
&=f(t)+\sum_{s=\sigma(t)}^{b}\frac{\prod_{i=0}^{s-t-1}(\nu+i)}{\Gamma(s-t+1)}f(s).
\end{align*}
Therefore, $\lim_{\nu\rightarrow 0}{_t}\Delta_b^{-\nu}f(t)=f(t)$. It
is now natural to define
\begin{equation}\label{Z::naosei10}
_a\Delta_t^{0}f(t)={_t}\Delta_b^{0}f(t)=f(t),
\end{equation}
which we do here, and to write
\begin{equation}\label{Z::seila1}
{_a}\Delta_t^{-\nu}f(t)=f(t)+\frac{\nu}{\Gamma(\nu+1)}\sum_{s
=a}^{t-1}(t+\nu-\sigma(s))^{(\nu-1)}f(s),\quad t\in\mathbb{T},\quad
\nu\geq 0,
\end{equation}
\begin{equation*}
{_t}\Delta_b^{-\nu}f(t)=f(t)+\frac{\nu}{\Gamma(\nu+1)}\sum_{s
=\sigma(t)}^{b}(s+\nu-\sigma(t))^{(\nu-1)}f(s),\quad
t\in\mathbb{T},\quad \nu\geq 0.
\end{equation*}

The next theorem was proved in \cite{Atici}.
\begin{theorem}\cite{Atici}
\label{Z::thm2} Let $f\in\mathcal{F}$ and $\nu>0$. Then, the
equality
\begin{equation*}
{_a}\Delta_{t}^{-\nu}\Delta
f(t)=\Delta(_a\Delta_t^{-\nu}f(t))-\frac{(t+\nu-a)^{(\nu-1)}}{\Gamma(\nu)}f(a),\quad
t\in\mathbb{T}^\kappa,
\end{equation*}
holds.
\end{theorem}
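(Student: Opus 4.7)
The plan is to establish the identity by computing both sides independently and matching them. I will first work out $\Delta\bigl({_a}\Delta_t^{-\nu}f(t)\bigr)$ directly, then compute ${_a}\Delta_{t}^{-\nu}\Delta f(t)$ via discrete summation by parts; their difference will produce precisely the boundary correction $\dfrac{(t+\nu-a)^{(\nu-1)}}{\Gamma(\nu)}f(a)$.

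First I would derive the power rule $\Delta_t[t^{(\nu)}]=\nu\,t^{(\nu-1)}$ directly from the definition $t^{(\nu)}=\Gamma(t+1)/\Gamma(t+1-\nu)$, together with the convention that $1/\Gamma$ vanishes at non-positive integers. This yields, as a consequence, $\Delta_t\bigl[(t+\nu-\sigma(s))^{(\nu-1)}\bigr]=(\nu-1)(t+\nu-\sigma(s))^{(\nu-2)}$ when one differentiates in $t$ with $s$ fixed. Applying $\Delta_t$ term by term to the defining sum of ${_a}\Delta_t^{-\nu}f$, and observing that the new term appearing at $s=t+1$ contributes $(\nu-1)^{(\nu-1)}f(t+1)=\Gamma(\nu)f(t+1)$, I obtain
\[
\Delta\bigl({_a}\Delta_t^{-\nu}f(t)\bigr)=\frac{\nu-1}{\Gamma(\nu)}\sum_{s=a}^{t}(t+\nu-\sigma(s))^{(\nu-2)}f(s)+f(t+1).
\]

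Next I would apply the discrete summation-by-parts formula
\[
\sum_{s=a}^{t}u(s)\,\Delta v(s)=\bigl[u(s)v(s)\bigr]_{s=a}^{s=t+1}-\sum_{s=a}^{t}v(s+1)\,\Delta_s u(s)
\]
to ${_a}\Delta_t^{-\nu}\Delta f(t)$ with $u(s)=(t+\nu-\sigma(s))^{(\nu-1)}$ and $v(s)=f(s)$. The key observations here are: the upper boundary term at $s=t+1$ vanishes because $(\nu-2)^{(\nu-1)}=\Gamma(\nu-1)/\Gamma(0)=0$; $\Delta_s u(s)=-(\nu-1)(t+\nu-\sigma(s)-1)^{(\nu-2)}$ by a mirror-image application of the same power rule; and after the shift $r=s+1$ the remaining sum runs from $a+1$ to $t+1$.

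Subtracting the two expressions, the bulk sums on indices $a+1,\ldots,t$ cancel pairwise. The survivors are a boundary term at $r=t+1$ contributing $-\Gamma(\nu)f(t+1)$ (which annihilates the stray $f(t+1)$ in $\Delta({_a}\Delta_t^{-\nu}f)$ once we multiply through by $\Gamma(\nu)$), together with two boundary contributions at $s=a$ giving $\bigl[(\nu-1)(t+\nu-a-1)^{(\nu-2)}+(t+\nu-a-1)^{(\nu-1)}\bigr]f(a)$. A short Gamma-function computation establishes the Pascal-type identity $(\nu-1)w^{(\nu-2)}+w^{(\nu-1)}=(w+1)^{(\nu-1)}$, which collapses the bracket to $(t+\nu-a)^{(\nu-1)}f(a)$; dividing by $\Gamma(\nu)$ then yields the claimed formula. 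The main obstacle is bookkeeping: several boundary terms and an index shift in summation by parts must line up exactly, and one must carefully justify $(\nu-2)^{(\nu-1)}=0$ via the convention that $1/\Gamma$ vanishes at non-positive integers rather than naively manipulating a divergent quantity.
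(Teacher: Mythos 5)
The paper does not actually prove this statement: it is quoted from Atici and Eloe with the remark ``The next theorem was proved in [Atici]'', and only the right-fractional-sum counterpart \eqref{Z::naosei12} is proved in the text. Your argument is correct and self-contained, and it is essentially the mirror image of the technique the paper uses for that counterpart: there, too, the key ingredients are a discrete product/power rule for the factorial function (the analogue of your $\Delta_t\bigl[(t+\nu-\sigma(s))^{(\nu-1)}\bigr]=(\nu-1)(t+\nu-\sigma(s))^{(\nu-2)}$), the usual summation by parts, and a direct computation of $\Delta$ applied to the fractional sum, with the boundary terms assembling via the Pascal-type identity $(\nu-1)w^{(\nu-2)}+w^{(\nu-1)}=(w+1)^{(\nu-1)}$ (which is just the power rule read backwards). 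I checked your bookkeeping: the new term $(\nu-1)^{(\nu-1)}f(t+1)=\Gamma(\nu)f(t+1)$, the vanishing of $(\nu-2)^{(\nu-1)}$, the cancellation of the $f(t+1)$ contributions via $(\nu-1)(\nu-2)^{(\nu-2)}=\Gamma(\nu)$, and the collapse of the two $f(a)$ terms to $(t+\nu-a)^{(\nu-1)}f(a)$ all work out. The only point worth a footnote is $\nu=1$: there $(\nu-2)^{(\nu-1)}=\Gamma(0)/\Gamma(0)$, so the ``division at a pole yields zero'' convention does not directly apply; but in that case the whole identity reduces to the telescoping statement $\sum_{s=a}^{t}\Delta f(s)=f(t+1)-f(a)$ and can be verified in one line.
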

\begin{remark}\label{Z::rem0}
It is easy to include the case $\nu=0$ in Theorem~\ref{Z::thm2}.
Indeed, in view of \eqref{Z::naosei9} and \eqref{Z::naosei10}, we
get
\begin{equation}\label{Z::seila0}
{_a}\Delta_{t}^{-\nu}\Delta f(t)=\Delta(_a\Delta_t^{-\nu}f(t))
-\frac{\nu}{\Gamma(\nu+1)}(t+\nu-a)^{(\nu-1)}f(a),\quad
t\in\mathbb{T}^\kappa,
\end{equation}
for all $\nu\geq 0$.
\end{remark}
Now, we prove the counterpart of Theorem~\ref{Z::thm2} for the right
fractional sum.
\begin{theorem}
Let $f\in\mathcal{F}$ and $\nu\geq 0$. Then, the equality
\begin{equation}\label{Z::naosei12}
{_t}\Delta_{\rho(b)}^{-\nu}\Delta
f(t)=\frac{\nu}{\Gamma(\nu+1)}(b+\nu-\sigma(t))^{(\nu-1)}f(b)
+\Delta(_t\Delta_b^{-\nu}f(t)),\quad t\in\mathbb{T}^\kappa,
\end{equation}
holds.
\end{theorem}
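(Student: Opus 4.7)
The plan is to prove the identity by direct computation of both the left-hand side and the right-hand side from the definition \eqref{Z::sum2}, then compare. For $\nu = 0$ the statement collapses to $\Delta f(t) = \Delta f(t)$ in view of \eqref{Z::naosei10} and the convention $\frac{\nu}{\Gamma(\nu+1)} = 0$ at $\nu=0$, so I may restrict attention to $\nu > 0$ and use the simpler form $\frac{\nu}{\Gamma(\nu+1)} = \frac{1}{\Gamma(\nu)}$.

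First I would expand the left-hand side by definition:
\[
{}_t\Delta_{\rho(b)}^{-\nu}\Delta f(t)
=\frac{1}{\Gamma(\nu)}\sum_{s=t}^{b-1}(s+\nu-\sigma(t))^{(\nu-1)}\bigl[f(s+1)-f(s)\bigr].
\]
After splitting into two sums and performing the index shift $s\mapsto s-1$ in the $f(s+1)$-sum, this becomes
\[
\frac{1}{\Gamma(\nu)}\sum_{s=t+1}^{b}(s+\nu-\sigma(t)-1)^{(\nu-1)}f(s)
-\frac{1}{\Gamma(\nu)}\sum_{s=t}^{b-1}(s+\nu-\sigma(t))^{(\nu-1)}f(s).
\]

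Next I would write out $\Delta({}_t\Delta_b^{-\nu}f(t)) = {}_{\sigma(t)}\Delta_b^{-\nu}f(\sigma(t))-{}_t\Delta_b^{-\nu}f(t)$ directly from \eqref{Z::sum2}, obtaining
\[
\frac{1}{\Gamma(\nu)}\sum_{s=t+1}^{b}(s+\nu-\sigma(t)-1)^{(\nu-1)}f(s)
-\frac{1}{\Gamma(\nu)}\sum_{s=t}^{b}(s+\nu-\sigma(t))^{(\nu-1)}f(s).
\]
Subtracting this from the previous expression, the two shifted sums cancel identically and the remaining difference of the two $(s+\nu-\sigma(t))^{(\nu-1)}f(s)$ sums (one running to $b-1$, the other to $b$) leaves a single leftover term at $s=b$, namely $\frac{1}{\Gamma(\nu)}(b+\nu-\sigma(t))^{(\nu-1)}f(b)$. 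Rewriting the constant as $\frac{\nu}{\Gamma(\nu+1)}$ yields \eqref{Z::naosei12}.

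I do not expect any deep obstacle; the argument is purely a careful index-shifting exercise, parallel to the proof of Theorem~\ref{Z::thm2} in \cite{Atici} but with the asymmetric roles of $t$ and $b$ swapped. The only point that requires attention is the appearance of $\rho(b)$ on the left: it is exactly this truncation of the upper summation limit that makes all terms cancel cleanly except the boundary contribution at $s=b$, rather than producing an uncontrolled $(\nu-2)$-order factorial term. An alternative route via the discrete summation-by-parts formula \eqref{partes2} would also work but introduces $(\nu-2)$-powers that must then be manipulated back, so the direct index-shift approach is cleaner.
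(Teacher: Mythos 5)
Your argument is correct: both expansions are right, the index shift $s\mapsto s-1$ in the $f(s+1)$-sum is legitimate, the two shifted sums cancel, and the surviving $s=b$ term $\frac{1}{\Gamma(\nu)}(b+\nu-\sigma(t))^{(\nu-1)}f(b)=\frac{\nu}{\Gamma(\nu+1)}(b+\nu-\sigma(t))^{(\nu-1)}f(b)$ is exactly the claimed boundary contribution; the case $\nu=0$ is disposed of as you say. However, your route is genuinely different from the paper's. The paper first proves the discrete product rule \eqref{prod:rule:s} for $\Delta_s\bigl((s+\nu-\sigma(t))^{(\nu-1)}f(s)\bigr)$, then applies summation by parts to $\frac{1}{\Gamma(\nu)}\sum_{s=t}^{b-1}(s+\nu-\sigma(t))^{(\nu-1)}\Delta f(s)$, obtaining the boundary terms at $s=t$ and $s=b$ plus a sum of $(\nu-2)$-order factorial terms; it then separately computes $\Delta({}_t\Delta_b^{-\nu}f(t))$ by differencing the kernel in $t$, which produces the same $(\nu-2)$-order sum and the same $-\frac{(\nu-1)^{(\nu-1)}}{\Gamma(\nu)}f(t)$ term, and concludes by matching. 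Your telescoping argument avoids the product rule and the $(\nu-2)$-order factorials entirely and is shorter and more elementary — precisely the advantage you anticipated in your closing remark. What the paper's heavier machinery buys is reusability: the product rule \eqref{prod:rule:s} is the same computation that reappears (in $h$-scaled form, \textrm{cf.} \eqref{hZ::eq:semlhante}) in the proof of Theorem~\ref{hZ::thm3} for the time scale $(h\mathbb{Z})_a$, and it is also needed independently in the summation-by-parts theorem (Theorem~\ref{Z::teor1}) and in the Legendre condition, so the authors get it once and use it several times. Your proof stands on its own as a valid and cleaner alternative for this particular identity.
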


\begin{proof}
We only prove the case $\nu>0$ as the case $\nu=0$ is trivial (see
Remark~\ref{Z::rem0}). We start by fixing an arbitrary
$t\in\mathbb{T}^\kappa$ and prove that for all
$s\in\mathbb{T}^\kappa$,
\begin{equation}\label{prod:rule:s}
    \Delta_s\left((s+\nu-\sigma(t))^{(\nu-1)}f(s)\right)=(\nu-1)(s+\nu-\sigma(t))^{(\nu-2)}f^\sigma(s)
+(s+\nu-\sigma(t))^{(\nu-1)}\Delta f(s).
\end{equation}
By definition of forward difference with respect to variable $s$ we can write that
\begin{eqnarray}
\Delta_s\left((s+\nu-\sigma(t))^{(\nu-1)}f(s)\right)&=&\left(\sigma(s)+\nu
-\sigma(t)\right)^{(\nu-1)}f^\sigma(s)-\left(s+\nu-\sigma(t)\right)^{(\nu-1)}f(s)\nonumber\\
&=&\left(s+\nu-t\right)^{(\nu-1)}f^\sigma(s)-\left(s+\nu-t-1\right)^{(\nu-1)}f(s)\nonumber\\
&=&\frac{\Gamma(s+\nu-t+1)f^\sigma(s)}{\Gamma(s-t+2)}-\frac{\Gamma(s+\nu-t)f(s)}{\Gamma(s-t+1)}~.\label{simp:gamma:prod}
\end{eqnarray}
Applying~\eqref{Z::naosei9} to the numerator and denominator 
of first fraction we have that \eqref{simp:gamma:prod} is equal to
\[
\frac{(s+\nu-t)\Gamma(s+\nu-t)f^\sigma(s)}{(s-t+1)\Gamma(s-t+1)}-\frac{\Gamma(s+\nu-t)f(s)}{\Gamma(s-t+1)}
\]
\[
=\frac{\left[(s+\nu-t)f^\sigma(s)-(s-t+1)f(s)\right]\Gamma(s+\nu-t)}{(s-t+1)\Gamma(s-t+1)}
\]
\[
=\frac{\left[(s-t+1)f^\sigma(s)+(\nu-1)f^\sigma(s)-(s-t+1)f(s)\right]\Gamma(s+\nu-t)}{(s-t+1)\Gamma(s-t+1)}
\]
\[
=(\nu-1)\frac{\Gamma(s+\nu-\sigma(t)+1)}{\Gamma(s+\nu-\sigma(t)-(\nu-2)+1)}f^\sigma(s)
+\frac{\Gamma(s+\nu-\sigma(t)+1)}{\Gamma(s+\nu-\sigma(t)-(\nu-1)+1)}(f^\sigma(s)-f(s))
\]
\[
=(\nu-1)(s+\nu-\sigma(t))^{(\nu-2)}f^\sigma(s)
+(s+\nu-\sigma(t))^{(\nu-1)}\Delta f(s)
~.\]
Now, with~\eqref{prod:rule:s} proven, we can state that
\begin{equation*}
\begin{split}
\frac{1}{\Gamma(\nu)} & \sum_{s=t}^{b-1}(s+\nu-\sigma(t))^{(\nu-1)}\Delta f(s)\\
&=\left[\frac{(s+\nu-\sigma(t))^{(\nu-1)}}{\Gamma(\nu)}f(s)\right]_{s=t}^{s=b}
-\frac{1}{\Gamma(\nu)}\sum_{s=t}^{b-1}(\nu-1)(s+\nu-\sigma(t))^{(\nu-2)}
f^\sigma(s)\\
&=\frac{(b+\nu-\sigma(t))^{(\nu-1)}}{\Gamma(\nu)}f(b)
-\frac{(\nu-1)^{(\nu-1)}}{\Gamma(\nu)}f(t)\\
&\qquad
-\frac{1}{\Gamma(\nu)}\sum_{s=t}^{b-1}(\nu-1)(s+\nu-\sigma(t))^{(\nu-2)}
f^\sigma(s).
\end{split}
\end{equation*}
We now compute $\Delta(_t\Delta_b^{-\nu}f(t))$:
\begin{equation*}
\begin{split}
\Delta(_t\Delta_b^{-\nu}f(t))&=
\frac{1}{\Gamma(\nu)}\left[\sum_{s=\sigma(t)}^{b}(s+\nu-\sigma(t+1))^{(\nu-1)}
f(s)\right.\\
& \left. \qquad\qquad\quad -\sum_{s=t}^{b}(s+\nu-\sigma(t))^{(\nu-1)} f(s)\right]\\
&=\frac{1}{\Gamma(\nu)}\left[\sum_{s=\sigma(t)}^{b}(s+\nu-\sigma(t+1))^{(\nu-1)}
f(s)\right.\\
&\qquad\qquad\quad
\left.-\sum_{s=\sigma(t)}^{b}(s+\nu-\sigma(t))^{(\nu-1)}
f(s)\right]-\frac{(\nu-1)^{(\nu-1)}}{\Gamma(\nu)}f(t)\\
&=\frac{1}{\Gamma(\nu)}\sum_{s=\sigma(t)}^{b}\Delta_t(s+\nu-\sigma(t))^{(\nu-1)}
f(s)-\frac{(\nu-1)^{(\nu-1)}}{\Gamma(\nu)}f(t)\\
&=-\frac{1}{\Gamma(\nu)}\sum_{s=t}^{b-1}(\nu-1)(s+\nu-\sigma(t))^{(\nu-2)}
f^\sigma(s)-\frac{(\nu-1)^{(\nu-1)}}{\Gamma(\nu)}f(t).
\end{split}
\end{equation*}
Since $t$ is arbitrary, the theorem is proved.
\end{proof}

\begin{definition}\label{Z::def1}
Let $0<\alpha\leq 1$ and set $\mu=1-\alpha$. Then, the \emph{left
fractional difference} and the \emph{right fractional difference} of
order $\alpha$ of a function $f\in\mathcal{F}$ are defined,
respectively, by
$$_a\Delta_t^\alpha f(t)=\Delta(_a\Delta_t^{-\mu}f(t)),\quad t\in\mathbb{T}^\kappa,$$
and
$$_t\Delta_b^\alpha f(t)=-\Delta(_t\Delta_b^{-\mu}f(t))),\quad t\in\mathbb{T}^\kappa.$$
\end{definition}


\section{Main results}
\label{Z::sec1}

Our aim is to introduce the discrete-time (in time scale
$\mathbb{T}=\{a,a+1,\ldots,b\}$ ) fractional problem of the calculus
of variations and to prove corresponding necessary optimality
conditions. In order to obtain an analogue of the Euler--Lagrange
equation (\textrm{cf.} Theorem~\ref{Z::thm0}) we first prove a
fractional formula of summation by parts. Our results
give discrete analogues to the fractional Riemann--Liouville results
available in the literature: Theorem~\ref{Z::teor1} is the discrete
analog of fractional integration by parts \cite{Riewe,Samko};
Theorem~\ref{Z::thm0} is the discrete analog of the fractional
Euler--Lagrange equation of Agrawal \cite[Theorem~1]{agr0}; the
natural boundary conditions \eqref{Z::rui1} and \eqref{Z::rui2} are
the discrete fractional analogues of the transversality conditions
in \cite{A:TC:06,MyID:182}. However, to the best of the author's
knowledge, no counterpart to our Theorem~\ref{Z::thm1} exists in the
literature of continuous fractional variational problems.


\subsection{Fractional summation by parts}

The next lemma is used in the proof of Theorem~\ref{Z::teor1}.

\begin{lemma}\label{Z::lem0}
Let $f$ and $h$ be two functions defined on $\mathbb{T}^\kappa$ and
$g$ a function defined on
$\mathbb{T}^\kappa\times\mathbb{T}^\kappa$. Then, the equality
\begin{equation*}
\sum_{\tau=a}^{b-1}f(\tau)\sum_{s=a}^{\tau-1}g(\tau,s)h(s)
=\sum_{\tau=a}^{b-2}h(\tau)\sum_{s=\sigma(\tau)}^{b-1}g(s,\tau)f(s)
\end{equation*}
holds.
\end{lemma}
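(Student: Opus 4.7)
The plan is to prove the identity by a straightforward interchange of the order of summation, i.e., a discrete Fubini argument, followed by a renaming of the dummy indices. Since $\mathbb{T}=\{a,a+1,\ldots,b\}$ is just an arithmetic progression with $\sigma(t)=t+1$, both sides are finite double sums over lattice points, and the claim is essentially a statement about describing the same triangular index set in two ways.

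First I would pull the factor $f(\tau)$ inside the inner sum to write the left-hand side as the double sum
\begin{equation*}
\sum_{\tau=a}^{b-1}\sum_{s=a}^{\tau-1} f(\tau)\,g(\tau,s)\,h(s),
\end{equation*}
whose index set is $\{(\tau,s):a\le \tau\le b-1,\ a\le s\le \tau-1\}$. The key step is to observe that the same set can be described as $\{(\tau,s):a\le s\le b-2,\ s+1\le \tau\le b-1\}$, since the constraint $a\le s\le \tau-1\le b-2$ forces $s\le b-2$, and given such an $s$ the variable $\tau$ ranges from $s+1$ to $b-1$. Swapping the order of summation, pulling out $h(s)$, and using $\sigma(s)=s+1$ gives
\begin{equation*}
\sum_{s=a}^{b-2} h(s) \sum_{\tau=\sigma(s)}^{b-1} g(\tau,s)\,f(\tau).
\end{equation*}

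Finally I would relabel the dummy variables ($s\leftrightarrow\tau$) to match the notation on the right-hand side of the statement, obtaining exactly
\begin{equation*}
\sum_{\tau=a}^{b-2} h(\tau) \sum_{s=\sigma(\tau)}^{b-1} g(s,\tau)\,f(s).
\end{equation*}

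There is no real obstacle here; the only point that requires a little care is keeping track of the boundaries of the triangular region and the correct translation $s+1=\sigma(s)$. Everything else is bookkeeping, so this should be at most a few lines.
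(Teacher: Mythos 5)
Your argument is correct: the identity is indeed just a discrete Fubini interchange over the triangular index set $\{(\tau,s): a\le s\le \tau-1\le b-2\}$, and your bookkeeping of the boundaries and the translation $\sigma(s)=s+1$ is accurate (note also that the $\tau=a$ term on the left is empty by the convention $\sum_{s=c}^{c-1}=0$, so nothing is lost at that endpoint). However, your route differs from the paper's: the author does not compute anything, but instead obtains the lemma as a special case of a general change-of-order-of-integration result on time scales (Theorem~10 of Akin's paper on Cauchy functions for dynamic equations on measure chains), by choosing $\mathbb{T}=\mathbb{Z}$ and $F(\tau,s)=f(\tau)g(\tau,s)h(s)$. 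The paper's proof is a one-line citation and situates the lemma inside the time-scales framework used throughout the thesis; yours is self-contained, elementary, and makes the combinatorial content of the identity transparent without requiring the reader to consult an external reference. Both are valid; your version is arguably preferable for a reader who wants to verify the boundary indices $b-1$ versus $b-2$ explicitly, which is exactly where such summation-by-parts lemmas tend to hide errors.
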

\begin{proof}
Choose $\mathbb{T} = \mathbb{Z}$ and $F(\tau,s) = f(\tau) g(\tau,s)
h(s)$ in Theorem~10 of \cite{Akin}.
\end{proof}

The next result gives a \emph{fractional summation by parts}
formula.

\begin{theorem}[Fractional summation by parts]
\label{Z::teor1} Let $f$ and $g$ be real valued functions defined on
$\mathbb{T}^k$ and $\mathbb{T}$, respectively. Fix $0<\alpha\leq 1$
and put $\mu=1-\alpha$. Then,
\begin{multline*}
\sum_{t=a}^{b-1}f(t)_a\Delta_t^\alpha
g(t)=f(b-1)g(b)-f(a)g(a)+\sum_{t=a}^{b-2}{_t\Delta_{\rho(b)}^\alpha
f(t)g^\sigma(t)}\\
+\frac{\mu}{\Gamma(\mu+1)}g(a)\left(\sum_{t=a}^{b-1}(t+\mu-a)^{(\mu-1)}f(t)
-\sum_{t=\sigma(a)}^{b-1}(t+\mu-\sigma(a))^{(\mu-1)}f(t)\right).
\end{multline*}
\end{theorem}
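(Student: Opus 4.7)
The plan is to reduce this fractional summation by parts to the ordinary discrete one in three steps: (i) use \eqref{Z::seila0} to pull a $\Delta g$ outside the left fractional operator, (ii) swap the order of the resulting double sum so that the inner sum becomes a right fractional sum applied to $f$, and (iii) apply ordinary discrete summation by parts to transfer the $\Delta$ from $g$ onto $f$, whereupon the right fractional difference appears directly from Definition~\ref{Z::def1}.

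Concretely, Definition~\ref{Z::def1} combined with \eqref{Z::seila0} yields
\[
{_a\Delta_t^\alpha} g(t) = {_a\Delta_t^{-\mu}} \Delta g(t) + \frac{\mu}{\Gamma(\mu+1)}(t+\mu-a)^{(\mu-1)} g(a).
\]
Multiplying by $f(t)$ and summing from $a$ to $b-1$, the second piece immediately produces the term $\frac{\mu}{\Gamma(\mu+1)} g(a) \sum_{t=a}^{b-1}(t+\mu-a)^{(\mu-1)} f(t)$ appearing in the statement. For the remaining quantity $S := \sum_{t=a}^{b-1} f(t)\, {_a\Delta_t^{-\mu}}\Delta g(t)$, I expand \eqref{Z::sum1} and interchange the order of summation over the triangle $\{a\leq s\leq t\leq b-1\}$, obtaining $S = \sum_{s=a}^{b-1} K(s)\, \Delta g(s)$, where the inner sum is exactly $K(s) := {_s\Delta_{\rho(b)}^{-\mu}} f(s)$ (the upper index must be $\rho(b)=b-1$, not $b$, since $f$ is defined only on $\mathbb{T}^\kappa$). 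Ordinary discrete summation by parts, obtained by writing $\Delta g(s)=g(s+1)-g(s)$ and reindexing, then gives
\[
S = K(b-1) g(b) - K(a) g(a) - \sum_{s=a}^{b-2} \Delta K(s)\, g^\sigma(s),
\]
and by Definition~\ref{Z::def1} we have $-\Delta K(s) = {_s\Delta_{\rho(b)}^\alpha} f(s)$, producing the middle term of the target identity. Using $(\mu-1)^{(\mu-1)}=\Gamma(\mu)$, the single-term sum collapses to $K(b-1)=f(b-1)$, which matches the $f(b-1) g(b)$ boundary term; splitting off the $s=a$ summand yields $K(a) = f(a) + \frac{\mu}{\Gamma(\mu+1)}\sum_{r=\sigma(a)}^{b-1}(r+\mu-\sigma(a))^{(\mu-1)} f(r)$, after which assembling the pieces is immediate.

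The only delicate point — and really the main obstacle — is the index bookkeeping around the right endpoint: because $f$ is not defined at $b$, the inner sum produced by the swap is a right fractional sum up to $\rho(b)$ rather than up to $b$, which is precisely why the statement features ${_t\Delta_{\rho(b)}^\alpha} f$ and a sum running only to $b-2$, and why the term $-K(a) g(a)$ combines with the isolated $g(a)$-piece from the first step to produce the bracketed difference of two sums in the target formula.
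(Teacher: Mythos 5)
Your proposal is correct and follows essentially the same route as the paper's proof: decompose via \eqref{Z::seila0} and \eqref{Z::seila1}, interchange the double sum (the paper packages this as Lemma~\ref{Z::lem0}), apply ordinary summation by parts, and recognize $-\Delta\bigl({_t\Delta_{\rho(b)}^{-\mu}}f(t)\bigr)={_t\Delta_{\rho(b)}^{\alpha}}f(t)$. The only cosmetic difference is that you keep the full range $\sum_{s=a}^{b-1}K(s)\Delta g(s)$ and sum by parts once, whereas the paper first splits off the $t=b-1$ term; your boundary bookkeeping, including $K(b-1)=f(b-1)$ and the expansion of $K(a)$, matches the paper's exactly.
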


\begin{proof}
From \eqref{Z::seila0} we can write
\begin{align}
\sum_{t=a}^{b-1}f(t)_a\Delta_t^\alpha
g(t)&=\sum_{t=a}^{b-1}f(t)\Delta(_a\Delta_t^{-\mu} g(t))\nonumber\\
&=\sum_{t=a}^{b-1}f(t)\left[_a\Delta_t^{-\mu}\Delta
g(t)+\frac{\mu}{\Gamma(\mu+1)}(t+\mu-a)^{(\mu-1)}g(a)\right]\nonumber\\
&=\sum_{t=a}^{b-1}f(t)_a\Delta_t^{-\mu}\Delta
g(t)+\sum_{t=a}^{b-1}\frac{\mu}{\Gamma(\mu+1)}(t+\mu-a)^{(\mu-1)}f(t)g(a)\label{Z::rui0}.
\end{align}
Using \eqref{Z::seila1} we get
\begin{align*}
\sum_{t=a}^{b-1}&f(t)_a\Delta_t^{-\mu}\Delta
g(t)\\
&=\sum_{t=a}^{b-1}f(t)\Delta
g(t)+\frac{\mu}{\Gamma(\mu+1)}\sum_{t=a}^{b-1}f(t)\sum_{s=a}^{t-1}(t+\mu-\sigma(s))^{(\mu-1)}\Delta
g(s)\\
&=\sum_{t=a}^{b-1}f(t)\Delta
g(t)+\frac{\mu}{\Gamma(\mu+1)}\sum_{t=a}^{b-2}\Delta
g(t)\sum_{s=\sigma(t)}^{b-1}(s+\mu-\sigma(t))^{(\mu-1)}f(s)\\
&=f(b-1)[g(b)-g(b-1)]+\sum_{t=a}^{b-2}\Delta
g(t)_t\Delta_{\rho(b)}^{-\mu} f(t),
\end{align*}
where the third equality follows by Lemma~\ref{Z::lem0}. We proceed
to develop the right hand side of the last equality as follows:
\begin{equation*}
\begin{split}
f&(b-1)[g(b)-g(b-1)]+\sum_{t=a}^{b-2}\Delta
g(t)_t\Delta_{\rho(b)}^{-\mu} f(t)\\
&=f(b-1)[g(b)-g(b-1)] +\left[g(t)_t\Delta_{\rho(b)}^{-\mu}
f(t)\right]_{t=a}^{t=b-1}-\sum_{t=a}^{b-2}
g^\sigma(t)\Delta(_t\Delta_{\rho(b)}^{-\mu} f(t))\\
&=f(b-1)g(b)-f(a)g(a)-\frac{\mu}{\Gamma(\mu+1)}g(a)\sum_{s
=\sigma(a)}^{b-1}(s+\mu-\sigma(a))^{(\mu-1)}f(s)\\
&\qquad +\sum_{t=a}^{b-2}{\left(_t\Delta_{\rho(b)}^\alpha
f(t)\right)g^\sigma(t)}\, ,
\end{split}
\end{equation*}
where the first equality follows from the usual summation by parts
formula. Putting this into \eqref{Z::rui0}, we get:
\begin{multline*}
\sum_{t=a}^{b-1}f(t)_a\Delta_t^\alpha
g(t)=f(b-1)g(b)-f(a)g(a)+\sum_{t=a}^{b-2}{\left(_t\Delta_{\rho(b)}^\alpha
f(t)\right)g^\sigma(t)}\\
+\frac{g(a)\mu}{\Gamma(\mu+1)}\sum_{t=a}^{b-1}\frac{(t+\mu-a)^{(\mu-1)}}{\Gamma(\mu)}f(t)
-\frac{g(a)\mu}{\Gamma(\mu+1)}\sum_{s=\sigma(a)}^{b-1}(s+\mu-\sigma(a))^{(\mu-1)}f(s).
\end{multline*}
The theorem is proved.
\end{proof}


\subsection{Necessary optimality conditions}

We begin to fix two arbitrary real numbers $\alpha$ and $\beta$ such
that $\alpha,\beta\in(0,1]$. Further, we put $\mu=1-\alpha$ and
$\nu=1-\beta$.

Let a function $L(t,u,v,w):\mathbb{T}^\kappa\times\mathbb{R}\times
\mathbb{R}\times\mathbb{R}\rightarrow\mathbb{R}$ be given. We assume
that the second-order partial derivatives $L_{uu}$, $L_{uv}$,
$L_{uw}$, $L_{vw}$, $L_{vv}$, and $L_{ww}$ exist and are continuous.

Consider the functional
$\mathcal{L}:\mathcal{F}\rightarrow\mathbb{R}$ defined by
\begin{equation}
\label{Z::naosei7}
\mathcal{L}(y(\cdot))=\sum_{t=a}^{b-1}L(t,y^{\sigma}(t),{_a}\Delta_t^\alpha
y(t),{_t}\Delta_b^\beta y(t))
\end{equation}
and the problem, that we denote by (P), of minimizing
\eqref{Z::naosei7} subject to the boundary conditions $y(a)=A$ and
$y(b)=B$ ($A,B\in\mathbb{R}$). Our aim is to derive necessary
conditions of first and second order for problem (P).
\begin{definition}
\label{Z::def:norm} For $f\in\mathcal{F}$ we define the norm
$$\|f\|=\max_{t\in\mathbb{T}^\kappa}|f^\sigma(t)|
+\max_{t\in\mathbb{T}^\kappa}|_a\Delta_t^\alpha
f(t)|+\max_{t\in\mathbb{T}^\kappa}|_t\Delta_b^\beta f(t)|.$$ A
function $\tilde{y}\in\mathcal{F}$ with $\tilde{y}(a)=A$ and
$\tilde{y}(b)=B$ is called a local minimizer for problem (P)
provided there exists $\delta>0$ such that
$\mathcal{L}(\tilde{y})\leq\mathcal{L}(y)$ for all $y\in\mathcal{F}$
with $y(a)=A$ and $y(b)=B$ and $\|y-\tilde{y}\|<\delta$.
\end{definition}

\begin{remark}
It is easy to see that Definition~\ref{Z::def:norm} gives a norm in
$\mathcal{F}$. Indeed, it is clear that $||f||$ is nonnegative, and
for an arbitrary $f\in\mathcal{F}$ and $k\in\mathbb{R}$ we have
$\|kf\|=|k|\|f\|$. The triangle inequality is also easy to prove:
\begin{align*}
\|f+g\|&=\max_{t\in\mathbb{T}^\kappa}|f(t)+g(t)|
+\max_{t\in\mathbb{T}^\kappa}|_a\Delta_t^\alpha (f+g)(t)|
+\max_{t\in\mathbb{T}^\kappa}|_t\Delta_b^\alpha(f+g)(t)|\\
&\leq\max_{t\in\mathbb{T}^\kappa}\left[|f(t)|+|g(t)|\right]
+\max_{t\in\mathbb{T}^\kappa}\left[|_a\Delta_t^\alpha
f(t)|+|_a\Delta_t^\alpha
g(t)|\right]\\
& \qquad +\max_{t\in\mathbb{T}^\kappa}\left[|_t\Delta_b^\alpha
f(t)|+|_t\Delta_b^\alpha g(t)|\right]\\
&\leq\|f\|+\|g\|.
\end{align*}
The only possible doubt is to prove that $||f|| = 0$ implies that
$f(t) = 0$ for any $t \in \mathbb{T} = \{a, a+1,\ldots,b\}$. Suppose
$||f|| = 0$. It follows that
\begin{gather}
\max_{t\in\mathbb{T}^\kappa}|f^\sigma(t)|  = 0\, , \label{Z::rr2:1}\\
\max_{t\in\mathbb{T}^\kappa}|_a\Delta_t^\alpha f(t)| = 0\, ,\label{Z::rr2:2}\\
\max_{t\in\mathbb{T}^\kappa}|_t\Delta_b^\beta f(t)| = 0 \, .
\label{Z::rr2:3}
\end{gather}
From \eqref{Z::rr2:1} we conclude that $f(t) = 0$ for all  $t \in
\{a+1,\ldots,b\}$. It remains to prove that $f(a) = 0$. To prove
this we use \eqref{Z::rr2:2} (or \eqref{Z::rr2:3}). Indeed, from
\eqref{Z::rr2:1} we can write
\begin{equation*}
\begin{split}
_a\Delta_t^\alpha & f(t)
=  \Delta\left(\frac{1}{\Gamma(1-\alpha)}\sum_{s=a}^{t}(t+1
-\alpha-\sigma(s))^{(-\alpha)} f(s) \right)\\
&= \frac{1}{\Gamma(1-\alpha)}\left(\sum_{s=a}^{t+1}(t+2 -
\alpha-\sigma(s))^{(-\alpha)} f(s)
- \sum_{s=a}^{t}(t+1-\alpha-\sigma(s))^{(-\alpha)} f(s) \right)\\
&= \frac{1}{\Gamma(1-\alpha)} \left( (t+2 -
\alpha-\sigma(a))^{(-\alpha)} f(a)
- (t+1 - \alpha-\sigma(a))^{(-\alpha)} f(a)\right)\\
&= \frac{f(a)}{\Gamma(1-\alpha)} \Delta (t+1 -
\alpha-\sigma(a))^{(-\alpha)}
\end{split}
\end{equation*}
and since by \eqref{Z::rr2:2} $_a\Delta_t^\alpha f(t) = 0$, one
concludes that $f(a) = 0$ (because $(t+1 -
\alpha-\sigma(a))^{(-\alpha)}$ is not a constant).
\end{remark}

\begin{definition}
A function $\eta\in\mathcal{F}$ is called an admissible variation
for problem (P) provided $\eta\neq 0$ and $\eta(a)=\eta(b)=0$.
\end{definition}

The next theorem presents a first order necessary optimality condition for
problem (P).

\begin{theorem}[The fractional discrete-time Euler--Lagrange equation]
\label{Z::thm0}\index{Euler--Lagrange equation!discrete fractional case} 
If $\tilde{y}\in\mathcal{F}$ is a local minimizer
for problem (P), then
\begin{equation}\label{Z::EL}
L_u[\tilde{y}](t) +{_t}\Delta_{\rho(b)}^\alpha
L_v[\tilde{y}](t)+{_a}\Delta_t^\beta L_w[\tilde{y}](t)=0
\end{equation}
holds for all $t\in\mathbb{T}^{\kappa^2}$, where the operator
$[\cdot]$ is defined by
$$
[y](s) =(s,y^{\sigma}(s),{_a}\Delta_s^\alpha y(s),{_s}\Delta_b^\beta
y(s)) .
$$
\end{theorem}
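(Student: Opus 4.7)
The plan is to proceed in the classical variational style: perturb $\tilde{y}$ by an admissible variation $\eta$ with $\eta(a)=\eta(b)=0$, differentiate the resulting one-parameter family $\Phi(\varepsilon)=\mathcal{L}(\tilde{y}+\varepsilon\eta)$ at $\varepsilon=0$, transfer the fractional differences off $\eta$ by fractional summation by parts, and then invoke a discrete fundamental lemma of the calculus of variations.

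First I would use the smoothness hypothesis on $L$ (existence and continuity of the second order partial derivatives) together with linearity of ${}_a\Delta_t^\alpha$ and ${}_t\Delta_b^\beta$ on $\mathcal{F}$ to obtain
\begin{equation*}
\Phi'(0)=\sum_{t=a}^{b-1}\Bigl[L_u[\tilde{y}](t)\,\eta^\sigma(t)+L_v[\tilde{y}](t)\,{}_a\Delta_t^\alpha \eta(t)+L_w[\tilde{y}](t)\,{}_t\Delta_b^\beta \eta(t)\Bigr].
\end{equation*}
Because $\tilde{y}$ is a local minimizer and $\tilde{y}+\varepsilon\eta$ satisfies the same boundary conditions as $\tilde{y}$ for every $\varepsilon$, the necessary condition $\Phi'(0)=0$ holds for every admissible $\eta$.

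Next I would apply Theorem~\ref{Z::teor1} (fractional summation by parts) to the middle sum with $f=L_v[\tilde{y}]$ and $g=\eta$. Since $\eta(a)=\eta(b)=0$, both the isolated terms $f(b-1)g(b)-f(a)g(a)$ and the $g(a)$-factor containing the $\mu$-weighted correction collapse, giving
\begin{equation*}
\sum_{t=a}^{b-1}L_v[\tilde{y}](t)\,{}_a\Delta_t^\alpha \eta(t)=\sum_{t=a}^{b-2}{}_t\Delta_{\rho(b)}^\alpha L_v[\tilde{y}](t)\,\eta^\sigma(t).
\end{equation*}
For the third sum I would derive the dual (right) fractional summation by parts by repeating the argument of Theorem~\ref{Z::teor1}, this time using \eqref{Z::naosei12} to pass $\Delta$ through ${}_t\Delta_b^{-\nu}$, and the standard discrete summation by parts in the other direction; the boundary terms again vanish because $\eta(a)=\eta(b)=0$, yielding
\begin{equation*}
\sum_{t=a}^{b-1}L_w[\tilde{y}](t)\,{}_t\Delta_b^\beta \eta(t)=\sum_{t=a}^{b-2}{}_a\Delta_t^\beta L_w[\tilde{y}](t)\,\eta^\sigma(t).
\end{equation*}
Finally, noting that $\eta^\sigma(b-1)=\eta(b)=0$ lets me rewrite $\sum_{t=a}^{b-1}L_u[\tilde{y}](t)\eta^\sigma(t)=\sum_{t=a}^{b-2}L_u[\tilde{y}](t)\eta^\sigma(t)$, so the identity $\Phi'(0)=0$ becomes
\begin{equation*}
\sum_{t=a}^{b-2}\Bigl[L_u[\tilde{y}](t)+{}_t\Delta_{\rho(b)}^\alpha L_v[\tilde{y}](t)+{}_a\Delta_t^\beta L_w[\tilde{y}](t)\Bigr]\eta^\sigma(t)=0.
\end{equation*}

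To close the proof I would invoke the discrete fundamental lemma: the values $\eta^\sigma(t)=\eta(t+1)$ for $t\in\{a,\dots,b-2\}$ correspond to the free interior values $\eta(a+1),\dots,\eta(b-1)$, which may be chosen arbitrarily subject to the fixed boundary conditions $\eta(a)=\eta(b)=0$. Picking $\eta$ supported at a single interior node isolates each bracket and forces \eqref{Z::EL} at every $t\in\mathbb{T}^{\kappa^2}$. The principal obstacle is the bookkeeping at the endpoints: one must verify carefully that all boundary contributions generated by the two fractional summations by parts actually cancel under $\eta(a)=\eta(b)=0$, since the weights $(t+\mu-a)^{(\mu-1)}$ (and their right-sided analogues) do not vanish and could otherwise survive as spurious terms.
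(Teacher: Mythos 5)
Your proposal is correct and follows essentially the same route as the paper: form $\Phi(\varepsilon)=\mathcal{L}(\tilde{y}+\varepsilon\eta)$, set $\Phi'(0)=0$, handle the $L_v$ term with Theorem~\ref{Z::teor1}, handle the $L_w$ term via \eqref{Z::naosei12} together with the summation-interchange lemma (Lemma~\ref{Z::lem0}) and ordinary summation by parts, and conclude from the arbitrariness of the interior values $\eta^\sigma(t)$. The boundary bookkeeping you flag is exactly the point the paper's computation verifies, and all such terms do cancel under $\eta(a)=\eta(b)=0$.
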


\begin{proof}
Suppose that $\tilde{y}(\cdot)$ is a local minimizer of
$\mathcal{L}(\cdot)$. Let $\eta(\cdot)$ be an arbitrary fixed
admissible variation and define the function
$\Phi:\left(-\frac{\delta}{\|\eta(\cdot)\|},
\frac{\delta}{\|\eta(\cdot)\|}\right)\rightarrow\mathbb{R}$ by
\begin{equation}
\label{Z::fi}
\Phi(\varepsilon)=\mathcal{L}(\tilde{y}(\cdot)+\varepsilon\eta(\cdot)).
\end{equation}
This function has a minimum at $\varepsilon=0$, so we must have
$\Phi'(0)=0$, \textrm{i.e.},
$$\sum_{t=a}^{b-1}\left[L_u[\tilde{y}](t)\eta^\sigma(t)
+L_v[\tilde{y}](t){_a}\Delta_t^\alpha\eta(t)
+L_w[\tilde{y}](t){_t}\Delta_b^\beta\eta(t)\right]=0,$$ which we may
write, equivalently, as
\begin{multline}
\label{Z::rui3} L_u[\tilde{y}](t)\eta^\sigma(t)|_{t=\rho(b)}
+\sum_{t=a}^{b-2}L_u[\tilde{y}](t)\eta^\sigma(t)\\
+\sum_{t=a}^{b-1}L_v[\tilde{y}](t){_a}\Delta_t^\alpha\eta(t)
+\sum_{t=a}^{b-1}L_w[\tilde{y}](t){_t}\Delta_b^\beta\eta(t)=0.
\end{multline}
Using Theorem~\ref{Z::teor1}, and the fact that $\eta(a)=\eta(b)=0$,
we get for the third term in \eqref{Z::rui3} that
\begin{equation}
\label{Z::naosei5}
\sum_{t=a}^{b-1}L_v[\tilde{y}](t){_a}\Delta_t^\alpha\eta(t)
=\sum_{t=a}^{b-2}\left({_t}\Delta_{\rho(b)}^\alpha
L_v[\tilde{y}](t)\right)\eta^\sigma(t).
\end{equation}
Using \eqref{Z::naosei12} it follows that
\begin{equation}
\label{Z::naosei4}
\begin{aligned}
\sum_{t=a}^{b-1}&L_w[\tilde{y}](t){_t}\Delta_b^\beta\eta(t)\\
&=-\sum_{t=a}^{b-1}L_w[\tilde{y}](t)\Delta({_t}\Delta_b^{-\nu}\eta(t))\\
&=-\sum_{t=a}^{b-1}L_w[\tilde{y}](t)\left[{_t}\Delta_{\rho(b)}^{-\nu}\Delta
\eta(t)-\frac{\nu}{\Gamma(\nu+1)}(b+\nu-\sigma(t))^{(\nu-1)}\eta(b)\right]\\
&=-\left(\sum_{t=a}^{b-1}L_w[\tilde{y}](t){_t}\Delta_{\rho(b)}^{-\nu}\Delta
\eta(t)-\frac{\nu\eta(b)}{\Gamma(\nu+1)}\sum_{t=a}^{b-1}(b
+\nu-\sigma(t))^{(\nu-1)}L_w[\tilde{y}](t)\right).
\end{aligned}
\end{equation}
We now use Lemma~\ref{Z::lem0} to get
\begin{equation}
\label{Z::naosei2}
\begin{split}
\sum_{t=a}^{b-1} &
L_w[\tilde{y}](t){_t}\Delta_{\rho(b)}^{-\nu}\Delta
\eta(t)\\
&=\sum_{t=a}^{b-1}L_w[\tilde{y}](t)\Delta\eta(t)+\frac{\nu}{\Gamma(\nu+1)}\sum_{t=a}^{b-2}
L_w[\tilde{y}](t)\sum_{s=\sigma(t)}^{b-1}(s+\nu-\sigma(t))^{(\nu-1)}
\Delta\eta(s)\\
&=\sum_{t=a}^{b-1}L_w[\tilde{y}](t)\Delta\eta(t)
+\frac{\nu}{\Gamma(\nu+1)}\sum_{t=a}^{b-1}\Delta\eta(t)\sum_{s=a}^{t
-1}(t+\nu-\sigma(s))^{(\nu-1)}L_w[\tilde{y}](s)\\
&=\sum_{t=a}^{b-1}\Delta\eta(t){_a}\Delta^{-\nu}_t
L_w[\tilde{y}](t).
\end{split}
\end{equation}
We apply again the usual summation by parts formula, this time to
\eqref{Z::naosei2}, to obtain:
\begin{equation}
\label{Z::naosei3}
\begin{split}
\sum_{t=a}^{b-1} & \Delta\eta(t){_a}\Delta^{-\nu}_t
L_w[\tilde{y}](t)\\
&=\sum_{t=a}^{b-2}\Delta\eta(t){_a}\Delta^{-\nu}_t
L_w[\tilde{y}](t)+(\eta(b)-\eta(\rho(b))){_a}\Delta^{-\nu}_t
L_w[\tilde{y}](t)|_{t=\rho(b)}\\
&=\left[\eta(t){_a}\Delta^{-\nu}_t
L_w[\tilde{y}](t)\right]_{t=a}^{t=b-1}-\sum_{t=a}^{b-2}\eta^\sigma(t)\Delta({_a}\Delta^{-\nu}_t
L_w[\tilde{y}](t))\\
&\qquad +\eta(b){_a}\Delta^{-\nu}_t
L_w[\tilde{y}](t)|_{t=\rho(b)}-\eta(b-1){_a}\Delta^{-\nu}_t
L_w[\tilde{y}](t)|_{t=\rho(b)}\\
&=\eta(b){_a}\Delta^{-\nu}_t
L_w[\tilde{y}](t)|_{t=\rho(b)}-\eta(a){_a}\Delta^{-\nu}_t
L_w[\tilde{y}](t)|_{t=a}-\sum_{t=a}^{b-2}\eta^\sigma(t){_a}\Delta^{\beta}_t
L_w[\tilde{y}](t).
\end{split}
\end{equation}
Since $\eta(a)=\eta(b)=0$ it follows, from \eqref{Z::naosei2} and
\eqref{Z::naosei3}, that
$$
\sum_{t=a}^{b-1}L_w[\tilde{y}](t){_t}\Delta_{\rho(b)}^{-\nu}\Delta
\eta(t)=-\sum_{t=a}^{b-2}\eta^\sigma(t){_a}\Delta^{\beta}_t
L_w[\tilde{y}](t)
$$
and, after inserting in \eqref{Z::naosei4}, that
\begin{equation}\label{Z::naosei6}
\sum_{t=a}^{b-1}L_w[\tilde{y}](t){_t}\Delta_b^\beta\eta(t)
=\sum_{t=a}^{b-2}\eta^\sigma(t){_a}\Delta^{\beta}_t
L_w[\tilde{y}](t).
\end{equation}
By \eqref{Z::naosei5} and \eqref{Z::naosei6} we may write
\eqref{Z::rui3} as
$$
\sum_{t=a}^{b-2}\left[L_u[\tilde{y}](t) +{_t}\Delta_{\rho(b)}^\alpha
L_v[\tilde{y}](t)+{_a}\Delta_t^\beta
L_w[\tilde{y}](t)\right]\eta^\sigma(t)=0.
$$
Since the values of $\eta^\sigma(t)$ are arbitrary for
$t\in\mathbb{T}^{\kappa^2}$, the Euler--Lagrange equation
\eqref{Z::EL} holds along $\tilde{y}$.
\end{proof}

\begin{remark}
If the initial condition $y(a)=A$ is not present (\textrm{i.e.},
$y(a)$ is free), we can use standard techniques to show that the
following supplementary condition must be fulfilled:
\begin{multline}\label{Z::rui1}
-L_v(a)+\frac{\mu}{\Gamma(\mu+1)}\left(\sum_{t=a}^{b-1}(t+\mu-a)^{(\mu-1)}L_v[\tilde{y}](t)\right.\\
\left.-\sum_{t=\sigma(a)}^{b-1}(t+\mu-\sigma(a))^{(\mu-1)}L_v[\tilde{y}](t)\right)+
L_w(a)=0.
\end{multline}
Similarly, if $y(b)=B$ is not present (\textrm{i.e.}, $y(b)$ is
free), the equality
\begin{multline}\label{Z::rui2}
L_u(\rho(b))+L_v(\rho(b))-L_w(\rho(b))\\
+\frac{\nu}{\Gamma(\nu+1)}\Biggl(\sum_{t=a}^{b-1}(b+\nu-\sigma(t))^{(\nu-1)}L_w[\tilde{y}](t)\\
-\sum_{t=a}^{b-2}(\rho(b)+\nu-\sigma(t))^{(\nu-1)}L_w[\tilde{y}](t)\Biggr)=0
\end{multline}
holds. We just note that the first term in \eqref{Z::rui2} arises
from the first term on the left hand side of \eqref{Z::rui3}.
Equalities \eqref{Z::rui1} and \eqref{Z::rui2} are the fractional
discrete-time \emph{natural boundary conditions}.
\end{remark}
The next result is a particular case of our Theorem~\ref{Z::thm0}.

\begin{cor}[The discrete-time Euler--Lagrange equation
-- \textrm{cf.}, \textrm{e.g.}, \cite{CD:Bohner:2004,RD}] If
$\tilde{y}$ is a solution to the problem
\begin{equation}
\label{Z::eq:BPCV:DT}
\begin{gathered}
\mathcal{L}(y(\cdot))=\sum_{t=a}^{b-1}L(t,y(t+1),\Delta y(t)) \longrightarrow \min \\
y(a)=A \, , \quad y(b)=B \, ,
\end{gathered}
\end{equation}
then $L_u(t,\tilde{y}(t+1),\Delta\tilde{y}(t)) -\Delta
L_v(t,\tilde{y}(t+1),\Delta\tilde{y}(t))=0$ for all $t \in \{ a,
\ldots, b-2\}$.
\end{cor}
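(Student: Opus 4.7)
The plan is to recognize the classical discrete variational problem~\eqref{Z::eq:BPCV:DT} as a degenerate instance of the fractional problem (P), and to extract the desired Euler--Lagrange equation by specializing Theorem~\ref{Z::thm0}. Concretely, I would set $\alpha = 1$ and view the Lagrangian of~\eqref{Z::eq:BPCV:DT} as trivially independent of the fourth slot $w$ in the setting of Theorem~\ref{Z::thm0}, so that $L_w \equiv 0$; the parameter $\beta \in (0,1]$ may then be chosen arbitrarily.

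First I would verify that, with $\alpha = 1$, the left fractional difference degenerates to the ordinary forward difference. By Definition~\ref{Z::def1} we have $\mu = 1 - \alpha = 0$, and the convention~\eqref{Z::naosei10}, namely ${}_a\Delta_t^0 = \mathrm{id}$, gives
$${}_a\Delta_t^1 y(t) = \Delta\bigl({}_a\Delta_t^0 y(t)\bigr) = \Delta y(t).$$
Combined with $y^\sigma(t) = y(t+1)$ on $\mathbb{T} = \{a, a+1, \ldots, b\}$ (see Example~\ref{Z::sigma:mu}), the functional of Theorem~\ref{Z::thm0} reduces exactly to the functional of~\eqref{Z::eq:BPCV:DT}.

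Next I would apply the fractional Euler--Lagrange equation~\eqref{Z::EL}, which holds on $\mathbb{T}^{\kappa^2} = \{a, \ldots, b-2\}$. The third summand ${}_a\Delta_t^\beta L_w[\tilde{y}](t)$ vanishes identically because $L_w \equiv 0$. A symmetric computation, using Definition~\ref{Z::def1} for the right fractional difference at $\alpha = 1$, gives ${}_t\Delta_{\rho(b)}^{1} f(t) = -\Delta f(t)$. Substituting into~\eqref{Z::EL} yields
$$L_u(t, \tilde{y}(t+1), \Delta \tilde{y}(t)) - \Delta L_v(t, \tilde{y}(t+1), \Delta \tilde{y}(t)) = 0,$$
which is the desired conclusion.

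The only delicate point is the boundary behavior of the fractional difference operators at $\alpha = 1$: one must interpret Definition~\ref{Z::def1} consistently with the convention~\eqref{Z::naosei10} that extends ${}_a\Delta_t^{-\nu}$ and ${}_t\Delta_b^{-\nu}$ to $\nu = 0$. Once this identification is in place, the corollary is immediate from Theorem~\ref{Z::thm0} and requires no additional calculation.
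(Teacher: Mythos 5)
Your proposal is correct and follows exactly the paper's own route: the paper's proof is the one-line observation ``Follows from Theorem~\ref{Z::thm0} with $\alpha=1$ and a $L$ not depending on $w$.'' You have simply made explicit the details the paper leaves implicit, namely that $\mu=0$ turns ${}_a\Delta_t^{1}$ into $\Delta$ and ${}_t\Delta_{\rho(b)}^{1}$ into $-\Delta$ via the convention~\eqref{Z::naosei10}, and that $L_w\equiv 0$ kills the third term of~\eqref{Z::EL}.
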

\begin{proof}
Follows from Theorem~\ref{Z::thm0} with $\alpha=1$ and a $L$ not
depending on $w$.
\end{proof}

We derive now the second order necessary condition for problem (P),
\textrm{i.e.}, we obtain Legendre's necessary condition for the
fractional difference setting.
\begin{theorem}[The fractional discrete-time Legendre condition]
\label{Z::thm1} If $\tilde{y}\in\mathcal{F}$ is a local minimizer
for problem (P), then the inequality
\begin{multline*}
L_{uu}[\tilde{y}](t)+2L_{uv}[\tilde{y}](t)
+L_{vv}[\tilde{y}](t)+L_{vv}[\tilde{y}](\sigma(t))(\mu-1)^2\\
+\sum_{s=\sigma(\sigma(t))}^{b-1}L_{vv}[\tilde{y}](s)\left(
\frac{\mu(\mu-1)\prod_{i=0}^{s-t-3}(\mu+i+1)}{(s-t)\Gamma(s-t)}\right)^2
+2L_{uw}[\tilde{y}](t)(\nu-1)\\
+2(\nu-1)L_{vw}[\tilde{y}](t)
+2(\mu-1)L_{vw}[\tilde{y}](\sigma(t))+L_{ww}[\tilde{y}](t)(1-\nu)^2\\
+L_{ww}[\tilde{y}](\sigma(t))
+\sum_{s=a}^{t-1}L_{ww}[\tilde{y}](s)\left(\frac{\nu(1-\nu)\prod_{i=0}^{t
-s-2}(\nu+i)}{(\sigma(t)-s)\Gamma(\sigma(t)-s)}\right)^2 \geq 0
\end{multline*}
holds for all $t\in\mathbb{T}^{\kappa^2}$, where
$[\tilde{y}](t)=(t,\tilde{y}^{\sigma}(t),{_a}\Delta_t^\alpha
\tilde{y}(t),{_t}\Delta_b^\beta\tilde{y}(t))$.
\end{theorem}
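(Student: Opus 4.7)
The plan is to mirror the argument of Theorem~\ref{Z::thm0} but at second order, exploiting that the Legendre-type inequality is a \emph{pointwise} condition, obtainable from $\Phi''(0)\ge 0$ by choosing a one-point (``spike'') admissible variation.

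Step one: fix an arbitrary $t^{\ast}\in\mathbb{T}^{\kappa^2}$ and introduce the admissible variation $\eta_{t^{\ast}}\in\mathcal{F}$ defined by $\eta_{t^{\ast}}(\sigma(t^{\ast}))=1$ and $\eta_{t^{\ast}}(s)=0$ for all other $s\in\mathbb{T}$; the boundary restrictions $\eta_{t^{\ast}}(a)=\eta_{t^{\ast}}(b)=0$ hold because $\sigma(t^{\ast})\in\{\sigma(a),\ldots,\rho(b)\}$. Then form $\Phi(\varepsilon)=\mathcal{L}(\tilde{y}+\varepsilon\eta_{t^{\ast}})$ as in \eqref{Z::fi}. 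Since $\tilde{y}$ is a local minimizer, $\Phi$ attains a minimum at $\varepsilon=0$, so $\Phi''(0)\ge 0$.

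Step two: expand $\Phi''(0)$ by the chain rule to obtain
\begin{equation*}
\Phi''(0)=\sum_{t=a}^{b-1}\Bigl[L_{uu}(\eta^\sigma)^{2}+2L_{uv}\eta^\sigma{_a}\Delta_t^\alpha\eta+2L_{uw}\eta^\sigma{_t}\Delta_b^\beta\eta+L_{vv}({_a}\Delta_t^\alpha\eta)^2+2L_{vw}({_a}\Delta_t^\alpha\eta)({_t}\Delta_b^\beta\eta)+L_{ww}({_t}\Delta_b^\beta\eta)^{2}\Bigr],
\end{equation*}
where every $L_{**}$ is evaluated along $[\tilde{y}](t)$. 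This is the analogue, in the present setting, of the expression in Lemma~\ref{lema1}; the continuity of the second partials of $L$ ensures the termwise differentiation is legitimate.

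Step three is the heart of the argument: plug $\eta_{t^{\ast}}$ into the quadratic form. Using \eqref{Z::seila1} and Definition~\ref{Z::def1}, I will compute the three ``profiles'' explicitly: $\eta^\sigma_{t^{\ast}}(t)$ is supported only at $t=t^{\ast}$; ${_a}\Delta_t^\alpha\eta_{t^{\ast}}(t)$ vanishes for $t<t^{\ast}$, equals $1$ at $t=t^{\ast}$, equals $\mu-1$ at $t=\sigma(t^{\ast})$, and for $t=s>\sigma(t^{\ast})$ reduces to $\dfrac{\mu(\mu-1)\prod_{i=0}^{s-t^{\ast}-2}(\mu+i)}{(s-t^{\ast})\,\Gamma(s-t^{\ast})}$ (after simplifying the falling-factorial quotient via \eqref{powerZ} and \eqref{Z::naosei9}); symmetrically, ${_t}\Delta_b^\beta\eta_{t^{\ast}}(t)$ vanishes for $t>\sigma(t^{\ast})$, equals $1-\nu$ at $t=t^{\ast}$, equals $1$ at $t=\sigma(t^{\ast})$, and for $t=s<t^{\ast}$ equals $\dfrac{\nu(1-\nu)\prod_{i=0}^{t^{\ast}-s-2}(\nu+i)}{(\sigma(t^{\ast})-s)\,\Gamma(\sigma(t^{\ast})-s)}$. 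Substituting these profiles into $\Phi''(0)$ collapses the five single-index sums to finitely many surviving terms, exactly matching the left-hand side of the stated inequality with the theorem's free variable $t$ replaced by $t^{\ast}$; dividing by the square of the (nonzero) spike weight and using $\Phi''(0)\ge 0$ gives the Legendre-type condition.

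The main obstacle is bookkeeping: the ``tail'' contributions of ${_a}\Delta_t^\alpha\eta_{t^{\ast}}$ for $t>\sigma(t^{\ast})$ and of ${_t}\Delta_b^\beta\eta_{t^{\ast}}$ for $t<t^{\ast}$ produce the two long $L_{vv}$ and $L_{ww}$ sums in the statement, and one must carefully combine the boundary values (the $t=t^{\ast}$ and $t=\sigma(t^{\ast})$ entries) across all six terms of the quadratic form to get the mixed coefficients $2L_{uv}$, $2(\nu-1)L_{uw}$, $2(\nu-1)L_{vw}[\tilde{y}](t)$, $2(\mu-1)L_{vw}[\tilde{y}](\sigma(t))$. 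This is a purely computational step once the three profiles above are established, but the indices and factorial simplifications must be tracked with care.
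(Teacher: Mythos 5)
Your proposal is exactly the paper's proof: the same one--point variation $\eta$ supported at $\sigma(t^{\ast})$, the same expansion of $\Phi''(0)\ge 0$ as a quadratic form in $(\eta^{\sigma},{_a}\Delta_t^{\alpha}\eta,{_t}\Delta_b^{\beta}\eta)$, and the same collapse of that form by computing the three profiles explicitly. Two of the profile values you quote need correcting when you carry out the computation: one finds ${_{t^{\ast}}}\Delta_b^{\beta}\eta(t^{\ast})=\nu-1$ rather than $1-\nu$ (the sign matters in the $2L_{uw}$ and $2L_{vw}$ cross terms, though not in the squared $L_{ww}$ term), and the tail value of ${_a}\Delta_s^{\alpha}\eta$ for $s>\sigma(t^{\ast})$ is $\frac{(\mu-1)\prod_{i=0}^{s-t^{\ast}-2}(\mu+i)}{(s-t^{\ast})\,\Gamma(s-t^{\ast})}$, i.e.\ without your extra leading factor $\mu$, which is the same quantity as the expression $\frac{\mu(\mu-1)\prod_{i=0}^{s-t^{\ast}-3}(\mu+i+1)}{(s-t^{\ast})\,\Gamma(s-t^{\ast})}$ appearing in the statement.
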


\begin{proof}
By the hypothesis of the theorem, and letting $\Phi$ be as in
\eqref{Z::fi}, we get
\begin{equation}
\label{Z::des0} \Phi''(0)\geq 0
\end{equation}
for an arbitrary admissible variation $\eta(\cdot)$. Inequality
\eqref{Z::des0} is equivalent to
\begin{multline*}
\sum_{t=a}^{b-1}\left[L_{uu}[\tilde{y}](t)(\eta^\sigma(t))^2
+2L_{uv}[\tilde{y}](t)\eta^\sigma(t){_a}\Delta_t^\alpha\eta(t)
+L_{vv}[\tilde{y}](t)({_a}\Delta_t^\alpha\eta(t))^2\right.\\
\left.+2L_{uw}[\tilde{y}](t)\eta^\sigma(t){_t}\Delta_b^\beta\eta(t)
+2L_{vw}[\tilde{y}](t){_a}\Delta_t^\alpha\eta(t){_t}\Delta_b^\beta\eta(t)
+L_{ww}[\tilde{y}](t)({_t}\Delta_b^\beta\eta(t))^2\right]\geq 0.
\end{multline*}
Let $\tau\in\mathbb{T}^{\kappa^2}$ be arbitrary and define
$\eta:\mathbb{T}\rightarrow\mathbb{R}$ by
\[ \eta(t) = \left\{ \begin{array}{ll}
1 & \mbox{if $t=\sigma(\tau)$};\\
0 & \mbox{otherwise}.\end{array} \right. \] It follows that
$\eta(a)=\eta(b)=0$, \textrm{i.e.}, $\eta$ is an admissible
variation. Using \eqref{Z::seila0} (note that $\eta(a)=0$), we get
\begin{equation*}
\begin{split}
\sum_{t=a}^{b-1}&\left[L_{uu}[\tilde{y}](t)(\eta^\sigma(t))^2
+2L_{uv}[\tilde{y}](t)\eta^\sigma(t){_a}\Delta_t^\alpha\eta(t)
+L_{vv}[\tilde{y}](t)({_a}\Delta_t^\alpha\eta(t))^2\right]\\
&=\sum_{t=a}^{b-1}\left\{L_{uu}[\tilde{y}](t)(\eta^\sigma(t))^2\right.\\
&\qquad \left.
+2L_{uv}[\tilde{y}](t)\eta^\sigma(t)\left[\Delta\eta(t)
+\frac{\mu}{\Gamma(\mu+1)}\sum_{s=a}^{t-1}(t
+\mu-\sigma(s))^{(\mu-1)}\Delta\eta(s)\right]\right.\\
&\qquad \left. +L_{vv}[\tilde{y}](t)\left(\Delta\eta(t)
+\frac{\mu}{\Gamma(\mu+1)}\sum_{s=a}^{t-1}(t
+\mu-\sigma(s))^{(\mu-1)}\Delta\eta(s)\right)^2\right\}\\
&= L_{uu}[\tilde{y}](\tau)+2L_{uv}[\tilde{y}](\tau)+L_{vv}[\tilde{y}](\tau)\\
&\qquad
+\sum_{t=\sigma(\tau)}^{b-1}L_{vv}[\tilde{y}](t)\left(\Delta\eta(t)
+\frac{\mu}{\Gamma(\mu+1)}\sum_{s=a}^{t-1}(t+\mu-\sigma(s))^{(\mu-1)}\Delta\eta(s)\right)^2.
\end{split}
\end{equation*}
Observe that
\begin{multline*}
\sum_{t=\sigma(\sigma(\tau))}^{b
-1}L_{vv}[\tilde{y}](t)\left(\frac{\mu}{\Gamma(\mu+1)}\sum_{s=a}^{t-1}(t
+\mu-\sigma(s))^{(\mu-1)}\Delta\eta(s)\right)^2
+ L_{vv}(\sigma(\tau))(-1+\mu)^2 \\
= \sum_{t=\sigma(\tau)}^{b-1}L_{vv}[\tilde{y}](t)\left(\Delta\eta(t)
+\frac{\mu}{\Gamma(\mu+1)}\sum_{s=a}^{t-1}(t+\mu-\sigma(s))^{(\mu-1)}\Delta\eta(s)\right)^2
\, .
\end{multline*}
We show next that
\begin{multline*}
\sum_{t=\sigma(\sigma(\tau))}^{b-1}L_{vv}[\tilde{y}](t)\left(\frac{\mu}{\Gamma(\mu
+1)}\sum_{s=a}^{t-1}(t+\mu-\sigma(s))^{(\mu-1)}\Delta\eta(s)\right)^2\\
=\sum_{t=\sigma(\sigma(\tau))}^{b-1}L_{vv}[\tilde{y}](t)\left(\frac{\mu(\mu-1)\prod_{i=0}^{t
-\tau-3}(\mu+i+1)}{(t-\tau)\Gamma(t-\tau)}\right)^2.
\end{multline*}
Let $t\in[\sigma(\sigma(\tau)),b-1]\cap\mathbb{Z}$. Then,
\begin{equation*}
\begin{split}
&\frac{\mu}{\Gamma(\mu+1)}\sum_{s=a}^{t-1}(t+\mu-\sigma(s))^{(\mu-1)}\Delta\eta(s)\\
&\quad
=\frac{\mu}{\Gamma(\mu+1)}\left[\sum_{s=a}^{\tau}(t+\mu-\sigma(s))^{(\mu-1)}\Delta\eta(s)
+\sum_{s=\sigma(\tau)}^{t-1}(t+\mu-\sigma(s))^{(\mu-1)}\Delta\eta(s)\right]\\
&\quad =\frac{\mu}{\Gamma(\mu+1)}\left[(t+\mu-\sigma(\tau))^{(\mu-1)}
-(t+\mu-\sigma(\sigma(\tau)))^{(\mu-1)}\right]\\
&\quad=\frac{\mu}{\Gamma(\mu+1)}\left[\frac{\Gamma(t+\mu-\sigma(\tau)+1)}{\Gamma(t
+\mu-\sigma(\tau)+1-(\mu-1))}-\frac{\Gamma(t+\mu-\sigma(\sigma(\tau))
+1)}{\Gamma(t+\mu-\sigma(\sigma(\tau))+1-(\mu-1))}\right]
\end{split}
\end{equation*}
\begin{equation}
\label{Z::rui10}
\begin{split}
&\quad=\frac{\mu}{\Gamma(\mu+1)}\left[\frac{\Gamma(t+\mu-\tau)}{\Gamma(t-\tau+1)}
-\frac{\Gamma(t-\tau+\mu-1)}{\Gamma(t-\tau)}\right]\\
&\quad=\frac{\mu}{\Gamma(\mu+1)}\left[\frac{(t+\mu-\tau-1)\Gamma(t+\mu-\tau-1)}{(t
-\tau)\Gamma(t-\tau)}-\frac{(t-\tau)\Gamma(t-\tau+\mu-1)}{(t-\tau)\Gamma(t-\tau)}\right]\\
&\quad=\frac{\mu}{\Gamma(\mu+1)}\frac{(\mu-1)\Gamma(t-\tau+\mu-1)}{(t-\tau)\Gamma(t-\tau)}\\
&\quad=\frac{\mu(\mu-1)\prod_{i=0}^{t-\tau-3}(\mu+i+1)}{(t-\tau)\Gamma(t-\tau)},
\end{split}
\end{equation}
which proves our claim. Observe that we can write
${_t}\Delta_b^\beta\eta(t)=-{_t}\Delta_{\rho(b)}^{-\nu}\Delta
\eta(t)$ since $\eta(b)=0$. It is not difficult to see that the
following equality holds:
$$\sum_{t=a}^{b-1}2L_{uw}[\tilde{y}](t)\eta^\sigma(t){_t}\Delta_b^\beta\eta(t)
=-\sum_{t=a}^{b-1}2L_{uw}[\tilde{y}](t)\eta^\sigma(t){_t}\Delta_{\rho(b)}^{-\nu}\Delta
\eta(t)=2L_{uw}[\tilde{y}](\tau)(\nu-1).$$ Moreover,
\begin{equation*}
\begin{split}
\sum_{t=a}^{b-1} & 2L_{vw}[\tilde{y}](t){_a}\Delta_t^\alpha\eta(t){_t}\Delta_b^\beta\eta(t)\\
&=-2\sum_{t=a}^{b-1}L_{vw}[\tilde{y}](t)\left\{\left(\Delta\eta(t)+\frac{\mu}{\Gamma(\mu+1)}
\cdot\sum_{s=a}^{t-1}(t+\mu-\sigma(s))^{(\mu-1)}\Delta\eta(s)\right)\right.\\
& \qquad \left. \cdot \left[\Delta\eta(t)
+\frac{\nu}{\Gamma(\nu+1)}\sum_{s=\sigma(t)}^{b-1}(s+\nu-\sigma(t))^{(\nu-1)}\Delta\eta(s)\right]\right\}\\
&=2(\nu-1)L_{vw}[\tilde{y}](\tau)+2(\mu-1)L_{vw}[\tilde{y}](\sigma(\tau)).
\end{split}
\end{equation*}
Finally, we have that
\begin{equation*}
\begin{split}
\sum_{t=a}^{b-1} & L_{ww}[\tilde{y}](t)({_t}\Delta_b^\beta\eta(t))^2\\
&=\sum_{t=a}^{\sigma(\tau)}L_{ww}[\tilde{y}](t)\left[\Delta\eta(t)
+\frac{\nu}{\Gamma(\nu+1)}\sum_{s=\sigma(t)}^{b-1}(s
+\nu-\sigma(t))^{(\nu-1)}\Delta\eta(s)\right]^2\\
&=\sum_{t=a}^{\tau-1}L_{ww}[\tilde{y}](t)\left[\frac{\nu}{\Gamma(\nu
+1)}\sum_{s=\sigma(t)}^{b-1}(s+\nu-\sigma(t))^{(\nu-1)}\Delta\eta(s)\right]^2\\
&\qquad +L_{ww}[\tilde{y}](\tau)(1-\nu)^2+L_{ww}[\tilde{y}](\sigma(\tau))\\
&=\sum_{t=a}^{\tau-1}L_{ww}[\tilde{y}](t)\left[\frac{\nu}{\Gamma(\nu+1)}\left\{(\tau
+\nu-\sigma(t))^{(\nu-1)}-(\sigma(\tau)+\nu-\sigma(t))^{(\nu-1)}\right\}\right]^2\\
&\qquad
+L_{ww}[\tilde{y}](\tau)(1-\nu)^2+L_{ww}[\tilde{y}](\sigma(\tau)).
\end{split}
\end{equation*}
Similarly as we have done in \eqref{Z::rui10}, we obtain that
$$\frac{\nu}{\Gamma(\nu+1)}\left[(\tau+\nu-\sigma(t))^{(\nu-1)}
-(\sigma(\tau)+\nu-\sigma(t))^{(\nu-1)}\right]
=\frac{\nu(1-\nu)\prod_{i=0}^{\tau-t-2}(\nu+i)}{(\sigma(\tau)-t)\Gamma(\sigma(\tau)-t)}.$$
We are done with the proof.
\end{proof}
A trivial corollary of our result gives the discrete-time version of
Legendre's necessary condition.
\begin{cor}[The discrete-time Legendre condition
-- \textrm{cf.}, \textrm{e.g.}, \cite{CD:Bohner:2004,Zeidan2}]
\label{Z::cor:2} If $\tilde{y}$ is a solution to the problem
\eqref{Z::eq:BPCV:DT}, then
$$L_{uu}[\tilde{y}](t)+2L_{uv}[\tilde{y}](t)+L_{vv}[\tilde{y}](t)
    +L_{vv}[\tilde{y}](\sigma(t))\geq 0$$
holds for all $t\in\mathbb{T}^{\kappa^2}$, where
$[\tilde{y}](t)=(t,\tilde{y}^{\sigma}(t),\Delta\tilde{y}(t))$.
\end{cor}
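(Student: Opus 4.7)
The plan is to derive this directly from Theorem~\ref{Z::thm1} by specializing $\alpha=1$ and taking an integrand $L$ that is independent of $w$, exactly as the preceding Euler--Lagrange corollary was obtained from Theorem~\ref{Z::thm0}.

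First I would set $\alpha=1$, which forces $\mu = 1-\alpha = 0$. Under this choice the left fractional difference reduces to the classical forward difference: since ${_a}\Delta_t^0 f = f$ by \eqref{Z::naosei10}, Definition~\ref{Z::def1} gives ${_a}\Delta_t^1 f(t) = \Delta f(t)$. Consequently, problem (P) with a Lagrangian $L=L(t,u,v)$ that does not depend on $w$ collapses onto problem~\eqref{Z::eq:BPCV:DT}, and any local minimizer of \eqref{Z::eq:BPCV:DT} is a local minimizer of the corresponding (P) so Theorem~\ref{Z::thm1} applies.

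Next I would read off the surviving terms in the inequality of Theorem~\ref{Z::thm1}. Any term carrying a factor of $\mu$ vanishes: the long sum $\sum_{s=\sigma(\sigma(t))}^{b-1} L_{vv}[\tilde{y}](s)\bigl(\mu(\mu-1)\prod_{i}(\mu+i+1)/[(s-t)\Gamma(s-t)]\bigr)^2$ drops out because of the $\mu^2$ in front, and the coefficient of $L_{vv}[\tilde{y}](\sigma(t))$ simplifies to $(\mu-1)^2=1$. Every term containing a partial derivative of $L$ with respect to $w$ vanishes since $L$ is independent of $w$: this eliminates $L_{uw}$, $L_{vw}$, $L_{ww}$ together with the trailing $\nu$-dependent sum, regardless of what value $\beta$ (hence $\nu$) is formally assigned.

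What remains is precisely
\[
L_{uu}[\tilde{y}](t)+2L_{uv}[\tilde{y}](t)+L_{vv}[\tilde{y}](t)+L_{vv}[\tilde{y}](\sigma(t))\geq 0
\]
for all $t\in\mathbb{T}^{\kappa^2}$, with $[\tilde{y}](t)=(t,\tilde{y}^\sigma(t),\Delta\tilde{y}(t))$, which is the claimed Legendre condition. I do not expect a real obstacle; the proof is a purely mechanical specialization of Theorem~\ref{Z::thm1}. The only point that merits a quick check is that the factor $\mu^2$ truly multiplies every summand of the long $L_{vv}$ sum, so that the sum drops out identically when $\mu=0$, and that no $w$-free term inadvertently acquires a $\nu$-dependent tail.
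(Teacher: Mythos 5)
Your proposal is correct and follows exactly the paper's own proof: the paper also obtains this corollary by taking $\alpha=1$ (so $\mu=0$) and an $L$ independent of $w$ in Theorem~\ref{Z::thm1}. Your additional verification that the $\mu^2$ factor kills the long $L_{vv}$ sum and that all $w$-derivative terms vanish is exactly the ``follows immediately'' step the paper leaves implicit.
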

\begin{proof}
We consider problem (P) with $\alpha=1$ and $L$ not depending on
$w$. The choice $\alpha=1$ implies $\mu=0$, and the result follows
immediately from Theorem~\ref{Z::thm1}.
\end{proof}


\section{Examples}
\label{Z::sec2}

In this section we present three illustrative examples. The results
were obtained using the open source Computer Algebra System
\textsf{Maxima}.\footnote{\url{http://maxima.sourceforge.net}} All
computations were done running \textsf{Maxima} on an
Intel$\circledR$ Core$^{TM}$2 Duo, CPU of 2.27GHz with 3Gb of RAM.
Our \textsf{Maxima} definitions are given in Appendix~\ref{Z::Maxima}.

\begin{example}
\label{Z::ex:1} Let us consider the following problem:
\begin{equation}
\label{Z::eq:ex1} J_{\alpha}(y)=\sum_{t=0}^{b-1}
\left({_0}\Delta_t^\alpha y(t)\right)^2 \longrightarrow \min \, ,
\quad y(0) = A \, , \quad y(b) = B \, .
\end{equation}
In this case Theorem~\ref{Z::thm1} is trivially satisfied. We obtain
the solution $\tilde{y}$ to our Euler--Lagrange equation
\eqref{Z::EL} for the case $b = 2$ using the computer algebra system
\textsf{Maxima}. Using our \textsf{Maxima} package (see the
definition of the command \texttt{extremal} in Appendix~\ref{Z::Maxima}) we do
\begin{verbatim}
         L1:v^2$
         extremal(L1,0,2,A,B,alpha,alpha);
\end{verbatim}
to obtain (2 seconds)
\begin{equation}
\label{Z::eq:sol:ex1} \tilde{y}(1) = \frac{2\,\alpha\,B+\left(
{\alpha}^{3}-{\alpha}^{2}+2\,\alpha\right) \,A}{2\,{\alpha}^{2}+2}
\, .
\end{equation}
For the particular case $\alpha = 1$ the equality
\eqref{Z::eq:sol:ex1} gives $\tilde{y}(1) = \frac{A+B}{2}$, which
coincides with the solution to the (non-fractional) discrete problem
\begin{equation*}
\sum_{t=0}^{1} \left(\Delta y(t)\right)^2 = \sum_{t=0}^{1}
\left(y(t+1)-y(t)\right)^2 \longrightarrow \min \, , \quad y(0) = A
\, , \quad y(2) = B \, .
\end{equation*}
Similarly, we can obtain exact formulas of the extremal on bigger
intervals (for bigger values of $b$). For example, the solution of
problem \eqref{Z::eq:ex1} with $b = 3$ is (35 seconds)
\begin{equation*}
\begin{split}
\tilde{y}(1) &= \frac{\left( 6\,{\alpha}^{2}+6\,\alpha\right)
\,B+\left( 2\,{\alpha}^{5}
+2\,{\alpha}^{4}+10\,{\alpha}^{3}-2\,{\alpha}^{2} +12\,\alpha\right)
\,A}{3\,{\alpha}^{4}+6\,{\alpha}^{3}
+15\,{\alpha}^{2}+12} \, ,\\
\tilde{y}(2) &= \frac{\left(
12\,{\alpha}^{3}+12\,{\alpha}^{2}+24\,\alpha\right) \,B+\left(
{\alpha}^{6}+{\alpha}^{5}+7\,{\alpha}^{4}-{\alpha}^{3}+4\,{\alpha}^{2}+12\,\alpha\right)
\,A}{6\,{\alpha}^{4}+12\,{\alpha}^{3}+30\,{\alpha}^{2}+24} \, ;
\end{split}
\end{equation*}
and the solution of problem \eqref{Z::eq:ex1} with $b = 4$ is (72
seconds)
\begin{equation*}
\begin{split}
\tilde{y}(1) &=
\frac{3\,{\alpha}^{7}+15\,{\alpha}^{6}+57\,{\alpha}^{5}+69\,{\alpha}^{4}
+156\,{\alpha}^{3}-12\,{\alpha}^{2}+144\,\alpha}{\xi} A\\
&\qquad + \frac{24\,{\alpha}^{3}+72\,{\alpha}^{2}+48\,\alpha}{\xi} B \, ,\\
\tilde{y}(2) &=
\frac{{\alpha}^{8}+5\,{\alpha}^{7}+22\,{\alpha}^{6}+32\,{\alpha}^{5}
+67\,{\alpha}^{4}+35\,{\alpha}^{3}+54\,{\alpha}^{2}+72\,\alpha}{\xi} A\\
&\qquad + \frac{24\,{\alpha}^{4}+72\,{\alpha}^{3}+120\,{\alpha}^{2}
+72\,\alpha}{\xi} B \, ,\\
\tilde{y}(3) &=
\frac{{\alpha}^{9}+6\,{\alpha}^{8}+30\,{\alpha}^{7}+60\,{\alpha}^{6}+117\,{\alpha}^{5}
+150\,{\alpha}^{4}-4\,{\alpha}^{3}+216\,{\alpha}^{2}+288\,\alpha}{\zeta} A\\
&\qquad + \frac{72\,{\alpha}^{5}+288\,{\alpha}^{4}+792\,{\alpha}^{3}
+576\,{\alpha}^{2}+864\,\alpha}{\zeta} B \, ,
\end{split}
\end{equation*}
where
\begin{equation*}
\begin{split}
\xi &= 4\,{\alpha}^{6}
+24\,{\alpha}^{5}+88\,{\alpha}^{4}+120\,{\alpha}^{3}
+196\,{\alpha}^{2}+144 \, , \\
\zeta &= 24\,{\alpha}^{6}+144\,{\alpha}^{5}+528\,{\alpha}^{4}
+720\,{\alpha}^{3}+1176\,{\alpha}^{2}+864 \, .
\end{split}
\end{equation*}
Consider now problem \eqref{Z::eq:ex1} with $b = 4$, $A=0$, and
$B=1$. In Table~\ref{Z::tab:1} we show the extremal values
$\tilde{y}(1)$, $\tilde{y}(2)$, $\tilde{y}(3)$, and corresponding
$\tilde{J}_{\alpha}$, for some values of $\alpha$. Our numerical
results show that the fractional extremal converges to the classical
(integer order) extremal when $\alpha$ tends to one. This is
illustrated in Figure~\ref{Z::Fig:0}. The numerical results from
Table~\ref{Z::tab:1} and Figure~\ref{Z::Fig:2} show that for this
problem the smallest value of $\tilde{J}_{\alpha}$,
$\alpha\in]0,1]$, occur for $\alpha=1$ (\textrm{i.e.}, the smallest
value of $\tilde{J}_{\alpha}$ occurs for the classical
non-fractional case).
\begin{figure}[htp]
\begin{center}
\includegraphics[scale=0.6]{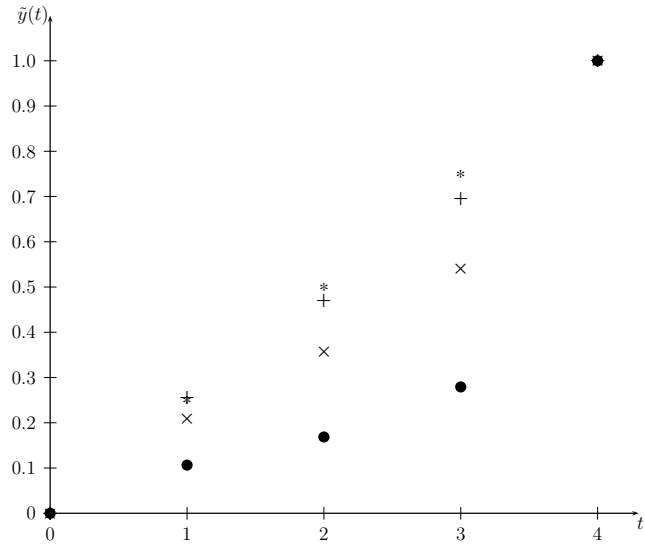}
  \caption{Extremal $\tilde{y}(t)$ of Example~\ref{Z::ex:1}
  with $b = 4$, $A=0$, $B=1$, and different $\alpha$'s
  ($\bullet$: $\alpha=0.25$; $\times$: $\alpha=0.5$;
  $+$: $\alpha=0.75$; $\ast$: $\alpha=1$).}\label{Z::Fig:0}
\end{center}
\end{figure}
{\small
\begin{table}[!htbp]
  \centering
  \begin{tabular}{|c|c|c|c|c|}
    \hline
    $\alpha$ & $\tilde{y}(1)$ & $\tilde{y}(2)$ & $\tilde{y}(3)$ & $\tilde{J}_{\alpha}$\\
    \hline
    0.25 & 0.10647146897355 & 0.16857982587479 & 0.2792657904952 & 0.90855653524095 \\
    \hline
    0.50 & 0.20997375328084 & 0.35695538057743 & 0.54068241469816 & 0.67191601049869 \\
    \hline
    0.75 & 0.25543605027861 & 0.4702345471038 & 0.69508876506414 & 0.4246209666969\\
    \hline
    1 & 0.25 & 0.5 & 0.75 & 0.25\\
    \hline
  \end{tabular}
\caption{The extremal values $\tilde{y}(1)$, $\tilde{y}(2)$ and
$\tilde{y}(3)$ of problem \eqref{Z::eq:ex1} with $b = 4$, $A=0$, and
$B=1$ for different $\alpha$'s.}\label{Z::tab:1}
\end{table}
}
\begin{figure}[htp]
\begin{center}
\includegraphics[scale=0.6]{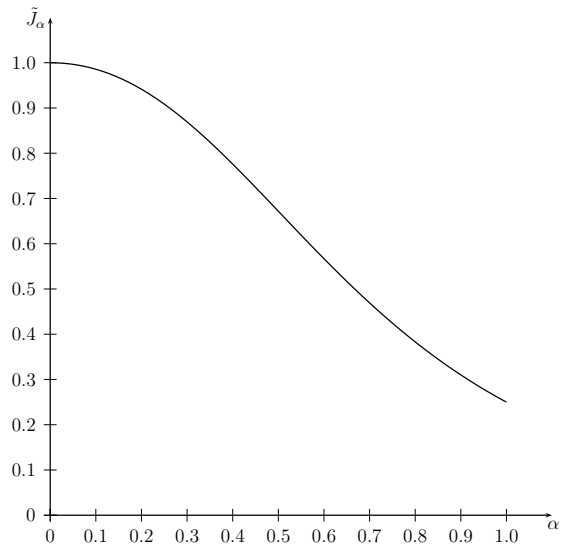}
\caption{Function $\tilde{J}_{\alpha}$ of Example~\ref{Z::ex:1} with
$b = 4$, $A=0$, and $B=1$.}\label{Z::Fig:2}
\end{center}
\end{figure}
\end{example}

\begin{example}
\label{Z::ex:2} In this example we generalize problem
\eqref{Z::eq:ex1} to
\begin{equation}
\label{Z::eq:ex2}
 J_{\alpha,\beta}=\sum_{t=0}^{b-1} \gamma_1
\Bigl({_0}\Delta_t^\alpha y(t)\Bigr)^2 + \gamma_2
\Bigl({_t}\Delta_b^\beta y(t)\Bigr)^2 \longrightarrow \min \, ,
\quad y(0) = A \, , \quad y(b) = B \, .
\end{equation}
As before, we solve the associated Euler--Lagrange equation
\eqref{Z::EL} for the case $b = 2$ with the help of our
\textsf{Maxima} package (35 seconds):
\begin{verbatim}
         L2:(gamma[1])*v^2+(gamma[2])*w^2$
         extremal(L2,0,2,A,B,alpha,beta);
\end{verbatim}
\begin{equation*}
\tilde{y}(1) =
\frac{\left(2\,\gamma_{2}\,\beta+\gamma_{1}\,{\alpha}^{3}-\gamma_{1}\,{\alpha}^{2}
+2\,\gamma_{1}\,\alpha\right)
\,A+\left(\gamma_{2}\,{\beta}^{3}-\gamma_{2}\,{\beta}^{2}
+2\,\gamma_{2}\,\beta+2\,\gamma_{1}\,\alpha\right)
\,B}{2\,\gamma_{2}\,{\beta}^{2}+2\,
\gamma_{1}\,{\alpha}^{2}+2\,\gamma_{2}+2\,\gamma_{1}} \, .
\end{equation*}
Consider now problem \eqref{Z::eq:ex2} with $\gamma_1=\gamma_2=1$,
$b=2$, $A=0$, $B=1$, and $\beta=\alpha$. In Table~\ref{Z::tab:2} we
show the values of $\tilde{y}(1)$ and $\tilde{J}_{\alpha} :=
J_{\alpha,\alpha}(\tilde{y}(1))$ for some values of $\alpha$. We
concluded, numerically, that the fractional extremal $\tilde{y}(1)$
tends to the classical (non-fractional) extremal when $\alpha$ tends
to one. Differently from Example~\ref{Z::ex:1}, the smallest value
of $\tilde{J}_{\alpha}$, $\alpha\in]0,1]$, does not occur here for
$\alpha=1$ (see Figure~\ref{Z::Fig:4}). The smallest value of
$\tilde{J}_{\alpha}$, $\alpha\in]0,1]$, occurs for
$\alpha=0.61747447161482$. {\small
\begin{table}[!htbp]
  \centering
  \begin{tabular}{|c|c|c|}
    \hline
    $\alpha$ & $\tilde{y}(1)$  & $\tilde{J}_{\alpha}$\\
    \hline
    0.25 & 0.22426470588235 &  0.96441291360294 \\
    \hline
    0.50 & 0.375 & 0.9140625 \\
    \hline
    0.75 & 0.4575 &  0.91720703125\\
    \hline
    1 & 0.5 & 1\\
    \hline
  \end{tabular}
  \caption{The extremal $\tilde{y}(1)$ of problem \eqref{Z::eq:ex2}
  for different values of $\alpha$
  ($\gamma_1=\gamma_2=1$, $b=2$, $A=0$, $B=1$, and $\beta = \alpha$).}\label{Z::tab:2}
\end{table}
}
\begin{figure}[htp]
\begin{center}
\includegraphics[scale=0.6]{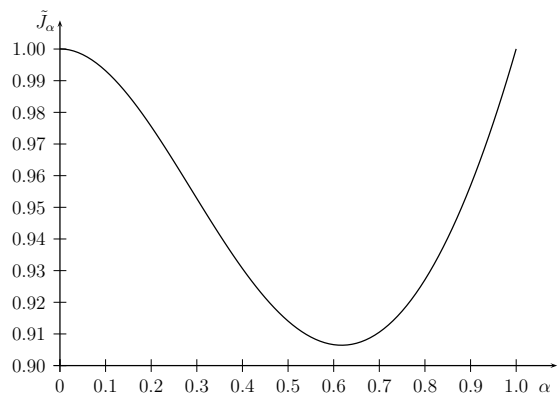}
  \caption{Function $\tilde{J}_{\alpha}$ of Example~\ref{Z::ex:2}
with $\gamma_1=\gamma_2=1$, $b=2$, $A=0$, $B=1$, and $\beta =
\alpha$.}\label{Z::Fig:4}
\end{center}
  \end{figure}
\end{example}

\begin{example}
\label{Z::ex:3} Our last example is a discrete version of the
fractional continuous problem \cite[Example~2]{agr2}:
\begin{equation}
\label{Z::eq:ex3} J_{\alpha}=\sum_{t=0}^{1}
\frac{1}{2}\left({_0}\Delta_t^\alpha y(t)\right)^2-y^{\sigma}(t)
\longrightarrow \min \, , \quad y(0) = 0 \, , \quad y(2) = 0 \, .
\end{equation}
The Euler--Lagrange extremal of \eqref{Z::eq:ex3} is easily obtained
with our \textsf{Maxima} package (4 seconds):
\begin{verbatim}
         L3:(1/2)*v^2-u;$
         extremal(L3,0,2,0,0,alpha,beta);
\end{verbatim}
\begin{equation}
\label{Z::eq:sol:ex3} \tilde{y}(1) =  \frac{1}{{\alpha}^{2}+1} \, .
\end{equation}
For the particular case $\alpha = 1$ the equality
\eqref{Z::eq:sol:ex3} gives $\tilde{y}(1) = \frac{1}{2}$, which
coincides with the solution to the non-fractional discrete problem
\begin{gather*}
\sum_{t=0}^{1} \frac{1}{2}\left(\Delta y(t)\right)^2-y^{\sigma}(t) =
\sum_{t=0}^{1} \frac{1}{2}\left(y(t+1)-y(t)\right)^2-y(t+1)
\longrightarrow \min \, , \\
y(0) = 0 \, , \quad y(2) = 0 \, .
\end{gather*}
In Table~\ref{Z::tab:3} we show the values of $\tilde{y}(1)$ and
$\tilde{J}_{\alpha}$ for some $\alpha$'s. As seen in
Figure~\ref{Z::Fig:6}, for $\alpha=1$ one gets the maximum value of
$\tilde{J}_{\alpha}$, $\alpha\in]0,1]$. {\small
\begin{table}[!htbp]
  \centering
  \begin{tabular}{|c|c|c|}
    \hline
    $\alpha$ & $\tilde{y}(1)$  & $\tilde{J}_{\alpha}$\\
    \hline
    0.25 & 0.94117647058824 &  -0.47058823529412 \\
    \hline
    0.50 & 0.8 & -0.4 \\
    \hline
    0.75 & 0.64 &  -0.32\\
    \hline
    1 & 0.5 & -0.25\\
    \hline
  \end{tabular}
  \caption{Extremal values $\tilde{y}(1)$ of \eqref{Z::eq:ex3}
  for different $\alpha$'s}\label{Z::tab:3}
\end{table}
}
\begin{figure}[!htbp]
\begin{center}
\includegraphics[scale=0.6]{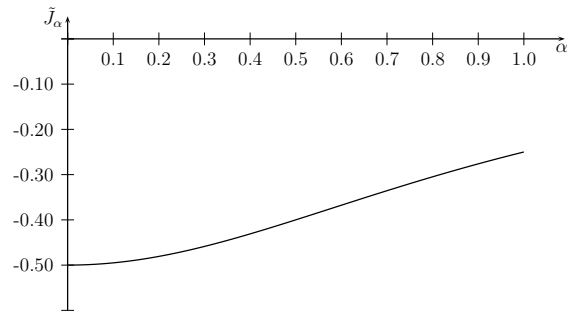}
\caption{Function $\tilde{J}_{\alpha}$ of
Example~\ref{Z::ex:3}.}\label{Z::Fig:6}
\end{center}
\end{figure}
\end{example}


\section{Conclusion}
\label{Z::sec:conc}

In this chapter we introduce the study of fractional discrete-time
problems of the calculus of variations of order $\alpha$,
$0 < \alpha \le 1$, with left and right discrete operators
of Riemann--Liouville type. For $\alpha = 1$
we obtain the classical discrete-time results
of the calculus of variations \cite{book:DCV}.

Main results of the chapter include
a fractional summation by parts formula (Theorem~\ref{Z::teor1}),
a fractional discrete-time Euler--Lagrange equation (Theorem~\ref{Z::thm0}),
transversality conditions \eqref{Z::rui1} and \eqref{Z::rui2},
and a fractional discrete-time Legendre condition (Theorem~\ref{Z::thm1}).
From the analysis of the results obtained from computer experiments,
we conclude that when the value of $\alpha$ approaches one,
the optimal value of the fractional discrete functional converges
to the optimal value of the classical (non-fractional) discrete problem.
On the other hand, the value of $\alpha$ for which the functional
attains its minimum varies with the concrete problem under consideration.


\section{State of the Art}

The discrete-time calculus is also a very important tool in
practical applications and in the modeling of real phenomena.
Therefore, it is not a surprise that fractional discrete calculus
is recently under strong development (see, \textrm{e.g.}, 
\cite{B:J:06, 7, Cresson:Frederico:Torres, Goodrich1, Goodrich2} and references therein).
There are some recent theses~\cite{Holm,Sengul} with results related with discrete calculus 
on time scale $\mathbb{T}=\left\{a,a+1,a+2,\ldots\right\}$. Thesis~\cite{Sengul} links the use 
of fractional difference equations with tumor growth modeling, and addresses a very important question: 
by changing the order of the difference equations from integers to fractional, in which conditions, 
we are able to provide more accurate models for real world problems?
For more on the subject see~\cite{sofia}.\\\\
In this chapter we use fractional difference operators of Riemann--Liouville type. 
However, there are some results related with fractional difference operators in Caputo sense 
\cite{Anastassiou2} like in \cite{Chen}, where the existence of solutions 
for IVP involving nonlinear fractional difference equations is discussed.
\\\\
The results of this chapter are published in~\cite{comNuno:Rui:Z} and were presented 
by the author at the Workshop on Control, Nonsmooth Analysis and Optimization, 
Celebrating Francis Clarke's and Richard Vinter's 60th Birthday, Porto, Portugal, 
May 4--8, 2009 in a contributed talk entitled {\it Necessary Optimality Conditions 
for Fractional Difference Problems of the Calculus of Variations}.

\clearpage{\thispagestyle{empty}\cleardoublepage}


\chapter{Fractional Variational Problems in $\mathbb{T}=(\lowercase{h}\mathbb{Z})_{\lowercase{a}}$}
\label{chap4}


We introduce a discrete-time fractional calculus of variations on
the time scale $(h\mathbb{Z})_a$, $h > 0$. First and second order
necessary optimality conditions are established. Examples
illustrating the use of the new Euler-Lagrange and Legendre type
conditions are given.

\section{Introduction}
\label{hZ::int}

Although the fractional
Euler--Lagrange equations are obtained in a similar manner as in the
standard variational calculus \cite{R:N:H:M:B:07}, some classical
results are extremely difficult to be proved in a fractional
context. This explains, for example, why a fractional Legendre type
condition is absent from the literature of fractional variational
calculus. In this chapter we give a first result in this direction
(\textrm{cf.} Theorem~\ref{hZ::thm1}).

Despite its importance in applications, less is known for
discrete-time fractional systems \cite{R:N:H:M:B:07}.
Our objective is proceed to develop the
theory of \emph{fractional difference calculus}, namely, we
introduce the concept of left and right fractional sum/difference
(\textrm{cf.} Definition~\ref{hZ::def0}). In
Section~\ref{hZ::sec0} we introduce notations, we give necessary
definitions, and prove some preliminary results needed in the
sequel. Main results of the paper appear in Section~\ref{hZ::sec1}: we
prove a fractional formula of $h$-summation by parts
(Theorem~\ref{hZ::teor1}), and necessary optimality conditions of
first and second order (Theorems~\ref{hZ::thm0} and \ref{hZ::thm1},
respectively) for the proposed $h$-fractional problem of the
calculus of variations \eqref{hZ::naosei7}. Before the end of the chapter we present
in Section~\ref{hZ::sec2} some illustrative examples and give some conclusions 
in Section~\ref{hZ::sec:conc} and the state of the art in Section~\ref{hZ::sec:state art}.


\section{Preliminaries}
\label{hZ::sec0}

One way to approach the Riemann-Liouville fractional calculus is
through the theory of linear differential equations \cite{Podlubny}.
Miller and Ross \cite{Miller} use an analogous methodology to
introduce fractional discrete operators for the case
$\mathbb{T}=\mathbb{Z}=\{a,a+1,a+2,\ldots\}, a\in\mathbb{R}$. That
was our starting point for the results presented in Chapter
\ref{chap3}. Because the results obtained in the previous chapter
were good, we start thinking how to generalize them to an arbitrary
time scale. However, since we found several difficulties when we try to
get results for a time-scale where the graininess function is not
constant, here we keep the graininess function constant but not,
necessarily, equal to one and we give a step further: we use the
theory of time scales in order to introduce fractional discrete
operators to the more general case
$\mathbb{T}=(h\mathbb{Z})_a=\{a,a+h,a+2h,\ldots\}$,
$a\in\mathbb{R}$, $h>0$.

Before going further we need to collected more results on the theory of
time-scales.

Until we state the opposite, the following results are true for a arbitrary time scale.

For $n\in \mathbb{N}_0$ and rd-continuous functions
$p_i:\mathbb{T}\rightarrow \mathbb{R}$, $1\leq i\leq n$, let us
consider the $n$th order linear dynamic equation
\begin{equation}
\label{hZ::linearDiffequa} Ly=0\, , \quad \text{ where }
Ly=y^{\Delta^n}+\sum_{i=1}^n p_iy^{\Delta^{n-i}} \, .
\end{equation}
A function $y:\mathbb{T}\rightarrow \mathbb{R}$ is said to be a
solution of equation (\ref{hZ::linearDiffequa}) on $\mathbb{T}$
provided $y$ is $n$ times delta differentiable on
$\mathbb{T}^{\kappa^n}$ and satisfies $Ly(t)=0$ for all
$t\in\mathbb{T}^{\kappa^n}$.

\begin{lemma}\cite[p.~239]{livro:2001}
\label{hZ::8:88 Bohner} If $z=\left(z_1, \ldots,z_n\right) :
\mathbb{T} \rightarrow \mathbb{R}^n$ satisfies for all $t\in
\mathbb{T}^\kappa$
\begin{equation}\label{hZ::5.86}
z^\Delta=A(t)z(t),\qquad\mbox{where}\qquad A=\left(
                                               \begin{array}{ccccc}
                                                 0 & 1 & 0 & \ldots & 0 \\
                                                 \vdots & 0 & 1 & \ddots & \vdots \\
                                                 \vdots &  & \ddots & \ddots & 0 \\
                                                 0 & \ldots & \ldots & 0 & 1 \\
                                                 -p_n & \ldots & \ldots & -p_2 & -p_1 \\
                                               \end{array}
                                             \right)
\end{equation}
then $y=z_1$ is a solution of equation \eqref{hZ::linearDiffequa}.
Conversely, if $y$ solves \eqref{hZ::linearDiffequa} on
$\mathbb{T}$, then $z=\left(y, y^\Delta,\ldots,
y^{\Delta^{n-1}}\right) : \mathbb{T}\rightarrow \mathbb{R}$
satisfies \eqref{hZ::5.86} for all $t\in\mathbb{T}^{\kappa^n}$
\end{lemma}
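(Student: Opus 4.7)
The plan is to prove both implications by unwinding the companion-matrix structure of $A(t)$ component-wise; the statement is essentially a bookkeeping exercise that converts an $n$th order scalar equation into an equivalent first-order system.

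For the forward implication, I would write $z^\Delta = A(t) z(t)$ out coordinate-by-coordinate. The rows $1,\dots,n-1$ of $A$ give
\begin{equation*}
z_1^\Delta = z_2, \quad z_2^\Delta = z_3, \quad \dots, \quad z_{n-1}^\Delta = z_n,
\end{equation*}
so by induction on $i$ one gets $z_{i+1} = y^{\Delta^i}$ for $y := z_1$ and $0 \le i \le n-1$. Differentiating the last such equality once more yields $z_n^\Delta = y^{\Delta^n}$. Substituting these identifications into the last row of the system, namely $z_n^\Delta = -p_n z_1 - p_{n-1} z_2 - \cdots - p_1 z_n$, produces
\begin{equation*}
y^{\Delta^n} = -\sum_{i=1}^n p_i(t)\, y^{\Delta^{n-i}},
\end{equation*}
which is exactly $Ly = 0$. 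One should note that the computation requires $y$ to be $n$ times delta differentiable, which is built into the iterative use of Theorem~\ref{teorema0}(1) (each $z_i^\Delta$ exists on $\mathbb{T}^\kappa$, so $y^{\Delta^i}$ exists on $\mathbb{T}^{\kappa^i}$).

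For the converse, given a solution $y$ of $Ly = 0$ on $\mathbb{T}$, I would simply set $z_i(t) := y^{\Delta^{i-1}}(t)$ for $i = 1,\dots,n$ and verify the system. The first $n-1$ rows of $z^\Delta = A(t) z(t)$ are immediate from the definition, since $z_i^\Delta = y^{\Delta^i} = z_{i+1}$. The last row is obtained by solving $Ly = 0$ for $y^{\Delta^n}$ and rewriting each $y^{\Delta^{n-i}}$ as $z_{n-i+1}$, giving
\begin{equation*}
z_n^\Delta = y^{\Delta^n} = -\sum_{i=1}^{n} p_i(t)\, z_{n-i+1}(t),
\end{equation*}
which matches the last row of $A(t) z(t)$.

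There is no serious obstacle: the only subtle point is the domain bookkeeping (working on $\mathbb{T}^{\kappa^n}$ versus $\mathbb{T}^\kappa$) and checking that the existence of delta derivatives propagates correctly in each direction, which follows from the hypothesis that $z$ is delta differentiable on $\mathbb{T}^\kappa$ in one direction and that $y$ is $n$ times delta differentiable on $\mathbb{T}^{\kappa^n}$ in the other. Everything else is direct substitution.
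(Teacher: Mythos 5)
Your argument is correct and is exactly the standard component-wise unwinding of the companion system; the paper itself states this lemma as a citation to \cite[p.~239]{livro:2001} without reproducing a proof, and the proof given there is the same direct substitution you carry out. The only cosmetic quibble is that your parenthetical appeal to Theorem~\ref{teorema0}(1) (differentiability implies continuity) is not really what justifies the iterated existence of $y^{\Delta^i}$ --- what does the work is your own observation that $z_{i+1}$ is $\Delta$-differentiable on $\mathbb{T}^\kappa$ and agrees with $y^{\Delta^i}$ on $\mathbb{T}^{\kappa^i}$, which you state correctly anyway.
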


\begin{definition}\cite[p.~239]{livro:2001}
We say that equation (\ref{hZ::linearDiffequa}) is \emph{regressive}\index{Regressive!equation}
provided $I + \mu(t) A(t)$ is invertible for all $t \in
\mathbb{T}^\kappa$, where $A$ is the matrix in \eqref{hZ::5.86}.
\end{definition}

\begin{definition}\cite[p.~250]{livro:2001}
We define the Cauchy function $y:\mathbb{T} \times
\mathbb{T}^{\kappa^n}\rightarrow \mathbb{R}$ for the linear dynamic
equation~(\ref{hZ::linearDiffequa}) to be, for each fixed
$s\in\mathbb{T}^{\kappa^n}$, the solution of the initial value
problem
\begin{equation}
\label{hZ::IVP} Ly=0,\quad
y^{\Delta^i}\left(\sigma(s),s\right)=0,\quad 0\leq i \leq n-2,\quad
y^{\Delta^{n-1}}\left(\sigma(s),s\right)=1\, .
\end{equation}
\end{definition}

\begin{theorem}\cite[p.~251]{livro:2001}\label{hZ::eqsol}
Suppose $\{y_1,\ldots,y_n\}$ is a fundamental system of the
regressive equation~(\ref{hZ::linearDiffequa}). Let $f\in C_{rd}$.
Then the solution of the initial value problem
$$
Ly=f(t),\quad y^{\Delta^i}(t_0)=0,\quad 0\leq i\leq n-1 \, ,
$$
is given by $y(t)=\int_{t_0}^t y(t,s)f(s)\Delta s$, where $y(t,s)$
is the Cauchy function for ~(\ref{hZ::linearDiffequa}).
\end{theorem}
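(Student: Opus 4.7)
The plan is to verify directly that the proposed function $y(t)=\int_{t_0}^{t}y(t,s)f(s)\Delta s$ satisfies both the initial conditions and the inhomogeneous equation $Ly=f$, using the time scales Leibniz rule for differentiating an integral with variable upper limit, together with the defining properties \eqref{hZ::IVP} of the Cauchy function.

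First I would check the initial conditions. Since the integral from $t_0$ to $t_0$ vanishes, one has $y(t_0)=0$ immediately. To handle higher $\Delta$-derivatives I would invoke the time scales Leibniz formula
\[
\left(\int_{t_0}^{t} F(t,s)\,\Delta s\right)^{\!\Delta}
= F(\sigma(t),t) + \int_{t_0}^{t} F^{\Delta_t}(t,s)\,\Delta s,
\]
applied successively with $F(t,s)=y(t,s)f(s)$. At the first step the boundary contribution is $y(\sigma(t),t)f(t)=0$ by \eqref{hZ::IVP}, giving $y^{\Delta}(t)=\int_{t_0}^{t}y^{\Delta_t}(t,s)f(s)\Delta s$. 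Iterating this, and using at each stage that $y^{\Delta^{i}}(\sigma(s),s)=0$ for $0\le i\le n-2$, one obtains by a straightforward induction
\[
y^{\Delta^{k}}(t)=\int_{t_0}^{t} y^{\Delta_t^{k}}(t,s)f(s)\,\Delta s,\qquad 0\le k\le n-1,
\]
so in particular $y^{\Delta^{k}}(t_0)=0$ for $0\le k\le n-1$, and the initial conditions are satisfied.

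Next, I would differentiate once more. The Leibniz formula now produces a nontrivial boundary term, since $y^{\Delta^{n-1}}(\sigma(s),s)=1$:
\[
y^{\Delta^{n}}(t)=y^{\Delta_t^{n-1}}(\sigma(t),t)\,f(t)+\int_{t_0}^{t} y^{\Delta_t^{n}}(t,s)f(s)\,\Delta s
= f(t)+\int_{t_0}^{t} y^{\Delta_t^{n}}(t,s)f(s)\,\Delta s.
\]
Plugging the expressions for $y^{\Delta^{k}}(t)$, $0\le k\le n$, into the operator $L$ and bringing the sum inside the integral, the integrand becomes $L\bigl(y(\cdot,s)\bigr)(t) f(s)$, which vanishes because, for each fixed $s$, $t\mapsto y(t,s)$ solves $Ly=0$. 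Therefore $Ly(t)=f(t)$.

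The only delicate step is the iterated application of the Leibniz rule, since it must be carried out in the $\Delta$-calculus with the correct boundary contribution $F(\sigma(t),t)$; once this formula is in place the proof is essentially bookkeeping guided by the normalization conditions of the Cauchy function. Regressivity of \eqref{hZ::linearDiffequa} is used implicitly only to guarantee that such a Cauchy function exists for each $s\in\mathbb{T}^{\kappa^n}$, via Lemma~\ref{hZ::8:88 Bohner}.
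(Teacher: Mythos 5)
Your argument is correct: this is the standard variation-of-constants proof via the time-scales Leibniz rule $\left(\int_{t_0}^{t}F(t,s)\Delta s\right)^{\Delta}=F(\sigma(t),t)+\int_{t_0}^{t}F^{\Delta_t}(t,s)\Delta s$, with the normalization conditions of the Cauchy function killing the boundary terms up to order $n-1$ and producing the $f(t)$ at order $n$. The thesis itself gives no proof — it quotes the result from Bohner and Peterson — and your approach coincides with the one in that reference, so there is nothing to reconcile.
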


It is known that $y(t,s):=H_{n-1}(t,\sigma(s))$ is the Cauchy
function for $y^{\Delta^n}=0$, where $H_{n-1}$ is a time scale
generalized polynomial \cite[Example~5.115]{livro:2001}. The
generalized polynomials $H_{k}$ are the functions
$H_k:\mathbb{T}^2\rightarrow \mathbb{R}$, $k\in \mathbb{N}_0$,
defined recursively as follows:
\begin{equation*}
H_0(t,s)\equiv 1 \, , \quad H_{k+1}(t,s)=\int_s^t
H_k(\tau,s)\Delta\tau \, , \quad k = 1, 2, \ldots
\end{equation*}
for all $s,t\in \mathbb{T}$. If we let $H_k^\Delta(t,s)$ denote, for
each fixed $s$, the derivative of $H_k(t,s)$ with respect to $t$,
then (\textrm{cf.} \cite[p.~38]{livro:2001})
$$
H_k^\Delta(t,s)=H_{k-1}(t,s)\quad \text{for } k\in \mathbb{N}, \
t\in \mathbb{T}^\kappa \, .
$$

Let $a\in\mathbb{R}$ and $h>0$,
$(h\mathbb{Z})_a=\{a,a+h,a+2h,\ldots\}$, and $b=a+kh$ for some
$k\in\mathbb{N}$.

From now on we restrict ourselves to the time scale
$\mathbb{T}=(h\mathbb{Z})_a$, $h > 0$. Our main goal is to propose
and develop a discrete-time fractional variational theory in
$\mathbb{T}=(h\mathbb{Z})_a$. We borrow the notations from the
recent calculus of variations on time scales
\cite{CD:Bohner:2004,RD,J:B:M:08}. How to generalize our results to
an arbitrary time scale $\mathbb{T}$, with the graininess function
$\mu$ depending on time, is not clear and remains a challenging
question.

 We have $\sigma(t)=t+h$,
$\rho(t)=t-h$, $\mu(t) \equiv h$, and we will frequently write
$f^\sigma(t)=f(\sigma(t))$. We put $\mathbb{T}=[a,b]\cap
(h\mathbb{Z})_a$, so that $\mathbb{T}^\kappa=[a,\rho(b)]\cap
(h\mathbb{Z})_a$ and $\mathbb{T}^{\kappa^2}=[a,\rho^2(b)] \cap
(h\mathbb{Z})_a$.\\The delta derivative coincides in this case with
the forward $h$-difference:
$$\displaystyle{f^{\Delta}(t)=\frac{f^\sigma(t)-f(t)}{\mu(t)}}.$$\\If
$h=1$, then we have the usual discrete forward difference $\Delta
f(t)$.\\The delta integral gives the $h$-sum (or $h$-integral) of
$f$: $\displaystyle{\int_a^b f(t)\Delta
t=\sum_{k=\frac{a}{h}}^{\frac{b}{h}-1}f(kh)h}$. If we have a
function $f$ of two variables, $f(t,s)$, its partial forward
$h$-differences will be denoted by $\Delta_{t,h}$ and
$\Delta_{s,h}$, respectively. We make use of the standard
conventions $\sum_{t=c}^{c-1}f(t)=0$, $c\in\mathbb{Z}$, and
$\prod_{i=0}^{-1}f(i)=1$. Often, \emph{left fractional delta
integration} (resp., \emph{right fractional delta integration}) of
order $\nu>0$ is denoted by $_a\Delta_t^{-\nu}f(t)$ (resp.
$_t\Delta_b^{-\nu}f(t)$). Here, similarly as in Ross \textit{et
al.} \cite{Ross}, where the authors omit the subscript $t$ on the
operator (the operator itself cannot depend on $t$), we write
$_a\Delta_h^{-\nu}f(t)$ (resp. $_h\Delta_b^{-\nu}f(t)$).

Before giving an explicit formula for the generalized polynomials
$H_{k}$ on $h\mathbb{Z}$ we introduce the following definition:
\begin{definition}\label{powerHZ}
For arbitrary $x,y\in\mathbb{R}$ the $h$-factorial function is
defined by
\begin{equation*}
x_h^{(y)}:=h^y\frac{\Gamma(\frac{x}{h}+1)}{\Gamma(\frac{x}{h}+1-y)}\,
,
\end{equation*}
where $\Gamma$ is the well-known Euler gamma function, and we use
the convention that division at a pole yields zero.
\end{definition}

\begin{remark}
Before proposing Definition~\ref{powerHZ} we have tried other
possibilities. One that seemed most obvious is just to replace $1$ by
$h$ in formula \eqref{powerZ} because if we do $h=1$ on the time scale
$(h\mathbb{Z})_a$ we have the time scale $\mathbb{Z}$. This
possibility did not reveal a good choice.
\end{remark}

\begin{remark}
For $h = 1$, and in accordance with the previous similar definition
on Chapter~\ref{chap3}, we write $x^{(y)}$ to denote $x_h^{(y)}$.
\end{remark}

\begin{prop}
\label{hZ::prop:d} For the time-scale $\mathbb{T}=(h\mathbb{Z})_a$
one has
\begin{equation}
\label{hZ::hn} H_{k}(t,s):=\frac{(t-s)_h^{(k)}}{k!}\quad\mbox{for
all}\quad s,t\in \mathbb{T} \text{ and } k\in \mathbb{N}_0 \, .
\end{equation}
\end{prop}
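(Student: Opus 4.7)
The plan is to proceed by induction on $k$, using the characterization of $H_k$ through its defining recursion together with the Fundamental Theorem of time scale calculus: $H_{k+1}(t,s)$ is the unique function satisfying $\Delta_{t,h} H_{k+1}(t,s) = H_k(t,s)$ and $H_{k+1}(s,s) = 0$. So the task reduces to verifying that the candidate $\frac{(t-s)_h^{(k+1)}}{(k+1)!}$ has these two properties, assuming inductively that $H_k(t,s) = \frac{(t-s)_h^{(k)}}{k!}$.

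For the base case $k=0$, Definition~\ref{powerHZ} immediately gives $(t-s)_h^{(0)} = h^0\, \Gamma((t-s)/h+1)/\Gamma((t-s)/h+1) = 1$, matching $H_0(t,s)\equiv 1$. For the initial-value check at the inductive step, I would observe that $(0)_h^{(k+1)} = h^{k+1}\Gamma(1)/\Gamma(-k)$, which equals $0$ for every $k\in\mathbb{N}_0$ by the convention that division at a pole of $\Gamma$ yields zero; hence $\frac{(s-s)_h^{(k+1)}}{(k+1)!} = 0$.

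The core computation is the derivative check. Writing out the forward $h$-difference and using Definition~\ref{powerHZ}, I would compute
\begin{equation*}
(t+h-s)_h^{(k+1)} - (t-s)_h^{(k+1)}
= h^{k+1}\left[\frac{\Gamma\!\bigl(\tfrac{t-s}{h}+2\bigr)}{\Gamma\!\bigl(\tfrac{t-s}{h}-k+1\bigr)}
- \frac{\Gamma\!\bigl(\tfrac{t-s}{h}+1\bigr)}{\Gamma\!\bigl(\tfrac{t-s}{h}-k\bigr)}\right].
\end{equation*}
Applying $\Gamma(x+1)=x\Gamma(x)$ in both numerator and denominator to bring everything over a common denominator, the bracket collapses to $(k+1)\,\Gamma((t-s)/h+1)/\Gamma((t-s)/h-k+1)$. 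Dividing by $h=\mu(t)$ and by $(k+1)!$ yields exactly $\frac{(t-s)_h^{(k)}}{k!}$, which by the induction hypothesis equals $H_k(t,s)$. This closes the induction.

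The only delicate point is the bookkeeping with the gamma quotients and making sure the pole convention is applied consistently (so that $H_{k+1}(s,s)=0$ and the formula makes sense at $t=s$); everything else is a mechanical verification. No obstacle of substance is anticipated — the result is essentially a time-scales packaging of the familiar identity $\Delta\binom{n}{k+1} = \binom{n}{k}$ rescaled by $h$.
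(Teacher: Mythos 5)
Your proof is correct and follows essentially the same route as the paper: the base case is identical, and your derivative check is precisely the paper's Lemma~\ref{hZ::lem:tl}, with the inductive step phrased via uniqueness of the antiderivative vanishing at $t=s$ rather than by directly evaluating $\int_s^t H_m(\tau,s)\Delta\tau$ --- the same content in slightly different packaging. Your explicit verification that $(s-s)_h^{(k+1)}=0$ via the pole convention is a point the paper leaves implicit, but otherwise the two arguments coincide.
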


To prove \eqref{hZ::hn} we use the following technical lemma.
Throughout this chapter the basic property \eqref{Z::naosei9}
of the gamma function will be frequently used.

\begin{lemma}
\label{hZ::lem:tl} Let $s \in \mathbb{T}$. Then, for all $t \in
\mathbb{T}^\kappa$ one has
\begin{equation*}
\Delta_{t,h} \left\{\frac{(t-s)_h^{(k+1)}}{(k+1)!}\right\} =
\frac{(t-s)_h^{(k)}}{k!} \, .
\end{equation*}
\end{lemma}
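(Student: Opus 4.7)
The plan is to compute the forward $h$-difference directly from the definition of the $h$-factorial function and simplify using the fundamental recursion $\Gamma(z+1)=z\Gamma(z)$. Since $\mu(t)\equiv h$, the delta derivative reduces to $\Delta_{t,h} f(t)=\frac{f(t+h)-f(t)}{h}$, so the entire task is to show that
\[
(t+h-s)_h^{(k+1)} - (t-s)_h^{(k+1)} = h(k+1)\,(t-s)_h^{(k)} .
\]
Dividing by $h(k+1)!$ will then give the claim.

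First I would unfold the definition: by Definition~\ref{powerHZ},
\[
(t-s)_h^{(k+1)} = h^{k+1}\,\frac{\Gamma\!\left(\tfrac{t-s}{h}+1\right)}{\Gamma\!\left(\tfrac{t-s}{h}-k\right)},\qquad
(t+h-s)_h^{(k+1)} = h^{k+1}\,\frac{\Gamma\!\left(\tfrac{t-s}{h}+2\right)}{\Gamma\!\left(\tfrac{t-s}{h}-k+1\right)} .
\]
Then I would apply $\Gamma(z+1)=z\Gamma(z)$ to the numerator and denominator of the shifted term, writing $\Gamma\!\left(\tfrac{t-s}{h}+2\right)=\left(\tfrac{t-s}{h}+1\right)\Gamma\!\left(\tfrac{t-s}{h}+1\right)$ and $\Gamma\!\left(\tfrac{t-s}{h}-k+1\right)=\left(\tfrac{t-s}{h}-k\right)\Gamma\!\left(\tfrac{t-s}{h}-k\right)$, so that both terms share the common factor $h^{k+1}\Gamma\!\left(\tfrac{t-s}{h}+1\right)/\Gamma\!\left(\tfrac{t-s}{h}-k\right)$.

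Factoring this out, the bracket that remains is $\frac{(t-s)/h+1}{(t-s)/h-k}-1=\frac{k+1}{(t-s)/h-k}$. Absorbing the denominator back into the gamma function via $\left(\tfrac{t-s}{h}-k\right)\Gamma\!\left(\tfrac{t-s}{h}-k\right)=\Gamma\!\left(\tfrac{t-s}{h}-k+1\right)=\Gamma\!\left(\tfrac{t-s}{h}+1-k\right)$, one obtains
\[
(t+h-s)_h^{(k+1)} - (t-s)_h^{(k+1)} = h(k+1)\cdot h^{k}\,\frac{\Gamma\!\left(\tfrac{t-s}{h}+1\right)}{\Gamma\!\left(\tfrac{t-s}{h}+1-k\right)} = h(k+1)\,(t-s)_h^{(k)},
\]
and dividing by $h\,(k+1)!$ yields the lemma.

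There is no real obstacle here beyond careful bookkeeping of the gamma-function arguments; the main subtlety is to make sure the convention ``division at a pole yields zero'' in Definition~\ref{powerHZ} is respected whenever $\tfrac{t-s}{h}-k$ is a nonpositive integer, but in those degenerate cases both sides of the identity vanish, so the formula continues to hold. This computation is the natural $h$-analogue of the fact that $\Delta\binom{t}{k+1}=\binom{t}{k}$ in the unit-step case, and it is what makes \eqref{hZ::hn} the right candidate for the generalized polynomial $H_k$.
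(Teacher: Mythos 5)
Your proof is correct and follows essentially the same route as the paper's: unfold the $h$-factorial via the gamma function, apply $\Gamma(z+1)=z\Gamma(z)$ to the shifted term to extract the common factor, and simplify the remaining bracket to $\frac{k+1}{(t-s)/h-k}$. The additional remark about the pole convention is a welcome bit of care but does not change the argument.
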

\begin{proof}
The equality follows by direct computations:
\begin{equation*}
\begin{split}
\Delta_{t,h} &\left\{\frac{(t-s)_h^{(k+1)}}{(k+1)!}\right\}
=\frac{1}{h}\left\{\frac{(\sigma(t)-s)_h^{(k+1)}}{(k+1)!}-\frac{(t-s)_h^{(k+1)}}{(k+1)!}\right\}\\
&=\frac{h^{k+1}}{h(k+1)!}\left\{\frac{\Gamma((t+h-s)/h+1)}{\Gamma((t+h-s)/h+1-(k+1))}
-\frac{\Gamma((t-s)/h+1)}{\Gamma((t-s)/h+1-(k+1))}\right\}\\
&=\frac{h^k}{(k+1)!}\left\{\frac{((t-s)/h+1)\Gamma((t-s)/h+1)}{((t-s)/h-k)\Gamma((t-s)/h-k)}
-\frac{\Gamma((t-s)/h+1)}{\Gamma((t-s)/h-k)}\right\}\\
&=\frac{h^k}{k!}\left\{\frac{\Gamma((t-s)/h+1)}{\Gamma((t-s)/h+1-k)}\right\}
=\frac{(t-s)_h^{(k)}}{k!} \, .
\end{split}
\end{equation*}
\end{proof}
\begin{proof}(of Proposition~\ref{hZ::prop:d})
We proceed by mathematical induction. For $k=0$
$$
H_0(t,s)=\frac{1}{0!}h^0\frac{\Gamma(\frac{t-s}{h}+1)}{\Gamma(\frac{t-s}{h}+1-0)}
=\frac{\Gamma(\frac{t-s}{h}+1)}{\Gamma(\frac{t-s}{h}+1)}=1 \, .
$$
Assume that (\ref{hZ::hn}) holds for $k$ replaced by $m$. Then by
Lemma~\ref{hZ::lem:tl}
\begin{eqnarray*}
H_{m+1}(t,s) &=& \int_s^t H_m(\tau,s)\Delta\tau = \int_s^t
\frac{(\tau-s)_h^{(m)}}{m!} \Delta\tau =
\frac{(t-s)_h^{(m+1)}}{(m+1)!},
\end{eqnarray*}
which is (\ref{hZ::hn}) with $k$ replaced by $m+1$.
\end{proof}

Let $y_1(t),\ldots,y_n(t)$ be $n$ linearly independent solutions of
the linear homogeneous dynamic equation $y^{\Delta^n}=0$. From
Theorem~\ref{hZ::eqsol} we know that the solution of \eqref{hZ::IVP}
(with $L=\Delta^n$ and $t_0=a$) is
\begin{equation*}
y(t) = \Delta^{-n} f(t)=\int_a^t
\frac{(t-\sigma(s))_h^{(n-1)}}{\Gamma(n)}f(s)\Delta s\\
=\frac{1}{\Gamma(n)}\sum_{k=a/h}^{t/h-1} (t-\sigma(kh))_h^{(n-1)}
f(kh) h \, .
\end{equation*}
Since $y^{\Delta_i}(a)=0$, $i = 0,\ldots,n-1$, then we can write
that
\begin{equation}
\label{hZ::eq:derDh:int}
\begin{split}
\Delta^{-n} f(t) &= \frac{1}{\Gamma(n)}\sum_{k=a/h}^{t/h-n}
(t-\sigma(kh))_h^{(n-1)} f(kh) h \\
&=
\frac{1}{\Gamma(n)}\int_a^{\sigma(t-nh)}(t-\sigma(s))_h^{(n-1)}f(s)
\Delta s \, .
\end{split}
\end{equation}
Note that function $t \rightarrow (\Delta^{-n} f)(t)$ is defined for
$t=a+n h \mbox{ mod}(h)$ while function $t \rightarrow f(t)$ is
defined for $t=a \mbox{ mod}(h)$. Extending \eqref{hZ::eq:derDh:int}
to any positive real value $\nu$, and having as an analogy the
continuous left and right fractional derivatives \cite{Miller1}, we
define the left fractional $h$-sum and the right fractional $h$-sum
as follows. We denote by $\mathcal{F}_\mathbb{T}$ the set of all
real valued functions defined on a given time scale $\mathbb{T}$.

\begin{definition}
\label{hZ::def0} Let $a\in\mathbb{R}$, $h>0$, $b=a+kh$ with
$k\in\mathbb{N}$, and put $\mathbb{T}=[a,b]\cap(h\mathbb{Z})_a$.
Consider $f\in\mathcal{F}_\mathbb{T}$. The left and right fractional
$h$-sum of order $\nu>0$ are, respectively, the operators
$_a\Delta_h^{-\nu} : \mathcal{F}_\mathbb{T} \rightarrow
\mathcal{F}_{\tilde{\mathbb{T}}_\nu^+}$ and $_h\Delta_b^{-\nu} :
\mathcal{F}_\mathbb{T} \rightarrow
\mathcal{F}_{\tilde{\mathbb{T}}_\nu^-}$, $\tilde{\mathbb{T}}_\nu^\pm
= \{t \pm \nu h : t \in \mathbb{T}\}$, defined by
\begin{equation*}
\begin{split}
_a\Delta_h^{-\nu}f(t) &= \frac{1}{\Gamma(\nu)}\int_{a}^{\sigma(t-\nu
h)}(t-\sigma(s))_h^{(\nu-1)}f(s)\Delta s
=\frac{1}{\Gamma(\nu)}\sum_{k=\frac{a}{h}}^{\frac{t}{h}-\nu}(t-\sigma(kh))_h^{(\nu-1)}f(kh)h\\
_h\Delta_b^{-\nu}f(t) &= \frac{1}{\Gamma(\nu)}\int_{t+\nu
h}^{\sigma(b)}(s-\sigma(t))_h^{(\nu-1)}f(s)\Delta s
=\frac{1}{\Gamma(\nu)}\sum_{k=\frac{t}{h}+\nu}^{\frac{b}{h}}(kh-\sigma(t))_h^{(\nu-1)}f(kh)h.
\end{split}
\end{equation*}
\end{definition}

\begin{remark}
In Definition~\ref{hZ::def0} we are using summations with limits
that are reals. For example, the summation that appears in the
definition of operator $_a\Delta_h^{-\nu}$ has the following
meaning:
$$
\sum_{k = \frac{a}{h}}^{\frac{t}{h} - \nu} G(k) = G(a/h) + G(a/h+1)
+ G(a/h+2) + \cdots + G(t/h - \nu),
$$
where $t \in \{ a + \nu h, a + h + \nu h , a + 2 h + \nu h , \ldots,
\underbrace{a+kh}_b + \nu h\}$ with $k\in\mathbb{N}$.
\end{remark}

\begin{lemma}
Let $\nu>0$ be an arbitrary positive real number. For any $t \in
\mathbb{T}$ we have: (i) $\lim_{\nu\rightarrow
0}{_a}\Delta_h^{-\nu}f(t+\nu h)=f(t)$; (ii) $\lim_{\nu\rightarrow
0}{_h}\Delta_b^{-\nu}f(t-\nu h)=f(t)$.
\end{lemma}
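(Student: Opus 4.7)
The plan is to prove (i) directly from the definition by isolating the boundary term of the sum, and then obtain (ii) by a mirror argument. Throughout, $t$ is fixed in $\mathbb{T}$, so the number of summands is finite and it suffices to analyze them one by one.

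First I would substitute $t+\nu h$ for $t$ in the definition of $_a\Delta_h^{-\nu}$. The upper summation limit then becomes $(t+\nu h)/h-\nu=t/h$, so
\begin{equation*}
{}_a\Delta_h^{-\nu}f(t+\nu h)
=\frac{h}{\Gamma(\nu)}\sum_{k=a/h}^{t/h}\bigl(t+\nu h-\sigma(kh)\bigr)_h^{(\nu-1)}f(kh).
\end{equation*}
The key step is to peel off the top term $k=t/h$. For this term $t+\nu h-\sigma(kh)=(\nu-1)h$, and from Definition~\ref{powerHZ} together with \eqref{Z::naosei9} one computes
\begin{equation*}
\bigl((\nu-1)h\bigr)_h^{(\nu-1)}=h^{\nu-1}\,\frac{\Gamma(\nu)}{\Gamma(1)}=h^{\nu-1}\Gamma(\nu),
\end{equation*}
so the top term contributes exactly $h^{\nu}f(t)$, which tends to $f(t)$ as $\nu\to 0^{+}$.

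Next I would show that every remaining term vanishes in the limit. For $k<t/h$ set $m=t/h-k-1\in\mathbb{N}_0$; then $t+\nu h-\sigma(kh)=(m+\nu)h$, and again by Definition~\ref{powerHZ},
\begin{equation*}
\bigl((m+\nu)h\bigr)_h^{(\nu-1)}=h^{\nu-1}\,\frac{\Gamma(m+\nu+1)}{\Gamma(m+2)}.
\end{equation*}
Using $\Gamma(m+\nu+1)=\nu\,\Gamma(\nu)\prod_{i=1}^{m}(i+\nu)$, the coefficient of $f(kh)$ in the sum becomes
\begin{equation*}
\frac{h^{\nu}\nu}{(m+1)!}\prod_{i=1}^{m}(i+\nu),
\end{equation*}
which has a factor $\nu$ and therefore tends to $0$ as $\nu\to 0^{+}$. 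Since the sum over $k\in\{a/h,\dots,t/h-1\}$ is finite, the whole remainder vanishes, and (i) follows.

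The main (minor) obstacle is just the bookkeeping with the $h$-factorial and the gamma-identity trick that extracts the factor $\nu$ from $\Gamma(m+\nu+1)/\Gamma(\nu)$; once this is clean, there is no analytic difficulty. Part (ii) is proved by an entirely symmetric computation: substitute $t-\nu h$ into the definition of $_h\Delta_b^{-\nu}$, whose lower summation index becomes $t/h$, isolate the bottom term (which again equals $h^{\nu}f(t)\to f(t)$), and observe that each remaining term carries a prefactor $\nu$ and hence vanishes in the limit.
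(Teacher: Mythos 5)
Your proof is correct and follows essentially the same route as the paper: peel off the boundary summand, identify it as $h^{\nu}f(t)$, and observe that every remaining summand carries a factor $\nu$ and so vanishes in the limit. You are in fact slightly more explicit than the paper, which only rewrites the prefactor as $\nu/\Gamma(\nu+1)$ and leaves the boundedness of the residual terms implicit, whereas you compute them outright.
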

\begin{proof}
Since
\begin{align*}
{_a}\Delta_h^{-\nu}f(t+\nu
h)&=\frac{1}{\Gamma(\nu)}\int_{a}^{\sigma(t)}(t+\nu
h-\sigma(s))_h^{(\nu-1)}f(s)\Delta s\\
&=\frac{1}{\Gamma(\nu)}\sum_{k=\frac{a}{h}}^{\frac{t}{h}}(t+\nu h-\sigma(kh))_h^{(\nu-1)}f(kh)h\\
&=h^{\nu}f(t)+\frac{\nu}{\Gamma(\nu+1)}\sum_{k=\frac{a}{h}}^{\frac{\rho(t)}{h}}(t+\nu
h-\sigma(kh))_h^{(\nu-1)}f(kh)h\, ,
\end{align*}
it follows that $\lim_{\nu\rightarrow 0}{_a}\Delta_h^{-\nu}f(t+\nu
h)=f(t)$. The proof of (ii) is similar.
\end{proof}

For any $t\in\mathbb{T}$ and for any $\nu\geq 0$ we define
$_a\Delta_h^{0}f(t) := {_h}\Delta_b^{0}f(t) := f(t)$ and write
\begin{equation}
\label{hZ::seila1}
\begin{gathered}
{_a}\Delta_h^{-\nu}f(t+\nu h) = h^\nu f(t)
+\frac{\nu}{\Gamma(\nu+1)}\int_{a}^{t}(t+\nu
h-\sigma(s))_h^{(\nu-1)}f(s)\Delta s\, , \\
{_h}\Delta_b^{-\nu}f(t)=h^\nu f(t-\nu h) +
\frac{\nu}{\Gamma(\nu+1)}\int_{\sigma(t)}^{\sigma(b)}(s+\nu
h-\sigma(t))_h^{(\nu-1)}f(s)\Delta s \, .
\end{gathered}
\end{equation}

\begin{theorem}\label{hZ::thm2}
Let $f\in\mathcal{F}_\mathbb{T}$ and $\nu\geq0$. For all
$t\in\mathbb{T}^\kappa$ we have
\begin{equation}
\label{hZ::naosei1} {_a}\Delta_{h}^{-\nu} f^{\Delta}(t+\nu
h)=(_a\Delta_h^{-\nu}f(t+\nu h))^{\Delta} -\frac{\nu}{\Gamma(\nu +
1)}(t+\nu h-a)_h^{(\nu-1)}f(a) \, .
\end{equation}
\end{theorem}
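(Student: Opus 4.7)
The case $\nu=0$ is immediate since the factor $\nu/\Gamma(\nu+1)$ kills the correction term and both sides reduce to $f^\Delta(t)$; so assume $\nu>0$. The strategy is to expand both sides of \eqref{hZ::naosei1} by means of the integral representation \eqref{hZ::seila1} applied once with $g=f^\Delta$ on the left and once with $g=f$ on the right, and then reconcile them via an $h$-integration by parts.

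First I would compute the right-hand side. After substituting \eqref{hZ::seila1}, the only nontrivial piece is the $\Delta$-derivative with respect to $t$ of $\int_a^t (t+\nu h-\sigma(s))_h^{(\nu-1)} f(s)\Delta s$. For $\mathbb T=(h\mathbb Z)_a$ a direct computation from $\int_a^{\sigma(t)}-\int_a^t$ yields the Leibniz-type identity $\Delta_{t,h}\int_a^t g(t,s)\Delta s = g(\sigma(t),t)+\int_a^t \Delta_{t,h}g(t,s)\Delta s$. The boundary term at $s=t$ is $(\sigma(t)+\nu h-\sigma(t))_h^{(\nu-1)}f(t) = (\nu h)_h^{(\nu-1)}f(t)$, which by Definition~\ref{powerHZ} equals $h^{\nu-1}\Gamma(\nu+1)f(t)$. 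To handle the surviving integrand I would first verify the factorial identities
\begin{equation*}
\Delta_{t,h}(t-c)_h^{(\mu)} = \mu\,(t-c)_h^{(\mu-1)}, \qquad \Delta_{s,h}(t-s)_h^{(\mu)} = -\mu\,(t-\sigma(s))_h^{(\mu-1)},
\end{equation*}
valid for arbitrary real $\mu$; both follow from Definition~\ref{powerHZ} together with $\Gamma(x+1)=x\Gamma(x)$, and extend the integer-exponent case already contained in Lemma~\ref{hZ::lem:tl}. Applying the first identity gives $\Delta_{t,h}(t+\nu h-\sigma(s))_h^{(\nu-1)} = (\nu-1)(t+\nu h-\sigma(s))_h^{(\nu-2)}$.

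Next I would treat the left-hand side. After using \eqref{hZ::seila1}, I apply the time-scale integration-by-parts formula \eqref{partes1} with $u(s)=(t+\nu h-s)_h^{(\nu-1)}$ (so that $u^\sigma(s)$ is exactly the kernel appearing under the sign) and $v=f$. The second factorial identity above gives $u^\Delta(s)=-(\nu-1)(t+\nu h-\sigma(s))_h^{(\nu-2)}$, so the remaining integral coincides with the one produced on the right-hand side. The boundary evaluation at $s=t$ yields $u(t)f(t)=(\nu h)_h^{(\nu-1)}f(t)=h^{\nu-1}\Gamma(\nu+1)f(t)$, matching the Leibniz boundary contribution on the right; the boundary evaluation at $s=a$ yields $(t+\nu h-a)_h^{(\nu-1)}f(a)$, which, carried outside with the coefficient $\nu/\Gamma(\nu+1)$, is precisely the correction term $-\frac{\nu}{\Gamma(\nu+1)}(t+\nu h-a)_h^{(\nu-1)}f(a)$ in \eqref{hZ::naosei1}.

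The main obstacle is the careful bookkeeping of the two $h$-factorial differencing formulas and ensuring that the boundary term at $s=t$ produced by the Leibniz rule on the right cancels the one produced by integration by parts on the left; this hinges on the identification $(\nu h)_h^{(\nu-1)}=h^{\nu-1}\Gamma(\nu+1)$. Once these ingredients are in place, subtracting the two expansions gives \eqref{hZ::naosei1} immediately.
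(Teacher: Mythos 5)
Your proposal is correct and follows essentially the same route as the paper: expand both sides via \eqref{hZ::seila1}, integrate by parts in $s$ on the left, differentiate under the integral sign in $t$ on the right, and match the boundary term $(\nu h)_h^{(\nu-1)}f(t)=h^{\nu-1}\Gamma(\nu+1)f(t)$ together with the $(t+\nu h-a)_h^{(\nu-1)}f(a)$ contribution. The only difference is presentational: the paper packages the summation-by-parts step through the pointwise product rule of Lemma~\ref{hZ::lemma:tl} and computes the $\Delta$-derivative of the right-hand side directly from the difference quotient, whereas you invoke Lemma~\ref{integracao:partes} and a Leibniz rule together with separately verified $h$-factorial differencing identities, which amounts to the same computation.
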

To prove Theorem~\ref{hZ::thm2} we make use of a technical lemma:
\begin{lemma}
\label{hZ::lemma:tl} Let $t\in\mathbb{T}^\kappa$. The following
equality holds for all $s\in\mathbb{T}^\kappa$:
\begin{multline}
\label{hZ::proddiff}
\Delta_{s,h}\left((t+\nu h-s)_h^{(\nu-1)}f(s))\right)\\
=(t+\nu h-\sigma(s))_h^{(\nu-1)}f^{\Delta}(s) -(v-1)(t+\nu
h-\sigma(s))_h^{(\nu-2)}f(s) \, .
\end{multline}
\end{lemma}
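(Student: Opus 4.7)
The plan is to apply the time-scales Leibniz rule in the form \eqref{produto}, namely $(FG)^\Delta = F^\Delta G + F^\sigma G^\Delta$, with $F(s) := (t+\nu h - s)_h^{(\nu-1)}$ (for $t$ fixed) and $G(s) := f(s)$. Since $F^\sigma(s) = F(\sigma(s)) = (t+\nu h - \sigma(s))_h^{(\nu-1)}$, this immediately produces the $f^\Delta$-term of \eqref{hZ::proddiff} in the desired shape, and reduces the problem to proving the single identity
\begin{equation*}
\Delta_{s,h}\bigl((t+\nu h - s)_h^{(\nu-1)}\bigr) \;=\; -(\nu-1)\,(t+\nu h - \sigma(s))_h^{(\nu-2)}. \qquad (\star)
\end{equation*}

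To establish $(\star)$ I would mimic the gamma-function manipulation carried out in \eqref{simp:gamma:prod}--\eqref{Z::rui10}. Writing $x := (t-s)/h$ for brevity and using Definition~\ref{powerHZ}, one has
\begin{equation*}
(t+\nu h - s)_h^{(\nu-1)} = h^{\nu-1}\,\frac{\Gamma(x+\nu+1)}{\Gamma(x+2)}, \qquad (t+\nu h - \sigma(s))_h^{(\nu-1)} = h^{\nu-1}\,\frac{\Gamma(x+\nu)}{\Gamma(x+1)},
\end{equation*}
so that $\Delta_{s,h}$ of $F$ equals $h^{-1}$ times the difference of these two expressions. Placing both fractions over the common denominator $\Gamma(x+2) = (x+1)\Gamma(x+1)$ via the fundamental property \eqref{Z::naosei9} gives numerator $(x+1)\Gamma(x+\nu) - \Gamma(x+\nu+1) = (x+1)\Gamma(x+\nu) - (x+\nu)\Gamma(x+\nu) = (1-\nu)\Gamma(x+\nu)$. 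Hence
\begin{equation*}
\Delta_{s,h}F(s) \;=\; -(\nu-1)\,h^{\nu-2}\,\frac{\Gamma(x+\nu)}{\Gamma(x+2)},
\end{equation*}
and recognising the right-hand side as $-(\nu-1)(t+\nu h - \sigma(s))_h^{(\nu-2)}$ (again via Definition~\ref{powerHZ}, noting that $(t+\nu h - \sigma(s))/h = x + \nu - 1$) completes the proof of $(\star)$.

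Substituting $(\star)$ into the product rule yields \eqref{hZ::proddiff}. The computation is essentially routine gamma-function bookkeeping; the main point requiring care is tracking which arguments carry the forward jump $\sigma(s) = s + h$ versus $s$ itself, so that the two fractions can be combined on the common denominator $\Gamma(x+2)$. Once that is in place, the cancellation $(x+1) - (x+\nu) = 1 - \nu$ delivers the claim immediately.
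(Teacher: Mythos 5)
Your proposal is correct and follows essentially the same route as the paper's own proof: apply the product rule \eqref{produto} with $F(s)=(t+\nu h-s)_h^{(\nu-1)}$ and $G(s)=f(s)$, then reduce the remaining difference $\Delta_{s,h}F(s)$ to a gamma-function computation that yields $-(\nu-1)(t+\nu h-\sigma(s))_h^{(\nu-2)}$ via the identity $\Gamma(x+\nu+1)=(x+\nu)\Gamma(x+\nu)$. The bookkeeping in your $(\star)$ matches the paper's chain of equalities exactly, so nothing further is needed.
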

\begin{proof}
Direct calculations give the intended result:
\begin{equation*}
\begin{split}
\Delta&_{s,h} \left((t+\nu h-s)_h^{(\nu-1)}f(s)\right)\\
&=\Delta_{s,h}\left((t+\nu h-s)_h^{(\nu-1)}\right)f(s)+\left(t+\nu h
-\sigma(s)\right)_h^{(\nu-1)}f^{\Delta}(s)\\
&=\frac{f(s)}{h}\left[h^{\nu-1}\frac{\Gamma\left(\frac{t+\nu
h-\sigma(s)}{h}+1\right)}{\Gamma\left(\frac{t+\nu
h-\sigma(s)}{h}+1-(\nu-1)\right)}-h^{\nu-1}\frac{\Gamma\left(\frac{t+\nu
h-s}{h}+1\right)}{\Gamma\left(\frac{t+\nu
h-s}{h}+1-(\nu-1)\right)}\right]\\
&\qquad +\left(t+\nu h -
\sigma(s)\right)_h^{(\nu-1)}f^{\Delta}(s)\\
&=f(s)\left[h^{\nu-2}\left[\frac{\Gamma(\frac{t+\nu
h-s}{h})}{\Gamma(\frac{t-s}{h}+1)}-\frac{\Gamma(\frac{t+\nu
h-s}{h}+1)}{\Gamma(\frac{t-s}{h}+2)}\right]\right]+\left(t+\nu h -
\sigma(s)\right)_h^{(\nu-1)}f^{\Delta}(s)\\
&=f(s)h^{\nu-2}\frac{\Gamma(\frac{t+\nu
h-s-h}{h}+1)}{\Gamma(\frac{t-s+\nu h-h}{h}+1-(\nu-2))}(-(\nu-1))+
\left(t+\nu h - \sigma(s)\right)_h^{(\nu-1)}f^{\Delta}(s)\\
&=-(\nu-1)(t+\nu h -\sigma(s))_h^{(\nu-2)}f(s)+\left(t+\nu h -
\sigma(s)\right)_h^{(\nu-1)}f^{\Delta}(s) \, ,
\end{split}
\end{equation*}
where the first equality follows directly from
\eqref{produto}.
\end{proof}

\begin{remark}
Given an arbitrary $t\in\mathbb{T}^\kappa$ it is easy to prove, in a
similar way as in the proof of Lemma~\ref{hZ::lemma:tl}, the
following equality analogous to \eqref{hZ::proddiff}: for all
$s\in\mathbb{T}^\kappa$
\begin{multline}
\label{hZ::eq:semlhante}
\Delta_{s,h}\left((s+\nu h-\sigma(t))_h^{(\nu-1)}f(s))\right)\\
=(\nu-1)(s+\nu h-\sigma(t))_h^{(\nu-2)}f^\sigma(s) + (s+\nu
h-\sigma(t))_h^{(\nu-1)}f^{\Delta}(s) \, .
\end{multline}
\end{remark}
\begin{proof}(of Theorem~\ref{hZ::thm2})
From Lemma~\ref{hZ::lemma:tl} we obtain that
\begin{equation}
\label{hZ::naosei}
\begin{split}
{_a}\Delta_{h}^{-\nu} & f^{\Delta}(t+\nu h) = h^\nu
f^\Delta(t)+\frac{\nu}{\Gamma(\nu+1)}\int_{a}^{t}(t+\nu
h-\sigma(s))_h^{(\nu-1)}f^{\Delta}(s)\Delta s\\
&=h^\nu f^\Delta(t)+\frac{\nu}{\Gamma(\nu+1)}\left[(t+\nu
h-s)_h^{(\nu-1)}f(s)\right]_{s=a}^{s=t}\\
&\qquad +\frac{\nu}{\Gamma(\nu+1)}\int_{a}^{\sigma(t)}(\nu-1)(t+\nu
h-\sigma(s))_h^{(\nu-2)}
f(s)\Delta s\\
&=-\frac{\nu(t+\nu h-a)_h^{(\nu-1)}}{\Gamma(\nu+1)}f(a)
+h^{\nu}f^\Delta(t)+\nu h^{\nu-1}f(t)\\
&\qquad +\frac{\nu}{\Gamma(\nu+1)}\int_{a}^{t}(\nu-1)(t+\nu
h-\sigma(s))_h^{(\nu-2)} f(s)\Delta s.
\end{split}
\end{equation}
We now show that $(_a\Delta_h^{-\nu}f(t+\nu h))^{\Delta}$ equals
\eqref{hZ::naosei}:
\begin{equation*}
\begin{split}
(_a\Delta_h^{-\nu} & f(t+\nu h))^\Delta = \frac{1}{h}\left[h^\nu
f(\sigma(t))+\frac{\nu}{\Gamma(\nu+1)}\int_{a}^{\sigma(t)}(\sigma(t)+\nu
h-\sigma(s))_h^{(\nu-1)}
f(s)\Delta s\right.\\
&\qquad \left.-h^\nu f(t)-\frac{\nu}{\Gamma(\nu+1)}\int_{a}^{t}(t
+\nu h-\sigma(s))_h^{(\nu-1)} f(s)\Delta s\right]\\
&=h^\nu
f^\Delta(t)+\frac{\nu}{h\Gamma(\nu+1)}\left[\int_{a}^{t}(\sigma(t)+\nu
h-\sigma(s))_h^{(\nu-1)}
f(s)\Delta s\right.\\
&\qquad\left.-\int_{a}^{t}(t+\nu h-\sigma(s))_h^{(\nu-1)}
f(s)\Delta s\right]+h^{\nu-1}\nu f(t)\\
&=h^\nu
f^\Delta(t)+\frac{\nu}{\Gamma(\nu+1)}\int_{a}^{t}\Delta_{t,h}\left((t+\nu
h -\sigma(s))_h^{(\nu-1)}
\right)f(s)\Delta s+h^{\nu-1}\nu f(t)\\
&=h^\nu
f^\Delta(t)+\frac{\nu}{\Gamma(\nu+1)}\int_{a}^{t}(\nu-1)(t+\nu
h-\sigma(s))_h^{(\nu-2)} f(s)\Delta s+\nu h^{\nu-1}f(t) \, .
\end{split}
\end{equation*}
\end{proof}
Follows the counterpart of Theorem~\ref{hZ::thm2} for the right
fractional $h$-sum:
\begin{theorem}\label{hZ::thm3} Let $f\in\mathcal{F}_\mathbb{T}$ and $\nu\geq 0$.
For all $t\in\mathbb{T}^\kappa$ we have
\begin{equation}
\label{hZ::naosei12} {_h}\Delta_{\rho(b)}^{-\nu} f^{\Delta}(t-\nu
h)=\frac{\nu}{\Gamma(\nu+1)}(b+\nu
h-\sigma(t))_h^{(\nu-1)}f(b)+(_h\Delta_b^{-\nu}f(t-\nu h))^{\Delta}
\, .
\end{equation}
\end{theorem}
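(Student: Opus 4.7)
The plan is to mirror the proof of Theorem~\ref{hZ::thm2}, using the analogue identity \eqref{hZ::eq:semlhante} in place of \eqref{hZ::proddiff}. First I would dispose of the trivial case $\nu=0$: both sides reduce to $f^{\Delta}(t)$ since $\tfrac{\nu}{\Gamma(\nu+1)}=0$ at $\nu=0$ and $_h\Delta^{0}_{\rho(b)}$, $_h\Delta^{0}_b$ act as the identity. So I assume $\nu>0$.

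The first main step is to rewrite the left-hand side using the decomposition \eqref{hZ::seila1} with $f$ replaced by $f^{\Delta}$ and $b$ replaced by $\rho(b)$:
\begin{equation*}
{_h}\Delta_{\rho(b)}^{-\nu} f^{\Delta}(t-\nu h)
= h^{\nu}f^{\Delta}(t)+\frac{\nu}{\Gamma(\nu+1)}\int_{\sigma(t)}^{\sigma(\rho(b))}
(s+\nu h-\sigma(t))_h^{(\nu-1)}f^{\Delta}(s)\Delta s.
\end{equation*}
Then I would apply the product-rule identity \eqref{hZ::eq:semlhante}, solved for $(s+\nu h-\sigma(t))_h^{(\nu-1)}f^{\Delta}(s)$, inside the $\Delta$-integral. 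The resulting telescoping term $[(s+\nu h-\sigma(t))_h^{(\nu-1)}f(s)]_{s=\sigma(t)}^{s=b}$ produces the boundary contribution $\tfrac{\nu}{\Gamma(\nu+1)}(b+\nu h-\sigma(t))_h^{(\nu-1)}f(b)$ (with the other endpoint contributing a $h^{\nu-1}\nu f(\sigma(t))$-type term after simplifying $(\nu-1)_h^{(\nu-1)}/\Gamma(\nu)$), while the remaining integral carries a factor $(\nu-1)(s+\nu h-\sigma(t))_h^{(\nu-2)}f^{\sigma}(s)$.

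The second main step is to compute $(_h\Delta_b^{-\nu}f(t-\nu h))^{\Delta}$ directly, again via \eqref{hZ::seila1}:
\begin{equation*}
(_h\Delta_b^{-\nu}f(t-\nu h))^{\Delta}
= \frac{1}{h}\left\{h^{\nu}f(\sigma(t))
+\tfrac{\nu}{\Gamma(\nu+1)}\!\int_{\sigma(\sigma(t))}^{\sigma(b)}\!(s+\nu h-\sigma(\sigma(t)))_h^{(\nu-1)}f(s)\Delta s\right.
\end{equation*}
\begin{equation*}
\left.-h^{\nu}f(t)-\tfrac{\nu}{\Gamma(\nu+1)}\!\int_{\sigma(t)}^{\sigma(b)}\!(s+\nu h-\sigma(t))_h^{(\nu-1)}f(s)\Delta s\right\}.
\end{equation*}
After grouping the two integrals over the common range $[\sigma(\sigma(t)),\sigma(b))$ and pulling out the leftover $s=\sigma(t)$ term, the integrand becomes $\Delta_{t,h}(s+\nu h-\sigma(t))_h^{(\nu-1)}$ acting on $f(s)$; by the computation inside the proof of Lemma~\ref{hZ::lemma:tl} this partial $h$-difference equals $-(\nu-1)(s+\nu h-\sigma(\sigma(t)))_h^{(\nu-2)}$. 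A shift of summation index identifies this with the integral appearing on the left-hand side after step one, and the boundary piece yields $h^{\nu}f^{\Delta}(t)+h^{\nu-1}\nu f(\sigma(t))$-type terms that exactly match those produced in step one.

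The main obstacle I anticipate is bookkeeping: the shift $t\mapsto t-\nu h$ versus $t\mapsto t+\nu h$ changes which endpoint is boundary and which is free, so the analogue of the surface term lands at $s=b$ rather than $s=a$, and one has to be careful with $\sigma(\rho(b))$ versus $b$, and with the treatment of the $s=\sigma(t)$ contribution when replacing the $\Delta$-integral by a sum from $\sigma(\sigma(t))$. Once the boundary pieces and the ``$(\nu-1)(\cdot)_h^{(\nu-2)}$'' integrals produced in the two steps are matched, subtracting gives exactly \eqref{hZ::naosei12}, and since $t\in\mathbb{T}^{\kappa}$ was arbitrary, the theorem follows.
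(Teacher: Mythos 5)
Your proposal is correct and follows essentially the same route as the paper: identity \eqref{hZ::eq:semlhante} plus time-scales integration by parts (Lemma~\ref{integracao:partes}) produces the boundary term $\frac{\nu}{\Gamma(\nu+1)}(b+\nu h-\sigma(t))_h^{(\nu-1)}f(b)$ on the left-hand side, and a direct expansion of $(_h\Delta_b^{-\nu}f(t-\nu h))^{\Delta}$ via $\Delta_{t,h}(s+\nu h-\sigma(t))_h^{(\nu-1)}=-(\nu-1)(s+\nu h-\sigma^2(t))_h^{(\nu-2)}$ matches the remaining terms after the index shift you describe. The only slip is cosmetic: the lower-endpoint contribution of the by-parts step is $(\nu h)_h^{(\nu-1)}=h^{\nu-1}\Gamma(\nu+1)$ rather than $(\nu-1)_h^{(\nu-1)}/\Gamma(\nu)$, but it still yields the $-\nu h^{\nu-1}f(\sigma(t))$ term you predict.
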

\begin{proof}
From \eqref{hZ::eq:semlhante} we obtain from integration by parts
(item 2 of Lemma~\ref{integracao:partes}) that
\begin{equation}
\label{hZ::naosei99}
\begin{split}
{_h}\Delta_{\rho(b)}^{-\nu} & f^{\Delta}(t-\nu h) =\frac{\nu(b+\nu
h-\sigma(t))_h^{(\nu-1)}}{\Gamma(\nu+1)}f(b)
+ h^\nu f^\Delta(t) -\nu h^{\nu-1}f(\sigma(t))\\
&\qquad -\frac{\nu}{\Gamma(\nu+1)}\int_{\sigma(t)}^{b}(\nu-1)(s+\nu
h-\sigma(t))_h^{(\nu-2)} f^\sigma(s)\Delta s.
\end{split}
\end{equation}
We show that $(_h\Delta_b^{-\nu}f(t-\nu h))^{\Delta}$ equals \eqref{hZ::naosei99}:
\begin{equation*}
\begin{split}
(_h&\Delta_b^{-\nu} f(t-\nu h))^{\Delta}\\
&=h^{\nu}f^\Delta(t)+\frac{\nu}{h\Gamma(\nu+1)}\left[\int_{\sigma^2(t)}^{\sigma(b)}(s+\nu
h-\sigma^2(t)))_h^{(\nu-1)}
f(s)\Delta s\right.\\
&\qquad \left.-\int_{\sigma^2(t)}^{\sigma(b)}(s+\nu
h-\sigma(t))_h^{(\nu-1)}
f(s)\Delta s\right]-\nu h^{\nu-1} f(\sigma(t))\\
&=h^{\nu}f^\Delta(t)+\frac{\nu}{\Gamma(\nu+1)}\int_{\sigma^2(t)}^{\sigma(b)}\Delta_{t,h}\left((s+\nu
h-\sigma(t))_h^{(\nu-1)}\right)
f(s)\Delta s-\nu h^{\nu-1} f(\sigma(t))
\end{split}
\end{equation*}
\begin{equation*}
\begin{split}
&=h^{\nu}f^\Delta(t)-\frac{\nu}{\Gamma(\nu+1)}\int_{\sigma^2(t)}^{\sigma(b)}(\nu-1)(s+\nu
h-\sigma^2(t))_h^{(\nu-2)}
f(s)\Delta s-\nu h^{\nu-1} f(\sigma(t))\\
&=h^{\nu}f^\Delta(t)-\frac{\nu}{\Gamma(\nu+1)}\int_{\sigma(t)}^{b}(\nu-1)(s+\nu
h-\sigma(t))_h^{(\nu-2)} f(s)\Delta s-\nu h^{\nu-1} f(\sigma(t)).
\end{split}
\end{equation*}
\end{proof}

\begin{definition}
\label{hZ::def1} Let $0<\alpha\leq 1$ and set $\gamma := 1-\alpha$.
The \emph{left fractional difference} $_a\Delta_h^\alpha f(t)$ and
the \emph{right fractional difference} $_h\Delta_b^\alpha f(t)$ of
order $\alpha$ of a function $f\in\mathcal{F}_\mathbb{T}$ are
defined as
\begin{equation*}
_a\Delta_h^\alpha f(t) := (_a\Delta_h^{-\gamma}f(t+\gamma
h))^{\Delta}\ \text{ and } \ _h\Delta_b^\alpha
f(t):=-(_h\Delta_b^{-\gamma}f(t-\gamma h))^{\Delta}
\end{equation*}
for all $t\in\mathbb{T}^\kappa$.
\end{definition}


\section{Main Results}
\label{hZ::sec1}

Our aim is to introduce the $h$-fractional problem of the calculus
of variations and to prove corresponding necessary optimality
conditions. In order to obtain an Euler-Lagrange type equation
(\textrm{cf.} Theorem~\ref{hZ::thm0}) we first prove a fractional
formula of $h$-summation by parts.


\subsection{Fractional $h$-summation by parts}

A big challenge was to discover a fractional $h$-summation by parts
formula within the time scale setting. Indeed, there is no clue of
what such a formula should be. We found it eventually, making use of
the following lemma.

\begin{lemma}
\label{hZ::lem1} Let $f$ and $k$ be two functions defined on
$\mathbb{T}^\kappa$ and $\mathbb{T}^{\kappa^2}$, respectively, and
$g$ a function defined on
$\mathbb{T}^\kappa\times\mathbb{T}^{\kappa^2}$. The following
equality holds:
\begin{equation*}
\int_{a}^{b}f(t)\left[\int_{a}^{t}g(t,s)k(s)\Delta s\right]\Delta
t=\int_{a}^{\rho(b)}k(t)\left[\int_{\sigma(t)}^{b}g(s,t)f(s)\Delta
s\right]\Delta t \, .
\end{equation*}
\end{lemma}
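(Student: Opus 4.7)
The plan is to exploit the very simple structure of the time scale $\mathbb{T}=(h\mathbb{Z})_a$: every $\Delta$-integral on this time scale collapses to a finite sum via $\int_c^d F(\tau)\Delta\tau=h\sum_{r=c/h}^{d/h-1}F(rh)$, so both sides of the claimed identity are double finite sums over a triangular index set, and the identity should follow from a Fubini-style interchange of the order of summation. This is in the same spirit as Lemma~\ref{Z::lem0} in the previous chapter, which covered the case $h=1$ by appealing to Theorem~10 of \cite{Akin}; here I would give a direct, self-contained argument because the computation is essentially one line once the index sets are written down carefully.

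Concretely, first I would rewrite the left-hand side as
\begin{equation*}
\int_{a}^{b}f(t)\!\left[\int_{a}^{t}g(t,s)k(s)\Delta s\right]\!\Delta t
=h^{2}\sum_{i=a/h}^{b/h-1}\sum_{j=a/h}^{i-1}f(ih)\,g(ih,jh)\,k(jh),
\end{equation*}
using that for $t=ih$ the inner integral $\int_{a}^{t}g(t,s)k(s)\Delta s$ is an $h$-sum from $s=a$ to $s=\rho(t)=(i-1)h$. Next I would swap the order of summation over the triangle $\{(i,j):a/h\le j\le i-1,\ a/h\le i\le b/h-1\}=\{(i,j):a/h\le j\le b/h-2,\ j+1\le i\le b/h-1\}$, obtaining
\begin{equation*}
h^{2}\sum_{j=a/h}^{b/h-2}k(jh)\sum_{i=j+1}^{b/h-1}g(ih,jh)\,f(ih).
\end{equation*}
Finally, I would recognize the outer sum as the $h$-integral from $a$ to $\rho(b)=b-h$ in the variable $t=jh$, and the inner sum as the $h$-integral from $\sigma(t)=t+h$ to $b$ in the variable $s=ih$, which yields exactly
\begin{equation*}
\int_{a}^{\rho(b)}k(t)\!\left[\int_{\sigma(t)}^{b}g(s,t)f(s)\Delta s\right]\!\Delta t.
\end{equation*}

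There is no real obstacle here beyond correct bookkeeping of endpoints: one must verify that the upper limit of the outer sum becomes $b/h-2$ (and hence that the outer integral terminates at $\rho(b)$, not $b$) and that the lower limit of the inner sum becomes $j+1$ (corresponding to $\sigma(t)=t+h$). Both are immediate consequences of the constant graininess $\mu\equiv h$ on $(h\mathbb{Z})_a$ and of the conventions $\sum_{r=c}^{c-1}=0$. No appeal to an external Fubini theorem is needed; everything is finite and absolutely summable, so the interchange is unconditionally justified.
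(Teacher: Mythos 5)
Your proof is correct, and it is essentially the paper's own argument: the paper also expands both $\Delta$-integrals into $h^2$ times a double sum over the triangular index set and interchanges the order of summation, merely packaging the bookkeeping as a matrix product $h^2 R\cdot C_1$ with $C_1=k(a)C_2+k(a+h)C_3+\cdots+k(b-2h)C_k$. Your endpoint checks (outer sum terminating at $b/h-2$, hence at $\rho(b)$, and inner sum starting at $j+1$, hence at $\sigma(t)$) match the paper's computation exactly.
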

\begin{proof}
Consider the matrices $R = \left[ f(a+h), f(a+2h), \cdots, f(b-h)
\right]$,
\begin{equation*}
C_1 = \left[
\begin{array}{c}
g(a+h,a)k(a) \\
g(a+2h,a)k(a)+g(a+2h,a+h)k(a+h) \\
\vdots \\
g(b-h,a)k(a)+g(b-h,a+h)k(a+h)+\cdots+ g(b-h,b-2h)k(b-2h)
\end{array}
\right],
\end{equation*}
\begin{gather*}
C_2 = \left[
\begin{array}{c}
g(a+h,a) \\
g(a+2h,a) \\
\vdots \\
g(b-h,a)  \end{array} \right], \ \  C_3 = \left[
\begin{array}{c}
0 \\
g(a+2h,a+h) \\
\vdots \\
g(b-h,a+h)  \end{array} \right], \ \ C_k = \left[
\begin{array}{c}
0 \\
0 \\
\vdots \\
g(b-h,b-2h)
\end{array}
\right] .
\end{gather*}
Direct calculations show that
\begin{equation*}
\begin{split}
\int_{a}^{b}&f(t)\left[\int_{a}^{t}g(t,s)k(s)\Delta s\right]\Delta t
=h^2\sum_{i=a/h}^{b/h-1} f(ih)\sum_{j=a/h}^{i-1}g(ih,jh)k(jh) = h^2 R \cdot C_1\\
&=h^2 R \cdot \left[k(a) C_2 + k(a+h)C_3 +\cdots +k(b-2h) C_k \right]\\
&=h^2\left[k(a)\sum_{j=a/h+1}^{b/h-1}g(jh,a)f(jh)+k(a+h)\sum_{j=a/h+2}^{b/h-1}g(jh,a+h)f(jh)\right.\\
&\left.\qquad \qquad +\cdots+k(b-2h)\sum_{j=b/h-1}^{b/h-1}g(jh,b-2h)f(jh)\right]\\
&=\sum_{i=a/h}^{b/h-2}k(ih)h\sum_{j=\sigma(ih)/h}^{b/h-1}g(jh,ih)f(jh)
h =\int_a^{\rho(b)}k(t)\left[\int_{\sigma(t)}^b g(s,t)f(s)\Delta
s\right]\Delta t.
\end{split}
\end{equation*}
\end{proof}

\begin{theorem}[fractional $h$-summation by parts]\label{hZ::teor1}
Let $f$ and $g$ be real valued functions defined on
$\mathbb{T}^\kappa$ and $\mathbb{T}$, respectively. Fix
$0<\alpha\leq 1$ and put $\gamma := 1-\alpha$. Then,
\begin{multline}
\label{hZ::delf:sumPart} \int_{a}^{b}f(t)_a\Delta_h^\alpha
g(t)\Delta t=h^\gamma f(\rho(b))g(b)-h^\gamma
f(a)g(a)+\int_{a}^{\rho(b)}{_h\Delta_{\rho(b)}^\alpha
f(t)g^\sigma(t)}\Delta t\\
+\frac{\gamma}{\Gamma(\gamma+1)}g(a)\left(\int_{a}^{b}(t+\gamma
h-a)_h^{(\gamma-1)}f(t)\Delta t -\int_{\sigma(a)}^{b}(t+\gamma
h-\sigma(a))_h^{(\gamma-1)}f(t)\Delta t\right).
\end{multline}
\end{theorem}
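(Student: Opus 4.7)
The strategy mirrors the proof of the analogous Theorem~\ref{Z::teor1} in the $\mathbb{Z}$ setting, using the building blocks already assembled in this chapter: the commutation formula of Theorem~\ref{hZ::thm2}, the expansion \eqref{hZ::seila1}, the fractional $h$-interchange Lemma~\ref{hZ::lem1}, the right-sided counterpart Theorem~\ref{hZ::thm3}, and the ordinary time-scale integration by parts (Lemma~\ref{integracao:partes}). Nothing beyond these tools is required; the work lies in lining them up in the correct order and keeping track of the shifted arguments $t\pm\gamma h$ inherent to the left/right fractional $h$-sums.

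First I unfold ${}_a\Delta_h^\alpha g(t)=({}_a\Delta_h^{-\gamma}g(t+\gamma h))^\Delta$ by Definition~\ref{hZ::def1} and apply Theorem~\ref{hZ::thm2} to exchange the $\Delta$-derivative with the left fractional $h$-sum, obtaining
$$_a\Delta_h^\alpha g(t)={}_a\Delta_h^{-\gamma}g^\Delta(t+\gamma h)+\frac{\gamma}{\Gamma(\gamma+1)}(t+\gamma h-a)_h^{(\gamma-1)}g(a).$$
Multiplying by $f(t)$ and integrating from $a$ to $b$ already isolates the first piece of the last summand in \eqref{hZ::delf:sumPart}, namely $\frac{\gamma g(a)}{\Gamma(\gamma+1)}\int_a^b(t+\gamma h-a)_h^{(\gamma-1)}f(t)\Delta t$. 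Next I expand the remaining $\int_a^b f(t){}_a\Delta_h^{-\gamma}g^\Delta(t+\gamma h)\Delta t$ via \eqref{hZ::seila1} as $h^\gamma\int_a^b f(t)g^\Delta(t)\Delta t$ plus a double integral $\frac{\gamma}{\Gamma(\gamma+1)}\int_a^b f(t)\int_a^t(t+\gamma h-\sigma(s))_h^{(\gamma-1)}g^\Delta(s)\Delta s\,\Delta t$, to which I apply Lemma~\ref{hZ::lem1} to swap the order of integration. The swapped inner integral has the form $\int_{\sigma(t)}^b(s+\gamma h-\sigma(t))_h^{(\gamma-1)}f(s)\Delta s$, which by \eqref{hZ::seila1} is exactly $\frac{\Gamma(\gamma+1)}{\gamma}\bigl[{}_h\Delta_{\rho(b)}^{-\gamma}f(t)-h^\gamma f(t-\gamma h)\bigr]$; in practice I keep the raw integral form to avoid arguments outside $\mathbb T$.

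What remains is $h^\gamma\int_a^b f(t)g^\Delta(t)\Delta t+\int_a^{\rho(b)}g^\Delta(t)\,{}_h\Delta_{\rho(b)}^{-\gamma}f(t)\,\Delta t$ minus the $h^\gamma f(t-\gamma h)$ correction, the latter producing, after the domain-shift bookkeeping, the term $-\frac{\gamma g(a)}{\Gamma(\gamma+1)}\int_{\sigma(a)}^b(t+\gamma h-\sigma(a))_h^{(\gamma-1)}f(t)\Delta t$ in \eqref{hZ::delf:sumPart}. Applying the ordinary integration-by-parts formula \eqref{partes2} to the integral involving $g^\Delta(t)$ and then invoking Theorem~\ref{hZ::thm3} to rewrite the resulting $({}_h\Delta_{\rho(b)}^{-\gamma}f(t))^\Delta$ as $-{}_h\Delta_{\rho(b)}^\alpha f(t)+\frac{\gamma}{\Gamma(\gamma+1)}(b+\gamma h-\sigma(t))_h^{(\gamma-1)}f(b)$ generates simultaneously the boundary contribution $h^\gamma f(\rho(b))g(b)-h^\gamma f(a)g(a)$ and the target integral $\int_a^{\rho(b)}{}_h\Delta_{\rho(b)}^\alpha f(t)\,g^\sigma(t)\Delta t$. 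Collecting all pieces gives \eqref{hZ::delf:sumPart}. The main obstacle is the bookkeeping around the shifted time scales $\tilde{\mathbb T}^\pm_\gamma$ on which the fractional operators are defined: showing that after swapping summation order and applying the two integration-by-parts steps, the correction terms produced by Theorem~\ref{hZ::thm2} and Theorem~\ref{hZ::thm3}, together with the $h^\gamma f(t-\gamma h)$ stray term from \eqref{hZ::seila1}, telescope cleanly into the two explicit boundary terms and the single explicit $g(a)$-correction appearing on the right-hand side of \eqref{hZ::delf:sumPart}.
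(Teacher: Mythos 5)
Your overall route is the same as the paper's: unfold ${}_a\Delta_h^\alpha g$ via Definition~\ref{hZ::def1} and Theorem~\ref{hZ::thm2}, expand with \eqref{hZ::seila1}, interchange the order of integration with Lemma~\ref{hZ::lem1}, and finish with the time-scale integration by parts of Lemma~\ref{integracao:partes}. Two of your bookkeeping claims in the final stage are wrong, however, and one of them would derail the computation if carried out literally.

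First, after integration by parts the integrand contains $({}_h\Delta_{\rho(b)}^{-\gamma}f(t-\gamma h))^\Delta$, and this converts to $-{}_h\Delta_{\rho(b)}^\alpha f(t)$ \emph{directly by Definition~\ref{hZ::def1}} applied on the interval $[a,\rho(b)]$; there is no correction term. Theorem~\ref{hZ::thm3} is a different identity --- it relates ${}_h\Delta_{\rho(b)}^{-\nu}f^{\Delta}(t-\nu h)$ to $({}_h\Delta_{b}^{-\nu}f(t-\nu h))^{\Delta}$, with mismatched endpoints $\rho(b)$ and $b$ --- and the term $\frac{\gamma}{\Gamma(\gamma+1)}(b+\gamma h-\sigma(t))_h^{(\gamma-1)}f(b)$ you propose to add does not belong here. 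If inserted it would leave an extra, non-cancelling contribution absent from \eqref{hZ::delf:sumPart}; worse, $f(b)$ is not even defined, since $f$ lives on $\mathbb{T}^\kappa$. Second, the correction $-\frac{\gamma}{\Gamma(\gamma+1)}g(a)\int_{\sigma(a)}^{b}(t+\gamma h-\sigma(a))_h^{(\gamma-1)}f(t)\Delta t$ does not come from the stray $h^\gamma f(t-\gamma h)$ term of \eqref{hZ::seila1}: that stray term cancels $h^\gamma\int_a^{\rho(b)}f(t)g^{\Delta}(t)\Delta t$, leaving only $h^\gamma f(\rho(b))\left[g(b)-g(\rho(b))\right]$. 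The $\int_{\sigma(a)}^{b}$ correction instead arises from the $t=a$ evaluation of the boundary term $\bigl[g(t)\,{}_h\Delta_{\rho(b)}^{-\gamma}f(t-\gamma h)\bigr]_{t=a}^{t=\rho(b)}$, once ${}_h\Delta_{\rho(b)}^{-\gamma}f(a-\gamma h)$ is expanded by \eqref{hZ::seila1}; the $t=\rho(b)$ evaluation supplies $h^\gamma f(\rho(b))g(\rho(b))$, completing the boundary term $h^\gamma f(\rho(b))g(b)$. With these two points repaired, your plan coincides with the paper's proof.
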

\begin{proof}
By \eqref{hZ::naosei1} we can write
\begin{equation}
\label{hZ::rui0}
\begin{split}
\int_{a}^{b} &f(t)_a\Delta_h^\alpha g(t)\Delta t
=\int_{a}^{b}f(t)(_a\Delta_h^{-\gamma} g(t+\gamma h))^{\Delta}\Delta t\\
&=\int_{a}^{b}f(t)\left[_a\Delta_h^{-\gamma}
g^{\Delta}(t+\gamma h)+\frac{\gamma}{\Gamma(\gamma
+1)}(t+\gamma h-a)_h^{(\gamma-1)}g(a)\right]\Delta t\\
&=\int_{a}^{b}f(t)_a\Delta_h^{-\gamma}g^{\Delta}(t+\gamma h)\Delta t
+\int_{a}^{b}\frac{\gamma}{\Gamma(\gamma+1)}(t+\gamma
h-a)_h^{(\gamma-1)}f(t)g(a)\Delta t.
\end{split}
\end{equation}
Using \eqref{hZ::seila1} we get
\begin{equation*}
\begin{split}
\int_{a}^{b} &f(t)_a\Delta_h^{-\gamma} g^{\Delta}(t+\gamma h) \Delta t\\
&=\int_{a}^{b}f(t)\left[h^\gamma g^{\Delta}(t) +
\frac{\gamma}{\Gamma(\gamma+1)}\int_{a}^{t}(t+\gamma h
-\sigma(s))_h^{(\gamma-1)} g^{\Delta}(s)\Delta s\right]\Delta t\\
&=h^\gamma\int_{a}^{b}f(t)g^{\Delta}(t)\Delta
t+\frac{\gamma}{\Gamma(\gamma+1)}\int_{a}^{\rho(b)}
g^{\Delta}(t)\int_{\sigma(t)}^{b}(s+\gamma h
-\sigma(t))_h^{(\gamma-1)}f(s)\Delta s \Delta t\\
&=h^\gamma f(\rho(b))[g(b)-g(\rho(b))]+\int_{a}^{\rho(b)}
g^{\Delta}(t)_h\Delta_{\rho(b)}^{-\gamma} f(t-\gamma h)\Delta t,
\end{split}
\end{equation*}
where the third equality follows by Lemma~\ref{hZ::lem1}. We proceed
to develop the right hand side of the last equality as follows:
\begin{equation*}
\begin{split}
h^\gamma & f(\rho(b))[g(b)-g(\rho(b))]+\int_{a}^{\rho(b)}
g^{\Delta}(t)_h\Delta_{\rho(b)}^{-\gamma} f(t-\gamma h)\Delta t\\
&=h^\gamma f(\rho(b))[g(b)-g(\rho(b))]
+\left[g(t)_h\Delta_{\rho(b)}^{-\gamma}
f(t-\gamma h)\right]_{t=a}^{t=\rho(b)}\\
&\quad -\int_{a}^{\rho(b)} g^\sigma(t)(_h\Delta_{\rho(b)}^{-\gamma} 
f(t-\gamma h))^{\Delta}\Delta t\\
&=h^\gamma f(\rho(b))g(b)-h^\gamma f(a)g(a)\\
&\quad
-\frac{\gamma}{\Gamma(\gamma+1)}g(a)\int_{\sigma(a)}^{b}(s+\gamma
h-\sigma(a))_h^{(\gamma-1)}f(s)\Delta s
+\int_{a}^{\rho(b)}{\left(_h\Delta_{\rho(b)}^\alpha
f(t)\right)g^\sigma(t)}\Delta t,
\end{split}
\end{equation*}
where the first equality follows from
Lemma~\ref{integracao:partes}. Putting this into
(\ref{hZ::rui0}) we get \eqref{hZ::delf:sumPart}.
\end{proof}


\subsection{Necessary optimality conditions}

We begin to fix two arbitrary real numbers $\alpha$ and $\beta$ such
that $\alpha,\beta\in(0,1]$. Further, we put $\gamma := 1-\alpha$
and $\nu :=1-\beta$.

Let a function
$L(t,u,v,w):\mathbb{T}^\kappa\times\mathbb{R}\times\mathbb{R}\times\mathbb{R}\rightarrow\mathbb{R}$
be given. We consider the problem of minimizing (or maximizing) a
functional $\mathcal{L}:\mathcal{F}_\mathbb{T}\rightarrow\mathbb{R}$
subject to given boundary conditions:
\begin{equation}
\label{hZ::naosei7}
\mathcal{L}(y(\cdot))=\int_{a}^{b}L(t,y^{\sigma}(t),{_a}\Delta_h^\alpha
y(t),{_h}\Delta_b^\beta y(t))\Delta t \longrightarrow \min, \
y(a)=A, \ y(b)=B \, .
\end{equation}
Our main aim is to derive necessary optimality conditions for
problem \eqref{hZ::naosei7}.
\begin{definition}
For $f\in\mathcal{F}_\mathbb{T}$ we define the norm
$$\|f\|=\max_{t\in\mathbb{T}^\kappa}|f^\sigma(t)|+\max_{t\in\mathbb{T}^\kappa}|_a\Delta_h^\alpha
f(t)|+\max_{t\in\mathbb{T}^\kappa}|_h\Delta_b^\beta f(t)|.$$ A
function $\hat{y}\in\mathcal{F}_\mathbb{T}$ with $\hat{y}(a)=A$ and
$\hat{y}(b)=B$ is called a local minimizer for problem
\eqref{hZ::naosei7} provided there exists $\delta>0$ such that
$\mathcal{L}(\hat{y})\leq\mathcal{L}(y)$ for all
$y\in\mathcal{F}_\mathbb{T}$ with $y(a)=A$ and $y(b)=B$ and
$\|y-\hat{y}\|<\delta$.
\end{definition}

\begin{definition}
A function $\eta\in\mathcal{F}_\mathbb{T}$ is called an admissible
variation provided $\eta \neq 0$ and $\eta(a)=\eta(b)=0$.
\end{definition}

From now on we assume that the second-order partial derivatives
$L_{uu}$, $L_{uv}$, $L_{uw}$, $L_{vw}$, $L_{vv}$, and $L_{ww}$ exist
and are continuous.


\subsubsection{First order optimality condition}

Next theorem gives a first order necessary condition for problem
\eqref{hZ::naosei7}, \textrm{i.e.}, an Euler-Lagrange type equation
for the fractional $h$-difference setting.
\begin{theorem}[The $h$-fractional Euler-Lagrange equation for problem \eqref{hZ::naosei7}]
\label{hZ::thm0} If $\hat{y}\in\mathcal{F}_\mathbb{T}$ is a local
minimizer for problem \eqref{hZ::naosei7}, then the equality
\begin{equation}
\label{hZ::EL} L_u[\hat{y}](t) +{_h}\Delta_{\rho(b)}^\alpha
L_v[\hat{y}](t)+{_a}\Delta_h^\beta L_w[\hat{y}](t)=0
\end{equation}
holds for all $t\in\mathbb{T}^{\kappa^2}$ with operator $[\cdot]$
defined by $[y](s) =(s,y^{\sigma}(s),{_a}\Delta_s^\alpha
y(s),{_s}\Delta_b^\beta y(s))$.
\end{theorem}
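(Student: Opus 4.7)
The plan mirrors the discrete proof of Theorem~\ref{Z::thm0} from Chapter~\ref{chap3}, adapted to the $h$-scaled calculus. Let $\eta\in\mathcal{F}_{\mathbb T}$ be an arbitrary admissible variation and, for $\varepsilon$ in a neighbourhood of $0$, put $\Phi(\varepsilon):=\mathcal{L}(\hat y+\varepsilon\eta)$. Since $\hat y$ is a local minimizer, $\Phi'(0)=0$. Differentiating under the delta integral and using linearity of ${_a}\Delta_h^\alpha$ and ${_h}\Delta_b^\beta$ yields
$$\int_{a}^{b}\bigl[L_u[\hat y](t)\eta^\sigma(t)+L_v[\hat y](t)\,{_a}\Delta_h^\alpha\eta(t)+L_w[\hat y](t)\,{_h}\Delta_b^\beta\eta(t)\bigr]\Delta t=0.$$

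The next task is to move the fractional operators off $\eta$. For the middle term I would apply the fractional $h$-summation by parts formula of Theorem~\ref{hZ::teor1} with $f=L_v[\hat y]$ and $g=\eta$. Because $\eta(a)=\eta(b)=0$, both boundary contributions $h^\gamma f(\rho(b))g(b)$ and $h^\gamma f(a)g(a)$ disappear, as do the two correction sums multiplied by $g(a)$, leaving
$$\int_a^b L_v[\hat y](t)\,{_a}\Delta_h^\alpha\eta(t)\,\Delta t=\int_a^{\rho(b)}{_h}\Delta_{\rho(b)}^\alpha L_v[\hat y](t)\,\eta^\sigma(t)\,\Delta t.$$
For the $L_w$ term I would derive an analogous right-sided identity by repeating the strategy behind Theorem~\ref{hZ::teor1}. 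Concretely, rewrite ${_h}\Delta_b^\beta\eta(t)=-({_h}\Delta_b^{-\nu}\eta(t-\nu h))^{\Delta}$, invoke \eqref{hZ::naosei12} (whose extra boundary term drops out since $\eta(b)=0$) to obtain ${_h}\Delta_b^\beta\eta(t)=-{_h}\Delta_{\rho(b)}^{-\nu}\eta^\Delta(t-\nu h)$, then swap the order of $h$-summation via Lemma~\ref{hZ::lem1}, recognise an ${_a}\Delta_h^{-\nu}L_w[\hat y]$, and finally apply the time scales integration by parts of Lemma~\ref{integracao:partes}. After the boundary corrections cancel thanks to $\eta(a)=\eta(b)=0$, this produces
$$\int_a^b L_w[\hat y](t)\,{_h}\Delta_b^\beta\eta(t)\,\Delta t=\int_a^{\rho(b)}{_a}\Delta_h^\beta L_w[\hat y](t)\,\eta^\sigma(t)\,\Delta t.$$

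Collecting the three contributions and observing that the $t=\rho(b)$ summand of the first term vanishes (since $\eta^\sigma(\rho(b))=\eta(b)=0$), the condition $\Phi'(0)=0$ becomes
$$\int_a^{\rho(b)}\bigl[L_u[\hat y](t)+{_h}\Delta_{\rho(b)}^\alpha L_v[\hat y](t)+{_a}\Delta_h^\beta L_w[\hat y](t)\bigr]\eta^\sigma(t)\,\Delta t=0.$$
To finish, I would invoke a fundamental lemma: for any fixed $\tau\in\mathbb{T}^{\kappa^2}$, the variation $\eta$ equal to $1$ at $\sigma(\tau)$ and $0$ on $\mathbb{T}\setminus\{\sigma(\tau)\}$ is admissible (it vanishes at $a$ and $b$), so the bracketed factor must vanish at $\tau$, which is precisely \eqref{hZ::EL} on $\mathbb{T}^{\kappa^2}$.

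The main obstacle is the symmetric right-sided $h$-summation-by-parts identity needed for the $L_w$ term: this counterpart of Theorem~\ref{hZ::teor1} has not been isolated as a lemma, so one must carry out the three-step derivation (apply \eqref{hZ::naosei12}, then Lemma~\ref{hZ::lem1} in the dual direction, then \eqref{partes1}) while keeping careful track of which boundary pieces survive the vanishing of $\eta$ at the endpoints. Once that identity is in place, the remaining argument is routine variational bookkeeping exactly parallel to the $\mathbb{Z}$ case treated in Chapter~\ref{chap3}.
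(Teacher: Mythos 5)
Your proposal is correct and follows essentially the same route as the paper's own proof: the first variation $\Phi'(0)=0$, Theorem~\ref{hZ::teor1} for the $L_v$ term, and for the $L_w$ term the chain \eqref{hZ::naosei12} $\rightarrow$ Lemma~\ref{hZ::lem1} $\rightarrow$ Lemma~\ref{integracao:partes}, followed by the arbitrariness of $\eta^\sigma(t)$ on $\mathbb{T}^{\kappa^2}$. The "obstacle" you flag at the end is exactly the computation the paper carries out in equations \eqref{hZ::naosei4}--\eqref{hZ::naosei6}, so nothing is missing.
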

\begin{proof}
Suppose that $\hat{y}(\cdot)$ is a local minimizer of
$\mathcal{L}[\cdot]$. Let $\eta(\cdot)$ be an arbitrarily fixed
admissible variation and define a function
$\Phi:\left(-\frac{\delta}{\|\eta(\cdot)\|},\frac{\delta}{\|\eta(\cdot)\|}\right)\rightarrow\mathbb{R}$
by
\begin{equation}
\label{hZ::fi}
\Phi(\varepsilon)=\mathcal{L}[\hat{y}(\cdot)+\varepsilon\eta(\cdot)].
\end{equation}
This function has a minimum at $\varepsilon=0$, so we must have
$\Phi'(0)=0$, i.e.,
$$\int_{a}^{b}\left[L_u[\hat{y}](t)\eta^\sigma(t)
+L_v[\hat{y}](t){_a}\Delta_h^\alpha\eta(t)
+L_w[\hat{y}](t){_h}\Delta_b^\beta\eta(t)\right]\Delta t=0,$$ which
we may write equivalently as
\begin{multline}
\label{hZ::rui3} h
L_u[\hat{y}](t)\eta^\sigma(t)|_{t=\rho(b)}+\int_{a}^{\rho(b)}L_u[\hat{y}](t)\eta^\sigma(t)\Delta
t +\int_{a}^{b}L_v[\hat{y}](t){_a}\Delta_h^\alpha\eta(t)\Delta
t\\+\int_{a}^{b}L_w[\hat{y}](t){_h}\Delta_b^\beta\eta(t)\Delta t=0.
\end{multline}
Using Theorem~\ref{hZ::teor1} and the fact that $\eta(a)=\eta(b)=0$,
we get
\begin{equation}
\label{hZ::naosei5}
\int_{a}^{b}L_v[\hat{y}](t){_a}\Delta_h^\alpha\eta(t)\Delta
t=\int_{a}^{\rho(b)}\left({_h}\Delta_{\rho(b)}^\alpha
\left(L_v[\hat{y}]\right)(t)\right)\eta^\sigma(t)\Delta t
\end{equation}
for the third term in \eqref{hZ::rui3}. Using \eqref{hZ::naosei12}
it follows that
\begin{equation}
\label{hZ::naosei4}
\begin{split}
\int_{a}^{b} & L_w[\hat{y}](t){_h}\Delta_b^\beta\eta(t)\Delta t\\
=&-\int_{a}^{b}L_w[\hat{y}](t)({_h}\Delta_b^{-\nu}\eta(t-\nu h))^{\Delta}\Delta t\\
=&-\int_{a}^{b}L_w[\hat{y}](t)\left[{_h}\Delta_{\rho(b)}^{-\nu}
\eta^{\Delta}(t-\nu h)-\frac{\nu}{\Gamma(\nu+1)}(b+\nu h-\sigma(t))_h^{(\nu-1)}\eta(b)\right]\Delta t\\
=&-\int_{a}^{b}L_w[\hat{y}](t){_h}\Delta_{\rho(b)}^{-\nu}
\eta^{\Delta}(t-\nu h)\Delta t
+\frac{\nu\eta(b)}{\Gamma(\nu+1)}\int_{a}^{b}(b+\nu
h-\sigma(t))_h^{(\nu-1)}L_w[\hat{y}](t)\Delta t .
\end{split}
\end{equation}
We now use Lemma~\ref{hZ::lem1} to get
\begin{equation}
\label{hZ::naosei2}
\begin{split}
\int_{a}^{b} &L_w[\hat{y}](t){_h}\Delta_{\rho(b)}^{-\nu} \eta^{\Delta}(t-\nu h)\Delta t\\
&=\int_{a}^{b}L_w[\hat{y}](t)\left[h^\nu\eta^{\Delta}(t)\right.\\
&\quad+\left.\frac{\nu}{\Gamma(\nu+1)}\int_{\sigma(t)}^{b}(s+\nu
h-\sigma(t))_h^{(\nu-1)} \eta^{\Delta}(s)\Delta s\right]\Delta t\\
&=\int_{a}^{b}h^\nu L_w[\hat{y}](t)\eta^{\Delta}(t)\Delta t\\
&\qquad
+\frac{\nu}{\Gamma(\nu+1)}\int_{a}^{\rho(b)}\left[L_w[\hat{y}](t)\int_{\sigma(t)}^{b}(s+\nu
h-\sigma(t))_h^{(\nu-1)} \eta^{\Delta}(s)\Delta s\right]\Delta t\\
&=\int_{a}^{b}h^\nu L_w[\hat{y}](t)\eta^{\Delta}(t)\Delta t\\
&\qquad
+\frac{\nu}{\Gamma(\nu+1)}\int_{a}^{b}\left[\eta^{\Delta}(t)\int_{a}^{t}(t+\nu
h
-\sigma(s))_h^{(\nu-1)}L_w[\hat{y}](s)\Delta s\right]\Delta t\\
&=\int_{a}^{b}\eta^{\Delta}(t){_a}\Delta^{-\nu}_h
\left(L_w[\hat{y}]\right)(t+\nu h)\Delta t.
\end{split}
\end{equation}
We apply again the time scale integration by parts formula
(Lemma~\ref{integracao:partes}), this time to
\eqref{hZ::naosei2}, to obtain,
\begin{equation}
\label{hZ::naosei3}
\begin{split}
\int_{a}^{b} & \eta^{\Delta}(t){_a}\Delta^{-\nu}_h
\left(L_w[\hat{y}]\right)(t+\nu h)\Delta t\\
&=\int_{a}^{\rho(b)}\eta^{\Delta}(t){_a}\Delta^{-\nu}_h
\left(L_w[\hat{y}]\right)(t+\nu h)\Delta t\\
&\qquad +(\eta(b)-\eta(\rho(b))){_a}\Delta^{-\nu}_h
\left(L_w[\hat{y}]\right)(t+\nu h)|_{t=\rho(b)}\\
&=\left[\eta(t){_a}\Delta^{-\nu}_h \left(L_w[\hat{y}]\right)(t+\nu
h)\right]_{t=a}^{t=\rho(b)}
-\int_{a}^{\rho(b)}\eta^\sigma(t)({_a}\Delta^{-\nu}_h
\left(L_w[\hat{y}]\right)(t+\nu h))^\Delta \Delta t\\
&\qquad +\eta(b){_a}\Delta^{-\nu}_h \left(L_w[\hat{y}]\right)(t+\nu
h)|_{t=\rho(b)}-\eta(\rho(b)){_a}\Delta^{-\nu}_h
\left(L_w[\hat{y}]\right)(t+\nu h)|_{t=\rho(b)}\\
&=\eta(b){_a}\Delta^{-\nu}_h \left(L_w[\hat{y}]\right)(t+\nu
h)|_{t=\rho(b)}-\eta(a){_a}\Delta^{-\nu}_h
\left(L_w[\hat{y}]\right)(t+\nu h)|_{t=a}\\
&\qquad -\int_{a}^{\rho(b)}\eta^\sigma(t){_a}\Delta^{\beta}_h
\left(L_w[\hat{y}]\right)(t)\Delta t.
\end{split}
\end{equation}
Since $\eta(a)=\eta(b)=0$ we obtain, from \eqref{hZ::naosei2} and
\eqref{hZ::naosei3}, that
$$\int_{a}^{b}L_w[\hat{y}](t){_h}\Delta_{\rho(b)}^{-\nu}
\eta^\Delta(t)\Delta t
=-\int_{a}^{\rho(b)}\eta^\sigma(t){_a}\Delta^{\beta}_h
\left(L_w[\hat{y}]\right)(t)\Delta t\, ,$$ and after inserting in
\eqref{hZ::naosei4}, that
\begin{equation}
\label{hZ::naosei6}
\int_{a}^{b}L_w[\hat{y}](t){_h}\Delta_b^\beta\eta(t)\Delta t
=\int_{a}^{\rho(b)}\eta^\sigma(t){_a}\Delta^{\beta}_h
\left(L_w[\hat{y}]\right)(t) \Delta t.
\end{equation}
By \eqref{hZ::naosei5} and \eqref{hZ::naosei6} we may write
\eqref{hZ::rui3} as
$$\int_{a}^{\rho(b)}\left[L_u[\hat{y}](t)
+{_h}\Delta_{\rho(b)}^\alpha
\left(L_v[\hat{y}]\right)(t)+{_a}\Delta_h^\beta
\left(L_w[\hat{y}]\right)(t)\right]\eta^\sigma(t) \Delta t =0\, .$$
Since the values of $\eta^\sigma(t)$ are arbitrary for
$t\in\mathbb{T}^{\kappa^2}$, the Euler-Lagrange equation
\eqref{hZ::EL} holds along $\hat{y}$.
\end{proof}

The next result is a direct corollary of Theorem~\ref{hZ::thm0}.

\begin{cor}[The $h$-Euler-Lagrange equation
-- \textrm{cf.}, \textrm{e.g.}, \cite{CD:Bohner:2004,RD}]
\label{hZ::ELCor} Let $\mathbb{T}$ be the time scale $h \mathbb{Z}$,
$h > 0$, with the forward jump operator $\sigma$ and the delta
derivative $\Delta$. Assume $a, b \in \mathbb{T}$, $a < b$. If
$\hat{y}$ is a solution to the problem
\begin{equation*}
\mathcal{L}(y(\cdot))=\int_{a}^{b}L(t,y^{\sigma}(t),y^\Delta(t))\Delta
t \longrightarrow \min, \  y(a)=A, \  y(b)=B\, ,
\end{equation*}
then the equality
$L_u(t,\hat{y}^{\sigma}(t),\hat{y}^\Delta(t))
-\left(L_v(t,\hat{y}^{\sigma}(t),\hat{y}^\Delta(t))\right)^\Delta
=0$ holds for all $t\in\mathbb{T}^{\kappa^2}$.
\end{cor}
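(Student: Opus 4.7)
The plan is to deduce the Corollary as a direct specialization of Theorem~\ref{hZ::thm0}. The idea is to choose $\alpha=1$ (so that the left fractional difference degenerates into the ordinary delta derivative) and to take a Lagrangian $L$ that is independent of the variable $w$ (so that the right fractional difference term disappears entirely). With these choices the functional in \eqref{hZ::naosei7} reduces exactly to the one in the statement of the corollary, and the Euler--Lagrange equation \eqref{hZ::EL} should collapse to the desired $L_u - (L_v)^\Delta = 0$.

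First I would verify the degeneration of the fractional operators at $\alpha=1$. With $\alpha=1$ we have $\gamma := 1-\alpha = 0$, so Definition~\ref{hZ::def1} together with the convention ${}_a\Delta_h^0 f(t) = f(t)$ introduced after \eqref{hZ::seila1} gives
\begin{equation*}
{}_a\Delta_h^1 f(t) = \bigl({}_a\Delta_h^{0} f(t + 0\cdot h)\bigr)^\Delta = f^\Delta(t),
\end{equation*}
and analogously
\begin{equation*}
{}_h\Delta_{\rho(b)}^1 f(t) = -\bigl({}_h\Delta_{\rho(b)}^{0} f(t - 0 \cdot h)\bigr)^\Delta = -f^\Delta(t).
\end{equation*}

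Next, taking $L$ independent of $w$ means $L_w \equiv 0$, so the last summand in \eqref{hZ::EL} vanishes identically, and the third slot of the operator $[y](s)$ is irrelevant. Therefore the functional becomes $\int_a^b L(t, y^\sigma(t), {}_a\Delta_h^1 y(t))\,\Delta t = \int_a^b L(t, y^\sigma(t), y^\Delta(t))\,\Delta t$, exactly as in the corollary's hypothesis. Applying Theorem~\ref{hZ::thm0} to $\hat{y}$ gives, for every $t \in \mathbb{T}^{\kappa^2}$,
\begin{equation*}
L_u(t,\hat{y}^\sigma(t),\hat{y}^\Delta(t)) + {}_h\Delta_{\rho(b)}^1 \bigl(L_v[\hat{y}]\bigr)(t) = 0,
\end{equation*}
and substituting the identity ${}_h\Delta_{\rho(b)}^1 f = -f^\Delta$ yields the claimed equation.

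There is no real obstacle here beyond being careful that the degenerate cases $\alpha=1$, $\gamma=0$ of Definitions~\ref{hZ::def0} and \ref{hZ::def1} behave as expected; one should also double-check that for $\gamma=0$ the boundary terms carrying factors of $\gamma/\Gamma(\gamma+1)$ or $(t+\gamma h - a)_h^{(\gamma-1)}$ that appear in the proof of Theorem~\ref{hZ::thm0} drop out cleanly (which they do, since $\gamma = 0$ makes those prefactors vanish), so no spurious transversality-like terms survive. Once these degenerations are confirmed the corollary follows with no further computation.
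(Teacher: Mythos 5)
Your proposal is correct and is exactly the paper's own argument: the paper's proof reads simply ``Choose $\alpha=1$ and a $L$ that does not depend on $w$ in Theorem~\ref{hZ::thm0}.'' You have merely spelled out the verification that ${}_a\Delta_h^1 f = f^\Delta$ and ${}_h\Delta_{\rho(b)}^1 f = -f^\Delta$ when $\gamma=0$, which is a sound and welcome addition but not a different route.
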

\begin{proof}
Choose $\alpha=1$ and a $L$ that does not depend on $w$ in
Theorem~\ref{hZ::thm0}.
\end{proof}

\begin{remark}
If we take $h=1$ in Corollary~\ref{hZ::ELCor} we have that
$$L_u(t,\hat{y}^{\sigma}(t),\Delta\hat{y}(t))
-\Delta L_v(t,\hat{y}^{\sigma}(t),\Delta\hat{y}(t)) =0$$
holds for all $t\in\mathbb{T}^{\kappa^2}$. This equation is usually
called \emph{the discrete Euler-Lagrange equation}, and can be
found, \textrm{e.g.}, in \cite[Chap.~8]{book:DCV}.
\end{remark}


\subsubsection{Natural boundary conditions}

If the initial condition $y(a)=A$ is not present in problem
\eqref{hZ::naosei7} (\textrm{i.e.}, $y(a)$ is free), besides the
$h$-fractional Euler-Lagrange equation \eqref{hZ::EL} the following
supplementary condition must be fulfilled:
\begin{multline}\label{hZ::rui1}
-h^\gamma L_v[\hat{y}](a)+\frac{\gamma}{\Gamma(\gamma+1)}\left(
\int_{a}^{b}(t+\gamma h-a)_h^{(\gamma-1)}L_v[\hat{y}](t)\Delta t\right.\\
\left.-\int_{\sigma(a)}^{b}(t+\gamma
h-\sigma(a))_h^{(\gamma-1)}L_v[\hat{y}](t)\Delta t\right)+
L_w[\hat{y}](a)=0.
\end{multline}
Similarly, if $y(b)=B$ is not present in \eqref{hZ::naosei7} ($y(b)$
is free), the extra condition
\begin{multline}\label{hZ::rui2}
h L_u[\hat{y}](\rho(b))+h^\gamma L_v[\hat{y}](\rho(b))-h^\nu L_w[\hat{y}](\rho(b))\\
+\frac{\nu}{\Gamma(\nu+1)}\left(\int_{a}^{b}(b+\nu
h-\sigma(t))_h^{(\nu-1)}L_w[\hat{y}](t)\Delta t \right.\\ \left.
-\int_{a}^{\rho(b)}(\rho(b)+\nu
h-\sigma(t))_h^{(\nu-1)}L_w[\hat{y}](t)\Delta t\right)=0
\end{multline}
is added to Theorem~\ref{hZ::thm0}. We leave the proof of the
\emph{natural boundary conditions} \eqref{hZ::rui1} and
\eqref{hZ::rui2} to the reader. We just note here that the first
term in \eqref{hZ::rui2} arises from the first term of the left hand
side of \eqref{hZ::rui3}.


\subsubsection{Second order optimality condition}

We now obtain a second order necessary condition for problem
\eqref{hZ::naosei7}, \textrm{i.e.}, we prove a Legendre optimality
type condition for the fractional $h$-difference setting.
\begin{theorem}[The $h$-fractional Legendre necessary condition]
\label{hZ::thm1} If $\hat{y}\in\mathcal{F}_\mathbb{T}$ is a local
minimizer for problem \eqref{hZ::naosei7}, then the inequality
\begin{equation}
\label{hZ::eq:LC}
\begin{split}
h^2
&L_{uu}[\hat{y}](t)+2h^{\gamma+1}L_{uv}[\hat{y}](t)+2h^{\nu+1}(\nu-1)L_{uw}[\hat{y}](t)
+h^{2\gamma}(\gamma -1)^2 L_{vv}[\hat{y}](\sigma(t))\\
&+2h^{\nu+\gamma}(\gamma-1)L_{vw}[\hat{y}](\sigma(t))
+2h^{\nu+\gamma}(\nu-1)L_{vw}[\hat{y}](t)+h^{2\nu}(\nu-1)^2 L_{ww}[\hat{y}](t)\\
&+h^{2\nu}L_{ww}[\hat{y}](\sigma(t))
+\int_{a}^{t}h^3L_{ww}[\hat{y}](s)\left(\frac{\nu(1-\nu)}{\Gamma(\nu+1)}(t+\nu
h - \sigma(s))_h^{(\nu-2)}\right)^2\Delta s\\
&+h^{\gamma}L_{vv}[\hat{y}](t)
+\int_{\sigma(\sigma(t))}^{b}h^3L_{vv}[\hat{y}](s)\left(\frac{\gamma(\gamma-1)}{\Gamma(\gamma+1)}(s+\gamma
h -\sigma(\sigma(t)))_h^{(\gamma-2)}\right)^2\Delta s \geq 0
\end{split}
\end{equation}
holds for all $t\in\mathbb{T}^{\kappa^2}$, where
$[\hat{y}](t)=(t,\hat{y}^{\sigma}(t),{_a}\Delta_t^\alpha
\hat{y}(t),{_t}\Delta_b^\beta\hat{y}(t))$.
\end{theorem}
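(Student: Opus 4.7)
The plan is to mimic the argument of Theorem~\ref{Z::thm1}, adapted to the $h$-scaled setting. Let $\eta$ be an admissible variation and set $\Phi(\varepsilon):=\mathcal{L}[\hat{y}(\cdot)+\varepsilon\eta(\cdot)]$. Since $\hat{y}$ is a local minimizer, $\Phi''(0)\geq 0$. Differentiating twice under the $h$-integral, in analogy with Lemma~\ref{lema1} and using that ${_a}\Delta_h^\alpha$ and ${_h}\Delta_b^\beta$ are $\mathbb{R}$-linear, one obtains
\begin{equation*}
\begin{split}
\Phi''(0)=\int_a^b\bigl[&L_{uu}[\hat{y}](\eta^\sigma)^2+2L_{uv}[\hat{y}]\eta^\sigma\,{_a}\Delta_h^\alpha\eta+L_{vv}[\hat{y}]({_a}\Delta_h^\alpha\eta)^2\\
&+2L_{uw}[\hat{y}]\eta^\sigma\,{_h}\Delta_b^\beta\eta+2L_{vw}[\hat{y}]\,{_a}\Delta_h^\alpha\eta\,{_h}\Delta_b^\beta\eta+L_{ww}[\hat{y}]({_h}\Delta_b^\beta\eta)^2\bigr]\Delta t\geq 0.
\end{split}
\end{equation*}

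Next, fix an arbitrary $\tau\in\mathbb{T}^{\kappa^2}$ and specialize $\eta$ to the pointwise ``bump'' supported at $\sigma(\tau)$: take $\eta(\sigma(\tau))$ equal to a suitable constant (rescaling by a positive power of $h$ if necessary to match the coefficients in \eqref{hZ::eq:LC}) and $\eta(t)=0$ elsewhere. Since $\tau\in\mathbb{T}^{\kappa^2}$, the boundary conditions $\eta(a)=\eta(b)=0$ hold and $\eta$ is admissible. I would then evaluate ${_a}\Delta_h^\alpha\eta(t)$ and ${_h}\Delta_b^\beta\eta(t)$ pointwise using \eqref{hZ::seila1} and Definition~\ref{hZ::def1}: for $t\leq\rho(\tau)$ the left fractional difference vanishes; at $t=\tau$ and $t=\sigma(\tau)$ it reduces to elementary expressions in $h$ and $\gamma$; and for $t\geq\sigma(\sigma(\tau))$ the difference of two consecutive $h$-factorials collapses, by means of $\Gamma(x+1)=x\Gamma(x)$, to a single term of the shape $\frac{\gamma(\gamma-1)}{\Gamma(\gamma+1)}(t+\gamma h-\sigma(\sigma(\tau)))_h^{(\gamma-2)}$. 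This is the $h$-analog of identity \eqref{Z::rui10}. A mirror calculation, based on \eqref{hZ::naosei12} and the right-difference version of \eqref{hZ::seila1}, handles ${_h}\Delta_b^\beta\eta(t)$.

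Substituting these expressions back into $\Phi''(0)\geq 0$, the six integrands collapse to a finite sum. The local contributions supported at $t\in\{\tau,\sigma(\tau)\}$ produce the explicit coefficients of $L_{uu}$, $L_{uv}$, $L_{uw}$, $L_{vw}$, $L_{vv}$, $L_{ww}$ appearing in \eqref{hZ::eq:LC}, whereas the tail pieces of $({_a}\Delta_h^\alpha\eta)^2$ for $t\geq\sigma(\sigma(\tau))$ and of $({_h}\Delta_b^\beta\eta)^2$ for $t\leq\rho(\tau)$ produce, respectively, the two $h$-integrals $\int_{\sigma(\sigma(t))}^b h^3 L_{vv}(\cdots)^2\Delta s$ and $\int_a^t h^3 L_{ww}(\cdots)^2\Delta s$ in \eqref{hZ::eq:LC}. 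Arbitrariness of $\tau\in\mathbb{T}^{\kappa^2}$ finishes the argument.

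The hard part will be the tail estimate, namely verifying that for $t$ at least $\sigma(\sigma(\tau))$ the expression $\frac{\gamma}{\Gamma(\gamma+1)}\bigl[(t+\gamma h-\sigma(\tau))_h^{(\gamma-1)}-(t+\gamma h-\sigma(\sigma(\tau)))_h^{(\gamma-1)}\bigr]$ collapses to the closed form inside the final integrand of \eqref{hZ::eq:LC}. This requires careful handling of the extra $h$-powers introduced by Definition~\ref{powerHZ}, which are absent in the $\mathbb{Z}$-setting of Chapter~\ref{chap3}: ratios of the form $\Gamma(\tfrac{t-\tau}{h}+1)/\Gamma(\tfrac{t-\tau}{h}-k)$ must be telescoped by iterated use of $\Gamma(x+1)=x\Gamma(x)$ to extract the factor $\gamma-1$ and to identify the $h$-factorial $(t+\gamma h-\sigma(\sigma(\tau)))_h^{(\gamma-2)}$. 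An entirely analogous identity is needed for the right fractional difference. Once these two closed forms are secured, all remaining steps are routine substitution and bookkeeping.
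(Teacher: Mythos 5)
Your proposal is correct and follows essentially the same route as the paper's proof: the second variation $\Phi''(0)\geq 0$, the bump variation supported at $\sigma(\tau)$ (the paper takes $\eta(\sigma(\tau))=h$, matching your anticipated rescaling), and the telescoping of the difference of consecutive $h$-factorials via $\Gamma(x+1)=x\Gamma(x)$, which is exactly the paper's identity \eqref{hZ::rui10} and its mirror for the right fractional difference. The ``hard part'' you single out is indeed where the paper spends its effort, and the extra $h$-powers you flag are accounted for there by the factor $h^{2}$ emerging in \eqref{hZ::rui10} and the final division by $h>0$.
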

\begin{proof}
By the hypothesis of the theorem, and letting $\Phi$ be as in
\eqref{hZ::fi}, we have as necessary optimality condition that
$\Phi''(0)\geq 0$ for an arbitrary admissible variation
$\eta(\cdot)$. Inequality $\Phi''(0)\geq 0$ is equivalent to
\begin{multline}
\label{hZ::des1}
\int_{a}^{b}\left[L_{uu}[\hat{y}](t)(\eta^\sigma(t))^2
+2L_{uv}[\hat{y}](t)\eta^\sigma(t){_a}\Delta_h^\alpha\eta(t)
+2L_{uw}[\hat{y}](t)\eta^\sigma(t){_h}\Delta_b^\beta\eta(t)\right.\\
\left. +L_{vv}[\hat{y}](t)({_a}\Delta_h^\alpha\eta(t))^2
+2L_{vw}[\hat{y}](t){_a}\Delta_h^\alpha\eta(t){_h}\Delta_b^\beta\eta(t)
+L_{ww}(t)({_h}\Delta_b^\beta\eta(t))^2\right]\Delta t\geq 0.
\end{multline}
Let $\tau\in\mathbb{T}^{\kappa^2}$ be arbitrary, and choose
$\eta:\mathbb{T}\rightarrow\mathbb{R}$ given by $\eta(t) = \left\{
\begin{array}{ll}
h & \mbox{if $t=\sigma(\tau)$};\\
0 & \mbox{otherwise}.\end{array} \right.$ It follows that
$\eta(a)=\eta(b)=0$, \textrm{i.e.}, $\eta$ is an admissible
variation. Using \eqref{hZ::naosei1} we get
\begin{equation*}
\begin{split}
\int_{a}^{b}&\left[L_{uu}[\hat{y}](t)(\eta^\sigma(t))^2
+2L_{uv}[\hat{y}](t)\eta^\sigma(t){_a}\Delta_h^\alpha\eta(t)
+L_{vv}[\hat{y}](t)({_a}\Delta_h^\alpha\eta(t))^2\right]\Delta t\\
&=\int_{a}^{b}\Biggl[L_{uu}[\hat{y}](t)(\eta^\sigma(t))^2\\
&\qquad\quad +2L_{uv}[\hat{y}](t)\eta^\sigma(t)\left(h^\gamma
\eta^{\Delta}(t)+
\frac{\gamma}{\Gamma(\gamma+1)}\int_{a}^{t}(t+\gamma h
-\sigma(s))_h^{(\gamma-1)}\eta^{\Delta}(s)\Delta s\right)\\
&\qquad\quad +L_{vv}[\hat{y}](t)\left(h^\gamma \eta^{\Delta}(t)
+\frac{\gamma}{\Gamma(\gamma+1)}\int_{a}^{t}(t+\gamma h
-\sigma(s))_h^{(\gamma-1)}\eta^{\Delta}(s)\Delta s\right)^2\Biggr]\Delta t\\
&=h^3L_{uu}[\hat{y}](\tau)+2h^{\gamma+2}L_{uv}[\hat{y}](\tau)+h^{\gamma+1}L_{vv}[\hat{y}](\tau)\\
&\quad
+\int_{\sigma(\tau)}^{b}L_{vv}[\hat{y}](t)\left(h^\gamma\eta^{\Delta}(t)
+\frac{\gamma}{\Gamma(\gamma+1)}\int_{a}^{t}(t+\gamma
h-\sigma(s))_h^{(\gamma-1)}\eta^{\Delta}(s)\Delta s\right)^2\Delta
t.
\end{split}
\end{equation*}
Observe that
\begin{multline*}
h^{2\gamma+1}(\gamma -1)^2 L_{vv}[\hat{y}](\sigma(\tau))\\
+\int_{\sigma^2(\tau)}^{b}L_{vv}[\hat{y}](t)\left(\frac{\gamma}{\Gamma(\gamma+1)}\int_{a}^{t}(t+\gamma
h-\sigma(s))_h^{(\gamma-1)}\eta^{\Delta}(s)\Delta s\right)^2\Delta t\\
=\int_{\sigma(\tau)}^{b}L_{vv}[\hat{y}](t)\left(h^\gamma
\eta^\Delta(t)+\frac{\gamma}{\Gamma(\gamma+1)}\int_{a}^{t}(t+\gamma
h-\sigma(s))_h^{(\gamma-1)}\eta^{\Delta}(s)\Delta s\right)^2\Delta
t.
\end{multline*}
Let $t\in[\sigma^2(\tau),\rho(b)]\cap h\mathbb{Z}$. Since
\begin{equation}
\label{hZ::rui10}
\begin{split}
\frac{\gamma}{\Gamma(\gamma+1)}&\int_{a}^{t}(t+\gamma h 
-\sigma(s))_h^{(\gamma-1)}\eta^{\Delta}(s)\Delta s\\
&= \frac{\gamma}{\Gamma(\gamma+1)}\left[
\int_{a}^{\sigma(\tau)}(t+\gamma h-\sigma(s))_h^{(\gamma-1)}\eta^{\Delta}(s)\Delta s\right.\\
&\qquad\qquad\qquad\qquad \left.+\int_{\sigma(\tau)}^{t}(t+\gamma h
-\sigma(s))_h^{(\gamma-1)}\eta^{\Delta}(s)\Delta s\right]\\
&=h\frac{\gamma}{\Gamma(\gamma+1)}\left[(t+\gamma h
-\sigma(\tau))_h^{(\gamma-1)}-(t+\gamma h-\sigma(\sigma(\tau)))_h^{(\gamma-1)}\right]\\
&=\frac{\gamma h^\gamma}{\Gamma(\gamma+1)}\left[
\frac{\left(\frac{t-\tau}{h}+\gamma-1\right)\Gamma\left(\frac{t-\tau}{h}+\gamma-1\right)
-\left(\frac{t-\tau}{h}\right)\Gamma\left(\frac{t-\tau}{h}+\gamma-1\right)}
{\left(\frac{t-\tau}{h}\right)\Gamma\left(\frac{t-\tau}{h}\right)}\right]\\
&=h^{2}\frac{\gamma(\gamma-1)}{\Gamma(\gamma+1)}(t+\gamma h
-\sigma(\sigma(\tau)))_h^{(\gamma-2)},
\end{split}
\end{equation}
we conclude that
\begin{multline*}
\int_{\sigma^2(\tau)}^{b}L_{vv}[\hat{y}](t)\left(\frac{\gamma}{\Gamma(\gamma+1)}\int_{a}^{t}(t
+\gamma h-\sigma(s))_h^{(\gamma-1)}\eta^{\Delta}(s)\Delta s\right)^2\Delta t\\
=\int_{\sigma^2(\tau)}^{b}L_{vv}[\hat{y}](t)\left(h^2\frac{\gamma(\gamma-1)}{\Gamma(\gamma+1)}(t
+\gamma h-\sigma^2(\tau))_h^{(\gamma-2)}\right)^2\Delta t.
\end{multline*}
Note that we can write
${_t}\Delta_b^\beta\eta(t)=-{_h}\Delta_{\rho(b)}^{-\nu}
\eta^\Delta(t-\nu h)$ because $\eta(b)=0$. It is not difficult to
see that the following equality holds:
\begin{equation*}
\begin{split}
\int_{a}^{b}2L_{uw}[\hat{y}](t)\eta^\sigma(t){_h}\Delta_b^\beta\eta(t)\Delta
t
&=-\int_{a}^{b}2L_{uw}[\hat{y}](t)\eta^\sigma(t){_h}\Delta_{\rho(b)}^{-\nu}
\eta^\Delta(t-\nu h)\Delta t\\
&=2h^{2+\nu}L_{uw}[\hat{y}](\tau)(\nu-1) \, .
\end{split}
\end{equation*}
Moreover,
\begin{equation*}
\begin{split}
\int_{a}^{b} &2L_{vw}[\hat{y}](t){_a}\Delta_h^\alpha\eta(t){_h}\Delta_b^\beta\eta(t)\Delta t\\
&=-2\int_{a}^{b}L_{vw}[\hat{y}](t)\left\{\left(h^\gamma\eta^{\Delta}(t)+\frac{\gamma}{\Gamma(\gamma+1)}
\cdot\int_{a}^{t}(t+\gamma h-\sigma(s))_h^{(\gamma-1)}\eta^{\Delta}(s)\Delta s\right)\right.\\
&\qquad\qquad
\left.\cdot\left[h^\nu\eta^{\Delta}(t)+\frac{\nu}{\Gamma(\nu+1)}\int_{\sigma(t)}^{b}(s
+\nu h-\sigma(t))_h^{(\nu-1)}\eta^{\Delta}(s)\Delta s\right]\right\}\Delta t\\
&=2h^{\gamma+\nu+1}(\nu-1)L_{vw}[\hat{y}](\tau)+2h^{\gamma+\nu+1}(\gamma-1)L_{vw}[\hat{y}](\sigma(\tau)).
\end{split}
\end{equation*}
Finally, we have that
\begin{equation*}
\begin{split}
&\int_{a}^{b} L_{ww}[\hat{y}](t)({_h}\Delta_b^\beta\eta(t))^2\Delta t\\
&=\int_{a}^{\sigma(\sigma(\tau))}L_{ww}[\hat{y}](t)\left[h^\nu\eta^{\Delta}(t)+\frac{\nu}{\Gamma(\nu+1)}
\int_{\sigma(t)}^{b}(s+\nu h-\sigma(t))_h^{(\nu-1)}\eta^{\Delta}(s)\Delta s\right]^2\Delta t\\
&=\int_{a}^{\tau}L_{ww}[\hat{y}](t)\left[\frac{\nu}{\Gamma(\nu+1)}\int_{\sigma(t)}^{b}(s
+\nu h-\sigma(t))_h^{(\nu-1)}\eta^{\Delta}(s)\Delta s\right]^2\Delta t\\
&\qquad +hL_{ww}[\hat{y}](\tau)(h^\nu-\nu h^\nu)^2+h^{2\nu+1}L_{ww}[\hat{y}](\sigma(\tau))\\
&=\int_{a}^{\tau}L_{ww}[\hat{y}](t)\left[h\frac{\nu}{\Gamma(\nu+1)}\left\{(\tau+\nu
h
-\sigma(t))_h^{(\nu-1)}-(\sigma(\tau)+\nu h-\sigma(t))_h^{(\nu-1)}\right\}\right]^2\\
&\qquad + hL_{ww}[\hat{y}](\tau)(h^\nu-\nu
h^\nu)^2+h^{2\nu+1}L_{ww}[\hat{y}](\sigma(\tau)).
\end{split}
\end{equation*}
Similarly as we did in \eqref{hZ::rui10}, we can prove that
\begin{multline*}
h\frac{\nu}{\Gamma(\nu+1)}\left\{(\tau+\nu
h-\sigma(t))_h^{(\nu-1)}-(\sigma(\tau)+\nu
h-\sigma(t))_h^{(\nu-1)}\right\}\\
=h^{2}\frac{\nu(1-\nu)}{\Gamma(\nu+1)}(\tau+\nu
h-\sigma(t))_h^{(\nu-2)}.
\end{multline*}
Thus, we have that inequality \eqref{hZ::des1} is equivalent to
\begin{multline}
\label{hZ::des2}
h\Biggl\{h^2L_{uu}[\hat{y}](t)+2h^{\gamma+1}L_{uv}[\hat{y}](t)
+h^{\gamma}L_{vv}[\hat{y}](t)+L_{vv}(\sigma(t))(\gamma h^\gamma-h^\gamma)^2\\
+\int_{\sigma(\sigma(t))}^{b}h^3L_{vv}(s)\left(\frac{\gamma(\gamma-1)}{\Gamma(\gamma+1)}(s
+\gamma h -\sigma(\sigma(t)))_h^{(\gamma-2)}\right)^2\Delta s\\
+2h^{\nu+1}L_{uw}[\hat{y}](t)(\nu-1)+2h^{\gamma+\nu}(\nu-1)L_{vw}[\hat{y}](t)\\
+2h^{\gamma+\nu}(\gamma-1)L_{vw}(\sigma(t))
+h^{2\nu}L_{ww}[\hat{y}](t)(1-\nu)^2+h^{2\nu}L_{ww}[\hat{y}](\sigma(t))\\
+\int_{a}^{t}h^3L_{ww}[\hat{y}](s)\left(\frac{\nu(1-\nu)}{\Gamma(\nu+1)}(t+\nu
h - \sigma(s))^{\nu-2}\right)^2\Delta s\Biggr\}\geq 0.
\end{multline}
Because $h>0$, \eqref{hZ::des2} is equivalent to \eqref{hZ::eq:LC}.
The theorem is proved.
\end{proof}

The next result is a simple corollary of Theorem~\ref{hZ::thm1}.
\begin{cor}[The $h$-Legendre necessary condition -- \textrm{cf.} 
Result~1.3 of \cite{CD:Bohner:2004}]
\label{hZ::CorDis:Bohner} Let $\mathbb{T}$ be the time scale $h
\mathbb{Z}$, $h > 0$, with the forward jump operator $\sigma$ and
the delta derivative $\Delta$. Assume $a, b \in \mathbb{T}$, $a <
b$. If $\hat{y}$ is a solution to the problem
\begin{equation*}
\mathcal{L}(y(\cdot))=\int_{a}^{b}L(t,y^{\sigma}(t),y^\Delta(t))\Delta
t \longrightarrow \min, \  y(a)=A, \ y(b)=B \, ,
\end{equation*}
then the inequality
\begin{equation}
\label{hZ::LNCBohner}
h^2L_{uu}[\hat{y}](t)+2hL_{uv}[\hat{y}](t)+L_{vv}[\hat{y}](t)+L_{vv}[\hat{y}](\sigma(t))
\geq 0
\end{equation}
holds for all $t\in\mathbb{T}^{\kappa^2}$, where
$[\hat{y}](t)=(t,\hat{y}^{\sigma}(t),\hat{y}^\Delta(t))$.
\end{cor}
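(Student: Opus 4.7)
The plan is to derive this corollary directly from Theorem~\ref{hZ::thm1} by specializing the parameters $\alpha$, $\beta$ and the Lagrangian. Concretely, I would set $\alpha = 1$ (so that $\gamma := 1-\alpha = 0$) and take $L$ to be independent of the variable $w$; the first choice reduces the left fractional difference ${_a}\Delta_h^\alpha y(t)$ to the ordinary delta derivative $y^\Delta(t)$, and the second choice makes the right fractional difference ${_h}\Delta_b^\beta y(t)$ irrelevant, so that problem \eqref{hZ::naosei7} collapses to the classical time-scale variational problem appearing in the corollary.

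Next I would inspect the Legendre inequality \eqref{hZ::eq:LC} term by term under these reductions. With $L$ not depending on $w$ we have $L_{uw} \equiv L_{vw} \equiv L_{ww} \equiv 0$, hence all summands containing these second partials, including the two integral terms involving $L_{ww}$ and the boundary contributions at $\sigma(t)$ involving $L_{vw}$, vanish. With $\gamma = 0$, the prefactor $\gamma(\gamma-1)/\Gamma(\gamma+1)$ in the remaining integral $\int_{\sigma(\sigma(t))}^{b} h^3 L_{vv}[\hat{y}](s)\left(\tfrac{\gamma(\gamma-1)}{\Gamma(\gamma+1)}(s+\gamma h-\sigma^2(t))_h^{(\gamma-2)}\right)^2 \Delta s$ is zero, so this term drops out as well.

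It remains to evaluate the surviving four terms. Substituting $\gamma=0$ yields $h^{2\gamma}(\gamma-1)^2 = 1$, $h^{\gamma+1} = h$, and $h^\gamma = 1$, so the factors in front of $L_{uu}[\hat{y}](t)$, $L_{uv}[\hat{y}](t)$, $L_{vv}[\hat{y}](t)$ and $L_{vv}[\hat{y}](\sigma(t))$ become $h^2$, $2h$, $1$ and $1$ respectively. Adding them up gives exactly
\[
h^2 L_{uu}[\hat{y}](t) + 2h L_{uv}[\hat{y}](t) + L_{vv}[\hat{y}](t) + L_{vv}[\hat{y}](\sigma(t)) \geq 0,
\]
which is the claimed inequality \eqref{hZ::LNCBohner}, valid on $\mathbb{T}^{\kappa^2}$ because Theorem~\ref{hZ::thm1} holds on that set.

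There is no real obstacle here: the proof is a pure specialization and direct bookkeeping of factors of $h^\gamma$ and $\gamma$. The only care needed is to double-check, under $\gamma = 0$, that the $(\gamma-1)$ factor correctly produces the $L_{vv}[\hat{y}](\sigma(t))$ contribution (with coefficient $+1$ since $(\gamma-1)^2 = 1$) and to confirm that the removed integral vanishes as a genuine value rather than through the convention that a product with $\gamma(\gamma-1)$ simply kills it; both checks are trivial.
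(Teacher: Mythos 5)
Your proposal is correct and follows exactly the paper's own route: the paper proves this corollary by choosing $\alpha=1$ and a Lagrangian $L$ independent of $w$, so that $\gamma=0$ and the result "follows immediately" from Theorem~\ref{hZ::thm1}. You simply make explicit the term-by-term bookkeeping (vanishing of the $L_{uw}$, $L_{vw}$, $L_{ww}$ terms and of the $L_{vv}$ integral via the $\gamma(\gamma-1)$ prefactor, plus the evaluation $h^{2\gamma}(\gamma-1)^2=1$, $h^{\gamma+1}=h$, $h^\gamma=1$) that the paper leaves implicit, and this bookkeeping is accurate.
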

\begin{proof}
Choose $\alpha=1$ and a Lagrangian $L$ that does not depend on $w$.
Then, $\gamma=0$ and the result follows immediately from
Theorem~\ref{hZ::thm1}.
\end{proof}

\begin{remark}
When $h$ goes to zero we have $\sigma(t) = t$ and inequality
\eqref{hZ::LNCBohner} coincides with Legendre's classical necessary
optimality condition $L_{vv}[\hat{y}](t) \ge 0$ (\textrm{cf.},
\textrm{e.g.}, \cite{vanBrunt}).
\end{remark}


\section{Examples}
\label{hZ::sec2}

In this section we present some illustrative examples.
\begin{example}
\label{hZ::ex:2} Let us consider the following problem:
\begin{equation}
\label{hZ::eq:ex2} \mathcal{L}(y)=\frac{1}{2} \int_{0}^{1}
\left({_0}\Delta_h^{\frac{3}{4}} y(t)\right)^2\Delta t
\longrightarrow \min \, , \quad y(0)=0 \, , \quad y(1)=1 \, .
\end{equation}
We consider (\ref{hZ::eq:ex2}) with different values of $h$.
Numerical results show that when $h$ tends to zero the
$h$-fractional Euler-Lagrange extremal tends to the fractional
continuous extremal: when $h \rightarrow 0$ (\ref{hZ::eq:ex2}) tends
to the fractional continuous variational problem in the
Riemann-Liouville sense studied in \cite[Example~1]{agr0}, with
solution given by
\begin{equation}
\label{hZ::solEx2}
y(t)=\frac{1}{2}\int_0^t\frac{dx}{\left[(1-x)(t-x)\right]^{\frac{1}{4}}}
\, .
\end{equation}
This is illustrated in Figure~\ref{hZ::Fig:2}.
\begin{figure}[!htbp]
\begin{center}
\includegraphics[scale=0.6]{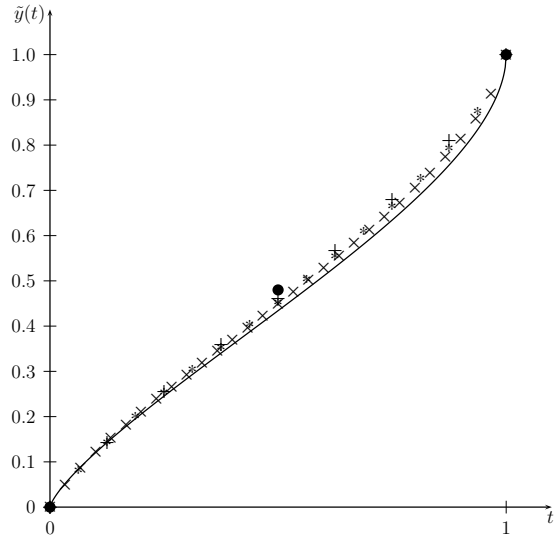}
  \caption{Extremal $\tilde{y}(t)$ for problem of Example~\ref{hZ::ex:2}
  with different values of $h$:
  $h=0.50$ ($\bullet$); $h=0.125$ ($+$);
  $h=0.0625$ ($\ast$); $h=1/30$ ($\times$).
  The continuous line represent function
  (\ref{hZ::solEx2}).}\label{hZ::Fig:2}
\end{center}
\end{figure}\\
In this example for each value of $h$ there is a unique
$h$-fractional Euler-Lagrange extremal, solution of \eqref{hZ::EL},
which always verifies the $h$-fractional Legendre necessary
condition \eqref{hZ::eq:LC}.
\end{example}

\begin{example}
\label{hZ::ex:1} Let us consider the following problem:
\begin{equation}
\label{hZ::eq:ex1} \mathcal{L}(y)=\int_{0}^{1}
\left[\frac{1}{2}\left({_0}\Delta_h^\alpha
y(t)\right)^2-y^{\sigma}(t)\right]\Delta t \longrightarrow \min \, ,
\quad y(0) = 0 \, , \quad y(1) = 0 \, .
\end{equation}
We begin by considering problem (\ref{hZ::eq:ex1}) with a fixed
value for $\alpha$ and different values of $h$. The extremals
$\tilde{y}$ are obtained using our Euler-Lagrange equation
(\ref{hZ::EL}). As in Example~\ref{hZ::ex:2} the numerical results
show that when $h$ tends to zero the extremal of the problem tends
to the extremal of the corresponding continuous fractional problem
of the calculus of variations in the Riemann-Liouville sense. More
precisely, when $h$ approximates zero problem (\ref{hZ::eq:ex1})
tends to the fractional continuous problem studied in
\cite[Example~2]{agr2}. For $\alpha=1$ and $h \rightarrow 0$ the
extremal of (\ref{hZ::eq:ex1}) is given by $y(t)=\frac{1}{2} t
(1-t)$, which coincides with the extremal of the classical problem
of the calculus of variations
\begin{equation*}
\mathcal{L}(y)=\int_{0}^{1} \left(\frac{1}{2} y'(t)^2-y(t)\right) dt
\longrightarrow \min \, , \quad y(0) = 0 \, , \quad y(1) = 0 \, .
\end{equation*}
This is illustrated in Figure~\ref{hZ::Fig:0} for $h =
\frac{1}{2^i}$, $i = 1, 2, 3, 4$.\\
\begin{figure}[!htbp]
\begin{center}
\includegraphics[scale=0.6]{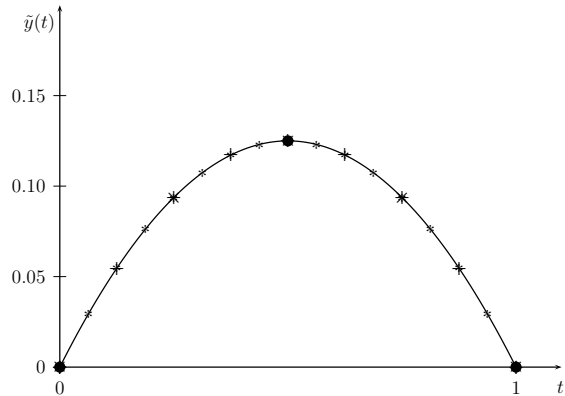}
  \caption{Extremal $\tilde{y}(t)$ for problem \eqref{hZ::eq:ex1}
  with $\alpha=1$ and different values of $h$:
  $h=0.5$ ($\bullet$); $h=0.25$ ($\times$);
  $h=0.125$ ($+$); $h=0.0625$ ($\ast$).}\label{hZ::Fig:0}
\end{center}
\end{figure}\\

In this example, for each value of $\alpha$ and $h$, we only have
one extremal (we only have one solution to (\ref{hZ::EL}) for each
$\alpha$ and $h$). Our Legendre condition \eqref{hZ::eq:LC} is
always verified along the extremals. Figure~\ref{hZ::Fig:1} shows
the extremals of problem \eqref{hZ::eq:ex1} for a fixed value of $h$
($h=1/20$) and different values of $\alpha$. The numerical results
show that when $\alpha$ tends to one the extremal tends to the
solution of the classical (integer order) discrete-time problem.\\
\end{example}
\begin{figure}[!htbp]
\begin{center}
\includegraphics[scale=0.6]{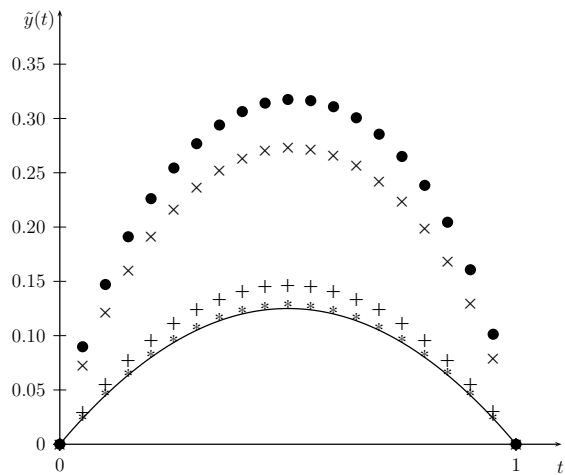}
  \caption{Extremal $\tilde{y}(t)$ for \eqref{hZ::eq:ex1}
with $h=0.05$ and different values of $\alpha$: $\alpha=0.70$
($\bullet$); $\alpha=0.75$ ($\times$); $\alpha=0.95$ ($+$);
$\alpha=0.99$ ($\ast$). The continuous line is $y(t)=\frac{1}{2} t
(1-t)$.}\label{hZ::Fig:1}
\end{center}
\end{figure}
Our last example shows that the $h$-fractional Legendre necessary
optimality condition can be a very useful tool. In
Example~\ref{hZ::ex:3} we consider a problem for which the
$h$-fractional Euler-Lagrange equation gives several candidates but
just a few of them verify the Legendre condition \eqref{hZ::eq:LC}.

\begin{example}
\label{hZ::ex:3} Let us consider the following problem:
\begin{equation}
\label{hZ::eq:ex3} \mathcal{L}(y)=\int_{a}^{b}
\left({_a}\Delta_h^\alpha
y(t)\right)^3+\theta\left({_h}\Delta_b^\alpha y(t)\right)^2\Delta t
\longrightarrow \min \, , \quad y(a)=0 \, , \quad y(b)=1 \, .
\end{equation}
For $\alpha=0.8$, $\beta=0.5$, $h=0.25$, $a=0$, $b=1$, and
$\theta=1$, problem (\ref{hZ::eq:ex3}) has eight different
Euler-Lagrange extremals. As we can see on
Table~\ref{hZ::candidates:ex3} only two of the candidates verify the
Legendre condition. To determine the best candidate we compare the
values of the functional $\mathcal{L}$ along the two good
candidates. The extremal we are looking for is given by the
candidate number five on Table~\ref{hZ::candidates:ex3}.\\
\begin{table}[!htbp]
\footnotesize \centering
\begin{tabular}{|c|c|c|c|c|c|}\hline
\# & $\tilde{y}\left(\frac{1}{4}\right)$ &
$\tilde{y}\left(\frac{1}{2}\right)$
& $\tilde{y}\left(\frac{3}{4}\right)$ & $\mathcal{L}(\tilde{y})$ & Legendre condition \eqref{hZ::eq:LC}\\
\hline
         1 & -0.5511786 &  0.0515282 &  0.5133134 &  9.3035911 &         Not verified \\
\hline
         2 &  0.2669091 &  0.4878808 &  0.7151924 &  2.0084203 &        Verified \\
\hline
         3 & -2.6745703 &  0.5599360 & -2.6730125 & 698.4443232 &         Not verified \\
\hline
         4 &  0.5789976 &  1.0701515 &  0.1840377 & 12.5174960 &         Not verified \\
\hline
         5 &  1.0306820 &  1.8920322 &  2.7429222 & -32.7189756 &        Verified \\
\hline
         6 &  0.5087946 & -0.1861431 &  0.4489196 & 10.6730959 &         Not verified \\
\hline
         7 &  4.0583690 & -1.0299054 & -5.0030989 & 2451.7637948 &         Not verified \\
\hline
         8 & -1.7436106 & -3.1898449 & -0.8850511 & 238.6120299 &         Not verified \\
\hline
  \end{tabular}
\smallskip
  \caption{There exist 8 Euler-Lagrange extremals for problem \eqref{hZ::eq:ex3}
  with $\alpha=0.8$, $\beta=0.5$, $h=0.25$, $a=0$, $b=1$, and $\theta=1$,
  but only 2 of them satisfy the fractional Legendre condition \eqref{hZ::eq:LC}.}
  \label{hZ::candidates:ex3}
\end{table}

For problem (\ref{hZ::eq:ex3}) with $\alpha=0.3$, $h=0.1$, $a=0$,
$b=0.5$, and $\theta=0$, we obtain the results of
Table~\ref{hZ::16dados}: there exist sixteen Euler-Lagrange
extremals but only one satisfy the fractional Legendre condition.
The extremal we are looking for is given by the candidate
number six on Table~\ref{hZ::16dados}.\\
\end{example}
\begin{table}[!htbp]
\footnotesize \centering
\begin{tabular}{|c|c|c|c|c|c|c|} \hline
\# & $\tilde{y}(0.1)$ & $\tilde{y}(0.2)$ & $\tilde{y}(0.3)$ &
$\tilde{y}(0.4)$ &  $\mathcal{L}(\tilde{y})$ & \eqref{hZ::eq:LC}\\
\hline

         1 & -0.305570704 & -0.428093486 & 0.223708338 & 0.480549114 & 12.25396166 &         No \\\hline

         2 & -0.427934654 & -0.599520948 & 0.313290997 & -0.661831134 & 156.2317667 &         No \\\hline

         3 & 0.284152257 & -0.227595659 & 0.318847274 & 0.531827387 & 8.669645848 &         No \\\hline

         4 & -0.277642565 & 0.222381632 & 0.386666793 & 0.555841555 & 6.993518478 &         No \\\hline

         5 & 0.387074742 & -0.310032839 & 0.434336603 & -0.482903047 & 110.7912605 &         No \\\hline

         6 & 0.259846344 & 0.364035314 & 0.463222456 & 0.597907505 & 5.104389191 &        Yes \\\hline

         7 & -0.375094681 & 0.300437245 & 0.522386246 & -0.419053781 & 93.95316858 &         No \\\hline

         8 & 0.343327771 & 0.480989769 & 0.61204299 & -0.280908953 & 69.23497954 &         No \\\hline

         9 & 0.297792192 & 0.417196073 & -0.218013689 & 0.460556635 & 14.12227593 &         No \\\hline

        10 & 0.41283304 & 0.578364133 & -0.302235104 & -0.649232892 & 157.8272685 &         No \\\hline

        11 & -0.321401682 & 0.257431098 & -0.360644857 & 0.400971272 & 19.87468886 &         No \\\hline

        12 & 0.330157414 & -0.264444122 & -0.459803086 & 0.368850105 & 24.84475504 &         No \\\hline

        13 & -0.459640837 & 0.368155651 & -0.515763025 & -0.860276767 & 224.9964788 &         No \\\hline

        14 & -0.359429958 & -0.50354835 & -0.640748011 & 0.294083676 & 34.43515839 &         No \\\hline

        15 & 0.477760586 & -0.382668914 & -0.66536683 & -0.956478654 & 263.3075289 &         No \\\hline

        16 & -0.541587541 & -0.758744525 & -0.965476394 & -1.246195157 & 392.9592508 &         No \\\hline
\end{tabular}
\smallskip
\caption{There exist 16 Euler-Lagrange extremals for problem
\eqref{hZ::eq:ex3}
  with $\alpha=0.3$, $h=0.1$, $a=0$, $b=0.5$, and $\theta=0$,
  but only 1 (candidate \#6) satisfy the fractional Legendre condition \eqref{hZ::eq:LC}.}\label{hZ::16dados}
\end{table}

\section{Conclusion}
\label{hZ::sec:conc}

In this chapter we introduce a new fractional difference variational calculus
in the time-scale $(h\mathbb{Z})_a$, $h > 0$ and $a$ a real number,
for Lagrangians depending on left and right discrete-time fractional derivatives.
Our objective was to introduce the concept of left and right fractional sum/difference
(\textrm{cf.} Definition~\ref{hZ::def0}) and to develop the theory of fractional difference calculus.
An Euler--Lagrange type equation \eqref{hZ::EL},
fractional natural boundary conditions \eqref{hZ::rui1} and \eqref{hZ::rui2},
and a second order Legendre type necessary optimality condition \eqref{hZ::eq:LC},
were obtained. The results are based on a new discrete fractional summation by parts formula
\eqref{hZ::delf:sumPart} for $(h\mathbb{Z})_a$. 
Obtained first and second order necessary optimality conditions
were implemented computationally in the computer algebra systems
\textsf{Maple} and \textsf{Maxima} (the \textsf{Maxima} 
code is found in Appendix~\ref{hZ::Maxima}).
Our numerical results show that solutions to the considered 
fractional problems become the classical discrete-time solutions
when the fractional order of the discrete-derivatives are integer
values, and that they converge to the fractional continuous-time
solutions when $h$ tends to zero. Our Legendre type condition is
useful to eliminate false candidates identified via the
Euler-Lagrange fractional equation.
The results of the chapter are formulated using standard notations
of the theory of time scales \cite{livro:2001,J:B:M:08,malina} because 
we keep in our mind the desire to generalize the present 
results to an arbitrary time scale $\mathbb{T}$.
Undoubtedly, much remains to be done in the development of the theory
of discrete fractional calculus of variations in $(h\mathbb{Z})_a$
here initiated.

\section{State of the Art}
\label{hZ::sec:state art}

The results of this chapter are published in~\cite{7} and were presented 
by the author at the FSS'09, Symposium on Fractional Signals and Systems, 
Lisbon, Portugal, November 4--6, 2009, in a contributed talk entitled 
{\it The fractional difference calculus of variations}.

Recently, Ferreira and Torres~\cite{Rui:arising} continued the development 
of the theory presented here with handy tools for the explicit solution 
of discrete equations involving left and right fractional difference operators.

\clearpage{\thispagestyle{empty}\cleardoublepage}



\chapter{Fractional Derivatives and Integrals on arbitrary $\mathbb{T}$}
\label{chap5}

In this chapter we introduce a fractional calculus on time scales using the theory
of delta dynamic equations. The basic notions of fractional order
integral and fractional order derivative on an arbitrary time scale
are proposed, using the inverse Laplace transform on time scales.
Useful properties of the new fractional operators are proved.

\section{Introduction}

Recently, two attempts have been made to provide a general
definition of fractional derivative on an arbitrary time scale
(see~\cite{emptyPaper,Anastassiou}).
These two works address a very interesting question, but
unfortunately there is a small inconsistency in the
very beginning of both studies. Indeed, 
investigations~\cite{emptyPaper,Anastassiou} are based on the following definition of
generalized polynomials on time scales $h_\alpha : \mathbb{T} \times
\mathbb{T}\rightarrow \mathbb{R}$:
\begin{equation}
\label{e:d}
\begin{gathered}
h_0(t,s) = 1 ,\\
h_{\alpha + 1}(t,s) = \int_s^t h_\alpha(\tau,s)\Delta\tau.
\end{gathered}
\end{equation}

Although the recurrence formula above address a very interesting
question about how to define fractional generalized polynomials on
time scales, recursion \eqref{e:d} provides a
definition only in the case $\alpha \in \mathbb{N}_0$,
and there is no hope to define polynomials $h_\alpha$ 
for real or complex indices $\alpha$ with~\eqref{e:d}.

Here we propose a different approach to provide a general definition
of fractional derivative on an arbitrary time scale based on the
Laplace transform~\cite{lap_Bohner}.

The chapter is organized as follows. In Section~\ref{InvLap::sec:prl}
we review the basic notions of Laplace transform on $\mathbb{R}$
(Section~\ref{Laplace transform:R}) and we introduce some necessary tools from time
scales (Section~\ref{InvLap::prl:ts}). Our results are then given in
Section~\ref{InvLap::sec:mr}: we introduce the concept of fractional
integral and fractional derivative on an arbitrary time scale $\T$
(Section~\ref{InvLap::mr:frac}); we then prove some important
properties of the fractional integrals and derivatives
(Section~\ref{InvLap::mr:prop}).


\section{Preliminaries}
\label{InvLap::sec:prl}

We start this section recalling results on classical Laplace transforms.
All necessary concepts to understand the Laplace transform definition
on time scales are supplied.


\subsection{Laplace transform on $\mathbb{R}$ as motivation}
\label{Laplace transform:R}

In order to motivate our idea we start with some 
concepts of continuous classical Laplace transform.\\

\begin{definition}\label{def:Lapl:R}
The \emph{Laplace transform}\index{Laplace transform!continuous case} 
of $f:\mathbb{R}\rightarrow \mathbb{R}$ is defined by
\begin{equation}\label{Lap:R}
\mathcal{L}[f](z)=F(z)=\int_0^{\infty} e^{-zt}f(t)dt,
\end{equation}
where $z\in\mathbb{C}$ is chosen so that the integral converges absolutely.
\end{definition}

\begin{remark}
It is well known that if
\begin{enumerate}
  \item $f$ is piecewise continuous on the interval $0\leq t \leq A$ for any positive $A$;
  \item $|f(t)|\leq M e^{at}$ when $t\geq T$, for any real constant $a$ and any positive 
  constants $M$ and $T$ (this means that $f$ is of exponential order, \textrm{i.e.}, 
  its rate of growth is not faster than that of exponential functions);
\end{enumerate}
then the Laplace transform~\eqref{Lap:R} exists for $z>a$.
\end{remark}

Because of the usefulness of the Laplace transform of derivatives in our work, 
we now refer what happens in the continuous case, using the following theorem 
that can be easily proved using the definition 
of Laplace transform and integration by parts.

\begin{theorem}
Suppose f is of exponential
order, and that f is continuous and $f'$ is piecewise continuous on any interval
$0\leq t\leq A$. Then
\begin{equation}\label{Lap:R:der:order:1}
\mathcal{L}[f']=z\mathcal{L}[f](z)-f(0)~.
\end{equation}
\end{theorem}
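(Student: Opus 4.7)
The plan is to start directly from the definition of the Laplace transform applied to $f'$, namely
\[
\mathcal{L}[f'](z) = \int_0^\infty e^{-zt} f'(t)\, dt,
\]
and to integrate by parts with $u=e^{-zt}$ and $dv = f'(t)\, dt$, so that $du = -z\, e^{-zt}\, dt$ and $v = f(t)$. Formally this yields
\[
\mathcal{L}[f'](z) = \bigl[e^{-zt} f(t)\bigr]_0^\infty + z\int_0^\infty e^{-zt} f(t)\, dt,
\]
and the goal is then simply to justify that the boundary term at infinity vanishes (giving the $-f(0)$) and that the remaining integral is $z\,\mathcal{L}[f](z)$.

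First I would justify the existence of $\mathcal{L}[f'](z)$ for $\mathrm{Re}(z)>a$: since $f$ is of exponential order $a$ and $f'$ is piecewise continuous, a standard argument (or a separate short lemma) shows that $f'$ is itself of exponential order, which gives absolute convergence of the integral. Next I would treat the boundary term: at $t=0$, $e^{-zt}f(t)=f(0)$; at $t\to\infty$, the bound $|f(t)|\le M e^{at}$ yields $|e^{-zt}f(t)|\le M\, e^{(a-\mathrm{Re}(z))t}\to 0$. Hence the boundary contribution is exactly $-f(0)$, and \eqref{Lap:R:der:order:1} follows.

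The only delicate point — and the main obstacle — is that $f'$ is merely \emph{piecewise} continuous, so the naive integration by parts is not immediately valid on $[0,\infty)$. To handle this, I would fix $A>0$ and choose a partition $0=t_0<t_1<\cdots<t_n=A$ that contains every discontinuity of $f'$ in $[0,A]$. On each subinterval $[t_{k-1},t_k]$ the function $f'$ is continuous (and $f$ is $C^1$), so classical integration by parts applies and gives
\[
\int_{t_{k-1}}^{t_k} e^{-zt} f'(t)\, dt = \bigl[e^{-zt} f(t)\bigr]_{t_{k-1}}^{t_k} + z\int_{t_{k-1}}^{t_k} e^{-zt} f(t)\, dt.
\]
Summing over $k$, the \emph{continuity of $f$} makes the intermediate boundary terms telescope perfectly, leaving
\[
\int_0^{A} e^{-zt} f'(t)\, dt = e^{-zA} f(A) - f(0) + z\int_0^A e^{-zt} f(t)\, dt.
\]
Finally I would let $A\to\infty$: the exponential-order estimate kills $e^{-zA}f(A)$, and the two integrals converge to $\mathcal{L}[f'](z)$ and $\mathcal{L}[f](z)$ respectively, yielding \eqref{Lap:R:der:order:1}. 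The substantive work is thus this piecewise bookkeeping plus the growth estimate at infinity; everything else is routine.
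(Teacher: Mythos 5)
Your proof is correct and follows exactly the route the paper indicates: the paper does not write out a proof but remarks that the result "can be easily proved using the definition of Laplace transform and integration by parts," which is precisely your argument. Your additional care in partitioning $[0,A]$ at the discontinuities of $f'$ and letting the continuity of $f$ telescope the boundary terms is a welcome filling-in of the detail the paper leaves implicit.
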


\begin{remark}
Applying the theorem multiple times yield
\begin{equation}\label{Lap:R:derivati}
\mathcal{L}[f^{(n)}](z)=z^n \mathcal{L}[f]-z^{n-1}f(0)-z^{n-2}f'(0)
-\ldots-z^2 f^{(n-3)}(0)-zf^{(n-2)}(0)-f^{(n-1)}(0).
\end{equation}
The above equality is an extremely useful tool of the Laplace transform
for solving linear ODEs with constant coefficients because it converts 
linear differential equations to linear algebraic equations that can be solved easily.
\end{remark}

Next proposition gives the Laplace transform of the Caputo
fractional derivative and was an inspiration for our work.

\begin{prop}\cite{book:Kilbas}
\label{InvLap::prop:Kilbas} Let $\alpha>0$, $n$ be the integer such
that $n-1<\alpha\leq n$, and $f$ a function satisfying $f \in
C^n(\R^+)$, $f^{(n)}\in L_1(0,t_1)$, $t_1>0$, and $|f^{(n)}(t)|\leq
B \mathrm{e}^{q_0 t}$, $t>t_1>0$. If the Laplace transforms
$\mathcal{L}[f](z)$ and $\mathcal{L}[f^{(n)}](z)$ exist, and
$\lim\limits_{t\rightarrow+\infty} f^{(k)}(t)=0$ for
$k=0,\ldots,n-1$, then
\begin{equation*}
\mathcal{L}\left[{}_0^C D^{\alpha}_{x}f\right](z)
=z^{\alpha}\mathcal{L}\left[f\right](z)
-\sum_{k=0}^{n-1}f^{(k)}(0)z^{\alpha-k-1}\,.
\end{equation*}
\end{prop}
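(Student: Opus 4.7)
The plan is to recognize the Caputo derivative as a (Laplace) convolution and then combine the convolution theorem with the already-recorded formula \eqref{Lap:R:derivati} for the Laplace transform of $f^{(n)}$. Recall from \eqref{left::CFD} that, for $n-1<\alpha\leq n$,
\begin{equation*}
{}_0^C D^{\alpha}_{x} f(x) \;=\; {}_0 I_x^{n-\alpha}\frac{d^n}{dx^n}f(x)
\;=\;\frac{1}{\Gamma(n-\alpha)}\int_{0}^{x}(x-t)^{n-\alpha-1}f^{(n)}(t)\,dt.
\end{equation*}
First I would rewrite this as the convolution $({}_0^C D^{\alpha}_{x} f)(x)=(k_{n-\alpha}\ast f^{(n)})(x)$ with the kernel $k_{n-\alpha}(x):=x^{n-\alpha-1}/\Gamma(n-\alpha)$ (understood to be $\delta$ when $\alpha=n$), which is justified because $f\in C^n(\mathbb{R}^+)$ makes $f^{(n)}$ locally integrable and the hypothesis $|f^{(n)}(t)|\le B e^{q_0 t}$ on $t>t_1$ gives exponential order, so both $k_{n-\alpha}$ and $f^{(n)}$ admit Laplace transforms in a common half-plane $\operatorname{Re}(z)>q_0$.

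Next I would compute the Laplace transform of the kernel. Using the standard identity $\mathcal{L}[x^{\beta-1}](z)=\Gamma(\beta)z^{-\beta}$ for $\beta>0$ (which follows from \eqref{Lap:R} by a change of variable $u=zt$ and the definition \eqref{Gamma} of the Gamma function), I obtain
\begin{equation*}
\mathcal{L}[k_{n-\alpha}](z)
=\frac{1}{\Gamma(n-\alpha)}\cdot\frac{\Gamma(n-\alpha)}{z^{n-\alpha}}
=z^{\alpha-n}.
\end{equation*}
By the convolution theorem for the Laplace transform (whose applicability here is guaranteed by the absolute convergence ensured by the growth hypotheses),
\begin{equation*}
\mathcal{L}\!\left[{}_0^C D^{\alpha}_{x} f\right](z)
=\mathcal{L}[k_{n-\alpha}](z)\,\mathcal{L}[f^{(n)}](z)
=z^{\alpha-n}\,\mathcal{L}[f^{(n)}](z).
\end{equation*}

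Finally I would substitute the expression \eqref{Lap:R:derivati} for $\mathcal{L}[f^{(n)}](z)$, which is legitimate since the hypotheses (continuity of $f,\ldots,f^{(n-1)}$, piecewise continuity of $f^{(n)}$, and exponential order of $f^{(n)}$) are precisely those needed to iterate \eqref{Lap:R:der:order:1}. This yields
\begin{equation*}
\mathcal{L}\!\left[{}_0^C D^{\alpha}_{x} f\right](z)
=z^{\alpha-n}\!\left(z^{n}\mathcal{L}[f](z)-\sum_{k=0}^{n-1}z^{n-1-k}f^{(k)}(0)\right)
=z^{\alpha}\mathcal{L}[f](z)-\sum_{k=0}^{n-1}f^{(k)}(0)\,z^{\alpha-k-1},
\end{equation*}
which is the claimed identity. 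The main obstacle I anticipate is purely bookkeeping of the analytic hypotheses: one has to check that the convolution theorem applies and that the limit conditions $\lim_{t\to\infty}f^{(k)}(t)=0$ (used to discard the boundary contributions at infinity in the successive applications of integration by parts that yield \eqref{Lap:R:derivati}) together with the exponential bound on $f^{(n)}$ lock the transform domain to a common right half-plane; everything algebraic is immediate from the convolution structure.
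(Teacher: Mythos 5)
Your argument is correct and is essentially the standard proof of this result: the paper itself gives no proof but cites it from \cite{book:Kilbas}, where the derivation is exactly your convolution argument (write ${}_0^C D_x^{\alpha}f = k_{n-\alpha}\ast f^{(n)}$ with $\mathcal{L}[k_{n-\alpha}](z)=z^{\alpha-n}$, apply the convolution theorem, then substitute \eqref{Lap:R:derivati}). Your handling of the hypotheses --- local integrability of the kernel since $n-\alpha-1>-1$, exponential order of $f^{(n)}$ to fix a common half-plane, and the vanishing limits $\lim_{t\to\infty}f^{(k)}(t)=0$ to kill the boundary terms in the iterated integration by parts --- is the right bookkeeping, and the degenerate case $\alpha=n$ reduces directly to \eqref{Lap:R:derivati} as you note.
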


\begin{remark}\label{Laplace:Caputo}
If $\alpha\in(0,1]$, then $\mathcal{L}\left[{}_0^C
D_{x}^{\alpha}f\right](z)
=z^{\alpha}\mathcal{L}\left[f\right](z)-f(0)z^{\alpha-1}$.
\end{remark}

\subsection{The Laplace transform on time scales}
\label{InvLap::prl:ts}

Looking to Definition~\ref{def:Lapl:R} we can observe that, 
in classical calculus, the exponential function assumes 
an important role in the theory of Laplace transforms.

Following the same direction, we would like to have a function on
time scales that serve the same purpose as the exponential function
does in the real case.

Before we can define such function, we need several concepts in
order for the definition to make sense. The first concept that we need is
the concept of regressivity.

\begin{definition}
A function $p:\mathbb{T}^\kappa\rightarrow\mathbb{R}$ is
\emph{regressive}\index{Regressive!function} provided
$$1+\mu(t)p(t)\neq 0$$
holds for all $t\in\mathbb{T}^\kappa$. We denote by $\mathcal{R}$
the set of all regressive and rd-continuous functions. The set of
all \emph{positively regressive} functions is defined by
$$\mathcal{R}^+=\{p\in\mathcal{R}:1+\mu(t)p(t)> 0,\ 
\mbox{for all}\ t\in\mathbb{T}^\kappa\}.$$
\end{definition}

\begin{definition}
The function $(\ominus p)(t)=-\frac{p(t)}{1+\mu(t)p(t)}$ 
for all $t\in\mathbb{T}^\kappa$ and $p\in \mathcal{R}$.
\end{definition}

Now we define the exponential function on time scales (also called, in some literature, 
the generalized exponential function) as the solution of the IVP~\eqref{exp}.

\begin{theorem}\cite[Theorem~1.37]{livro:2003}
Suppose $p\in\mathcal{R}$ and fix $t_0\in\mathbb{T}$. Then the
initial value problem
\begin{equation}\label{exp}
y^\Delta=p(t)y,\ y(t_0)=1
\end{equation}
has a unique solution on $\mathbb{T}$.
\end{theorem}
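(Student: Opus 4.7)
The plan is to establish existence and uniqueness separately, exploiting the dual structure of a general time scale (isolated versus dense points).

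For existence, I would use a Picard-type iteration adapted to time scales. Define a sequence $\{y_n\}$ of rd-continuous functions on $\mathbb{T}$ by setting $y_0(t) \equiv 1$ and
\[
y_{n+1}(t) = 1 + \int_{t_0}^{t} p(\tau)\, y_n(\tau)\, \Delta\tau, \qquad n \in \mathbb{N}_0.
\]
Fix a compact interval $[\alpha,\beta]_{\mathbb{T}}$ containing $t_0$. Since $p \in \mathcal{R}$ is rd-continuous, it is bounded on $[\alpha,\beta]_{\mathbb{T}}$, say $|p(t)| \le M$. A standard induction using item~6 of Theorem~\ref{teorema1} gives
\[
|y_{n+1}(t) - y_n(t)| \le \frac{M^{n+1}|t - t_0|^{n+1}}{(n+1)!},
\]
so the series $y_0 + \sum_{n\geq 0}(y_{n+1} - y_n)$ converges uniformly on $[\alpha,\beta]_{\mathbb{T}}$ to a continuous limit $y$. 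Passing to the limit in the integral recursion and invoking the fundamental theorem for the $\Delta$-integral shows that $y$ is $\Delta$-differentiable with $y^\Delta(t) = p(t)y(t)$ and $y(t_0)=1$.

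For uniqueness, suppose $y_1$ and $y_2$ are two solutions and set $z = y_1 - y_2$, so that $z^\Delta = p(t)\,z$ with $z(t_0)=0$. I would argue locally according to the point type. At a right-scattered $t$, the identity \eqref{transfor} combined with the dynamic equation gives $z^\sigma(t) = (1+\mu(t)p(t))\,z(t)$; regressivity makes this factor nonzero, so $z$ vanishes at $\sigma(t)$ iff it vanishes at $t$, and one can propagate the value $z(t_0)=0$ forward and backward through isolated points. At right-dense points, item~3 of Theorem~\ref{teorema0} reduces the equation to the classical ODE $z' = p(t)z$, and standard Gronwall/uniqueness from ODE theory forces $z \equiv 0$ on each dense segment given a single zero. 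Gluing these two arguments yields $z \equiv 0$ on all of $\mathbb{T}$.

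The main obstacle I expect is the uniform convergence estimate of the Picard iterates on a \emph{general} time scale: the bound on $\int_{t_0}^{t} |t-\tau|^n/n!\,\Delta\tau$ must be controlled by a polynomial power of $|t-t_0|$ valid regardless of how dense or scattered the points of $\mathbb{T}$ are. This can be handled cleanly by induction using the generalized polynomial $h_{n+1}(t,t_0)$ from Definition~\ref{polinomios} together with the comparison $h_n(t,t_0) \le (t-t_0)^n/n!$ on $[\alpha,\beta]_{\mathbb{T}}$. A secondary delicate point is ensuring the gluing in the uniqueness argument is consistent at accumulation points of isolated points, which is dealt with by invoking rd-continuity of $z$ and passing to limits from the scattered side.
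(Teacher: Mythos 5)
Your existence argument has a genuine gap in the backward time direction. The Picard iteration and the comparison $h_n(t,t_0)\le (t-t_0)^n/n!$ that you propose to control it are sound only for $t\ge t_0$ (there the delta integral of a nonnegative nondecreasing function is dominated by the corresponding Riemann integral, and induction closes the estimate). For $t<t_0$ the inequality fails and the iterates need not converge at all: on $\mathbb{T}=\mathbb{Z}$ with $t_0=0$ one computes $h_n(-N,0)=\binom{-N}{n}=(-1)^n\binom{N+n-1}{n}$, so for a constant $p$ with $|p|\ge 1$ the partial sums $y_n(-N)=\sum_{k=0}^{n}p^k h_k(-N,0)$ — which are exactly your Picard iterates — diverge, while the actual solution $y(-N)=(1+p)^{-N}$ exists. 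This is precisely where regressivity must enter the \emph{existence} half of the proof, and your construction never invokes it: at a right-scattered $s<t_0$ the solution is forced to satisfy $y(s)=(1+\mu(s)p(s))^{-1}y(\sigma(s))$, and it is the hypothesis $1+\mu(s)p(s)\neq 0$ that makes this backward step possible. The source the paper cites avoids the issue entirely by exhibiting the closed-form solution \eqref{expon} through the cylinder transformation $\xi_{\mu(\tau)}(p(\tau))$, which is well defined in both time directions exactly because $1+\mu p\neq 0$.

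The uniqueness argument also relies on a dichotomy that does not exhaust a general time scale. You treat isolated points (propagation through $z^\sigma=(1+\mu p)z$) and ``dense segments'' (classical ODE uniqueness on real intervals contained in $\mathbb{T}$), but a time scale such as the Cantor set — listed in Chapter~2 as a legitimate example — contains no intervals and no isolated points, while possessing uncountably many points that are simultaneously left-dense and right-dense and are not reachable from $t_0$ by finitely many jumps. Neither branch of your case analysis applies at such points, so the gluing cannot be completed. The standard repairs are either the induction principle on time scales or the observation that $w:=z\,e_{\ominus p}(\cdot,t_0)$ satisfies $w^\Delta\equiv 0$ and is therefore constant; both require machinery beyond the two local cases you consider.
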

The solution of the IVP \eqref{exp} is the \emph{exponential function on
time scales}\index{Exponential function on
time scales},
and is given by $e_p(\cdot,t_0)$, where
\begin{equation} \label{expon}
e_p(t,s)=exp\left\{\int_s^t \xi_{\mu(\tau)}(p(\tau))\Delta
\tau\right\}\quad\text{with}\quad\xi_h(z)=\left\{
                                          \begin{array}{ll}
                                            \frac{Log(1+hz)}{h} & \hbox{if $h\neq 0$} \\
                                            z & \hbox{if $h=0$}
                                          \end{array}
                                       \right.
~.\end{equation}

In \eqref{expon} $\xi_h$ is the cylinder transformation defined for
$h>0$ from the set $\mathbb{C}_h:=\left\{z\in
\mathbb{C}:z\neq -\frac{1}{h}\right\}$ to the set
$\mathbb{Z}_h:=\left\{z\in \mathbb{C}:
-\frac{\pi}{h}<Im(z)\leq\frac{\pi}{h}\right\}$. The set
$\mathbb{C}_h$ is called the Hilger complex plane~\cite{livro:2001}.\\
\begin{remark}
For $h=0$, $\mathbb{C}_0:=\mathbb{C}$.
\end{remark}

\begin{definition}\cite[p.~52]{livro:2001}
Let $h>0$ and $z\in \mathbb{C}_h$. The Hilger real part of $z$ is defined by
\begin{equation}\label{Hil:Re}
Re_h(z):=\frac{|zh+1|-1}{h}
\end{equation}
and the Hilger imaginary part of $z$ by
\begin{equation}\label{Hil:Im}
Im_h(z):=\frac{Arg(zh+1)}{h},
\end{equation}
where $Arg(z)$ denotes, as usual, the principal argument of $z$.
\end{definition}

\begin{remark}
If we consider a time scale with graininess function $\mu(t)>0$
and a $z$ belonging to $\mathbb{C}_\mu:=\left\{z\in \mathbb{C}:z\neq -\frac{1}{\mu(t)}\right\}$,
we can replace $h$ by $\mu(t)$ in the right-hand sides of~\eqref{Hil:Re} and~\eqref{Hil:Im}
and represent them, respectively, by $Re_\mu(z)$ and $Im_\mu(z)$.
\end{remark}

Now we recall some properties of the exponential function stated in Theorem 2.36 of \cite{livro:2001}.
\begin{theorem}\cite{livro:2001}\label{exp:chap5}
If $p\in \mathcal{R}$, then,
\begin{enumerate}
  \item $e_0(t,s)\equiv 1$ and $e_p(t,t)\equiv 1$;
  \item $e_p(\sigma(t),s)=(1+\mu(t)p(t))e_p(t,s)$;
  \item $\frac{1}{e_p(t,s)}=e_{\ominus p}(t,s)$~.
\end{enumerate}
\end{theorem}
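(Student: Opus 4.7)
The plan is to prove the three items in turn, using only the defining initial value problem \eqref{exp} together with the closed-form expression \eqref{expon} and the basic calculus of Theorem~\ref{teorema0}.

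For item~1, I would simply unwind the formula \eqref{expon}. When $p\equiv 0$, the cylinder transformation gives $\xi_{\mu(\tau)}(0)=0$ in both cases of the piecewise definition (since $\operatorname{Log}(1+0)/h=0$ and the second branch evaluates to $0$ directly), so the integrand vanishes and $e_0(t,s)=\exp(0)=1$. For $e_p(t,t)$, the integral $\int_t^t\xi_{\mu(\tau)}(p(\tau))\Delta\tau$ is $0$ by part~5 of Theorem~\ref{teorema1}, hence $e_p(t,t)=\exp(0)=1$.

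For item~2, the quick route is to exploit the fact that, by construction, $y(t):=e_p(t,s)$ solves \eqref{exp} and is in particular $\Delta$-differentiable with $y^\Delta(t)=p(t)y(t)$. Plugging this into the ``simple useful formula'' \eqref{transfor} of Theorem~\ref{teorema0} gives $e_p(\sigma(t),s)=e_p^\sigma(t,s)=e_p(t,s)+\mu(t)\,p(t)\,e_p(t,s)=(1+\mu(t)p(t))e_p(t,s)$, which is the desired identity.

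Item~3 is the most interesting; I would prove it by showing that the function $g(t):=e_p(t,s)\,e_{\ominus p}(t,s)$ is identically $1$. First note that $\ominus p\in\mathcal{R}$, since $1+\mu(t)(\ominus p)(t)=1/(1+\mu(t)p(t))\neq 0$, so $e_{\ominus p}(\cdot,s)$ is well defined and satisfies $e_{\ominus p}^\Delta(t,s)=(\ominus p)(t)\,e_{\ominus p}(t,s)$. Applying the product rule \eqref{produto} together with item~2 yields
\[
g^\Delta(t)=p(t)e_p(t,s)e_{\ominus p}(t,s)+(1+\mu(t)p(t))e_p(t,s)(\ominus p)(t)e_{\ominus p}(t,s)=e_p(t,s)e_{\ominus p}(t,s)\,\bigl[p(t)+(1+\mu(t)p(t))(\ominus p)(t)\bigr].
\]
A direct calculation shows that the bracket vanishes, because $(1+\mu(t)p(t))(\ominus p)(t)=-p(t)$ by the definition of $\ominus p$. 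Hence $g^\Delta\equiv 0$ and, combined with the initial value $g(s)=e_p(s,s)e_{\ominus p}(s,s)=1$ from item~1, this gives $g\equiv 1$, i.e., $1/e_p(t,s)=e_{\ominus p}(t,s)$. The only subtle point is that $g^\Delta\equiv 0$ on $\mathbb{T}^\kappa$ implies $g$ is constant, which one can establish either by appealing to a standard time-scale uniqueness result for the IVP $g^\Delta=0$, $g(s)=1$, or by combining the identity $g^\sigma(t)=g(t)$ at right-scattered points with continuity at dense points.
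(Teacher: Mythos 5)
Your three arguments are all correct, so let me first note that the paper itself offers no proof to compare against: Theorem~\ref{exp:chap5} is simply quoted from Theorem~2.36 of the Bohner--Peterson monograph \cite{livro:2001}, so your proposal has to be judged as a self-contained argument, and it holds up. Items 1 and 2 are exactly the standard reasoning: $\xi_{\mu(\tau)}(0)=0$ in both branches of the cylinder transformation, $\int_t^t(\cdot)\,\Delta\tau=0$, and the identity \eqref{transfor} combined with the defining dynamic equation $e_p^\Delta(\cdot,s)=p\,e_p(\cdot,s)$ immediately gives $e_p(\sigma(t),s)=(1+\mu(t)p(t))e_p(t,s)$. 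For item 3 you depart from the usual textbook route, which verifies $\xi_{\mu(\tau)}((\ominus p)(\tau))=-\xi_{\mu(\tau)}(p(\tau))$ and reads the reciprocal identity directly off the closed form \eqref{expon}; instead you differentiate the product $g=e_p(\cdot,s)\,e_{\ominus p}(\cdot,s)$, use the product rule \eqref{produto} together with your item 2, observe that $(1+\mu p)(\ominus p)=-p$ kills the bracket, and conclude $g\equiv 1$ from $g^\Delta\equiv 0$ and $g(s)=1$. Both routes are legitimate; yours has the advantage of never manipulating the complex logarithm in \eqref{expon} (so no branch issues) and of exercising only the regressivity of $\ominus p$, which you correctly verify, at the modest cost of having to invoke uniqueness for the trivial IVP $g^\Delta=0$, $g(s)=1$ --- a point you flag and resolve appropriately. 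One small remark: the product-rule computation implicitly uses item 2, so the logical order first item 2, then item 3 matters, and you have respected it.
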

\begin{remark}
Item 2 of Theorem~\ref{exp:chap5} has special importance because it provides 
the ability to rewrite $e_p(\sigma(t),s)$ without the symbol $\sigma(t)$  
and on that way simplify expressions of exponential function of elementary 
functions and by consequence the Laplace transform.
\end{remark}

With all previous necessary concepts already given we are in conditions 
to understand the Laplace transform on time scales and provide 
some of their properties which are needed across this chapter.

The main results presented in this section were taken 
from \cite{lap_Bohner} and/or \cite{laplace}.

\begin{definition}\cite{lap_Bohner}\label{InvLap::def:ltts}
We define the \emph{generalized Laplace transform}\index{Laplace transform!time scales case} 
of a regulated function $f:\mathbb{T}\rightarrow \mathbb{C}$, 
where $\mathbb{T}$ denotes a time scale which 
is unbounded above and contains zero, by
\begin{equation}
\mathcal{L}_{\mathbb{T}}[f](z)=F(z):=\int_0^\infty f(t)
e_{\ominus z}(\sigma(t),0)\Delta t\quad\text{for}\quad z\in D[f],
\end{equation}
where $D[f]$ consists of all $z\in \mathbb{C}$ for which 
the improper integral exists and for which 
$1+\mu(t)z\neq 0$ for all $t\in \mathbb{T}$.
\end{definition}

\begin{remark}
In view of Definition~\ref{InvLap::def:ltts}, the Laplace transform
$\mathcal{L}$ of Proposition~\ref{InvLap::prop:Kilbas} can be
written as $\mathcal{L}_\R$.
\end{remark}


Throughout the rest of chapter, $\T$ is an arbitrary time scale with bounded
graininess, \textrm{i.e.}, $0<\mu_{min}\leq \mu(t)\leq \mu_{max}$
for all $ t \in \T$. Let $t_0 \in \T$ be fixed.

\begin{definition}\cite{laplace}
The function $f:\T\rightarrow \R$ is said to be of \emph{exponential
type I} if there exist constants $M, c>0$ such that $|f(t)|\leq M
\textrm{e}^{c t}$. Furthermore, $f$ is said to be of
\emph{exponential type II} if there exist constants $M, c>0$ such
that $|f(t)|\leq M \mathrm{e}_c(t,0)$.
\end{definition}

The time scale exponential function $\mathrm{e}_c(t,0)$ is of type
II while generalized polynomials $h_k(t,0)$ are of type I.

\begin{theorem}\cite{laplace}
If $f$ is of exponential type II with exponential constant $c$, then
the delta integral $\int_0^{\infty} f(t)
\mathrm{e}_{\ominus z}(\sigma(t),0)\Delta t$ converges absolutely for
$z\in D$.
\end{theorem}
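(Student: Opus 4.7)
The plan is to verify absolute convergence by bounding the integrand pointwise and comparing it with a known convergent delta integral. First I would unpack what must be shown: for $z \in D$, the integral $\int_0^\infty |f(t)\, e_{\ominus z}(\sigma(t),0)|\,\Delta t$ must be finite. Using the hypothesis that $f$ is of exponential type II, I have $|f(t)| \leq M\, e_c(t,0)$ for some constants $M,c>0$. So the problem reduces to controlling $|e_{\ominus z}(\sigma(t),0)|$ from above by an expression comparable to $e_{-c}(t,0)$ (up to an integrable factor), with $c$ strictly less than the ``effective real part'' of $z$ encoded in the description of $D$.

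Next I would simplify $e_{\ominus z}(\sigma(t),0)$ using the algebraic identities already recorded in Theorem~\ref{exp:chap5}: namely $e_{\ominus z}(\sigma(t),0) = 1/e_z(\sigma(t),0)$ and $e_z(\sigma(t),0) = (1+\mu(t)z)\,e_z(t,0)$. This yields
\[
|e_{\ominus z}(\sigma(t),0)| = \frac{1}{|1+\mu(t)z|\,|e_z(t,0)|}.
\]
The key quantitative step is to relate $|e_z(t,0)|$ to the Hilger real part $\mathrm{Re}_{\mu(t)}(z)$. Using the cylinder transformation formula~\eqref{expon}, one checks that
\[
|e_z(t,0)| = \exp\!\Bigl\{\int_0^t \xi_{\mu(\tau)}\!\bigl(\mathrm{Re}_{\mu(\tau)}(z)\bigr)\,\Delta\tau\Bigr\} = e_{\alpha(\cdot)}(t,0),
\]
where $\alpha(\tau) := \mathrm{Re}_{\mu(\tau)}(z) = (|1+\mu(\tau)z|-1)/\mu(\tau)$. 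Since $\mu(\tau) \in [\mu_{min},\mu_{max}]$ is bounded and positive, and since $z \in D$ corresponds (by the standing hypothesis on the Laplace transform region) to the condition that $\mathrm{Re}_{\mu(t)}(z) > c$ uniformly in $t$, there exists $\varepsilon > 0$ with $\alpha(\tau) \ge c+\varepsilon$ for all $\tau \ge 0$.

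Combining these observations gives the pointwise bound
\[
|f(t)\,e_{\ominus z}(\sigma(t),0)| \;\leq\; \frac{M\, e_c(t,0)}{|1+\mu(t)z|\, e_{\alpha(\cdot)}(t,0)} \;\leq\; \frac{M}{|1+\mu(t)z|}\, e_{c\,\ominus\,\alpha(\cdot)}(t,0),
\]
where the quotient $e_c/e_{\alpha}$ is itself an exponential function on the time scale with a strictly negative (and bounded away from $0$) regressive parameter. The factor $1/|1+\mu(t)z|$ is uniformly bounded because $\mu$ is bounded and $1+\mu(t)z$ stays away from zero for $z \in D$. Consequently the right-hand side is dominated by a constant multiple of $e_{-\varepsilon'}(t,0)$ for some $\varepsilon'>0$, whose delta integral over $[0,\infty)$ is finite by a standard time scale estimate (cf.\ the bounds used for exponential-type functions in \cite{laplace}). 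Applying the comparison test for improper delta integrals yields absolute convergence, and hence existence, of $\mathcal{L}_{\mathbb{T}}[f](z)$ on $D$.

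The main obstacle is the third step: carefully justifying the modulus formula $|e_z(t,0)| = e_{\mathrm{Re}_{\mu(\cdot)}(z)}(t,0)$ on a time scale with non-constant graininess, and then translating the description of the convergence region $D$ into the uniform lower bound $\mathrm{Re}_{\mu(t)}(z) \ge c+\varepsilon$. Once that estimate is in place, the rest is a routine comparison argument, and the boundedness assumption $0 < \mu_{min}\leq \mu(t)\leq \mu_{max}$ is precisely what prevents pathological cancellation in $|1+\mu(t)z|$ and guarantees a uniform constant.
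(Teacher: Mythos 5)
The thesis states this result only as a citation from \cite{laplace} and does not reproduce a proof, so there is no in-paper argument to compare against; your reconstruction is, in substance, the standard proof from that reference, and it is correct. The one step you should make fully precise is the passage from ``$z\in D$'' to the uniform lower bound $\alpha(\tau)=Re_{\mu(\tau)}(z)\ge c+\varepsilon$: as written in the thesis, $D[f]$ is defined circularly as the set where the integral exists, whereas the set $D$ intended in the cited theorem is the Hilger half-plane $\{z: Re_{\mu_{min}}(z)>Re_{\mu_{min}}(c)=c\}$, and the uniformity in $\tau$ then follows because $h\mapsto Re_h(z)$ is nondecreasing and $\mu(\tau)\ge\mu_{min}>0$. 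With that reading your chain of estimates is sound: the identity $\mathrm{Re}\,\xi_h(z)=\xi_h(Re_h(z))$ (a one-line computation from $\mathrm{Re}\,Log(1+hz)=\ln|1+hz|$) gives $|e_z(t,0)|=e_{Re_{\mu(\cdot)}(z)}(t,0)$; the quotient $e_c/e_\alpha=e_{c\ominus\alpha}$ has parameter $(c\ominus\alpha)(\tau)=(c-\alpha(\tau))/|1+\mu(\tau)z|\le-\varepsilon'<0$ for a fixed $z$; and $\xi_\mu(p)\le p$ yields $e_{c\ominus\alpha}(t,0)\le e^{-\varepsilon' t}$, whose delta integral over $[0,\infty)$ converges because consecutive points of $\mathbb{T}$ are at least $\mu_{min}$ apart while $\mu\le\mu_{max}$. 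The reference \cite{laplace} organizes the same estimate geometrically: it shows that $c\ominus z$ lies uniformly inside the Hilger circles, so that $|e_{c\ominus z}(t,0)|\le e_{\ominus\gamma}(t,0)$ for some $\gamma>0$, and then integrates this majorant exactly. Your modulus-identity route buys a more self-contained analytic computation at the cost of having to justify $|e_z(t,0)|=e_{Re_\mu(z)}(t,0)$ on a time scale with nonconstant graininess, which you correctly identify as the main obstacle and which does go through as sketched.
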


Similarly to the classical Laplace transform 
(\textrm{cf.} formula~\eqref{Lap:R:der:order:1})
we can state the following theorem:

\begin{theorem}\cite[Theorem 1.2]{lap_Bohner}\label{Laplace:of:derivative:thm}
If $f:\mathbb{T}\rightarrow \mathbb{C}$ is such that $f^\Delta$ is regulated, then
\begin{equation}\label{Laplace:of:derivative}
\mathcal{L}_\mathbb{T}[f^\Delta](z)=z\mathcal{L}_\mathbb{T}[f](z)-f(0)
\end{equation}
for all $z\in \mathcal{D}[f]$ such that $\lim_{t\rightarrow \infty}\left(f(t)e_{\ominus z}(t)\right)=0$.
\end{theorem}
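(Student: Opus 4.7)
The plan is to apply the time-scale integration by parts formula to the defining integral of $\mathcal{L}_\mathbb{T}[f^\Delta](z)$, with a suitable antiderivative of $e_{\ominus z}(\sigma(\cdot),0)$ playing the role of one of the factors. The whole argument reduces, essentially, to identifying a convenient primitive and then handling the boundary behaviour at $0$ and $\infty$.

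First, I would compute the $\Delta$-derivative of the generalized exponential that naturally appears. Since $e_p(\cdot,0)$ solves $y^\Delta = p(t) y$ with $y(0)=1$, taking $p = \ominus z$ gives
\begin{equation*}
e_{\ominus z}^\Delta(t,0) = (\ominus z)(t)\, e_{\ominus z}(t,0) = \frac{-z}{1+\mu(t)z}\, e_{\ominus z}(t,0).
\end{equation*}
Using item 2 of Theorem~\ref{exp:chap5}, namely $e_{\ominus z}(\sigma(t),0) = (1+\mu(t)(\ominus z))\, e_{\ominus z}(t,0) = \frac{e_{\ominus z}(t,0)}{1+\mu(t)z}$, this simplifies to the key identity
\begin{equation*}
e_{\ominus z}^\Delta(t,0) = -z\, e_{\ominus z}(\sigma(t),0).
\end{equation*}
In other words, up to the factor $-z$, the function $e_{\ominus z}(t,0)$ is an antiderivative of the kernel $e_{\ominus z}(\sigma(t),0)$ appearing in Definition~\ref{InvLap::def:ltts}.

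Next, I would apply the integration by parts formula \eqref{partes2} from Lemma~\ref{integracao:partes} on a finite interval $[0,b]_\mathbb{T}$, choosing the two factors so that $g(t) := e_{\ominus z}(t,0)$ (so that $g^\sigma(t) = e_{\ominus z}(\sigma(t),0)$) and the other factor is $f$. This yields
\begin{equation*}
\int_{0}^{b} f^\Delta(t)\, e_{\ominus z}(\sigma(t),0)\, \Delta t
= \bigl[f(t)\, e_{\ominus z}(t,0)\bigr]_{t=0}^{t=b}
- \int_{0}^{b} f(t)\, e_{\ominus z}^\Delta(t,0)\, \Delta t.
\end{equation*}
Substituting the identity derived above for $e_{\ominus z}^\Delta$, the second integral becomes $-z \int_0^b f(t)\, e_{\ominus z}(\sigma(t),0)\, \Delta t$.

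Finally, I would let $b \to \infty$. The boundary contribution at $0$ gives $-f(0)\, e_{\ominus z}(0,0) = -f(0)$ by item 1 of Theorem~\ref{exp:chap5}; the boundary contribution at $b$ vanishes by the standing hypothesis $\lim_{t\to\infty} f(t)\, e_{\ominus z}(t,0) = 0$; and the improper integrals converge because $z \in \mathcal{D}[f]$ (so that the Laplace transforms in the statement exist). Assembling the pieces,
\begin{equation*}
\mathcal{L}_\mathbb{T}[f^\Delta](z)
= -f(0) + z\, \mathcal{L}_\mathbb{T}[f](z),
\end{equation*}
which is \eqref{Laplace:of:derivative}. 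The only mildly delicate point is the justification for passing to the limit $b\to\infty$ simultaneously in the boundary term and in the two improper integrals; this is exactly what the regularity of $f^\Delta$, the membership $z \in \mathcal{D}[f]$, and the decay assumption on $f(t)\, e_{\ominus z}(t,0)$ are designed to handle, so this step is routine rather than genuinely difficult.
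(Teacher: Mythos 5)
Your argument is correct and is precisely the route the paper itself indicates (in the remark following the theorem): write out the defining integral, integrate by parts via Lemma~\ref{integracao:partes}, and use the properties of the exponential function --- in particular the identity $e_{\ominus z}^{\Delta}(t,0)=-z\,e_{\ominus z}(\sigma(t),0)$, which you derive correctly from item 2 of Theorem~\ref{exp:chap5} --- before evaluating the boundary terms with $e_{\ominus z}(0,0)=1$ and the decay hypothesis at infinity. Nothing further is needed.
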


\begin{remark}
To prove Theorem \ref{Laplace:of:derivative:thm} we first make use 
of the Laplace transform, next apply integration by parts formula, 
and then use some of the properties of the exponential function 
mentioned before to rewrite the result and get what the theorem claims.
\end{remark}

Using the last result and mathematical induction one
obtains the following result:
\begin{prop}
Let $f:\T\rightarrow \R$ be such that $f^{\Delta^n}$ is regulated 
and $F$ represents the generalized Laplace transform of $f$. Then,
\begin{equation}
\label{InvLap::lap:derivative}
\mathcal{L}_{\T}[f^{\Delta^n}](z)=z^nF(z)
-\sum_{k=0}^{n-1}z^{n-k-1}f^{\Delta^{k}}(0)\,
\end{equation}
for all $z\in \mathcal{D}[f]$ such that 
$\lim_{t\rightarrow \infty}\left(f^{\Delta^k} (t)e_{\ominus z}(t)\right)=0$
$(0\leq k<n)$.
\end{prop}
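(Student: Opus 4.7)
The plan is to prove the formula by induction on $n\in\mathbb{N}$, using Theorem~\ref{Laplace:of:derivative:thm} as both the base case and the engine of the induction step. The hypotheses on $f^{\Delta^n}$ being regulated and on the limits $\lim_{t\rightarrow\infty}(f^{\Delta^k}(t)e_{\ominus z}(t))=0$ for $0\leq k<n$ are exactly what is needed so that Theorem~\ref{Laplace:of:derivative:thm} can be applied iteratively to each of the derivatives $f,f^\Delta,\ldots,f^{\Delta^{n-1}}$.

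First, I would note that the base case $n=1$ is precisely the content of Theorem~\ref{Laplace:of:derivative:thm}, since with $n=1$ the sum reduces to the single term $z^{0}f^{\Delta^{0}}(0)=f(0)$, giving $\mathcal{L}_{\T}[f^{\Delta}](z)=zF(z)-f(0)$.

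For the inductive step, I would assume that the formula holds for some $n\in\mathbb{N}$ and derive it for $n+1$. Applying Theorem~\ref{Laplace:of:derivative:thm} with $f$ replaced by $f^{\Delta^{n}}$ (which is admissible because $(f^{\Delta^n})^\Delta=f^{\Delta^{n+1}}$ is regulated by hypothesis, and the limit assumption for $k=n$ guarantees that the boundary term at infinity vanishes), one obtains
\begin{equation*}
\mathcal{L}_{\T}[f^{\Delta^{n+1}}](z)
=\mathcal{L}_{\T}[(f^{\Delta^{n}})^{\Delta}](z)
=z\,\mathcal{L}_{\T}[f^{\Delta^{n}}](z)-f^{\Delta^{n}}(0).
\end{equation*}
Substituting the inductive hypothesis into the right-hand side yields
\begin{equation*}
\mathcal{L}_{\T}[f^{\Delta^{n+1}}](z)
=z\!\left(z^{n}F(z)-\sum_{k=0}^{n-1}z^{n-k-1}f^{\Delta^{k}}(0)\right)-f^{\Delta^{n}}(0)
=z^{n+1}F(z)-\sum_{k=0}^{n-1}z^{n-k}f^{\Delta^{k}}(0)-f^{\Delta^{n}}(0),
\end{equation*}
and absorbing the last term into the sum as the $k=n$ contribution gives
\begin{equation*}
\mathcal{L}_{\T}[f^{\Delta^{n+1}}](z)
=z^{n+1}F(z)-\sum_{k=0}^{n}z^{(n+1)-k-1}f^{\Delta^{k}}(0),
\end{equation*}
which is exactly the claimed identity at level $n+1$.

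Honestly, there is no real obstacle here: the work is entirely in setting up the induction cleanly and verifying that the hypothesis on the decay $f^{\Delta^{k}}(t)e_{\ominus z}(t)\to 0$ for $0\leq k<n$ supplies, at each stage of the recursion, the precise condition required by Theorem~\ref{Laplace:of:derivative:thm}. The only point to be slightly careful about is the re-indexing of the sum when the new boundary term $f^{\Delta^{n}}(0)$ is incorporated, so that the exponent of $z$ appears as $(n+1)-k-1$ rather than $n-k$; this is a purely bookkeeping matter.
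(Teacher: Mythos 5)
Your proof is correct and follows essentially the same route as the paper's own argument: induction on $n$, with the base case and the inductive step both supplied by Theorem~\ref{Laplace:of:derivative:thm} applied to $f^{\Delta^{n}}$, followed by substitution of the inductive hypothesis and absorption of the new boundary term into the sum. Your version is in fact slightly more careful than the paper's in explicitly tracking which decay hypothesis $\lim_{t\rightarrow\infty}\bigl(f^{\Delta^{k}}(t)e_{\ominus z}(t)\bigr)=0$ is consumed at each stage of the recursion.
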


\begin{proof}
The proof will be done by mathematical induction on $n$.
Before starting the induction process it's important to note 
that the fact that $f^{\Delta^n}$ is regulated implies 
that all delta derivatives with order below $n$ are also regulated.
We know from Theorem~\ref{Laplace:of:derivative:thm} that
$$\mathcal{L}_\mathbb{T}[f^\Delta](z)=z\mathcal{L}_\mathbb{T}[f](z)-f(0)\,.$$
Assuming that~\eqref{InvLap::lap:derivative} is true we have
\begin{eqnarray*}
\mathcal{L}_\mathbb{T}[f^{\Delta^{n+1}}](z)&=&z\mathcal{L}_\mathbb{T}[f^{\Delta^n}](z)-f^n(0)\\
&=&z\left(z^n F(z)-\sum_{k=0}^{n-1}z^{n-k-1}f^{\Delta^{k}}(0)\right)-f^{\Delta^n}(0)\\
&=&z^{n+1} F(z)-\sum_{k=0}^{n-1}z^{n-k}f^{\Delta^{k}}(0)-z^0 f^{\Delta^n}(0)\\
&=&z^{n+1} F(z)-\sum_{k=0}^{n}z^{n-k}f^{\Delta^{k}}(0)\,.
\end{eqnarray*}
\end{proof}

Using  Theorem~\ref{Laplace:transform:propert},
Davis \emph{et al.}~\cite{laplace}
established Theorem~\ref{laplace:inversion:time:scales} below.

\begin{theorem}\cite[Theorem 1.2]{laplace}\label{Laplace:transform:propert}
Let $F$ denote the generalized transform for $f:\mathbb{T}\rightarrow \mathbb{R}$.
Then:
\begin{enumerate}
  \item $F(z)$ is analytic in $Re_{\mu}(z)>Re_{\mu}(c)$;
  \item $F(z)$ is bounded in $Re_{\mu}(z)>Re_{\mu}(c)$;
  \item $\lim_{|z|\rightarrow \infty}F(z)=0$.
\end{enumerate}
\end{theorem}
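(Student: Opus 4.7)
\medskip

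\noindent\textbf{Plan for the proof.}

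The plan is to reduce all three statements to a single uniform pointwise bound on the integrand $|f(t)\,e_{\ominus z}(\sigma(t),0)|$, valid on any closed half-plane of the form $Re_\mu(z)\ge Re_\mu(c)+\delta$ with $\delta>0$. First I would establish the key identity
\[
e_{\ominus z}(\sigma(t),0)=\frac{e_{\ominus z}(t,0)}{1+\mu(t)z},
\]
obtained from item~2 of Theorem~\ref{exp:chap5} applied to $e_z(\sigma(t),0)=(1+\mu(t)z)\,e_z(t,0)$ together with item~3. Combined with the cylinder transformation formula \eqref{expon} and the definition \eqref{Hil:Re} of the Hilger real part, one obtains $|e_z(t,0)|=e_{Re_\mu(z)}(t,0)$, hence
\[
\bigl|e_{\ominus z}(\sigma(t),0)\bigr|=\frac{1}{|1+\mu(t)z|}\,e_{\ominus Re_\mu(z)}(t,0).
\]
Using the hypothesis $|f(t)|\le M\,e_c(t,0)$ (exponential type~II) and the standard time-scale identity $e_c(t,0)\,e_{\ominus Re_\mu(z)}(t,0)=e_{c\ominus Re_\mu(z)}(t,0)$, this yields the master estimate
\[
\bigl|f(t)\,e_{\ominus z}(\sigma(t),0)\bigr|\le \frac{M}{|1+\mu(t)z|}\,e_{c\ominus Re_\mu(z)}(t,0).
\]
When $Re_\mu(z)>Re_\mu(c)$ the exponent $c\ominus Re_\mu(z)$ is negatively regressive, so $e_{c\ominus Re_\mu(z)}(t,0)\to 0$ as $t\to\infty$, and the right-hand side is $\Delta$-integrable.

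With the master estimate in hand, each conclusion follows rapidly. For \emph{analyticity} (item~1), I would fix a compact set $K\subset\{z:Re_\mu(z)>Re_\mu(c)\}$, choose $\delta>0$ with $Re_\mu(z)\ge Re_\mu(c)+\delta$ on $K$, and differentiate the integrand formally with respect to $z$. A second application of the bound, combined with the explicit derivative of $e_{\ominus z}(\sigma(t),0)$ with respect to $z$, gives an integrable dominating function independent of $z\in K$. The Weierstrass/Morera argument (differentiation under the integral sign on time scales) then shows $F$ is holomorphic on $K$, and since $K$ was arbitrary, on the whole half-plane. For \emph{boundedness} (item~2), I would integrate the master estimate directly: $|F(z)|\le M\int_0^{\infty}|1+\mu(t)z|^{-1}\,e_{c\ominus Re_\mu(z)}(t,0)\,\Delta t$, and the bounded-graininess hypothesis $\mu(t)\le\mu_{\max}$ together with the strict inequality $Re_\mu(z)>Re_\mu(c)$ makes this integral finite and uniformly bounded on the half-plane.

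For the \emph{limit at infinity} (item~3), I would apply Lebesgue dominated convergence along any sequence $z_n$ with $|z_n|\to\infty$ lying in the half-plane. Pointwise, $|1+\mu(t)z_n|\ge \mu_{\min}|z_n|-1\to\infty$, so $(1+\mu(t)z_n)^{-1}\to 0$ for each $t$, and hence the integrand tends to $0$. The master estimate provides the $z$-independent dominating function needed to pass to the limit under the $\Delta$-integral, giving $F(z_n)\to 0$.

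The main obstacle is the technical one of keeping the Hilger real part $Re_\mu(z)$ coherent when the graininess $\mu(t)$ is not constant: $Re_\mu$ literally depends on $t$ through $\mu(t)$, so statements like ``$Re_\mu(z)>Re_\mu(c)$'' have to be interpreted uniformly in $t$. This is precisely where the standing hypothesis $0<\mu_{\min}\le\mu(t)\le\mu_{\max}$ is crucial, as it lets one replace the $t$-dependent inequalities by uniform ones and legitimizes both the absolute convergence and the differentiation under the integral sign.
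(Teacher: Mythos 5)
You should first note that the thesis does not prove this theorem at all: it is imported verbatim from Davis \emph{et al.}\ \cite{laplace} as a quoted result, so there is no in-paper proof to compare against. Your argument is, in substance, the standard one from that source: the pointwise identity $e_{\ominus z}(\sigma(t),0)=e_{\ominus z}(t,0)/(1+\mu(t)z)$, the modulus formula $|e_z(t,0)|=e_{Re_\mu(z)}(t,0)$, and the resulting master estimate are all correct, and items 1 and 3 go through essentially as you describe (for item 3 the hypothesis $\mu(t)\ge\mu_{\min}>0$ is exactly what forces $Re_\mu(z)\ge|z|-2/\mu_{\min}\to\infty$, and the $n$-dependent dominating function $e_{c\ominus Re_\mu(z_n)}(t,0)$ can be replaced by a fixed one using the monotonicity of $\lambda\mapsto e_{c\ominus\lambda}(t,0)$ — a step you should make explicit).

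The genuine gap is in item 2. Your master estimate yields
$|F(z)|\le M\int_0^{\infty}|1+\mu(t)z|^{-1}e_{c\ominus Re_\mu(z)}(t,0)\,\Delta t$, and a direct computation (e.g.\ writing $e_{c\ominus\lambda}=\bigl(e_{c\ominus\lambda}\bigr)^{\Delta}/(c\ominus\lambda)$ with $|(c\ominus\lambda)(t)|\ge(\lambda-c)/(1+\mu_{\max}\lambda)$) bounds this by a constant times $(1+\mu_{\max}\lambda)/(\lambda-c)$ with $\lambda=Re_\mu(z)$. This blows up as $Re_\mu(z)\downarrow Re_\mu(c)$, so it does \emph{not} give uniform boundedness on the open half-plane, contrary to what you assert. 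Indeed the literal claim fails: for $\mathbb{T}=h\mathbb{Z}$ and $f=e_c(\cdot,0)$ one computes $F(z)=1/(z-c)$, which is unbounded on $\{Re_\mu(z)>Re_\mu(c)\}=\{Re_\mu(z)>c\}$. What your estimate actually proves — and what is all that the inversion theorem (Theorem~\ref{laplace:inversion:time:scales}) needs — is boundedness on every closed sub-half-plane $Re_\mu(z)\ge Re_\mu(c)+\delta$, $\delta>0$. You should either restate item 2 in that form or acknowledge that the constant $c$ must be taken strictly larger than the true exponential growth rate of $f$; as written, the claimed uniformity is not a consequence of your own bound.
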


Since our idea is to apply the Laplace transform theory on time scales, 
particularly the inverse of Laplace transform on time scales, 
to define the fractional integral (\textrm{cf.} Definition~\ref{F:I:T:S}) 
and fractional derivative (\textrm{cf.} Definition~\ref{F:D:T:S}) 
on time scales, Theorem \ref{laplace:inversion:time:scales} plays 
an important role in our work by providing sufficient conditions 
for the existence of such inverse and a formula to evaluate it.

\begin{theorem}[Inversion formula of the Laplace transform \cite{laplace}]
\label{laplace:inversion:time:scales}
Suppose that $F$ is analytic in the region $R \mathrm{e}_{\mu}(z)>R
\mathrm{e}_{\mu}(c)$ and $F(z)\rightarrow 0$ uniformly as
$|z|\rightarrow\infty$ in this region. Assume $F$ has finitely many
regressive poles of finite order $\{z_1,z_2,\ldots, z_n\}$ and
$\tilde{F}_{\R}(z)$ is the transform of the function $\tilde{f}(t)$
on $\R$ that corresponds to the transform $F(z)=F_{\T}(z)$ of $f(t)$
on $\T$. If
\[
\int_{c-i\infty}^{c+i\infty}\left|\tilde{F}_{\R}(z)\right||dz|<\infty\,,
\]
then
\begin{equation*}
f(t)=\sum_{i=1}^n\mbox{Res}_{z=z_i} \mathrm{e}_z(t,0)F(z)
\end{equation*}
has transform $F(z)$ for all $z$ with $Re(z)>c$.
\end{theorem}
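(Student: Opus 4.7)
The plan is to reduce the inversion problem on $\T$ to the classical Bromwich/Fourier--Mellin inversion on $\R$, and then transfer the residue expansion back to the time scale setting by means of the correspondence $e^{zt}\leftrightarrow e_z(t,0)$. The key observation is that both transforms have the same algebraic structure on the spectral side: the pole structure of $F$ does not depend on whether we read it as $F_\T$ or $\tilde F_\R$, and the residue calculus lives entirely in the $z$-plane.

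First I would recall the classical inversion formula for $\tilde f$ on $\R$: under the hypothesis that $\tilde F_\R$ is analytic in $\operatorname{Re}(z)>c$, vanishes uniformly at infinity there, and is absolutely integrable along the vertical line $\operatorname{Re}(z)=c$, the Bromwich integral
\begin{equation*}
\tilde f(t)=\frac{1}{2\pi\I}\int_{c-\I\infty}^{c+\I\infty}e^{zt}\,\tilde F_\R(z)\,dz
\end{equation*}
recovers $\tilde f(t)$ for $t>0$. Using items 1--3 of Theorem~\ref{Laplace:transform:propert} together with the assumption that $F$ has only finitely many poles (all regressive, of finite order, all lying in $\operatorname{Re}_\mu(z)\le\operatorname{Re}_\mu(c)$), I would close the contour by a large semicircle to the left. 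Jordan's lemma, justified by the uniform decay $\tilde F_\R(z)\to 0$, kills the semicircular contribution, and the residue theorem yields
\begin{equation*}
\tilde f(t)=\sum_{i=1}^{n}\operatorname{Res}_{z=z_i}\bigl(e^{zt}\tilde F_\R(z)\bigr).
\end{equation*}

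Next I would perform the transfer to $\T$. Under the correspondence of the theorem, the monomial $\frac{1}{(z-z_i)^k}$ in $\tilde F_\R$ --- which is the Laplace transform on $\R$ of $\frac{t^{k-1}}{(k-1)!}e^{z_it}$ --- corresponds on $\T$ to the function whose generalized Laplace transform is the same rational expression, namely a suitable time scale analogue built from $e_{z_i}(t,0)$ (recall $\mathcal{L}_\T[e_{z_i}(\cdot,0)](z)=\tfrac{1}{z-z_i}$, a standard computation from \cite{lap_Bohner}). Because each residue is a finite linear combination of such rational terms (obtained from the Laurent expansion of $F$ around $z_i$), the residue calculation commutes with the substitution $e^{zt}\mapsto e_z(t,0)$. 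Therefore
\begin{equation*}
f(t):=\sum_{i=1}^{n}\operatorname{Res}_{z=z_i}\bigl(e_z(t,0)\,F(z)\bigr)
\end{equation*}
is the natural time scale lift of the classical inverse.

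Finally, I would verify that this candidate $f$ has $\mathcal{L}_\T[f]=F$. For this, expand $F$ near each pole as a Laurent series and use linearity of $\mathcal{L}_\T$ together with the transforms of the time scale polynomial--exponential building blocks $h_{k-1}(t,0)\,e_{z_i}(t,0)$ (or the appropriate delta-derivative analogues), each of which is known to transform to $\frac{1}{(z-z_i)^k}$ on $D[f]$. Summing the contributions reconstructs $F(z)$ exactly on $\operatorname{Re}(z)>c$, completing the proof.

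The main obstacle I anticipate is making the $\R\leftrightarrow\T$ correspondence rigorous at the level of residues: one must be sure that evaluating the residue in the $z$-variable and then replacing $e^{zt}$ by $e_z(t,0)$ produces the same result as writing out the Laurent expansion of $F$ first and transforming term-by-term. This is a formal-power-series identity in $(z-z_i)$ and ultimately rests on the fact that the derivatives $\partial_z^j e_z(t,0)\big|_{z=z_i}$ on $\T$ play the role of $t^j e^{z_it}$ on $\R$; making this precise, and checking that all interchanges of limits, sums, and integrals are legal under the stated hypothesis $\int_{c-\I\infty}^{c+\I\infty}|\tilde F_\R(z)|\,|dz|<\infty$, is where the real work lies.
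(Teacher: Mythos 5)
The paper does not prove this theorem at all: it is quoted verbatim from Davis \emph{et al.}\ \cite{laplace}, with the remark that those authors established it using Theorem~\ref{Laplace:transform:propert}. So there is no in-paper proof to compare against; your proposal can only be measured against the argument in the cited source, and in outline it follows the same route --- classical Bromwich inversion for $\tilde f$ on $\R$, reduction of $F$ to its principal parts at the finitely many poles, and term-by-term lifting to $\T$ via the building blocks whose generalized transforms are $\frac{1}{(z-z_i)^k}$.

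Two steps in your sketch need to be made precise before this counts as a proof. First, the passage ``summing the contributions reconstructs $F(z)$'' conceals a Liouville argument: you must subtract from $F$ the sum of its principal parts at $z_1,\dots,z_n$, observe that the difference is entire and tends to $0$ uniformly as $|z|\to\infty$, and conclude it vanishes identically. Without this, there is no justification for treating $F$ as a finite linear combination of the terms $\frac{1}{(z-z_i)^k}$, which is the whole basis of the term-by-term inversion. Second --- and you correctly flag this as the real work --- the identity
\begin{equation*}
\mathcal{L}_{\T}\Bigl[\operatorname{Res}_{z=z_i}\tfrac{e_z(\cdot,0)}{(z-z_i)^k}\Bigr](z)=\frac{1}{(z-z_i)^k}
\end{equation*}
is not automatic on a general time scale, because $\partial_z^{\,j}e_z(t,0)\big|_{z=z_i}$ is \emph{not} $h_j(t,0)\,e_{z_i}(t,0)$ when $\mu\not\equiv 0$; it is here that the regressivity of each pole $z_i$ (so that $e_{z_i}(\cdot,0)$ exists and $1+\mu(t)z_i\neq 0$) enters essentially, and it is exactly this family of transform identities that Theorem~\ref{Laplace:transform:propert} and the transform table of \cite{lap_Bohner} are invoked to supply. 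As it stands your proposal is a correct and well-organized plan that identifies the right decomposition, but it defers rather than resolves the one genuinely time-scale-specific step.
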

\begin{example}\cite[Example 1.1]{laplace}\label{example:inverse}
Suppose $F(z)=\frac{1}{z^2}$. Then
$$\mathcal{L}_{\mathbb{T}}^{-1}[F](z)=f(t)=Res_{z=0}\frac{e_z(t,0)}{z^2}=t.$$\\
Applying Theorem~\ref{laplace:inversion:time:scales} for $F(z)=\frac{1}{z^3}$ we get
$$\mathcal{L}_{\mathbb{T}}^{-1}[F](z)=f(t)=h_2(t,0)\,.$$
\end{example}
\begin{remark}
Using induction arguments over the results of Example~\ref{example:inverse} 
it's easy to prove that the inverse transform of $F(z)=\frac{1}{z^{k+1}}$, 
$k\in\N$, is $h_k(\cdot,0)$.
\end{remark}

\begin{remark}
The inversion formula also gives the claimed
inverses for any of the elementary functions that were presented in
the table of Laplace transform in \cite{lap_Bohner}.
\end{remark}
The following lemma establish a relation between the solution of~\eqref{exp} with power series on time scales.

\begin{lemma}\cite[Lemma 4.4]{Bohnerlaplace:convol}
\label{Laplace:IVP}
For all $z\in\mathbb{C}$ and $t\in \mathbb{T}$ with $t\geq \alpha$, 
the initial value problem
\begin{equation}
y^\Delta=zy,\quad\quad y(\alpha)=1
\end{equation}
has a unique solution $y$ that is represented in the form
\begin{equation}\label{IVP:powerseries}
y(t)=\sum_{k=0}^{+\infty}z^k h_k(t,\alpha)
\end{equation}
and satisfies the inequality
\begin{equation}
\left|y(t)\right|\leq e^{|z|(t-\alpha)}\,.
\end{equation}
\end{lemma}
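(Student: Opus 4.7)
The plan is to combine the existence-and-uniqueness theory for the IVP $y^{\Delta}=zy$, $y(\alpha)=1$ with a term-by-term analysis of the formal series $S(t):=\sum_{k=0}^{\infty} z^{k}h_{k}(t,\alpha)$. Existence and uniqueness on $\mathbb{T}\cap[\alpha,\infty)$ follow from the recursion $y(\sigma(t))=(1+\mu(t)z)y(t)$ at right-scattered points combined with standard ODE theory on dense segments; alternatively, when $z$ is regressive the solution is simply $e_z(t,\alpha)$ from Section 2. The remaining task is to show that $S(t)$ is a well-defined function which satisfies the same IVP, and thereby to deduce both the representation and the asserted bound.

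The central technical step is the estimate
$$
|h_k(t,\alpha)| \leq \frac{(t-\alpha)^k}{k!}, \qquad t\geq\alpha,\ k\in\mathbb{N}_{0}.
$$
I would prove this by induction on $k$: the base case $k=0$ is $h_0\equiv 1$, and for the inductive step I would use the recursion $h_{k+1}(t,\alpha)=\int_\alpha^t h_k(\tau,\alpha)\Delta\tau$ together with the comparison $\int_\alpha^t g(\tau)\Delta\tau \leq \int_\alpha^t g(\tau)\,d\tau$, valid whenever $g$ admits a nonnegative, nondecreasing extension to $\mathbb{R}$; this comparison holds because at each right-scattered $t_i$ the delta-contribution $g(t_i)\mu(t_i)$ is dominated by the Lebesgue contribution $\int_{t_i}^{\sigma(t_i)} g(\tau)\,d\tau$, while on dense stretches the two integrals agree. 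Granted this lemma, $S(t)$ converges uniformly on each compact $[\alpha,b]_{\mathbb{T}}$ by comparison with $e^{|z|(b-\alpha)}$, and so does the formally differentiated series $\sum_{k=1}^{\infty} z^{k}h_{k-1}(t,\alpha)=z\,S(t)$, using the identity $h_k^{\Delta}(t,\alpha)=h_{k-1}(t,\alpha)$ recalled earlier in the excerpt. Uniform convergence then legitimates term-by-term $\Delta$-differentiation, giving $S^{\Delta}(t)=z\,S(t)$; combined with $S(\alpha)=h_0(\alpha,\alpha)=1$ (all higher $h_k(\alpha,\alpha)$ vanishing by construction), uniqueness of the IVP identifies $y(t)=S(t)$, and the pointwise estimate $|y(t)|\leq\sum_{k=0}^{\infty}|z|^{k}(t-\alpha)^{k}/k!=e^{|z|(t-\alpha)}$ follows at once.

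The principal obstacle is the validity of term-by-term $\Delta$-differentiation of an infinite series on an arbitrary time scale, which requires a time-scale analogue of the classical uniform-convergence theorem for derivatives. I would handle this by integration: letting $S_N$ denote the partial sums, I would first show that $S_N^{\Delta}$ converges uniformly to a candidate limit $T$ on every $[\alpha,b]_{\mathbb{T}}$, then apply the fundamental relation $S_N(t)=S_N(\alpha)+\int_\alpha^t S_N^{\Delta}(\tau)\Delta\tau$ and pass to the limit using continuity of the $\Delta$-integral with respect to uniform convergence of integrands (item 6 of Theorem~\ref{teorema1}); the resulting integral representation of $S(t)$ then yields both its $\Delta$-differentiability and the identity $S^{\Delta}=T=zS$, closing the argument.
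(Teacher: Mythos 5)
The paper offers no proof of this lemma: it is quoted verbatim from Bohner and Guseinov \cite{Bohnerlaplace:convol}, so there is nothing internal to compare against. Your argument is correct and is essentially the standard proof from that reference: the key estimate $0\leq h_k(t,\alpha)\leq (t-\alpha)^k/k!$ for $t\geq\alpha$ (obtained exactly as you describe, by induction and comparison of the $\Delta$-integral with the Riemann integral of the nondecreasing majorant), followed by uniform convergence on compacts, justification of term-by-term $\Delta$-differentiation via the integral representation of the partial sums, and identification with the unique forward solution of the IVP.
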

\begin{remark}
Let $\alpha=0$. If we use jointly Lemma~\ref{Laplace:IVP} and the well-known fact
that the unique solution of IVP \eqref{exp} is $e_z(t,0)$ we have the equality
$$e_z(t,0)=\sum_{k=0}^{+\infty}z^kh_k(t,0)$$
that allows us to see that generalized polynomials $h_k(t,0)$ is the reason 
to have different inverse Laplace images when we have different time scales.
\end{remark}


\section{Main Results}
\label{InvLap::sec:mr}

We begin by introducing the definition of fractional integral and
fractional derivative on an arbitrary time scale $\T$.


\subsection{Fractional derivative and integral on time scales}
\label{InvLap::mr:frac}

Simultaneously, the generalized Laplace transform on time scales
gives unification and extension of the classical results. Important
to us, the Laplace transform of the $\Delta$-derivative is given by
the formula $\mathcal{L}_{\T}[f^{\Delta}](z)=zF(z)-f(0)$. Our idea
is to define the fractional derivative on time scales via the
inverse Laplace transform formula for the complex function
$G(z)=z^{\alpha}\mathcal{L}_{\T}[f](z)-f(0^+)z^{\alpha-1}$.
Furthermore, for $\alpha\in(n-1,n]$, $n\in\N$, we use a
generalization of \eqref{InvLap::lap:derivative} to define
fractional derivatives on times scales for higher orders $\alpha$.

\begin{definition}[Fractional integral on time scales]
\label{F:I:T:S}\index{Fractional integral on time scales}
Let $\alpha>0$, $\T$ be a time scale, and $f : \mathbb{T}
\rightarrow \mathbb{R}$. The fractional integral of $f$ of order
$\alpha$ on the time scale $\T$, denoted by $I_{\T}^{\alpha}f$, is
defined by
\[
I_{\T}^{\alpha}f(t)
=\mathcal{L}_{\T}^{-1}\left[\frac{F(z)}{z^{\alpha}}\right](t)\,.
\]
\end{definition}

\begin{definition}[Fractional derivative on time scales]
\label{F:D:T:S}\index{Fractional derivative on time scales} 
Let $\T$ be a time scale,
$F(z)=\mathcal{L}_{\T}[f](z)$, and $\alpha\in (n-1,n]$, $n\in\N$.
The fractional derivative of function $f$ of order $\alpha$ on the
time scale $\T$, denoted by $f^{(\alpha)}$, is defined by
\begin{equation}
\label{InvLap::frac:def:n}
f^{(\alpha)}(t)=\mathcal{L}^{-1}_{\T}\left[z^{\alpha}F(z)
-\sum_{k=0}^{n-1}f^{\Delta^k}(0^+)z^{\alpha-k-1}\right](t)\,.
\end{equation}
\end{definition}

\begin{remark}
For $\alpha\in (0,1]$ we have
\begin{equation*}
f^{(\alpha)}(t)=\mathcal{L}^{-1}_{\T}\left[z^{\alpha}F(z)
-f(0^+)z^{\alpha-1}\right](t)\,.
\end{equation*}
Moreover, if we use $\mathbb{T}=\mathbb{R}$ we have
\begin{equation*}
f^{(\alpha)}(t)=\mathcal{L}^{-1}_{\R}\left[z^{\alpha}F(z)
-f(0^+)z^{\alpha-1}\right](t)=\mathcal{L}^{-1}_{\R}\left[
\mathcal{L}[{}_{0^+}^C D^{\alpha}_{x}f](z)\right](t)
=\left({}_0^C D^{\alpha}_{x}f\right)(t)\,.
\end{equation*}
\end{remark}


\subsection{Properties}
\label{InvLap::mr:prop}

We begin with two trivial but important remarks about the fractional
integral and the fractional derivative operators just introduced.

\begin{remark}
As the inverse Laplace transform is linear, we also have linearity
for the new fractional integral and derivative:
\begin{equation*}
\begin{split}
I_{\T}^{\alpha}(af+bg)(t)
&=aI_{\T}^{\alpha}f(t)+bI_{\T}^{\alpha}g(t),\\
(af+bg)^{(\alpha)}(t) &=a f^{(\alpha)}(t) + b g^{(\alpha)}(t)\,.
\end{split}
\end{equation*}
\end{remark}

\begin{remark}
Since $h_0(t) \equiv 1$ for any time scale $\T$, from the definition
of Laplace transform and the fractional derivative we conclude that
$h_0^{(\alpha)}(t)=0$. For $\alpha>1$ one has $h_1^{(\alpha)}(t)=0$.
\end{remark}

We now prove several important properties of the fractional
integrals and fractional derivatives on arbitrary time scales.

\begin{prop}
Let $\alpha\in(n-1,n]$, $n \in \N$. If $k\leq n-1$, then
\[
h_k^{(\alpha)}(t,0)=0\,.
\]
\end{prop}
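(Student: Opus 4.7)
The plan is to apply the definition of $h_k^{(\alpha)}$ directly. By \eqref{InvLap::frac:def:n}, I need two ingredients: the Laplace transform $F(z)=\mathcal{L}_\T[h_k(\cdot,0)](z)$ and the initial values $h_k^{\Delta^j}(0,0)$ for $j=0,\dots,n-1$.

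First I would compute the Laplace transform. From Example~\ref{example:inverse} and the subsequent remark (obtained by induction on the inversion formula of Theorem~\ref{laplace:inversion:time:scales}), one has
\[
\mathcal{L}_{\T}[h_k(\cdot,0)](z)=\frac{1}{z^{k+1}}.
\]
Next I would compute the delta derivatives of $h_k(\cdot,0)$ at $0$. By the recursion defining the generalized polynomials (cf. Section~\ref{hZ::sec0}, $h_\ell^{\Delta}(t,s)=h_{\ell-1}(t,s)$), iteration gives $h_k^{\Delta^j}(t,0)=h_{k-j}(t,0)$ for $0\le j\le k$ and $h_k^{\Delta^j}(t,0)\equiv 0$ for $j>k$. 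Evaluating at $t=0$, using $h_0(0,0)=1$ and $h_\ell(0,0)=\int_0^0 h_{\ell-1}(\tau,0)\Delta\tau=0$ for $\ell\ge 1$, I obtain
\[
h_k^{\Delta^j}(0,0)=\begin{cases}1, & j=k,\\ 0, & 0\le j\le n-1,\ j\ne k.\end{cases}
\]
Here the hypothesis $k\le n-1$ is crucial: it guarantees that the index $j=k$ actually lies in the summation range $\{0,1,\dots,n-1\}$ appearing in~\eqref{InvLap::frac:def:n}.

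Plugging everything into~\eqref{InvLap::frac:def:n},
\[
h_k^{(\alpha)}(t,0)=\mathcal{L}^{-1}_{\T}\!\left[z^{\alpha}\cdot\frac{1}{z^{k+1}}-\sum_{j=0}^{n-1}h_k^{\Delta^j}(0,0)\,z^{\alpha-j-1}\right]\!(t)=\mathcal{L}^{-1}_{\T}\!\left[z^{\alpha-k-1}-z^{\alpha-k-1}\right]\!(t)=0,
\]
which yields the claim. The argument is largely bookkeeping; the only point requiring care is the justification that $\mathcal{L}_\T[h_k(\cdot,0)](z)=z^{-(k+1)}$ on an arbitrary time scale, but this follows from the inversion formula applied to the function $F(z)=z^{-(k+1)}$ (whose only regressive pole is at $z=0$, of order $k+1$), together with the observation that $\operatorname{Res}_{z=0}(e_z(t,0)/z^{k+1})=h_k(t,0)$ through the power series representation \eqref{IVP:powerseries} of $e_z(t,0)$.
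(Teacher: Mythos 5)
Your proof is correct and follows essentially the same route as the paper: apply Definition~\eqref{InvLap::frac:def:n}, observe that $h_k^{\Delta^i}(0)=h_{k-i}(0,0)$ vanishes except at $i=k$ (which lies in the summation range precisely because $k\leq n-1$), so the sum collapses to the single term $z^{\alpha-k-1}$ and cancels $z^{\alpha}/z^{k+1}$. The only difference is that you spell out the justification of $\mathcal{L}_{\T}[h_k(\cdot,0)](z)=z^{-(k+1)}$ and of the initial values, which the paper leaves implicit.
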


\begin{proof}
From \eqref{InvLap::frac:def:n} it follows that
\begin{equation*}
\begin{split}
h_k^{(\alpha)}(t,0)&=\mathcal{L}^{-1}_{\T}\left[\frac{z^{\alpha}}{z^{k+1}}
-\sum_{i=0}^{n-1}h_k^{\Delta^i}(0)z^{\alpha-i-1}\right](t)
=\mathcal{L}^{-1}_{\T}\left[\frac{z^{\alpha}}{z^{k+1}}
-\sum_{i=0}^{k}h_{k-i}(0)z^{\alpha-i-1}\right](t)\\
&=\mathcal{L}^{-1}_{\T}\left[\frac{z^{\alpha}}{z^{k+1}}-z^{\alpha-k-1}\right](t)=0\,.
\end{split}
\end{equation*}
\end{proof}

\begin{prop}
Let $\alpha\in(n-1,n]$, $n \in \N$. If $k\geq n$, then
\[
h_k^{(\alpha)}(t,0)
=\mathcal{L}^{-1}_{\T}\left[\frac{1}{z^{k+1-\alpha}}\right](t) \,.
\]
\end{prop}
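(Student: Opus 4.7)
The plan is to apply Definition \ref{F:D:T:S} directly to $f = h_k(\cdot,0)$ and verify that all boundary terms in the sum vanish when $k \geq n$. The key inputs are (i) the Laplace transform of the generalized polynomial $h_k(\cdot,0)$, namely $\mathcal{L}_\T[h_k(\cdot,0)](z) = 1/z^{k+1}$ (this follows by induction from Example~\ref{example:inverse} together with the inversion formula in Theorem~\ref{laplace:inversion:time:scales}), and (ii) the recursion $h_k^\Delta(t,0) = h_{k-1}(t,0)$ for $k \in \mathbb{N}$, which iterates to $h_k^{\Delta^i}(t,0) = h_{k-i}(t,0)$ for $0 \leq i \leq k$.

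First I would evaluate each boundary term $h_k^{\Delta^i}(0^+, 0)$ appearing in \eqref{InvLap::frac:def:n}. By the iterated recursion, $h_k^{\Delta^i}(0^+,0) = h_{k-i}(0^+,0)$. Since $h_j(0,0) = 0$ for every $j \geq 1$ (immediate from the definition $h_{j+1}(t,s) = \int_s^t h_j(\tau,s)\Delta\tau$ evaluated at $t=s=0$), and because the hypothesis $k \geq n$ together with $0 \leq i \leq n-1$ gives $k - i \geq 1$, every one of the terms $h_k^{\Delta^i}(0^+,0)$ with $i = 0, 1, \ldots, n-1$ equals zero.

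Consequently, plugging $F(z) = 1/z^{k+1}$ and the vanishing boundary values into \eqref{InvLap::frac:def:n} yields
\[
h_k^{(\alpha)}(t,0) = \mathcal{L}_\T^{-1}\!\left[z^\alpha \cdot \frac{1}{z^{k+1}} - \sum_{i=0}^{n-1} 0 \cdot z^{\alpha - i - 1}\right](t) = \mathcal{L}_\T^{-1}\!\left[\frac{1}{z^{k+1-\alpha}}\right](t),
\]
which is the desired identity.

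The proof is essentially a bookkeeping exercise once the Laplace image $1/z^{k+1}$ of $h_k(\cdot,0)$ is in hand, so there is no real obstacle; the only subtlety is justifying that formula on a general time scale. For this, I would note that Lemma \ref{Laplace:IVP} gives $e_z(t,0) = \sum_{k=0}^{\infty} z^k h_k(t,0)$, so formally $\mathcal{L}_\T[h_k(\cdot,0)](z) = 1/z^{k+1}$ by matching coefficients against the known transform $\mathcal{L}_\T[e_z(\cdot,0)](\zeta) = 1/(\zeta - z)$; alternatively, one argues by induction on $k$ using Theorem \ref{Laplace:of:derivative:thm} together with $h_{k+1}^\Delta(t,0) = h_k(t,0)$ and $h_{k+1}(0,0) = 0$.
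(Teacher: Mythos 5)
Your proof is correct and follows essentially the same route as the paper: substitute $F(z)=1/z^{k+1}$ into Definition~\ref{F:D:T:S}, reduce the boundary terms via $h_k^{\Delta^i}(0)=h_{k-i}(0)$, and observe that $k\geq n$ forces $k-i\geq 1$ so all these terms vanish. The extra care you take in justifying $\mathcal{L}_{\T}[h_k(\cdot,0)](z)=1/z^{k+1}$ is a welcome addition, since the paper only records that fact in a remark following Example~\ref{example:inverse}.
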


\begin{proof}
From \eqref{InvLap::frac:def:n} we have
\begin{equation*}
\begin{split}
h_k^{(\alpha)}(t)&=\mathcal{L}^{-1}_{\T}\left[\frac{z^{\alpha}}{z^{k+1}}
-\sum_{i=0}^{n-1}h_k^{\Delta^i}(0)z^{\alpha-i-1}\right](t)
=\mathcal{L}^{-1}_{\T}\left[\frac{z^{\alpha}}{z^{k+1}}
-\sum_{i=0}^{n-1}h_{k-i}(0)z^{\alpha-i-1}\right](t)\\
&=\mathcal{L}^{-1}_{\T}\left[\frac{z^{\alpha}}{z^{k+1}}\right](t)
=\mathcal{L}^{-1}_{\T}\left[\frac{1}{z^{k+1-\alpha}}\right](t)\,.
\end{split}
\end{equation*}
\end{proof}

\begin{prop}
Let $\alpha\in(n-1,n]$, $n \in \N$. If $c(t) \equiv m$, $m\in
\mathbb{R}$, then
\[
c^{(\alpha)}(t)=0.
\]
\end{prop}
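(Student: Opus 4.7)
The plan is to observe that the statement is essentially a corollary of the two preceding propositions combined with the linearity remark. Since $c(t) \equiv m$ can be written as $c(t) = m \cdot h_0(t,0)$ (because $h_0 \equiv 1$ on any time scale $\T$), the linearity of the fractional derivative operator yields
\[
c^{(\alpha)}(t) = m \cdot h_0^{(\alpha)}(t,0).
\]
First I would apply the previously established proposition asserting that $h_k^{(\alpha)}(t,0) = 0$ whenever $\alpha \in (n-1,n]$ and $k \leq n-1$. Taking $k = 0$ (note $0 \leq n - 1$ for every $n \in \mathbb{N}$) gives $h_0^{(\alpha)}(t,0) = 0$, and the conclusion follows immediately.

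Alternatively, for a self-contained argument I would go straight to Definition~\ref{F:D:T:S}. The two ingredients needed are the Laplace image of a constant, namely $\mathcal{L}_{\T}[m](z) = m/z$ (which follows from $\mathcal{L}_{\T}^{-1}[1/z](t) = 1 = h_0(t,0)$ noted earlier in the chapter), together with the observation that $c^{\Delta^0}(0^+) = m$ and $c^{\Delta^k}(0^+) = 0$ for all $k \geq 1$. Substituting these into \eqref{InvLap::frac:def:n} one obtains
\[
c^{(\alpha)}(t) = \mathcal{L}^{-1}_{\T}\!\left[z^{\alpha}\cdot\frac{m}{z} - m\, z^{\alpha-1}\right](t) = \mathcal{L}^{-1}_{\T}[0](t) = 0,
\]
since only the $k = 0$ term in the sum survives and exactly cancels the contribution $z^{\alpha}F(z) = m\,z^{\alpha-1}$.

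There is essentially no obstacle here; this statement is a direct sanity check that the proposed fractional derivative on time scales preserves the natural expectation that derivatives of constants vanish — matching the Caputo (rather than Riemann--Liouville) behaviour already emphasised in Chapter~\ref{chap1}. The only subtlety worth flagging explicitly in the proof is the justification that $\mathcal{L}_{\T}[m](z) = m/z$ on an arbitrary $\T$, which is why I would prefer the first, reduction-to-$h_0$ approach: it avoids re-deriving any Laplace image and leans entirely on results already proved in this section.
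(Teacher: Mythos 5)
Your first argument is exactly the paper's proof: write $c(t)=m\cdot h_0(t,0)$, invoke linearity, and use $h_0^{(\alpha)}(t,0)=0$ from the preceding results. The alternative direct computation via Definition~\ref{F:D:T:S} is also sound but unnecessary; the proposal is correct and matches the paper's approach.
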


\begin{proof}
From the linearity of the inverse Laplace transform and the fact
that $$h^{(\alpha)}_0(t,0)=0 $$it follows that
$$c^{(\alpha)}(t)=(m\cdot 1)^{(\alpha)}=(m\cdot
\overline{}h_0(t,0))^{(\alpha)}=m\cdot h^{(\alpha)}_0(t,0)=m\cdot 0 = 0\,.$$
\end{proof}

\begin{prop}
\label{InvLap::prop:comp_integrals} 
Let $\alpha,\beta>0$. Then,
\begin{equation*}
I_{\T}^{\beta}\left(I_{\T}^{\alpha}f\right)(t)
=I_{\T}^{\alpha+\beta}f(t)\,.
\end{equation*}
\end{prop}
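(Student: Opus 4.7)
The plan is to prove the composition law by a direct Laplace-transform computation, exploiting the definition of $I_\T^\alpha$ as an inverse Laplace transform. First I would set $F(z) = \mathcal{L}_\T[f](z)$ and write, by Definition~\ref{F:I:T:S},
\[
I_\T^\alpha f(t) = \mathcal{L}_\T^{-1}\left[\frac{F(z)}{z^\alpha}\right](t).
\]
Applying the definition again with the exponent $\beta$ and with the function $I_\T^\alpha f$ in place of $f$ gives
\[
I_\T^\beta\bigl(I_\T^\alpha f\bigr)(t)
= \mathcal{L}_\T^{-1}\left[\frac{\mathcal{L}_\T[I_\T^\alpha f](z)}{z^\beta}\right](t).
\]
The entire proof then hinges on recognizing that $\mathcal{L}_\T[I_\T^\alpha f](z) = F(z)/z^\alpha$, so that substitution yields
\[
I_\T^\beta\bigl(I_\T^\alpha f\bigr)(t)
= \mathcal{L}_\T^{-1}\left[\frac{F(z)}{z^{\alpha+\beta}}\right](t) = I_\T^{\alpha+\beta} f(t),
\]
which is precisely the claim.

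The only substantive step is therefore the identity $\mathcal{L}_\T[\mathcal{L}_\T^{-1}[G]] = G$ applied to $G(z) = F(z)/z^\alpha$. This is the point I would have to handle with care: one needs $G$ to satisfy the hypotheses of Theorem~\ref{laplace:inversion:time:scales} (analyticity in an appropriate half-plane, uniform decay at infinity, finitely many regressive poles of finite order, and the integrability condition on the associated $\R$-transform). Since division by $z^\alpha$ with $\alpha > 0$ only \emph{improves} the decay of $F(z)$ at infinity and preserves analyticity away from $z = 0$, these hypotheses transfer from $F$ to $F/z^\alpha$ without trouble, and the Laplace transform is then a true inverse to $\mathcal{L}_\T^{-1}$ on the relevant class of functions.

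I expect this verification of the hypotheses to be the main (and really only) obstacle; the algebraic manipulation above is otherwise immediate. Once these analytic prerequisites are acknowledged — following the same standing assumptions on $f$ implicit in the definition of $I_\T^\alpha$ itself — the composition law follows in a single line. No induction, no use of generalized polynomial identities, and no explicit convolution formula on $\T$ are required: everything is absorbed into the bijectivity of $\mathcal{L}_\T$ between suitable function spaces.
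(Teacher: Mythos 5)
Your proof is correct and follows essentially the same route as the paper's: both apply Definition~\ref{F:I:T:S} twice and reduce the claim to the identity $\mathcal{L}_{\T}\left[I_{\T}^{\alpha}f\right](z)=F(z)/z^{\alpha}$, giving $\mathcal{L}_{\T}^{-1}\left[z^{-\alpha-\beta}F(z)\right](t)=I_{\T}^{\alpha+\beta}f(t)$ in one line. If anything, you are more careful than the paper, which silently assumes that $\mathcal{L}_{\T}$ undoes $\mathcal{L}_{\T}^{-1}$ on $F(z)/z^{\alpha}$, whereas you explicitly flag the hypotheses of Theorem~\ref{laplace:inversion:time:scales} that justify this step.
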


\begin{proof}
By definition we have
\[
I_{\T}^{\beta}\left(I_{\T}^{\alpha}f\right)(t)
=\mathcal{L}_{\T}^{-1}\left[z^{-\beta}\mathcal{L}_{\T}\left[I_{\T}^{\alpha}f\right]\right](t)
=\mathcal{L}_{\T}^{-1}\left[s^{-\alpha-\beta}F(z)\right](t)
=I_{\T}^{\alpha+\beta}f(t)\,.
\]
\end{proof}

\begin{prop}
\label{InvLap::prop:comp} Let $\alpha,\beta\in(0,1]$.
\begin{itemize}
\item[(a)] If $\alpha+\beta\leq 1$, then
\[
\left(f^{(\alpha)}\right)^{(\beta)}(t)
=f^{(\alpha+\beta)}(t)-\mathcal{L}^{-1}_{\T}\left[z^{\beta-1}f^{(\alpha)}(0)\right](t)\,.
\]
\item[(b)] If $1<\alpha+\beta\leq 2$, then
\[
\left(f^{(\alpha)}\right)^{(\beta)}(t)=f^{(\alpha+\beta)}(t)
-\mathcal{L}^{-1}_{\T}\left[z^{\beta-1}f^{(\alpha)}(0)\right](t)+
\mathcal{L}^{-1}_{\T}\left[z^{\alpha+\beta-2}f^{\Delta}(0)\right](t)\,.
\]
\item[(c)] If $\beta\in(0,1]$, then
\[
\left(f^{\Delta}\right)^{(\beta)}(t)=f^{(\beta+1)}(t)\,.
\]
\end{itemize}
\end{prop}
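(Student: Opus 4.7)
The plan is to reduce all three identities to straightforward algebraic manipulations in the Laplace domain, using linearity of $\mathcal{L}_{\T}^{-1}$ together with the fact that, by Definition~\ref{F:D:T:S}, $\mathcal{L}_{\T}[f^{(\alpha)}](z) = z^{\alpha}F(z) - f(0^+)z^{\alpha-1}$ whenever $\alpha\in(0,1]$. The key is to compute $(f^{(\alpha)})^{(\beta)}$ by applying Definition~\ref{F:D:T:S} a second time and then comparing the resulting expression inside $\mathcal{L}_{\T}^{-1}[\cdot]$ with the Laplace-domain expression that defines $f^{(\alpha+\beta)}$.

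For part (a), since $\alpha+\beta\le 1$ we have $\beta\in(0,1]$, so
\[
(f^{(\alpha)})^{(\beta)}(t)
= \mathcal{L}_{\T}^{-1}\!\left[z^{\beta}\,\mathcal{L}_{\T}[f^{(\alpha)}](z) - f^{(\alpha)}(0^+)\,z^{\beta-1}\right](t).
\]
Substituting the Laplace transform of $f^{(\alpha)}$ gives $z^{\beta}\mathcal{L}_{\T}[f^{(\alpha)}](z) = z^{\alpha+\beta}F(z) - f(0^+)z^{\alpha+\beta-1}$. Because $\alpha+\beta\in(0,1]$, Definition~\ref{F:D:T:S} with $n=1$ gives $f^{(\alpha+\beta)}(t) = \mathcal{L}_{\T}^{-1}[z^{\alpha+\beta}F(z) - f(0^+)z^{\alpha+\beta-1}](t)$, which after invoking linearity of the inverse Laplace transform yields exactly the claimed identity. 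Part (c) is almost identical: replacing $f^{(\alpha)}$ by $f^{\Delta}$ and using \eqref{Laplace:of:derivative} in place of the Laplace transform of $f^{(\alpha)}$, the terms $z^{\beta+1}F(z) - f(0)z^{\beta} - f^{\Delta}(0)z^{\beta-1}$ appear, which is precisely the Laplace-domain definition of $f^{(\beta+1)}$ (valid since $\beta+1\in(1,2]$, so we use $n=2$).

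For part (b) the same computation applies, but now $\alpha+\beta\in(1,2]$ so the defining expression for $f^{(\alpha+\beta)}$ uses $n=2$ and carries an extra subtracted term $f^{\Delta}(0^+)z^{\alpha+\beta-2}$. After repeating the derivation in (a), the expression inside $\mathcal{L}_{\T}^{-1}$ is $z^{\alpha+\beta}F(z) - f(0^+)z^{\alpha+\beta-1} - f^{(\alpha)}(0^+)z^{\beta-1}$; adding and subtracting $f^{\Delta}(0^+)z^{\alpha+\beta-2}$ to rebuild the $n=2$ form of $f^{(\alpha+\beta)}$ produces the corrective $+\,\mathcal{L}_{\T}^{-1}[z^{\alpha+\beta-2}f^{\Delta}(0)]$ term that distinguishes (b) from (a).

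The only delicate point is justifying the identity $\mathcal{L}_{\T}[f^{(\alpha)}](z) = z^{\alpha}F(z) - f(0^+)z^{\alpha-1}$; this requires that $f$ (and the auxiliary pieces produced by the inverse transform in Definition~\ref{F:D:T:S}) lie in the uniqueness class where $\mathcal{L}_{\T}\circ\mathcal{L}_{\T}^{-1}$ is the identity, which by Theorem~\ref{Laplace:transform:propert} and Theorem~\ref{laplace:inversion:time:scales} amounts to imposing that $F$ be analytic and decay uniformly in the appropriate right half-plane and that each transform in sight have only finitely many regressive poles of finite order. Under this standing hypothesis (implicit throughout the chapter), the three parts reduce to the bookkeeping above; I expect no computational obstacle beyond tracking the initial-value terms correctly when $\alpha+\beta$ crosses the threshold $1$.
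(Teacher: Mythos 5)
Your proposal is correct and follows essentially the same route as the paper's proof: apply Definition~\ref{F:D:T:S} twice, substitute $z^{\beta}\mathcal{L}_{\T}[f^{(\alpha)}](z)=z^{\alpha+\beta}F(z)-f(0^+)z^{\alpha+\beta-1}$, and use linearity of $\mathcal{L}^{-1}_{\T}$ to recognize the $n=1$ (resp.\ $n=2$) form of $f^{(\alpha+\beta)}$, adding and subtracting the $f^{\Delta}(0^+)z^{\alpha+\beta-2}$ term in case (b). The only cosmetic difference is part (c), which the paper obtains by setting $\alpha=1$ in (b) so that the two correction terms cancel, whereas you verify it directly from \eqref{Laplace:of:derivative}; both computations are valid, and your closing remark about working in the uniqueness class where $\mathcal{L}_{\T}\circ\mathcal{L}^{-1}_{\T}$ is the identity makes explicit a hypothesis the paper leaves implicit.
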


\begin{proof}
(a) Let $\alpha+\beta\leq 1$. Then
\begin{equation*}
\begin{split}
\left(f^{(\alpha)}\right)^{(\beta)}(t)
&=\mathcal{L}^{-1}_{\T}\left[z^{\beta}\mathcal{L}\left[f^{(\alpha)}\right](z)
-z^{\beta-1}f^{(\alpha)}(0)\right](t)\\
&=\mathcal{L}^{-1}_{\T}\left[z^{\alpha+\beta}F(z)-z^{\alpha+\beta-1}f(0)\right](t)
- \mathcal{L}^{-1}_{\T}\left[z^{\beta-1}f^{(\alpha)}(0)\right](t)\\
&=f^{(\alpha+\beta)}(t)
-\mathcal{L}^{-1}_{\T}\left[z^{\beta-1}f^{(\alpha)}(0)\right](t)\,.
\end{split}
\end{equation*}
(b) For $1<\alpha+\beta\leq 2$ we have
\[f^{(\alpha+\beta)}(t)=
\mathcal{L}^{-1}_{\T}\left[z^{\alpha+\beta}\mathcal{L}\left[f\right](z)
-z^{\alpha+\beta-1}f(0)-z^{\alpha+\beta-2}f^{\Delta}(0)\right](t)\,.\]
Hence,
\[\left(f^{(\alpha)}\right)^{(\beta)}(t)=f^{(\alpha+\beta)}(t)
+\mathcal{L}^{-1}_{\T}\left[z^{\alpha+\beta-2}f^{\Delta}(0)\right](t)
-\mathcal{L}^{-1}_{\T}\left[z^{\beta-1}f^{(\alpha)}(0)\right](t)\,.
\] (c) The intended relation follows from (b) by choosing
$\alpha=1$.
\end{proof}

In general $\left(f^{(\alpha)}\right)^{(\beta)}(t)
\neq\left(f^{(\beta)}\right)^{(\alpha)}(t)$. However, the equality
holds in particular cases:

\begin{prop}
If $\alpha+\beta\leq 1$ and $f(0)=0$, then
$\left(f^{(\alpha)}\right)^{(\beta)}(t)
=\left(f^{(\beta)}\right)^{(\alpha)}(t)$.
\end{prop}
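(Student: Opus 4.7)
The plan is to invoke part (a) of Proposition~\ref{InvLap::prop:comp} twice, once for each order of composition, and then to argue that the extra correction terms both vanish under the hypothesis $f(0)=0$.

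\textbf{Step 1.} Since the inequality $\alpha+\beta\leq 1$ is symmetric in $\alpha,\beta$, part (a) of Proposition~\ref{InvLap::prop:comp} applies with the roles of $\alpha$ and $\beta$ exchanged. Thus
\begin{equation*}
\left(f^{(\alpha)}\right)^{(\beta)}(t)
=f^{(\alpha+\beta)}(t)-\mathcal{L}^{-1}_{\T}\left[z^{\beta-1}f^{(\alpha)}(0)\right](t),
\end{equation*}
\begin{equation*}
\left(f^{(\beta)}\right)^{(\alpha)}(t)
=f^{(\alpha+\beta)}(t)-\mathcal{L}^{-1}_{\T}\left[z^{\alpha-1}f^{(\beta)}(0)\right](t).
\end{equation*}
Subtracting these two identities, it suffices to show that $f^{(\alpha)}(0)=0$ and $f^{(\beta)}(0)=0$, because $f^{(\alpha)}(0)$ and $f^{(\beta)}(0)$ are constants and hence the inverse Laplace images $\mathcal{L}^{-1}_{\T}[z^{\beta-1}f^{(\alpha)}(0)]$ and $\mathcal{L}^{-1}_{\T}[z^{\alpha-1}f^{(\beta)}(0)]$ are these constants multiplied by fixed inverse images that vanish as soon as the prefactor does.

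\textbf{Step 2.} I claim that $f(0)=0$ forces $f^{(\alpha)}(0)=0$ for every $\alpha\in(0,1]$ (and symmetrically $f^{(\beta)}(0)=0$). Indeed, with $f(0)=0$, Definition~\ref{F:D:T:S} gives
\begin{equation*}
f^{(\alpha)}(t)=\mathcal{L}^{-1}_{\T}\left[z^{\alpha}F(z)\right](t).
\end{equation*}
Using \eqref{Laplace:of:derivative} together with $f(0)=0$, we have $zF(z)=\mathcal{L}_{\T}[f^{\Delta}](z)$, and therefore
\begin{equation*}
f^{(\alpha)}(t)=\mathcal{L}^{-1}_{\T}\left[z^{\alpha-1}\mathcal{L}_{\T}[f^{\Delta}](z)\right](t)
=I_{\T}^{1-\alpha}\!\left(f^{\Delta}\right)(t),
\end{equation*}
so $f^{(\alpha)}$ is a genuine fractional integral in the sense of Definition~\ref{F:I:T:S}. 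The plan is then to read off $f^{(\alpha)}(0)$ from the inversion formula (Theorem~\ref{laplace:inversion:time:scales}): since $e_z(0,0)=1$, one has $f^{(\alpha)}(0)=\sum_i\mathrm{Res}_{z=z_i}\,z^{\alpha-1}\mathcal{L}_{\T}[f^{\Delta}](z)$, and one argues that the decay properties guaranteed by Theorem~\ref{Laplace:transform:propert} (namely $\mathcal{L}_{\T}[f^{\Delta}](z)\to 0$ and its boundedness in the relevant half-plane) force this sum of residues to be zero; an equivalent route is to use the initial-value limit $f^{(\alpha)}(0^+)=\lim_{z\to\infty}z\cdot z^{\alpha-1}\mathcal{L}_{\T}[f^{\Delta}](z)$ and exploit that $\mathcal{L}_{\T}[f^{\Delta}](z)$ vanishes at infinity sufficiently fast for the standard classes of test functions considered here.

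\textbf{Step 3 (conclusion and main obstacle).} With $f^{(\alpha)}(0)=f^{(\beta)}(0)=0$ in hand, both correction terms in Step~1 disappear and
\begin{equation*}
\left(f^{(\alpha)}\right)^{(\beta)}(t)=f^{(\alpha+\beta)}(t)=\left(f^{(\beta)}\right)^{(\alpha)}(t),
\end{equation*}
which is the asserted commutativity. The main obstacle is precisely the vanishing claim in Step~2: the time-scale Laplace transform machinery does not, by itself, guarantee enough decay of $\mathcal{L}_{\T}[f^{\Delta}](z)$ at infinity to kill $z^{\alpha-1}\mathcal{L}_{\T}[f^{\Delta}](z)$ at $t=0$ without some regularity assumption on $f$. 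In the proof I would make explicit the class of admissible $f$ (e.g.\ functions for which the inversion-formula hypotheses of Theorem~\ref{laplace:inversion:time:scales} hold and for which $\mathcal{L}_{\T}[f^{\Delta}](z)=o(z^{-\alpha})$ at infinity), which is the same tacit setting already used throughout Section~\ref{InvLap::mr:prop}.
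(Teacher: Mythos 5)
Your proposal is correct and follows essentially the same route as the paper, whose entire proof reads ``It follows from item (a) of Proposition~\ref{InvLap::prop:comp}'': both arguments apply part (a) twice with the roles of $\alpha$ and $\beta$ exchanged and cancel the common term $f^{(\alpha+\beta)}(t)$. Your Step~2, showing that $f(0)=0$ forces $f^{(\alpha)}(0)=f^{(\beta)}(0)=0$ so that both correction terms vanish, makes explicit (together with the decay hypotheses it genuinely requires) a point that the paper's one-line proof leaves entirely tacit.
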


\begin{proof}
It follows from item (a) of Proposition~\ref{InvLap::prop:comp}.
\end{proof}

\begin{prop}
Let $\alpha\in (n-1,n]$, $n \in \N$, and $\lim\limits_{t\rightarrow
0^+}f^{\Delta^k}(t)=f^{\Delta^k}(0^+)$ exist, $k=0,\ldots,n-1$. The
following equality holds:
\begin{equation*}
I^{\alpha}_{\T}\left(f^{(\alpha)}\right)(t)
=f(t)-\sum_{k=0}^{n-1}f^{\Delta^k}(0^+)h_k(t)\,.
\end{equation*}
\end{prop}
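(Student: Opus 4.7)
The plan is to work entirely on the Laplace-transform side, where the fractional integral and the fractional derivative become simple algebraic operations, and then invert at the end using the known fact that $\mathcal{L}^{-1}_{\T}[1/z^{k+1}](t) = h_k(t,0)$ (established in Example~\ref{example:inverse} and its subsequent remark).

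First I will set $F(z) = \mathcal{L}_{\T}[f](z)$ and, using Definition~\ref{F:D:T:S}, read off that
\begin{equation*}
\mathcal{L}_{\T}\bigl[f^{(\alpha)}\bigr](z) = z^{\alpha} F(z) - \sum_{k=0}^{n-1} f^{\Delta^k}(0^+)\, z^{\alpha-k-1}.
\end{equation*}
(This is the Laplace transform of the function defined as an inverse Laplace transform; the existence of the one-sided limits $f^{\Delta^k}(0^+)$ assumed in the statement is precisely what makes the right-hand side above a legitimate transform whose inverse recovers the argument of $\mathcal{L}^{-1}_{\T}$ in \eqref{InvLap::frac:def:n}.) Next, applying Definition~\ref{F:I:T:S} to the function $g := f^{(\alpha)}$, I get
\begin{equation*}
I^{\alpha}_{\T}\bigl(f^{(\alpha)}\bigr)(t) = \mathcal{L}^{-1}_{\T}\!\left[\frac{\mathcal{L}_{\T}[f^{(\alpha)}](z)}{z^{\alpha}}\right]\!(t).
\end{equation*}

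Substituting the expression for $\mathcal{L}_{\T}[f^{(\alpha)}](z)$ and using that division by $z^{\alpha}$ distributes over the finite sum, the bracket simplifies to
\begin{equation*}
\frac{1}{z^{\alpha}}\!\left(z^{\alpha} F(z) - \sum_{k=0}^{n-1} f^{\Delta^k}(0^+)\, z^{\alpha-k-1}\right) = F(z) - \sum_{k=0}^{n-1} f^{\Delta^k}(0^+)\, \frac{1}{z^{k+1}}.
\end{equation*}
Now I invoke the linearity of $\mathcal{L}^{-1}_{\T}$ together with the identities $\mathcal{L}^{-1}_{\T}[F](t) = f(t)$ and $\mathcal{L}^{-1}_{\T}[1/z^{k+1}](t) = h_k(t,0)$ to conclude
\begin{equation*}
I^{\alpha}_{\T}\bigl(f^{(\alpha)}\bigr)(t) = f(t) - \sum_{k=0}^{n-1} f^{\Delta^k}(0^+)\, h_k(t,0),
\end{equation*}
which is the required identity.

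The main obstacle is the subtle but crucial interchange between $\mathcal{L}_{\T}$ and $\mathcal{L}^{-1}_{\T}$: one must verify that the function $f^{(\alpha)}$ produced by Definition~\ref{F:D:T:S} indeed has a Laplace transform equal to $z^{\alpha}F(z) - \sum_{k=0}^{n-1} f^{\Delta^k}(0^+) z^{\alpha-k-1}$, so that dividing by $z^{\alpha}$ and inverting is legitimate. This is where the hypothesis on the existence of the one-sided limits $f^{\Delta^k}(0^+)$ is used, together with the hypotheses of Theorem~\ref{laplace:inversion:time:scales} which guarantee that the inversion formula is applicable in the relevant regressive half-plane; once these transform/inverse-transform conditions are in place, the computation above is purely algebraic.
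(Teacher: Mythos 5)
Your proof is correct and follows essentially the same route as the paper's: both apply Definition~\ref{F:I:T:S} to $f^{(\alpha)}$, use that $\mathcal{L}_{\T}[f^{(\alpha)}](z)=z^{\alpha}F(z)-\sum_{k=0}^{n-1}f^{\Delta^k}(0^+)z^{\alpha-k-1}$, cancel the $z^{\alpha}$, and invert term by term via $\mathcal{L}^{-1}_{\T}[1/z^{k+1}](t)=h_k(t)$. Your added remarks on why the transform/inverse-transform interchange is legitimate are a welcome bit of extra care that the paper's own proof leaves implicit.
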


\begin{proof}
Let $F(z)=\mathcal{L}_{\T}\left[f\right](z)$. Then,
\begin{equation*}
\begin{split}
I^{\alpha}_{\T}\left(f^{(\alpha)}\right)(t)
&=\mathcal{L}_{\T}^{-1}\left[z^{-\alpha}\mathcal{L}_{\T}\left[f^{(\alpha)}\right](z)\right](t)
=\mathcal{L}_{\T}^{-1}\left[F(z)-\sum_{k=0}^{n-1}f^{\Delta^k}(0^+)\frac{z^{\alpha-k-1}}{z^{\alpha}}\right](t)\\
&=f(t)-\sum_{k=0}^{n-1}f^{\Delta^k}(0^+)\mathcal{L}_{\T}^{-1}\left[\frac{1}{z^{k+1}}\right](t)
=f(t)-\sum_{k=0}^{n-1}f^{\Delta^k}(0^+)h_k(t)\,.
\end{split}
\end{equation*}
\end{proof}

\begin{prop}
Let $\alpha\in (n-1,n]$, $n \in \N$, and
$F(z)=\mathcal{L}_{\T}\left[f\right](z)$. If
$\lim\limits_{z\rightarrow\infty}F(z)=0$ and
$\lim\limits_{z\rightarrow\infty}\frac{F(z)}{z^{\alpha-k}}=0$, $k
\in \{1,\ldots,n\}$, then
$\left(I^\alpha_\mathbb{T}f\right)^{(\alpha)}(t)=f(t)$.
\end{prop}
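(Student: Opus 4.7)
The plan is to use Definitions~\ref{F:I:T:S} and~\ref{F:D:T:S} to rewrite $(I^\alpha_\mathbb{T} f)^{(\alpha)}$ in a way that isolates the boundary terms, and then to show that under the given asymptotic hypotheses on $F$ every such boundary term vanishes.

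Set $g:=I^\alpha_\mathbb{T}f$ and $G(z):=\mathcal{L}_\mathbb{T}[g](z)=F(z)/z^{\alpha}$, which holds by Definition~\ref{F:I:T:S}. Applying Definition~\ref{F:D:T:S} to $g$ gives
\begin{equation*}
g^{(\alpha)}(t)
=\mathcal{L}^{-1}_{\mathbb{T}}\Bigl[z^{\alpha}G(z)
-\sum_{k=0}^{n-1} g^{\Delta^{k}}(0^{+})\,z^{\alpha-k-1}\Bigr](t)
= f(t)-\sum_{k=0}^{n-1} g^{\Delta^{k}}(0^{+})\,\mathcal{L}^{-1}_{\mathbb{T}}\bigl[z^{\alpha-k-1}\bigr](t),
\end{equation*}
since $z^{\alpha}G(z)=F(z)$ and the inverse Laplace transform is linear. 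So the equality $g^{(\alpha)}(t)=f(t)$ reduces to showing $g^{\Delta^{k}}(0^{+})=0$ for every $k\in\{0,1,\ldots,n-1\}$.

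To establish these initial-value identities I would proceed by induction on $k$, using formula \eqref{InvLap::lap:derivative} applied to $g$:
\begin{equation*}
\mathcal{L}_{\mathbb{T}}[g^{\Delta^{m}}](z)
= z^{m}G(z)-\sum_{j=0}^{m-1} z^{m-j-1}\,g^{\Delta^{j}}(0^{+})
= \frac{F(z)}{z^{\alpha-m}}-\sum_{j=0}^{m-1} z^{m-j-1}\,g^{\Delta^{j}}(0^{+}).
\end{equation*}
By Theorem~\ref{Laplace:transform:propert}, the left-hand side tends to $0$ as $|z|\to\infty$. For $m=1$ this yields $g(0^{+})=\lim_{z\to\infty}F(z)/z^{\alpha-1}=0$ by the hypothesis with $k=1$. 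Assuming inductively $g(0^{+})=g^{\Delta}(0^{+})=\cdots=g^{\Delta^{m-1}}(0^{+})=0$, the sum collapses and the identity becomes $\mathcal{L}_{\mathbb{T}}[g^{\Delta^{m}}](z)=F(z)/z^{\alpha-m}$; passing to the limit and invoking the hypothesis with $k=m$ gives $g^{\Delta^{m}}(0^{+})=0$ for all $m=1,\ldots,n-1$. (The assumption $\lim_{z\to\infty}F(z)=0$ together with $\lim_{z\to\infty}F(z)/z^{\alpha-n}=0$ ensure consistency at the endpoints $m=0$ and $m=n$.) Substituting back then yields $g^{(\alpha)}(t)=f(t)$, as required.

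The hard part is the inductive step, specifically justifying that $\mathcal{L}_{\mathbb{T}}[g^{\Delta^{m}}](z)\to 0$ as $|z|\to\infty$, which requires $g^{\Delta^{m}}$ to belong to the class of functions to which Theorem~\ref{Laplace:transform:propert} applies (e.g., to be of exponential type so that its Laplace transform is well defined and analytic/bounded in a right half-plane). In the continuous case $\mathbb{T}=\mathbb{R}$ this is standard, but on a general time scale one must check that $I^{\alpha}_{\mathbb{T}}f$ is smooth enough in the $\Delta$-sense and that its successive $\Delta$-derivatives inherit the requisite growth bounds from $f$; otherwise the chain of initial-value limits cannot be read off formula \eqref{InvLap::lap:derivative}.
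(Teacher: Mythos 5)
Your proof follows essentially the same route as the paper's: both reduce the claim to showing $\bigl(I^{\alpha}_{\mathbb{T}}f\bigr)^{\Delta^{k}}(0^{+})=0$ for $k=0,\ldots,n-1$, and both establish this by induction using formula \eqref{InvLap::lap:derivative} together with the hypotheses $\lim_{z\to\infty}F(z)/z^{\alpha-k}=0$. The only cosmetic difference is that the paper extracts the initial values via the initial-value limit $\lim_{z\to\infty}z\,\mathcal{L}_{\mathbb{T}}\bigl[(I^{\alpha}_{\mathbb{T}}f)^{\Delta^{k}}\bigr](z)$, whereas you solve for them from the vanishing at infinity of the transform (item 3 of Theorem~\ref{Laplace:transform:propert}); these are equivalent.

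One index slip in your inductive step should be repaired. Writing $g=I^{\alpha}_{\mathbb{T}}f$, after the sum collapses you obtain $\mathcal{L}_{\mathbb{T}}[g^{\Delta^{m}}](z)=F(z)/z^{\alpha-m}$, but this identity contains no occurrence of $g^{\Delta^{m}}(0^{+})$, so "passing to the limit with the hypothesis $k=m$" cannot yield $g^{\Delta^{m}}(0^{+})=0$. To extract $g^{\Delta^{m}}(0^{+})$ you must go one derivative further, exactly as you did in your base case: from
$\mathcal{L}_{\mathbb{T}}[g^{\Delta^{m+1}}](z)=F(z)/z^{\alpha-m-1}-g^{\Delta^{m}}(0^{+})$,
letting $|z|\to\infty$ and invoking the hypothesis with $k=m+1$ gives $g^{\Delta^{m}}(0^{+})=0$. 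With $m$ ranging over $0,\ldots,n-1$ this uses precisely the hypotheses $k\in\{1,\ldots,n\}$, matching the statement. Your closing remark about the regularity needed for Theorem~\ref{Laplace:transform:propert} to apply to the iterated $\Delta$-derivatives of $I^{\alpha}_{\mathbb{T}}f$ is a fair caveat; the paper's proof passes over it in silence as well.
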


\begin{proof}
Firstly let us notice that $\lim\limits_{t\rightarrow
0^+}\left(I_{\T}^{\alpha}f\right)^{\Delta^k}(t)=0$ for
$k\in\{0,\ldots, n-1\}$. For that we check that
$\lim\limits_{z\rightarrow
\infty}z\mathcal{L}\left[I^{\alpha}_{\T}f\right](z)
=\lim\limits_{z\rightarrow \infty}z\frac{F(z)}{z^{\alpha}}=0$.
Nextly let us assume that it holds for $i=0,\ldots,k-1$ and let us
calculate
\begin{equation*}
\begin{split}
\lim\limits_{z\rightarrow
\infty}z\mathcal{L}\left[\left(I^{\alpha}_{\T}
f\right)^{\Delta^k}\right](z) &=\lim\limits_{z\rightarrow
\infty}z\left(z^k\frac{F(z)}{z^{\alpha}}
-\sum_{i=0}^{k-1}z^{k-1-i}\left(I_{\T}^{\alpha}\right)^{\Delta^i}(0)\right)\\
&= \lim\limits_{z\rightarrow
\infty}z\left(z^k\frac{F(z)}{z^{\alpha}}\right)
=\lim\limits_{z\rightarrow \infty}\frac{F(z)}{z^{\alpha-k-1}}=0 \,.
\end{split}
\end{equation*}
Then we easily conclude that
\[
\left(I^\alpha_\mathbb{T}f\right)^{(\alpha)}(t)
=\mathcal{L}_{\T}^{-1}\left[z^{\alpha}\frac{F(z)}{z^{\alpha}}\right](t)=f(t)\,.
\]
\end{proof}

The convolution of two functions $f:\T\rightarrow\R$ and $g:\T\times
\T\rightarrow\R$ on time scales, where $g$ is rd-continuous with
respect to the first variable, is defined in \cite{BL,laplace}:
\begin{equation*}\index{Convolution of functions on time scales}
\left(f\ast g \right)(t)=\int_0^t f(\tau)g(t,\sigma(\tau))\Delta
\tau\,.
\end{equation*}
As function $g$ we can consider, \textrm{e.g.},
$\mathrm{e}_c(t,t_0)$ or $h_k(t,t_0)$.

\begin{prop}
Let $t_0\in \T$. If $\alpha\in(0,1)$, then
\begin{equation*}
\left(f\ast g(\cdot,t_0)\right)^{(\alpha)}(t)
=\left(f^{(\frac{\alpha}{2})}\ast
g^{(\frac{\alpha}{2})}(\cdot,t_0)\right)(t) =\left(f^{(\alpha)}\ast
g(\cdot,t_0)\right)(t)\,,
\end{equation*}
where we assume the existence of the involved derivatives.
\end{prop}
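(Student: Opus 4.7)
The plan is to apply the generalized Laplace transform $\mathcal{L}_{\T}$ to each of the three expressions and invoke the time-scale convolution theorem from \cite{BL,laplace}, namely $\mathcal{L}_{\T}[f\ast g(\cdot,t_0)](z)=F(z)G(z)$ with $G(z):=\mathcal{L}_{\T}[g(\cdot,t_0)](z)$, together with Definition~\ref{F:D:T:S} which expresses the fractional derivative through an explicit Laplace-domain formula. Once all three sides are shown to share a common Laplace transform, I would apply the inversion formula (Theorem~\ref{laplace:inversion:time:scales}) to conclude.

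First I would observe that $(f\ast g(\cdot,t_0))(0)=\int_0^0 f(\tau)g(0,\sigma(\tau))\Delta\tau=0$, so Definition~\ref{F:D:T:S} with $\alpha\in(0,1)$ gives
\begin{equation*}
\mathcal{L}_{\T}\!\left[(f\ast g(\cdot,t_0))^{(\alpha)}\right]\!(z)=z^{\alpha}F(z)G(z).
\end{equation*}
For the middle expression I would apply the convolution theorem first and then Definition~\ref{F:D:T:S} to each factor, obtaining
\begin{equation*}
\mathcal{L}_{\T}\!\left[f^{(\alpha/2)}\ast g^{(\alpha/2)}(\cdot,t_0)\right]\!(z)=\bigl(z^{\alpha/2}F(z)-f(0^+)z^{\alpha/2-1}\bigr)\bigl(z^{\alpha/2}G(z)-g(t_0,0^+)z^{\alpha/2-1}\bigr).
\end{equation*}
Analogously, for the right-hand side,
\begin{equation*}
\mathcal{L}_{\T}\!\left[f^{(\alpha)}\ast g(\cdot,t_0)\right]\!(z)=\bigl(z^{\alpha}F(z)-f(0^+)z^{\alpha-1}\bigr)G(z).
\end{equation*}
Comparing the three, I see that all coincide with $z^{\alpha}F(z)G(z)$ provided $f(0^+)=0$ and $g(t_0,0^+)=0$; these boundary vanishings I would absorb into the standing assumption "the involved derivatives exist," since otherwise the Laplace images would contain the extra spurious terms $f(0^+)z^{\alpha-1}G(z)$ or $g(t_0,0^+)z^{\alpha/2-1}F(z)$, etc., reflecting the usual initial-value obstruction to the fractional semigroup property already visible in Proposition~\ref{InvLap::prop:comp}. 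Applying the inverse Laplace transform term by term then delivers both equalities.

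The main obstacle will be the rigorous treatment of those initial-value terms: establishing explicitly which side conditions on $f$ and $g(\cdot,t_0)$ (vanishing at $0^+$, rd-continuity of the relevant delta derivatives, exponential type bounds so that $\mathcal{D}[\cdot]$ is nonempty) are sufficient to guarantee the two identities simultaneously, and verifying that the resulting Laplace images satisfy the hypotheses of Theorem~\ref{laplace:inversion:time:scales} so that injectivity of $\mathcal{L}_{\T}$ can be legitimately invoked on a common half-plane of convergence. A secondary technical point is to confirm that the convolution theorem of \cite{BL,laplace} applies to the second-variable dependence of $g(t,\sigma(\tau))$ used in the definition of $\ast$ here.
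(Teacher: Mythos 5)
Your proposal follows essentially the same route as the paper's proof: apply the generalized Laplace transform, use the convolution theorem to reduce each of the three expressions to $z^{\alpha}F(z)G(z)$, and conclude by the inversion formula. You are in fact more careful than the paper, which silently identifies $\mathcal{L}^{-1}_{\T}\left[z^{\alpha/2}F(z)\right]$ with $f^{(\alpha/2)}$ and $\mathcal{L}^{-1}_{\T}\left[z^{\alpha}F(z)\right]$ with $f^{(\alpha)}$; your explicit tracking of the initial-value terms such as $f(0^+)z^{\alpha-1}G(z)$ makes visible the vanishing conditions at $0^+$ that the paper sweeps into the phrase ``we assume the existence of the involved derivatives.''
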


\begin{proof}
From the convolution theorem for the generalized Laplace transform
\cite[Theorem 2.1]{laplace},
\[
\mathcal{L}_{\T}\left[\left(f\ast
g(\cdot,t_0)\right)^{(\alpha)}\right](z) =z^{\alpha}F(z)G(z)\,.
\]
Hence,
\begin{equation*}
\begin{split}
\left(f\ast g(\cdot,t_0)\right)^{(\alpha)}(t)
&=\mathcal{L}^{-1}_{\T}\left[z^{\alpha}F(z)G(z)\right](t)
=\mathcal{L}^{-1}_{\T}\left[z^{\frac{\alpha}{2}}F(z)\right](t)
\mathcal{L}^{-1}_{\T}\left[z^{\frac{\alpha}{2}}G(z)\right](t)\\
&=\left(f^{(\alpha/2)}\ast g^{(\alpha/2)}(\cdot,t_0)\right)(t)\,.
\end{split}
\end{equation*}
Equivalently, we can write
\[
\mathcal{L}^{-1}_{\T}\left[z^{\alpha}F(z)G(z)\right](t)
=\mathcal{L}^{-1}_{\T}\left[z^{\alpha}F(z)\right](t)
\mathcal{L}^{-1}_{\T}\left[G(z)\right](t)=\left(f^{(\alpha)}\ast
g(\cdot,t_0)\right)(t)\,.
\]
\end{proof}

\section{State of the Art}

The results of this chapter are already published in~\cite{BastosMozTorres} 
and were presented by the author in a seminar with the title 
{\it Fractional Derivatives and Integrals on Time Scales 
via the Inverse Generalized Laplace Transform} on June 9, 2011,
at the Faculty of Computer Science of Bia\l ystok University of Technology, Bia\l ystok, Poland.


\clearpage{\thispagestyle{empty}\cleardoublepage}

\chapter{Conclusions and future work}\label{conclusao}

In this thesis we define fractional sum and difference operators for
the time scales $\mathbb{T}=\mathbb{Z}$ and $\mathbb{T}=(h\mathbb{Z})_a$
and prove some properties for them. We also established and proved first and
second order necessary optimality conditions for problems
of the calculus of variations on both time scales.
They show that the solutions of the fractional problems coincide
with the solutions of the corresponding non-fractional variational
problems when the order of the discrete derivatives is an integer
value. We believe that the present work will potentiate further research in
the fractional calculus of variations where much remains to be done.\\
With respect to the fractional derivatives and integrals on an arbitrary
time scale $\mathbb{T}$, we introduce a fractional calculus on time scales
using the theory of delta dynamic equations
and the inverse Laplace transform on time scales.
The basic notions of fractional order integral
and fractional order derivative on an arbitrary time scale are given,
and some properties of the new introduced fractional operators are proved.

\medskip

Some possible directions of research are:
\begin{itemize}
\item to extend the results on fractional variational problems 
for higher-order problems of the calculus of variations
\cite{B:J:05,RD,NataliaHigherOrderNabla,Artur:Martins:Torres:2012} 
with fractional discrete derivatives of any order;
\item to explore new possibilities of research for fractional continuous 
variational problems, in particular, to get a fractional continuous Legendre 
necessary optimality condition (an interesting open question);
\item to explore several areas of application, for example, in signal processing, 
where fractional derivatives of a discrete-time signal are particularly useful
to describe noise processes \cite{Ortig:B};
\item to find a general formulation leading to the specification of ARMA parameters 
(as pointed out by one of the referees of~\cite{comNuno:Rui:Z});
\item to study optimality conditions for more general variable
endpoint variational problems \cite{Zeidan,AD:10b,MyID:169} and
isoperimetric problems \cite{Almeida1,AlmeidaNabla};
\item to obtain fractional sufficient optimality
conditions of Jacobi type and a version of Noether's theorem
\cite{Bartos,Cresson:Frederico:Torres,gastao:delfim,MyID:149}
for discrete-time fractional variational problems;
\item to explore direct methods of optimization for absolute extrema
\cite{Bohner:F:T,mal:tor,T:L:08};
\item to generalize our first and second order
optimality conditions to a fractional Lagrangian
possessing delay terms \cite{M:J:B:09,B:M:J:08};
\item to extend our results of discrete fractional variational problems 
to problems involving Caputo difference operators~\cite{Anastassiou2} 
or even Riemann-Liouvile and Caputo together on the same Lagrangian.
\end{itemize}

Another line of research, where we just have few things done~\cite{comNuno}, 
is related with the new discrete fractional diamond sum (\textrm{cf.} 
Definition~\ref{diamond::diamond} below) in the following topics:

\begin{itemize}
\item to investigate the discrete fractional diamond difference 
introduced in~\cite{comNuno} and generalize the new
discrete diamond fractional operator
to an arbitrary time scale $\mathbb{T}$;
\item to investigate
the usefulness of modeling with fractional equations~\cite{sofia}
and study corresponding fractional diamond variational principles;
\item to extend the results presented in Chapters~\ref{chap3} 
and~\ref{chap4} to an arbitrary time scale;
\item to study and develop results on other subjects of
mathematics using the theory of discrete fractional calculus.
\end{itemize}

All the results in Chapters~\ref{chap3},~\ref{chap4} and \ref{chap5}
were obtained for delta-calculus but could easily be rewritten using
the operator nabla. However, in our opinion, doesn't make sense to have
results for $\Delta$ and for $\nabla$ separately.
Motivated by the diamond-alpha dynamic derivative on time scales \cite{14,11,13}
and the fractional derivative of \cite{withBasia:Spain2010,Malinowska:Torres:2011:Caputo},
we introduce a new operator that unify and extend the operators $\Delta$ 
and $\nabla$ by using a convex linear combination of them.
To exemplify what we propose, we introduce a more general
fractional sum operator (for $\mathbb{T}=\mathbb{Z}$), making use of the symbol,
${_{\gamma}}\diamondsuit$ (\textrm{cf.}
Definition~\ref{diamond::diamond}).
With this new operator it's not our intention to change what was done in 
$\Delta$ and $\nabla$ cases -- as particular cases, results on delta and 
nabla discrete fractional calculus are obtained from the new operator -- 
but just to unify the two cases in one.

Looking to the literature of discrete fractional difference
operators, two approaches are found (see, \textrm{e.g.},
\cite{4,comNuno:Rui:Z}): one using the $\Delta$ point of view
(sometimes called the forward fractional difference approach),
another using the $\nabla$ perspective (sometimes called the
backward fractional difference approach).
When $\gamma = 1$ our ${_{\gamma}}\diamondsuit$ operator is reduced
to the $\Delta$ one; when $\gamma = 0$ the ${_{\gamma}}\diamondsuit$
operator coincides with the corresponding $\nabla$ fractional sum.\\\\
Analogously to Definition~\ref{Miller:fract}, one considers the
discrete nabla fractional sum operator:

\begin{definition}\cite{4}
\label{diamond::nabla} The discrete nabla fractional sum operator is
defined by
\begin{equation*}
(\nabla_a^{-\beta}f)(t)
=\frac{1}{\Gamma(\beta)}\sum_{s=a}^{t}(t-\rho(s))^{\overline{\beta-1}}f(s),
\end{equation*}
where $\beta>0$. Here $f$ is defined for $s=a \mod (1)$ and
$\nabla_a^{-\beta}f$ is defined for $t=a \mod (1)$.
\end{definition}

\begin{remark}
Let $\mathbb{N}_a=\{a,a+1,a+2,\ldots\}$. The operator
$\nabla_a^{-\beta}$ maps functions defined on $\mathbb{N}_a$ to
functions defined on $\mathbb{N}_{a}$. The fact that $f$ and
$\nabla_a^{-\beta} f$ have the same domain, while $f$ and
$\Delta_a^{-\alpha} f$ do not, explains why some authors prefer the
nabla approach.
\end{remark}

The next result gives a relation between the delta fractional sum
and the nabla fractional sum operators.

\begin{lemma}\cite{4}
\label{diamond::nabladelta} Let $0\leq m-1<\nu\leq m$, where $m$
denotes an integer. Let $a$ be a positive integer, and $y(t)$ be
defined on $t \in \mathbb{N}_a=\{a,a+1,a+2,\ldots\}$. The following
statement holds: $\left(\Delta_a^{-\nu}
y\right)(t+\nu)=\left(\nabla_a^{-\nu} y\right)(t)$, $t\in
\mathbb{N}_{a}$.
\end{lemma}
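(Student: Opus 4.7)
The plan is to unfold both sides of the claimed identity using the explicit formulas given in Definition~\ref{Miller:fract} (equivalently Definition~\ref{Z::def0}) and Definition~\ref{diamond::nabla}, and then show that the resulting summands coincide term by term via the gamma function. Since $\mathbb{T}=\mathbb{Z}$ here, one has $\sigma(s)=s+1$ and $\rho(s)=s-1$, so the two sums to compare have the same range $s=a,\dots,t$ once the shift $t\mapsto t+\nu$ has been applied on the delta side.

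First I would write out
\begin{equation*}
(\Delta_a^{-\nu}y)(t+\nu)
=\frac{1}{\Gamma(\nu)}\sum_{s=a}^{t}(t+\nu-\sigma(s))^{(\nu-1)}y(s)
=\frac{1}{\Gamma(\nu)}\sum_{s=a}^{t}(t+\nu-s-1)^{(\nu-1)}y(s),
\end{equation*}
using the generalized falling factorial $x^{(\nu-1)}=\Gamma(x+1)/\Gamma(x+2-\nu)$ from \eqref{powerZ}. This rewrites each summand as
\begin{equation*}
(t+\nu-s-1)^{(\nu-1)}=\frac{\Gamma(t+\nu-s)}{\Gamma(t-s+1)}.
\end{equation*}
Next I would do the analogous unfolding on the nabla side. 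The rising factorial satisfies $x^{\overline{\nu-1}}=\Gamma(x+\nu-1)/\Gamma(x)$, so
\begin{equation*}
(t-\rho(s))^{\overline{\nu-1}}=(t-s+1)^{\overline{\nu-1}}
=\frac{\Gamma(t-s+\nu)}{\Gamma(t-s+1)},
\end{equation*}
and therefore
\begin{equation*}
(\nabla_a^{-\nu}y)(t)=\frac{1}{\Gamma(\nu)}\sum_{s=a}^{t}\frac{\Gamma(t+\nu-s)}{\Gamma(t-s+1)}y(s).
\end{equation*}
Comparing the two expressions, they are manifestly identical, which yields the conclusion.

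The proof is therefore essentially a bookkeeping exercise, and the only real obstacle is notational: one must be careful that the two different factorial symbols $x^{(\cdot)}$ (falling) and $x^{\overline{\cdot}}$ (rising) are translated into the correct quotient of gamma functions, and that the index shift $t\mapsto t+\nu$ matches the endpoint $t-\nu$ in the outer upper limit of $\Delta_a^{-\nu}$ with the endpoint $t$ of $\nabla_a^{-\nu}$. The hypothesis $0\le m-1<\nu\le m$ plays no role in the algebraic identity itself; it merely guarantees that the fractional sum is the one relevant to fractional differences of order $\nu$. No convergence or regularity issue arises because the sums are finite.
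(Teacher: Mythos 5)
Your proof is correct. Note that the paper itself states this lemma as a citation of Atici--Eloe \cite{4} and supplies no proof of its own, so there is nothing internal to compare against; your direct verification --- writing both summands as $\Gamma(t+\nu-s)/\bigl(\Gamma(\nu)\,\Gamma(t-s+1)\bigr)$ and observing that the shift $t\mapsto t+\nu$ makes the two index ranges both equal to $s=a,\dots,t$ --- is exactly the computation one would expect, and your remark that the hypothesis $0\leq m-1<\nu\leq m$ is not needed for the algebraic identity (only $\nu>0$ matters) is accurate.
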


With the help of this lemma we are able to rewrite our results of Chapter~\ref{chap3} 
to the $\nabla$ operator. Our intention is not to have results for $\Delta$ and $\nabla$ 
separately but, instead, to extend and unify them through the following operator:


\begin{definition}
\label{diamond::diamond} The diamond-$\gamma$ fractional operator of
order $(\alpha,\beta)$ is given, when applied to a function $f$ at
point $t$, by
\begin{equation*}
\left(_{\gamma}\diamondsuit_a^{-\alpha,-\beta}f\right)(t) =\gamma
\left(\Delta_a^{-\alpha}f\right)(t+\alpha) +(1-\gamma)
\left(\nabla_a^{-\beta}f\right)(t),
\end{equation*}
where $\alpha>0$, $\beta>0$, and $\gamma\in[0,1]$. Here, both $f$
and $_{\gamma}\diamondsuit_a^{-\alpha,-\beta}f$ are defined for $t=a
\mod (1)$.
\end{definition}

\begin{remark}
Similarly to the nabla fractional operator, our operator
$_{\gamma}\diamondsuit_a^{-\alpha,-\beta}$ maps functions defined on
$\mathbb{N}_a$ to functions defined on $\mathbb{N}_a$,
$\mathbb{N}_a=\{a,a+1,a+2,\ldots\}$ for $a$ a given real number.
\end{remark}

\begin{remark}
The new diamond fractional operator of
Definition~\ref{diamond::diamond} gives, as particular cases, the
operator of Definition~\ref{Miller:fract} for $\gamma=1$,
$$
\left(_{1}\diamondsuit_a^{-\alpha,-\beta}f\right)(t)
=\left(\Delta_a^{-\alpha}f\right)(t+\alpha), \quad t\equiv a \mod
(1),
$$
and the operator of Definition~\ref{diamond::nabla} for $\gamma=0$,
$$
\left(_{0}\diamondsuit_a^{-\alpha,-\beta}f\right)(t)
=\left(\nabla_a^{-\beta}f\right)(t), \quad t\equiv a \mod (1).
$$
\end{remark}

The next theorems give important properties of the new, more
general, discrete fractional operator
$_{\gamma}\diamondsuit_a^{-\alpha,-\beta}$.

\begin{theorem}
\label{diamond::thm:SUM} Let $f$ and $g$ be real functions defined
on $\mathbb{N}_a$, $\mathbb{N}_a=\{a,a+1,a+2,\ldots\}$ for $a$ a
given real number. The following equality holds:
\begin{equation*}
\left(_{\gamma}\diamondsuit_a^{-\alpha,-\beta} (f+g)\right)(t)
=\left({_{\gamma}\diamondsuit}_a^{-\alpha,-\beta}f\right)(t)
+\left({_{\gamma}\diamondsuit}_a^{-\alpha,-\beta}g\right)(t).
\end{equation*}
\end{theorem}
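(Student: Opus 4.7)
The plan is to reduce the claim to the linearity of the two constituent operators $\Delta_a^{-\alpha}$ and $\nabla_a^{-\beta}$, which in turn follows directly from the linearity of finite sums in their defining expressions. First I would apply Definition~\ref{diamond::diamond} to the function $f+g$ at a point $t$ with $t \equiv a \mod (1)$, obtaining
\begin{equation*}
\left({_{\gamma}\diamondsuit}_a^{-\alpha,-\beta}(f+g)\right)(t)
= \gamma \left(\Delta_a^{-\alpha}(f+g)\right)(t+\alpha)
+ (1-\gamma)\left(\nabla_a^{-\beta}(f+g)\right)(t).
\end{equation*}

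Next I would invoke the linearity of the delta fractional sum: from Definition~\ref{Miller:fract},
\begin{equation*}
\left(\Delta_a^{-\alpha}(f+g)\right)(t+\alpha)
= \frac{1}{\Gamma(\alpha)}\sum_{s=a}^{t}(t+\alpha-\sigma(s))^{(\alpha-1)}\bigl(f(s)+g(s)\bigr),
\end{equation*}
and since finite sums distribute over addition, this equals $\left(\Delta_a^{-\alpha}f\right)(t+\alpha) + \left(\Delta_a^{-\alpha}g\right)(t+\alpha)$. An entirely analogous computation, using the nabla fractional sum from Definition~\ref{diamond::nabla}, yields $\left(\nabla_a^{-\beta}(f+g)\right)(t) = \left(\nabla_a^{-\beta}f\right)(t) + \left(\nabla_a^{-\beta}g\right)(t)$.

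Substituting both identities back and regrouping terms according to the weights $\gamma$ and $1-\gamma$, one recovers exactly $\left({_{\gamma}\diamondsuit}_a^{-\alpha,-\beta}f\right)(t) + \left({_{\gamma}\diamondsuit}_a^{-\alpha,-\beta}g\right)(t)$, completing the argument. There is no real obstacle here: the result is a routine verification whose only content is the linearity of the underlying fractional sum operators, and the proof amounts to little more than unwinding definitions. The only point warranting care is ensuring the domain condition $t \equiv a \mod (1)$ so that both $\Delta_a^{-\alpha}$ and $\nabla_a^{-\beta}$ are simultaneously well-defined at the points where the diamond operator is evaluated, which follows from the hypothesis that $f$ and $g$ are both defined on $\mathbb{N}_a$.
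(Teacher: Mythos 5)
Your proposal is correct and follows essentially the same route as the paper's own proof: both unfold Definition~\ref{diamond::diamond} into its $\gamma$-weighted delta and nabla parts, distribute the finite sums defining $\Delta_a^{-\alpha}$ and $\nabla_a^{-\beta}$ over $f+g$, and regroup. No gaps; the observation about the common domain $t\equiv a \bmod (1)$ is a sensible (if routine) extra check.
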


\begin{proof}
The intended equality follows from the definition of
diamond-$\gamma$ fractional sum of order $(\alpha,\beta)$:
\begin{equation*}
\begin{split}
(_{\gamma}\diamondsuit_a^{-\alpha,-\beta}(f+g))(t)
&=\gamma(\Delta_a^{-\alpha}(f+g))(t+\alpha)+(1-\gamma)(\nabla_a^{-\beta}(f+g))(t)\\
&=\frac{\gamma}{\Gamma(\alpha)}\sum_{s=a}^{t}(t+\alpha-\sigma(s))^{(\alpha-1)}(f(s)
+g(s))\\
&\qquad+\frac{1-\gamma}{\Gamma(\beta)}\sum_{s=a}^{t}(t-\rho(s))^{\overline{\beta-1}}(f(s)+g(s))\\
&=\frac{\gamma}{\Gamma(\alpha)}\sum_{s=a}^{t}(t+\alpha-\sigma(s))^{(\alpha-1)}f(s)
+\frac{\gamma}{\Gamma(\alpha)}\sum_{s=a}^{t}(t+\alpha-\sigma(s))^{(\alpha-1)}g(s)\\
&\qquad
+\frac{1-\gamma}{\Gamma(\beta)}\sum_{s=a}^{t}(t-\rho(s))^{\overline{\beta-1}}f(s)
+\frac{1-\gamma}{\Gamma(\beta)}\sum_{s=a}^{t}(t-\rho(s))^{\overline{\beta-1}}g(s)\\
&=
\left[\frac{\gamma}{\Gamma(\alpha)}\sum_{s=a}^{t}(t+\alpha-\sigma(s))^{(\alpha-1)}f(s)
+\frac{1-\gamma}{\Gamma(\beta)}\sum_{s=a}^{t}(t-\rho(s))^{\overline{\beta-1}}f(s)\right]\\
&\quad +
\left[\frac{\gamma}{\Gamma(\alpha)}\sum_{s=a}^{t}(t+\alpha-\sigma(s))^{(\alpha-1)}g(s)
+\frac{1-\gamma}{\Gamma(\beta)}\sum_{s=a}^{t}(t-\rho(s))^{\overline{\beta-1}}g(s)\right]\\
&=(_{\gamma}\diamondsuit_a^{-\alpha,-\beta}f)(t) +
(_{\gamma}\diamondsuit_a^{-\alpha,-\beta}g)(t).
\end{split}
\end{equation*}
\end{proof}

\begin{theorem}
\label{diamond::thm:const} Let $f(t)=k$ on $\mathbb{N}_a$, $k$ a
constant. The following equality holds:
$$
(_{\gamma}\diamondsuit_a^{-\alpha,-\beta}f)(t)
=\gamma\frac{\Gamma(t-a+1+\alpha) k}{\Gamma(\alpha+1)\Gamma(t-a+1)}
+(1-\gamma)\frac{\Gamma(t-a+1+\beta)
k}{\Gamma(\beta+1)\Gamma(t-a+1)}\, .
$$
\end{theorem}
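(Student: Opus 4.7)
The plan is to reduce the theorem to evaluating two explicit sums, one arising from the delta piece and one from the nabla piece of the diamond operator. I would start by writing out Definition~\ref{diamond::diamond} applied to $f(s)\equiv k$, factoring $k$ outside each sum:
\[
\left({_{\gamma}\diamondsuit}_a^{-\alpha,-\beta}f\right)(t)
=\frac{\gamma\, k}{\Gamma(\alpha)}\sum_{s=a}^{t}(t+\alpha-\sigma(s))^{(\alpha-1)}
+\frac{(1-\gamma)\, k}{\Gamma(\beta)}\sum_{s=a}^{t}(t-\rho(s))^{\overline{\beta-1}}.
\]
So the whole theorem reduces to evaluating these two sums in closed form in terms of gamma functions.

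First I would rewrite each summand using \eqref{powerZ}. Since $\sigma(s)=s+1$, one has
$(t+\alpha-\sigma(s))^{(\alpha-1)}=\Gamma(t+\alpha-s)/\Gamma(t-s+1)$; and since $\rho(s)=s-1$, the corresponding nabla rising factorial gives $(t-\rho(s))^{\overline{\beta-1}}=\Gamma(t-s+\beta)/\Gamma(t-s+1)$. Performing the substitution $u=t-s$ in both sums transforms them into the single template
\[
S_\lambda(n):=\sum_{u=0}^{n}\frac{\Gamma(u+\lambda)}{\Gamma(u+1)},\qquad n:=t-a,
\]
with $\lambda=\alpha$ for the delta part and $\lambda=\beta$ for the nabla part.

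The key identity I need is the hockey-stick/Vandermonde-type identity
\[
S_\lambda(n)=\frac{\Gamma(\lambda)\,\Gamma(n+\lambda+1)}{\Gamma(\lambda+1)\,\Gamma(n+1)}
=\frac{\Gamma(n+\lambda+1)}{\lambda\,\Gamma(n+1)}.
\]
I would prove this by a short induction on $n$: the base case $n=0$ gives $\Gamma(\lambda)=\Gamma(\lambda+1)/\lambda$, and the inductive step uses the Pascal-like recursion $\Gamma(n+\lambda+1)/[(n+1)\Gamma(n+1)]+\Gamma(n+1+\lambda)/\Gamma(n+2)$ combined with $\Gamma(x+1)=x\Gamma(x)$ to produce $\Gamma(n+2+\lambda)/[\lambda\,\Gamma(n+2)]$ after routine algebra. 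Substituting this identity back into the two sums with $\lambda=\alpha$ and $\lambda=\beta$, the prefactors $\Gamma(\alpha)/\Gamma(\alpha+1)=1/\alpha$ cancel cleanly and we recover exactly
\[
\gamma\,\frac{\Gamma(t-a+1+\alpha)\,k}{\Gamma(\alpha+1)\Gamma(t-a+1)}
+(1-\gamma)\,\frac{\Gamma(t-a+1+\beta)\,k}{\Gamma(\beta+1)\Gamma(t-a+1)}.
\]

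The only mildly delicate step is the combinatorial identity $S_\lambda(n)$, which is standard when $\lambda\in\mathbb{N}$ but has to be handled via the gamma-function recurrence for arbitrary positive real $\lambda$; everything else is bookkeeping of the definitions. As a sanity check, one recovers the particular cases $\gamma=1$ (pure delta sum of a constant, matching the formula one gets from Definition~\ref{Miller:fract}) and $\gamma=0$ (pure nabla sum), which is the expected behaviour of the unifying operator $_{\gamma}\diamondsuit$.
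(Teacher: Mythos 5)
Your proof is correct and follows essentially the same route as the paper's: expand the diamond operator via its definition, factor out $k$, and evaluate the resulting delta and nabla sums in closed form via the gamma function. The only difference is that the paper simply asserts the closed forms of the two sums, whereas you isolate and prove (by induction) the identity $\sum_{u=0}^{n}\Gamma(u+\lambda)/\Gamma(u+1)=\Gamma(n+\lambda+1)/(\lambda\,\Gamma(n+1))$ that justifies that step — a welcome addition, and your verification checks out.
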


\begin{proof}
By definition of diamond-$\gamma$ fractional sum of order
$(\alpha,\beta)$, we have
\[\begin{split}
(_{\gamma}\diamondsuit_a^{-\alpha,-\beta}k)(t)
&=\gamma(\Delta_a^{-\alpha}k)(t+\alpha)+(1-\gamma)(\nabla_a^{-\beta}k)(t)\\
&=\frac{\gamma}{\Gamma(\alpha)}\sum_{s=0}^t
k(t+\alpha-\sigma(s))^{(\alpha-1)}
+\frac{1-\gamma}{\Gamma(\beta)}\sum_{s=0}^t
k(t-\rho(s))^{\overline{\beta-1}}\\
&=\gamma\frac{\Gamma(t-a+1+\alpha)}{\alpha\Gamma(\alpha)\Gamma(t-a+1)}k+
(1-\gamma)\frac{\Gamma(t-a+1+\beta)}{\beta\Gamma(\beta)\Gamma(t-a+1)}k\\
&=\gamma\frac{\Gamma(t-a+1+\alpha)}{\Gamma(\alpha+1)\Gamma(t-a+1)}k+
(1-\gamma)\frac{\Gamma(t-a+1+\beta)}{\Gamma(\beta+1)\Gamma(t-a+1)}k.
\end{split}
\]
\end{proof}

\begin{cor}
\label{diamond::m:f:cor} Let $f(t) \equiv k$ for a certain constant
$k$. Then,
\begin{equation}
\label{diamond::Miller_Constant} (\Delta_a^{-\alpha}f)(t+\alpha)
=\frac{\Gamma(t-a+1+\alpha)}{\Gamma(\alpha+1)\Gamma(t-a+1)}k.
\end{equation}
\end{cor}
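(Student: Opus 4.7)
The plan is to derive this corollary as an immediate specialization of Theorem~\ref{diamond::thm:const}. Recall that by construction the diamond-$\gamma$ fractional operator reduces, for $\gamma = 1$, to the left fractional delta sum evaluated at the shifted argument: $(_{1}\diamondsuit_a^{-\alpha,-\beta}f)(t) = (\Delta_a^{-\alpha}f)(t+\alpha)$, independently of $\beta$. So the natural route is simply to set $\gamma = 1$ on both sides of the identity in Theorem~\ref{diamond::thm:const}.

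First I would invoke Theorem~\ref{diamond::thm:const} applied to the constant function $f(t) \equiv k$, writing
\[
(_{\gamma}\diamondsuit_a^{-\alpha,-\beta}f)(t)
=\gamma\frac{\Gamma(t-a+1+\alpha)\,k}{\Gamma(\alpha+1)\Gamma(t-a+1)}
+(1-\gamma)\frac{\Gamma(t-a+1+\beta)\,k}{\Gamma(\beta+1)\Gamma(t-a+1)}\, .
\]
Then I would choose $\gamma = 1$: on the left-hand side this collapses the convex combination defining $_{\gamma}\diamondsuit_a^{-\alpha,-\beta}$ onto its delta component, yielding $(\Delta_a^{-\alpha}f)(t+\alpha)$, and on the right-hand side the factor $(1-\gamma)$ annihilates the $\nabla$-contribution, leaving exactly $\frac{\Gamma(t-a+1+\alpha)}{\Gamma(\alpha+1)\Gamma(t-a+1)} k$, which is~\eqref{diamond::Miller_Constant}.

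There is no real obstacle here: the only thing to be careful about is making clear that the identity in Theorem~\ref{diamond::thm:const} holds for \emph{every} admissible pair $(\alpha,\beta)$ and every $\gamma\in[0,1]$, so specializing $\gamma = 1$ (while leaving $\beta$ arbitrary) is legitimate and the $\beta$-dependent term genuinely drops out. In fact, one may observe that the computation in the proof of Theorem~\ref{diamond::thm:const} already isolates the delta piece
\[
\frac{1}{\Gamma(\alpha)}\sum_{s=a}^{t} k\,(t+\alpha-\sigma(s))^{(\alpha-1)}
=\frac{\Gamma(t-a+1+\alpha)}{\Gamma(\alpha+1)\Gamma(t-a+1)}\,k,
\]
so the corollary is literally the first summand of that identity and requires no new calculation.
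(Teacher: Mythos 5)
Your proposal is correct and follows essentially the same route as the paper: the paper's proof also simply sets $\gamma=1$ in Theorem~\ref{diamond::thm:const} and recalls that $(_{1}\diamondsuit_a^{-\alpha,-\beta}k)(t)=(\Delta_a^{-\alpha}k)(t+\alpha)$. Your additional observation that the delta summand is already computed explicitly inside the proof of that theorem is a fair sanity check but adds nothing beyond the paper's argument.
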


\begin{proof}
The result follows from Theorem~\ref{diamond::thm:const} choosing
$\gamma=1$ and recalling that
$$(_{1}\diamondsuit_a^{-\alpha,-\beta}k)(t) =
(\Delta_a^{-\alpha}k)(t+\alpha)\,.$$
\end{proof}

\begin{remark}
In the particular case when $a=0$, equality
\eqref{diamond::Miller_Constant} coincides with the result of
\cite[Section~5]{Miller}.
\end{remark}

The fractional nabla result analogous to
Corollary~\ref{diamond::m:f:cor} is easily obtained:

\begin{cor}
If $k$ is a constant, then
\begin{equation*}
(\nabla_a^{-\beta}k)(t)
=\frac{\Gamma(t-a+1+\beta)}{\Gamma(\beta+1)\Gamma(t-a+1)}k.
\end{equation*}
\end{cor}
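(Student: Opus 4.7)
The plan is to derive this result as an immediate specialization of Theorem~\ref{diamond::thm:const} by choosing $\gamma=0$, exactly as the preceding corollary specialized the same theorem at $\gamma=1$. Recall that the remark following Definition~\ref{diamond::diamond} records the identity
\[
\left({_{0}\diamondsuit}_a^{-\alpha,-\beta}f\right)(t) =\left(\nabla_a^{-\beta}f\right)(t),
\]
so the $\nabla$-fractional sum is nothing but the diamond-$\gamma$ operator at $\gamma=0$.

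First I would invoke Theorem~\ref{diamond::thm:const} applied to the constant function $f(t)\equiv k$, obtaining
\[
(_{\gamma}\diamondsuit_a^{-\alpha,-\beta}k)(t)
=\gamma\,\frac{\Gamma(t-a+1+\alpha)\,k}{\Gamma(\alpha+1)\Gamma(t-a+1)}
+(1-\gamma)\,\frac{\Gamma(t-a+1+\beta)\,k}{\Gamma(\beta+1)\Gamma(t-a+1)}.
\]
Setting $\gamma=0$ collapses the first summand to zero and leaves precisely
\[
(\nabla_a^{-\beta}k)(t)
=\frac{\Gamma(t-a+1+\beta)}{\Gamma(\beta+1)\Gamma(t-a+1)}\,k,
\]
which is the claim. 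Since Theorem~\ref{diamond::thm:const} has already been established in the preceding pages (via the two summation identities derived inside its proof), no new computation is required here.

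There is no real obstacle: the corollary is designed to make explicit that the $\nabla$-side of the new diamond operator already contains the classical nabla-fractional-sum-of-a-constant formula, mirroring the corresponding delta statement in Corollary~\ref{diamond::m:f:cor}. If a self-contained verification were preferred, one could instead start directly from Definition~\ref{diamond::nabla}, write
\[
(\nabla_a^{-\beta}k)(t)=\frac{k}{\Gamma(\beta)}\sum_{s=a}^{t}(t-\rho(s))^{\overline{\beta-1}},
\]
and evaluate the rising-factorial sum by telescoping, but this duplicates the calculation already used in the proof of Theorem~\ref{diamond::thm:const}; the one-line deduction above is cleaner and keeps the exposition parallel to Corollary~\ref{diamond::m:f:cor}.
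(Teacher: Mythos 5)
Your proof is correct and matches the paper's own argument exactly: the paper likewise obtains this corollary by setting $\gamma=0$ in Theorem~\ref{diamond::thm:const} and recalling that $({_{0}\diamondsuit}_a^{-\alpha,-\beta}k)(t)=(\nabla_a^{-\beta}k)(t)$. No issues.
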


\begin{proof}
The result follows from Theorem~\ref{diamond::thm:const} choosing
$\gamma=0$ and recalling that
$$(_{0}\diamondsuit_a^{-\alpha,-\beta}k)(t)
=(\nabla_a^{-\beta}k)(t)\,.$$
\end{proof}

\begin{theorem}
\label{diamond::thm:10} Let $f$ be a real valued function and
$\alpha_1$, $\alpha_2$, $\beta_1$, $\beta_2>0$. Then,
\[\begin{split}
\left(_{\gamma}\diamondsuit_a^{-\alpha_1,
-\beta_1}\left(_{\gamma}\diamondsuit_a^{-\alpha_2,-\beta_2}f\right)\right)(t)
&=\gamma\left({_{\gamma}}\diamondsuit_a^{-(\alpha_1+\alpha_2),
-(\beta_1+\alpha_2)} f\right)(t)\\
&\qquad+(1-\gamma)\left(
{_{\gamma}}\diamondsuit_a^{-(\alpha_1+\beta_2),-(\beta_1+\beta_2)}
f\right)(t).
\end{split}
\]
\end{theorem}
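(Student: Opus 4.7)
The natural approach is to first rewrite the $\Delta$-component of the diamond operator in nabla form. By Lemma~\ref{diamond::nabladelta}, for any function $y$ defined on $\mathbb{N}_a$ one has $(\Delta_a^{-\mu} y)(t+\mu) = (\nabla_a^{-\mu} y)(t)$, so the defining formula of Definition~\ref{diamond::diamond} collapses to the purely nabla expression
\begin{equation*}
({_\gamma}\diamondsuit_a^{-\alpha,-\beta} f)(t) = \gamma(\nabla_a^{-\alpha} f)(t) + (1-\gamma)(\nabla_a^{-\beta} f)(t).
\end{equation*}
In this form the identity becomes a bilinear manipulation. First I would note that $g := {_\gamma}\diamondsuit_a^{-\alpha_2,-\beta_2} f$ is again defined on $\mathbb{N}_a$ (by the remark following Definition~\ref{diamond::diamond}), so the same rewriting applies to the outer operator as well.

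Next I would substitute $g = \gamma\,\nabla_a^{-\alpha_2} f + (1-\gamma)\,\nabla_a^{-\beta_2} f$ into $({_\gamma}\diamondsuit_a^{-\alpha_1,-\beta_1} g)(t) = \gamma(\nabla_a^{-\alpha_1} g)(t) + (1-\gamma)(\nabla_a^{-\beta_1} g)(t)$. Using linearity of $\nabla_a^{-\mu}$ (immediate from Definition~\ref{diamond::nabla}) expands the composition into four cross terms
\begin{multline*}
\gamma^{2}\,\nabla_a^{-\alpha_1}\nabla_a^{-\alpha_2} f(t) \;+\; \gamma(1-\gamma)\,\nabla_a^{-\alpha_1}\nabla_a^{-\beta_2} f(t) \\
+\;(1-\gamma)\gamma\,\nabla_a^{-\beta_1}\nabla_a^{-\alpha_2} f(t) \;+\;(1-\gamma)^{2}\,\nabla_a^{-\beta_1}\nabla_a^{-\beta_2} f(t).
\end{multline*}
The decisive step is now the nabla semigroup identity $\nabla_a^{-\mu_1}\nabla_a^{-\mu_2} = \nabla_a^{-(\mu_1+\mu_2)}$, the nabla counterpart of the Riemann--Liouville semigroup \eqref{right::RLSP} used for the $\Delta$ sum in Chapter~\ref{chap3}. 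Applying it collapses the four compositions to $\nabla_a^{-(\alpha_1+\alpha_2)}f$, $\nabla_a^{-(\alpha_1+\beta_2)}f$, $\nabla_a^{-(\beta_1+\alpha_2)}f$, and $\nabla_a^{-(\beta_1+\beta_2)}f$, respectively.

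Finally, factoring $\gamma$ and $(1-\gamma)$ out of the four-term sum gives
\begin{multline*}
\gamma\bigl[\gamma\,\nabla_a^{-(\alpha_1+\alpha_2)}f(t) + (1-\gamma)\,\nabla_a^{-(\beta_1+\alpha_2)}f(t)\bigr] \\
+ (1-\gamma)\bigl[\gamma\,\nabla_a^{-(\alpha_1+\beta_2)}f(t) + (1-\gamma)\,\nabla_a^{-(\beta_1+\beta_2)}f(t)\bigr],
\end{multline*}
and each bracket is exactly the nabla form of a diamond operator, producing the right-hand side of the theorem; if one prefers to restore the stated mixed $\Delta/\nabla$ presentation in the first slot of each diamond, one applies Lemma~\ref{diamond::nabladelta} once more in the reverse direction. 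The main obstacle is the nabla semigroup property: if it is not directly available from the cited literature, it has to be established beforehand, either by a change-of-summation argument on Definition~\ref{diamond::nabla} using a discrete Vandermonde identity for the rising factorial $(t-\rho(s))^{\overline{\mu-1}}$, or by transporting the Miller--Ross delta semigroup across Lemma~\ref{diamond::nabladelta}. Everything else reduces to linearity and the elementary regrouping by $\gamma$ and $(1-\gamma)$.
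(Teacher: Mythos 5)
Your proof is correct and follows essentially the same route as the paper: expand the composition bilinearly into four cross terms, collapse each composed pair of fractional sums via the semigroup property, and regroup by $\gamma$ and $(1-\gamma)$. The only difference is that you normalize everything to nabla form at the outset using Lemma~\ref{diamond::nabladelta}, whereas the paper keeps the mixed $\Delta/\nabla$ presentation and applies that lemma term by term in mid-computation; both versions rest on the same semigroup identity for the fractional sums, which you rightly flag as the one ingredient needing external justification (the paper uses it silently).
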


\begin{proof}
Direct calculations show the intended relation:
\begin{equation*}
\begin{split}
(_{\gamma}&\diamondsuit_a^{-\alpha_1,-\beta_1}(_{\gamma}\diamondsuit_a^{-\alpha_2,-\beta_2}f))(t)
=\gamma(\Delta_a^{-\alpha_1}(_{\gamma}\diamondsuit_a^{-\alpha_2,-\beta_2}f))(t+\alpha_1)
\\&\quad+ (1-\gamma)(\nabla_a^{-\beta_1}(_{\gamma}\diamondsuit_a^{-\alpha_2,-\beta_2}f))(t)\\
&=\quad\gamma^2
(\Delta_a^{-\alpha_1}\left(\Delta_a^{-\alpha_2}f\right))(t+\alpha_1+\alpha_2)
+
\gamma(1-\gamma)\left(\Delta_a^{-\alpha_1}\left(\nabla_a^{-\beta_2}f\right)\right)(t+\alpha_1)
\\&\quad+ (1-\gamma)\gamma\left(\nabla_a^{-\beta_1}\left(\Delta_a^{-\alpha_2}f\right)\right)(t+\alpha_2)
+  (1-\gamma)^2\left(\nabla_a^{-\beta_1}\left(\nabla_a^{-\beta_2}f\right)\right)(t)\\
&=\gamma^2\left(\Delta_a^{-(\alpha_1+\alpha_2)}f\right)(t+\alpha_1+\alpha_2)
+ \gamma(1-\gamma)\left(\Delta_a^{-\alpha_1}\left(\Delta_a^{-\beta_2}f\right)\right)(t+\alpha_1+\beta_2)\\
&\quad +
(1-\gamma)\gamma\left(\nabla_a^{-\beta_1}\left(\nabla_a^{-\alpha_2}f\right)\right)(t)
+(1-\gamma)^2 \left(\nabla_a^{-(\beta_1+\beta_2)}f\right)(t)\\
&=\gamma^2\left(\Delta_a^{-(\alpha_1+\alpha_2)}f\right)(t+\alpha_1+\alpha_2)
+ \gamma(1-\gamma)\left(\Delta_a^{-(\alpha_1+\beta_2)}f\right)(t+\alpha_1+\beta_2)\\
&\quad +
(1-\gamma)\gamma\left(\nabla_a^{-(\beta_1+\alpha_2)}f\right)(t)
+(1-\gamma)^2\left(\nabla_a^{-(\beta_1+\beta_2)}f\right)(t)\\
&=\gamma\left[\gamma\left(\Delta_a^{-(\alpha_1+\alpha_2)}f\right)(t+\alpha_1+\alpha_2)
+ (1-\gamma)\left(\nabla_a^{-(\beta_1+\alpha_2)}f\right)(t)\right]\\
&\quad +
(1-\gamma)\left[\gamma\left(\Delta_a^{-(\alpha_1+\beta_2)}f\right)(t+\alpha_1+\beta_2)
+ (1-\gamma)\left(\nabla_a^{-(\beta_1+\beta_2)}f\right)(t)\right].
\end{split}
\end{equation*}
\end{proof}

\begin{remark}
If $\gamma=0$, then
$\left(_{\gamma}\diamondsuit_a^{-\alpha_1,
-\beta_1}(_{\gamma}\diamondsuit_a^{-\alpha_2,-\beta_2}f)\right)(t)
=\left(\nabla_a^{-(\beta_1+\beta_2)}f\right)(t)$.
\end{remark}

\begin{remark}
If $\gamma=1$, then
$\left(_{\gamma}\diamondsuit_a^{-\alpha_1,
-\beta_1}(_{\gamma}\diamondsuit_a^{-\alpha_2,-\beta_2}f)\right)(t)
=\left(\Delta_a^{-(\alpha_1+\alpha_2)}f\right)(t+\alpha_1+\alpha_2)$.
\end{remark}

\begin{remark}
If $\alpha_1=\alpha_2=\alpha$ and $\beta_1=\beta_2=\beta$, then
$$\left(_{\gamma}\diamondsuit_a^{-\alpha,
-\beta}(_{\gamma}\diamondsuit_a^{-\alpha,-\beta}f)\right)(t)
= \left({_{\gamma}}\diamondsuit_a^{-\alpha,-\beta}f\right)(t)\,.$$
\end{remark}

We now prove a general Leibniz formula.

\begin{theorem}[Leibniz formula]
\label{diamond::thm:ProductRule} Let $f$ and $g$ be real valued
functions, $0<\alpha,~\beta<1$. For all $t$ such that $t=a \mod
(1)$, the following equality holds:
\begin{multline}
\label{diamond::eq:GLF}
\left(_{\gamma}\diamondsuit_a^{-\alpha,-\beta}(fg)\right)(t)
=\gamma\sum_{k=0}^\infty\binom{-\alpha}{k}\left[\left(\nabla^k
g\right)(t)\right] \cdot\left[\left(\Delta_a^{-(\alpha+k)}f\right)(t+\alpha+k)\right]\\
+(1-\gamma)\sum_{k=0}^\infty\binom{-\beta}{k}\left[\left(\nabla^k
g\right)(t)\right]\left[\left(\Delta_a^{-(\beta+k)}f\right)(t+\beta
+ k)\right],
\end{multline}
where
$$
\binom{u}{v}=\frac{\Gamma(u+1)}{\Gamma(v+1)\Gamma(u-v+1)}.
$$
\end{theorem}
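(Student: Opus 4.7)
The plan is to first unpack the diamond-$\gamma$ operator via its definition,
\[
\left({}_{\gamma}\diamondsuit_a^{-\alpha,-\beta}(fg)\right)(t)
=\gamma\,\bigl(\Delta_a^{-\alpha}(fg)\bigr)(t+\alpha)+(1-\gamma)\,\bigl(\nabla_a^{-\beta}(fg)\bigr)(t),
\]
and then use Lemma~\ref{diamond::nabladelta} to rewrite the nabla fractional sum in delta form, $(\nabla_a^{-\beta}h)(t)=(\Delta_a^{-\beta}h)(t+\beta)$. Both summands then have the same structure, so the theorem reduces to a single Leibniz-type identity for the left discrete Riemann--Liouville fractional sum $\Delta_a^{-\mu}$, applied once with $\mu=\alpha$ and once with $\mu=\beta$.

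First, I would recast the fractional sum in binomial form. Using $\sigma(s)=s+1$ together with the elementary identity $\Gamma(u+\mu)/[\Gamma(\mu)\Gamma(u+1)]=(-1)^u\binom{-\mu}{u}$ and the change of variable $u=t-s$, the defining formula \eqref{Z::sum1} becomes the convolution-style representation
\[
(\Delta_a^{-\mu}h)(t+\mu)=\sum_{u=0}^{t-a}(-1)^u\binom{-\mu}{u}h(t-u),
\]
which is the form that makes the Leibniz calculation transparent.

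Next, I would establish, by induction on $u$ from the very definition of $\nabla$, the discrete Newton--Taylor expansion
\[
g(t-u)=(1-\nabla)^u g(t)=\sum_{k=0}^{u}(-1)^k\binom{u}{k}(\nabla^k g)(t).
\]
Substituting this into the binomial form applied to $fg$, swapping the two finite sums in $u$ and $k$, and then invoking the standard identity
\[
\binom{-\mu}{u}\binom{u}{k}=\binom{-\mu}{k}\binom{-\mu-k}{u-k},
\]
the inner $u$-sum, after the reindexing $v=u-k$, is identified as a fractional sum of $f$ of order $\mu+k$ evaluated at the appropriate argument. Summing over $k$ produces the Leibniz expansion for $(\Delta_a^{-\mu}(fg))(t+\mu)$; applying this with $\mu=\alpha$ and $\mu=\beta$, multiplying by $\gamma$ and $1-\gamma$ respectively, and adding, yields \eqref{diamond::eq:GLF}.

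The main obstacle will be the combinatorial bookkeeping in the double summation: properly exchanging the order of summation, absorbing the sign factors $(-1)^{u+k}$, and, above all, correctly identifying the argument at which the resulting fractional sum of $f$ must be evaluated so that the expression matches the compact form in \eqref{diamond::eq:GLF}. The ``$k=0$ to $\infty$'' upper limit is not an obstruction, since the discrete Newton--Taylor expansion truncates at $k=t-a$, making each series effectively finite for $t\in\{a,a+1,a+2,\ldots\}$.
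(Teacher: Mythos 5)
Your proposal is correct and follows essentially the same route as the paper's proof: both expand $g$ about $t$ in a discrete (nabla) Taylor/Newton series, substitute into the fractional sum of the product, interchange the two summations, and identify the inner sum as a fractional sum of $f$ of order $\mu+k$. The only differences are organizational --- you convert the nabla term to delta form via Lemma~\ref{diamond::nabladelta} at the outset and keep the bookkeeping in binomial-coefficient language, whereas the paper runs the two terms in parallel using factorial-function identities and Gamma-function ratios --- and your closing concern about pinning down the evaluation point of $\Delta_a^{-(\mu+k)}f$ is well placed, since that (a sum over $s$ up to $t-k$, i.e.\ base point $t-k$ rather than $t$) is exactly the delicate step in the paper's own computation.
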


\begin{proof}
By definition of the diamond fractional sum,
\begin{equation*}
\begin{split}
\left(_{\gamma}\diamondsuit_a^{-\alpha,-\beta}(fg)\right)(t)
&=\gamma \left(\Delta_a^{-\alpha}(fg)\right)(t+\alpha)
+(1-\gamma)\left(\nabla_a^{-\beta}(fg)\right)(t)\\
&=\frac{\gamma}{\Gamma(\alpha)}\sum_{s=a}^{t}(t+\alpha-\sigma(s))^{(\alpha-1)}f(s)g(s)\\
&\quad+\frac{1-\gamma}{\Gamma(\beta)}\sum_{s=a}^{t}(t-\rho(s))^{\overline{\beta-1}}f(s)g(s).
\end{split}
\end{equation*}
By Taylor's expansion of $g(s)$ \cite{1},
$$
g(s)=\sum_{k=0}^\infty \frac{(s-t)^{\overline{k}}}{k!} (\nabla^k
g)(t)=\sum_{k=0}^\infty (-1)^k\frac{(t-s)^{(k)}}{k!}(\nabla^k g)(t).
$$
Substituting the Taylor series of $g(s)$ at $t$,
\begin{multline*}
\left(_{\gamma}\diamondsuit_a^{-\alpha,-\beta}(fg)\right)(t)
=\frac{\gamma}{\Gamma(\alpha)}\sum_{s=a}^{t}(t+\alpha-\sigma(s))^{(\alpha-1)}f(s)
\left[\sum_{k=0}^\infty (-1)^k(t-s)^{(k)}\frac{(\nabla^k g)(t)}{k!}\right]\\
+\frac{1-\gamma}{\Gamma(\beta)}\sum_{s=a}^{t}(t-\rho(s))^{\overline{\beta-1}}f(s)
\left[\sum_{k=0}^\infty (-1)^k(t-s)^{(k)}\frac{(\nabla^k
g)(t)}{k!}\right].
\end{multline*}
Since
\begin{equation*}
\begin{split}
(t+\alpha-\sigma(s))^{(\alpha-1)}(t-s)^{(k)}&=(t+\alpha-\sigma(s))^{(\alpha+k+1)},\\
(t-\rho(s))^{\overline{\beta-1}}(t-s)^{(k)}&=(t+\beta-\sigma(s))^{(\beta+k+1)},
\end{split}
\end{equation*}
and $\displaystyle\sum_{s=t-k+1}^{t}(t-s)^{(k)}=0$, we have
\begin{multline*}
\left(_{\gamma}\diamondsuit_a^{-\alpha,-\beta}(fg)\right)(t)
=\frac{\gamma}{\Gamma(\alpha)}\sum_{k=0}^\infty
(-1)^k\frac{(\nabla^k g)(t)}{k!} \sum_{s=a}^{t-k}(t+\alpha-\sigma(s))^{(\alpha+k-1)}f(s)\\
+\frac{1-\gamma}{\Gamma(\beta)}\sum_{k=0}^\infty
(-1)^k\frac{(\nabla^k g)(t)}{k!}
\sum_{s=a}^{t-k}(t+\beta-\sigma(s))^{(\beta+k-1)}f(s).
\end{multline*}
Because
$$
(-1)^k=\frac{\Gamma(-\alpha+1)\Gamma(\alpha)}{\Gamma(-\alpha+k+1)\Gamma(k+\alpha)}
=\frac{\Gamma(-\beta+1)\Gamma(\beta)}{\Gamma(-\beta+k+1)\Gamma(k+\beta)}
$$
and $k!=\Gamma(k+1)$, the above expression becomes
\begin{equation*}
\begin{split}
\left(_{\gamma}\diamondsuit_a^{-\alpha,-\beta}(fg)\right)(t)
&=\frac{\gamma}{\Gamma(\alpha)} \sum_{k=0}^\infty (\nabla^k g)(t)
\binom{-\alpha}{k}\cdot\left[\frac{1}{\Gamma(k+\alpha)}
\sum_{s=a}^{t-k}(t+\alpha-\sigma(s))^{(\alpha+k-1)}f(s)\right]\\
&\quad+\frac{1-\gamma}{\Gamma(\beta)} \sum_{k=0}^\infty (\nabla^k
g)(t) \binom{-\beta}{k}\left[\frac{1}{\Gamma(k+\beta)}
\sum_{s=a}^{t-k}(t+\beta-\sigma(s))^{(\beta+k-1)}f(s)\right]\\
&=\gamma\sum_{k=0}^\infty \binom{-\alpha}{k}(\nabla^k g)(t)
(\Delta_a^{-(\alpha+k)}f)(t+\alpha+k)\\
&\quad+(1-\gamma)\sum_{k=0}^\infty \binom{-\beta}{k}(\nabla^k
g)(t)(\Delta_a^{-(\beta+k)}f)(t+\beta+k).
\end{split}
\end{equation*}
\end{proof}

\begin{remark}
Choosing $\gamma=0$ in our Leibniz formula \eqref{diamond::eq:GLF},
we obtain that
$$(\nabla_a^{-\beta}(fg))(t)
=\sum_{k=0}^\infty\binom{-\beta}{k}\left[(\nabla^k
g)(t)\right]\left[(\Delta_a^{-(\beta+k)}f)(t+\beta+k)\right].
$$
\end{remark}

\begin{remark}
Choosing $\gamma=1$ in our Leibniz formula \eqref{diamond::eq:GLF},
we obtain that
\begin{equation}
\label{diamond::LeibnizDelta} (\Delta_a^{-\alpha}(fg))(t+\alpha)
=\sum_{k=0}^\infty\binom{-\alpha}{k}\left[(\nabla^k g)(t)\right]
\left[(\Delta_a^{-(\alpha+k)}f)(t+\alpha+k)\right].
\end{equation}
As a particular case of \eqref{diamond::LeibnizDelta}, let $a=0$.
Then, recalling Lemma~\ref{diamond::nabladelta}, we obtain the
Leibniz formulas of \cite{5}.
\\
\end{remark}

The preliminary results presented here, about this new fractional operator,
are published in~\cite{comNuno} and were presented by the author at the NSC'10,
3rd Conference on Nonlinear Science and Complexity, Ankara, Turkey, July 28--31, 2010.


\clearpage{\thispagestyle{empty}\cleardoublepage}

\appendix

\chapter{Maxima code used in Chapter 3}\label{Z::Maxima}

The following \textsf{Maxima} code implements Theorem~\ref{Z::thm0}.
Examples illustrating the use of our procedure \texttt{extremal} below are
found in Section~\ref{Z::sec2}.

\small
\begin{verbatim}

kill(all)$ ratprint:false$ simpsum:true$ tlimswitch:true$

sigma(t):=t+1$

rho(t):=t-1$

rho2(t):=rho(rho(t))$

Delta(exp,t):=block( define(f12(t),exp),
return((f12(sigma(t))-f12(t))) )$

p(x,y):=(gamma(x+1))/(gamma(x+1-y))$

SumL(a,t,nu,exp):=block( define(f1(x),exp),
f1(t)+nu/gamma(nu+1)*sum((p(t+nu-sigma(r),nu-1))*f1(r),r,(a),((t-1)))
)$

SumR(t1,b,nu1,exp1):=block( define(f2(x),exp1),
f2(t1)+nu1/gamma(nu1+1)*sum((p(s+nu1-sigma(t1),nu1-1))*f2(s),s,(t1+1),b)
)$

DeltaL(a2,t2,alpha2,exp2):=block(
[alpha1:ratsimp(alpha2),a:ratsimp(a2),t:ratsimp(t2)],
define(f3(x),exp2), define(q(x),SumL(a,x,1-alpha1,f3(x))), q0:q(o),
o4:float(ev(ratsimp(Delta(q0,o)),nouns)), o41:subst(o=t,o4),
remfunction(f), remfunction(q), return(o41) )$

DeltaR(t3,b,alpha3,exp3):=block(
[alpha1:ratsimp(alpha3),b:ratsimp(b),t:ratsimp(t3)],
define(f4(x),exp3), define(q1(o),(SumR(x,b,1-alpha1,f4(x)))),
q10:q1(z),
o5:float(ev(ratsimp(-Delta((SumR(x,b,1-alpha1,f4(x))),x)),nouns)),
o51:subst(x=t,o5), remfunction(f), remfunction(q1), return(o51) )$

EL(exp7,a,b,alpha7,beta7):=block(
[a:ratsimp(a),b:ratsimp(b),alpha:ratsimp(alpha7),beta:ratsimp(beta7)],
define(LL(t,u,v,w),exp7), b1:diff(LL(t,u,v,w),u),
sa:subst([u=y(sigma(t)),v=DeltaL(a,t,alpha,y(o)),
                        w=DeltaR(t,b,beta,y(o))],b1),
b2:diff(LL(t,u,v,w),v),
sb:subst([t=x,u=y(sigma(x)),v=DeltaL(a,x,alpha,y(x)),
                            w=DeltaR(x,b,beta,y(x))],b2),
sb1:DeltaR(o,rho(b),alpha,sb), sb11:subst(o=t,sb1),
b3:diff(LL(t,u,v,w),w),
sc:subst([t=x,u=y(sigma(x)),v=DeltaL(a,x,alpha,y(x)),
                            w=DeltaR(x,b,beta,y(x))],b3),
sc2:DeltaL(a,p2,beta,sc), sc22:subst(p2=t,sc2), return(sa+sb11+sc22)
)$

ELt(exp8,a,b,alpha8,beta8,t8):=
ratsimp(subst(t=t8,EL(exp8,a,b,alpha8,beta8)))$

extremal(L,a,b,A9,B9,alpha9,beta9):=block(
[a:ratsimp(a),b:ratsimp(b),alpha:ratsimp(alpha9),
beta:ratsimp(beta9),A1:ratsimp(A9), B1:ratsimp(B9)],
eqs:makelist(ratsimp(ELt(L,a,b,alpha,beta,a+i)),i,0,ratsimp((rho2(b)-a))),
vars:makelist(y(ratsimp(a+i)),i,1,ratsimp((rho(b)-a))),
Xi:[a],Xf:[b],Yi:[A1],Yf:[B1],
X:makelist(ratsimp(i),i,1,ratsimp((rho(b)-a))), X:append(Xi,X,Xf),
sols:algsys(subst([y(a)=A1,y(b)=B1],eqs),vars),
Y:makelist(rhs(sols[1][i]),i,1,ratsimp((rho(b)-a))),
Y:append(Yi,Y,Yf),
return(makegamma(ratsimp(minfactorial(makefact(sols[1])))) )$
\end{verbatim}

\normalsize

\clearpage{\thispagestyle{empty}\cleardoublepage}

\chapter{Maxima code used in Chapter 4}
\label{hZ::Maxima}

The following \textsf{Maxima} code implements 
Theorem~\ref{hZ::thm0} and Theorem~\ref{hZ::thm1}.

\small

\begin{verbatim}
kill(all)$remfunction(all)$remvalue(all)$
fpprec:100$load("draw")$
load("eval_string")$
ratprint:false$
simpsum:true$
simpproduct:true$
tlimswitch:true$

sigma(t,h):=t+h$

rho(t,h):=t-h$

rho2(t,h):=rho(rho(t,h),h)$

Delta(exp,t,h):=block(
define(f12(t),exp),
return((f12(sigma(t,h))-f12(t))/h)
)$

p(x,y,h):=h^y*((((gamma(x/h+1)))/((gamma(x/h+1-y)))))$

intHZ(f,var,a9,b9,h9):=block(
[a:ratsimp(a9),b:ratsimp(b9),h:ratsimp(h9)],
p1:subst(var=k*h,f),
res:sum(p1*h, k, a/h, rho(b,h)/h),
return(res)
)$

SumL(a,t,nu,exp9,h0):=block(
define(f1(x),exp9),
li:ratsimp(a/h0),
ls:ratsimp((t-h0)/h0),
l1:float((nu/gamma(nu+1))),
(h^nu)*f1(t)+l1*sum((p(t+nu*h0-sigma(r*h0,h0),nu-1,h0))*f1(r*h0),r,li,ls)*h0)$

SumR(t,b,nu1,exp1,h1):=block(
define(f2(x),exp1),
l1:((nu1/gamma(nu1+1))),
li:ratsimp(sigma(t,h1)/h1),
ls:ratsimp(b/h1),
(h1^nu1)*f2(t)+l1*sum((p(c*h1+nu1*h1-sigma(t,h1),nu1
-1,h1))*f2(c*h1),c,li,ls)*h1)$

DeltaL(a2,t2,alpha2,exp2,h):=block(
[alpha1:ratsimp(alpha2),a:ratsimp(a2),t:ratsimp(t2)],
define(f3(x),exp2),
define(q(x),SumL(a,x,1-alpha1,f3(x),h)),
q0:q(o),
o4:(ev((Delta(q0,o,h)),nouns)),
o41:subst(o=t,o4),
remfunction(f3),
remfunction(q),
return(o41)
)$

DeltaR(t3,b,alpha3,exp3,h3):=block(
[alpha1:ratsimp(alpha3),h:ratsimp(h3),b:ratsimp(b),t:ratsimp(t3)],
define(f4(x),exp3),
define(q1(x),(SumR(x,b,1-alpha1,f4(x),h))),
q01:q1(o),
o5:-(ev((Delta(q01,o,h)),nouns)),
o51:subst(o=t,o5),
remfunction(f4),
remfunction(q1),
return(o51))$

EL(exp7,a,b,alpha7,beta7,h7):=block(
[a:ratsimp(a),b:ratsimp(b),alpha:ratsimp(alpha7),beta:ratsimp(beta7),
                                                h:ratsimp(h7)],
define(LL(t,u,v,w),exp7),
b1:diff(LL(t,u,v,w),u),
sa:subst([u=y(sigma(t,h)),v=DeltaL(a,t,alpha,y(t),h),
                          w=DeltaR(t,b,beta,y(t),h)],b1),
b2:diff(LL(t,u,v,w),v),
sb:subst([t=x,u=y(sigma(t,h)),v=DeltaL(a,x,alpha,y(x),h),
                              w=DeltaR(x,b,beta,y(x),h)],b2),
sb1:DeltaR(o,rho(b,h),alpha,sb,h),
sb11:subst(o=t,sb1),
b3:diff(LL(t,u,v,w),w),
sc:subst([u=y(sigma(t,h)),v=DeltaL(a,x,alpha,y(x),h),
                          w=DeltaR(x,b,beta,y(x),h)],b3),
sc2:DeltaL(a,p2,beta,sc,h),
sc22:subst(p2=t,sc2),
return(sa+sb11+sc22)
)$

ELt(exp8,a,b,alpha8,beta8,t8,h):=
ratsimp(subst(t=t8,EL(exp8,a,b,alpha8,beta8,h)))$

extremal2(L,a,b,A9,B9,alpha9,beta9,h9):=block(
[a:ratsimp(a),b:ratsimp(b),h:ratsimp(h9),alpha:ratsimp(alpha9),
beta:ratsimp(beta9),A1:ratsimp(A9),
B1:ratsimp(B9)],
eqs:makelist(
(ELt(L,a,b,alpha,beta,a+i*h,h)),i,0,ratsimp((rho2(b,h)-a)/h)
),
vars:makelist(y(ratsimp(a+i*h)),i,1,ratsimp((rho(b,h)-a)/h)),
Xi:[a],Xf:[b],Yi:[A1],Yf:[B1],
X:makelist(ratsimp(i*h),i,1,ratsimp((rho(b,h)-a)/h)),
X:append(Xi,X,Xf),
eqs01:subst([y(a)=A1,y(b)=B1],eqs),
eqs1:((eqs01)),
return(eqs1)
)$

extremal(L,a,b,A9,B9,alpha9,beta9,h9):=block(
[a:ratsimp(a),b:ratsimp(b),h:ratsimp(h9),alpha:ratsimp(alpha9),
beta:ratsimp(beta9),A1:ratsimp(A9),
B1:ratsimp(B9)],
eqs:makelist((ELt(L,a,b,alpha,beta,a+i*h,h)),i,0,ratsimp((rho2(b,h)-a)/h)),
vars:makelist(y(ratsimp(a+i*h)),i,1,ratsimp((rho(b,h)-a)/h)),
Xi:[a],Xf:[b],Yi:[A1],Yf:[B1],
X:makelist(ratsimp(i*h),i,1,ratsimp((rho(b,h)-a)/h)),
X:append(Xi,X,Xf),
eqs01:subst([y(a)=A1,y(b)=B1],eqs),
eqs1:((eqs01)),
sols:algsys(eqs1,vars),
if length(sols)=1 then
(Y:makelist(rhs(sols[1][i]),i,1,ratsimp((rho(b,h)-a)/h)),
Y:append(Yi,Y,Yf)
),
disp(float(sols)),
return(makegamma(ratsimp(minfactorial(makefact(sols)))))
)$

LegEqH(L,a,b,alpha,beta,h):=block(
mu:1-alpha,
nu:1-beta,
duu:diff(L,u,2),
duv:diff(L,u,1,v,1),
duw:diff(L,u,1,w,1),
dvv:diff(L,v,2),
dvw:diff(L,v,1,w,1),
dww:diff(L,w,2),
part1:(h^2)*subst([u=y(sigma(t,h)),v=DeltaL(a,t,alpha,y(x),h),
                                   w=DeltaR(t,b,beta,y(x),h)],duu),
part2:2*h^(mu+1)*subst([u=y(sigma(t,h)),v=DeltaL(a,t,alpha,y(x),h),
                                        w=DeltaR(t,b,beta,y(x),h)],duv),
part3:(h^mu)*subst([u=y(sigma(t,h)),v=DeltaL(a,t,alpha,y(x),h),
                                    w=DeltaR(t,b,beta,y(x),h)],dvv),
part4:subst(t=sigma(t,h),subst([u=y(sigma(t,h)),v=DeltaL(a,t,alpha,y(x),h),
                           w=DeltaR(t,b,beta,y(x),h)],dvv))*(mu*h^mu-h^mu)^2,
part5:intHZ(h^3*subst(t=s,subst([u=y(sigma(t,h)),v=DeltaL(a,t,alpha,y(x),h),
      w=DeltaR(t,b,beta,y(x),h)],dvv))*((1/gamma(mu+1))*mu*(mu-1)
         *p(s+mu*h-sigma(sigma(t,h),h),mu-2,h))^2,s,sigma(sigma(t,h),h),b,h),
part6:2*h^(nu+1)*subst([u=y(sigma(t,h)),v=DeltaL(a,t,alpha,y(x),h),
                           w=DeltaR(t,b,beta,y(x),h)],duw)*(nu-1),
part7:2*h^(mu+nu)*subst([u=y(sigma(t,h)),v=DeltaL(a,t,alpha,y(x),h),
                           w=DeltaR(t,b,beta,y(x),h)],dvw)*(nu-1),
part8:2*h^(mu+nu)*subst(t=sigma(t,h),subst([u=y(sigma(t,h)),
   v=DeltaL(a,t,alpha,y(x),h),w=DeltaR(t,b,beta,y(x),h)],dvw))*(mu-1),
part9:h^(2*nu)*subst([u=y(sigma(t,h)),
   v=DeltaL(a,t,alpha,y(x),h),w=DeltaR(t,b,beta,y(x),h)],dww)*(1-nu)^2,
part10:h^(2*nu)*subst(t=sigma(t,h),subst([u=y(sigma(t,h)),
   v=DeltaL(a,t,alpha,y(x),h),w=DeltaR(t,b,beta,y(x),h)],dww)),
part11:intHZ(h^3*subst(t=s,subst([u=y(sigma(t,h)),v=DeltaL(a,t,alpha,y(x),h),
             w=DeltaR(t,b,beta,y(x),h)],dww))*((1/gamma(nu+1))*nu*(1-nu)
                                *p(t+nu*h-sigma(s,h),nu-2,h))^2,s,a,t,h),
return(part1+part2+part3+part4+part5+part6+part7+part8+part9+part10+part11)
)$

LegEqMinH(L,a,b,A,B,alpha,beta,h):=block(
vec:[],
vec:append(vec,[subst([t=a,y(a)=A],ratsimp(LegEqH(L,a,b,alpha,beta,h)))]),
vectTemp:[a],
for i:1 thru length(X)-3 do(
t1:ratsimp(a+i*h),
vectTemp:append(vectTemp,[t1]),
res:subst([t=ratsimp(t1)],ratsimp(LegEqH(L,a,b,alpha,beta,h))),
vec:append(vec,[res])
),
M:matrix(vectTemp,vec),
return(M)
)$

LeqEqMinAllH(L,a,b,A,B,alpha,beta,sols,h):=block(
a:ratsimp(a),b:ratsimp(b),
Validos:[],
for i:1 thru length(sols) do(
M:LegEqMinH(L,a,b,A,B,alpha,beta,h),
M:subst([y(a)=A,y(b)=B],M),
M:subst(makelist(sols[i][k], k, 1, length(sols[i])),M),
Sinal:[],
c:0,
for l:1 thru matrix_size(M)[2] do (
Sinal:append(Sinal,[is(M[2][l]>=0)]),
if is(M[2][l]>=0)=true then(c:c+1)
),
if c=matrix_size(M)[2] then (
Validos:append(Validos,[i])
)
),
disp(Validos)
)
$

TESTE(L,a,b,A,B,alpha,beta,h,sols):=block(
a:ratsimp(a),b:ratsimp(b),
indices:[],
valor:[],
for i:1 thru length(sols) do(
f00:subst(
[u=y(sigma(t,h)),v=DeltaL(a,t,alpha,y(t),h),w=DeltaR(t,b,beta,y(t),h)],L
),
r00:ratsimp(intHZ(f00,t,a,b,h)),
r01:subst(sols[i],r00),
r02:subst([y(a)=A,y(b)=B],r01),
indices:append(indices,[i]),
valor:append(valor,[r02])
),
Mat:matrix(indices,valor),
return(Mat)
)$
\end{verbatim}

\normalsize

\clearpage{\thispagestyle{empty}\cleardoublepage}



\clearpage{\thispagestyle{empty}\cleardoublepage}

\printindex

\clearpage{\thispagestyle{empty}\cleardoublepage}


\end{document}